\documentclass[11pt]{article}
\usepackage[centertags]{amsmath}
\usepackage{amsfonts}
\usepackage{amssymb}
\usepackage{amsthm,color}
\usepackage{newlfont}
\usepackage{bbm}
\usepackage{graphicx}
\usepackage{mathtools}
\usepackage{physics}
\usepackage{enumerate}
\usepackage[title]{appendix}
\usepackage{hyperref}
\usepackage{dsfont}
\usepackage{comment}
\usepackage{array}
\usepackage[symbol]{footmisc}
\usepackage{etoolbox}
\usepackage{enumitem}
\usepackage{cases}
\usepackage{subfigure}

\pagestyle{plain} 
\setlength{\textheight}{8.5in}
\setlength{\topmargin}{-0.4in} \setlength{\oddsidemargin}{15pt}
\setlength{\evensidemargin}{\oddsidemargin} \flushbottom
\setlength{\textwidth}{6.2in}
\newtheorem{Theorem}{Theorem}[section]
\newtheorem{Definition}[Theorem]{Definition}

\newtheorem{Lemma}[Theorem]{Lemma}

\newtheorem{Conj}[Theorem]{Conjecture}

\newtheorem{remark}[Theorem]{Remark}

\numberwithin{equation}{section}

\begin{document}

\def\le{\left}
\def\r{\right}
\def\cost{\mbox{const}}
\def\a{\alpha}
\def\d{\delta}
\def\ph{\varphi}
\def\e{\epsilon}
\def\la{\lambda}
\def\si{\sigma}
\def\La{\Lambda}
\def\B{{\cal B}}
\def\A{{\mathcal A}}
\def\L{{\mathcal L}}
\def\O{{\mathcal O}}
\def\bO{\overline{{\mathcal O}}}
\def\F{{\mathcal F}}
\def\K{{\mathcal K}}
\def\H{{\mathcal H}}
\def\D{{\mathcal D}}
\def\C{{\mathcal C}}
\def\M{{\mathcal M}}
\def\N{{\mathcal N}}
\def\G{{\mathcal G}}
\def\T{{\mathcal T}}
\def\R{{\mathbb R}}
\def\I{{\mathcal I}}

\def\bw{\overline{W}}
\def\phin{\|\varphi\|_{0}}
\def\s0t{\sup_{t \in [0,T]}}
\def\lt{\lim_{t\rightarrow 0}}
\def\iot{\int_{0}^{t}}
\def\ioi{\int_0^{+\infty}}
\def\ds{\displaystyle}
\def\pag{\vfill\eject}
\def\fine{\par\vfill\supereject\end}
\def\acapo{\hfill\break}

\def\beq{\begin{equation}}
\def\eeq{\end{equation}}
\def\barr{\begin{array}}
\def\earr{\end{array}}
\def\vs{\vspace{.1mm}   \\}
\def\rd{\reals\,^{d}}
\def\rn{\reals\,^{n}}
\def\rr{\reals\,^{r}}
\def\bD{\overline{{\mathcal D}}}
\newcommand{\dimo}{\hfill \break {\bf Proof - }}
\newcommand{\nat}{\mathbb N}
\newcommand{\E}{\mathbb E}
\newcommand{\Pro}{\mathbb P}
\newcommand{\com}{{\scriptstyle \circ}}
\newcommand{\reals}{\mathbb R}

\def\Amu{{A_\mu}}
\def\Qmu{{Q_\mu}}
\def\Smu{{S_\mu}}
\def\H{{\mathcal{H}}}
\def\Im{{\textnormal{Im }}}
\def\Tr{{\textnormal{Tr}}}
\def\E{{\mathbb{E}}}
\def\P{{\mathbb{P}}}
\def\span{{\textnormal{span}}}
\renewcommand{\arraystretch}{1.2}

\patchcmd{\abstract}{-.5em}{-0.5em}{}{}

\title{Asymptotic analysis on narrow tubes: narrow escape problems and diffusion processes}
\author{Wen-Tai Hsu\footnote{Department of Mathematics, University of Maryland, College Park, MD 20742, wthsu@umd.edu}}

\date{\vspace{-5ex}}

\maketitle

\begin{abstract}
This paper investigates a diffusion process in a narrow tubular domain with reflecting boundary conditions, where the geometry serves as a singular perturbation of an underlying graph in $\mathbb{R}^2$ or $\mathbb{R}^3$.
The construction incorporates distinct scaling regimes in the neighborhoods of the graph's vertices and edges.
We show that, in the limit, the projected process converges weakly to a diffusion process on the graph, with gluing conditions at the vertices that depend on the relative scales of the neighborhoods.
Our analysis relies on a detailed understanding of the narrow escape problem in domains with bottlenecks.
In particular, we rigorously derive the asymptotic behavior of the expected escape time, establish the asymptotic exponential distribution of escape times and obtain exit place estimates, results that may be of independent interest.
\end{abstract}

\setcounter{tocdepth}{1}
\tableofcontents

\section{Introduction}
Narrow domain problems arise in various areas of physics, such as the propagation of electrical signals along nearly one-dimensional neuronal structures, the diffusion and transport of proteins in cellular networks, and ocean dynamics where the vertical scale is small compared to the horizontal extent; see, e.g., \cite{DPT}, \cite{QGbook}, \cite{NET}, \cite{NETnetwork}, \cite{DST} for further examples.
These phenomena frequently occur in geometrically constrained environments, leading to complex dynamics that are often intractable to analyze directly within the full domain.
A common approach to overcome this difficulty is to approximate the narrow region by a lower dimensional object, such as an interval or a graph.
However, rigorously justifying such graph approximations is highly nontrivial, especially in settings involving multiple interacting scales, and addressing this challenge is a central goal of this paper.

In the present paper, we consider a simple, finite, connected graph $\Gamma = (V,E) \subset \mathbb{R}^d$, with vertices $O_1,\cdots,O_{\abs{V}} \in V$ and edges $I_1,\cdots,I_{\abs{E}} \in E$, where $d=2$ or $3$.
Here, $V$ and $E$ denote the sets of vertices and edges, respectively, and $\abs{V}$ and $\abs{E}$ their cardinalities.
The associated narrow tube domain $G_\epsilon$ consisting of the union of $\lambda_k \epsilon$-neighborhoods of the edges $I_k$ and $\rho_j r_j(\epsilon)$-neighborhoods of the vertices $O_j$, where $\rho_j, \lambda_k$ are positive constants and $r_j(\epsilon) > 0$ for all $\epsilon >0$, with $\lim_{\epsilon \to 0} r_j(\epsilon) = 0$.

We aim to study the diffusion process with reflecting boundary conditions on the narrow tube $G_\epsilon$
\begin{equation}\label{narrow diffusion}
    dZ^\epsilon(t) = \sqrt{2}dB(t) + \nu_\epsilon (Z^\epsilon(t)) d\phi^\epsilon(t), \ Z^\epsilon(0) = z \in G_\epsilon.
\end{equation}
Here $B(t)$ is a $d$-dimensional Brownian motion, $\nu_\epsilon(z)$ is the unit inward normal vector at the point $z \in \partial G_\epsilon$, and $\phi^\epsilon(t)$ is the local time of $Z^\epsilon(t)$ on the boundary $\partial G_\epsilon$.

From a mathematical perspective, this setting corresponds to a domain singular perturbation problem, involving diffusion in a domain that degenerates toward a lower-dimensional structure.
Specifically, we start with a one-dimensional graph $\Gamma$ and perturb it into a $d$-dimensional tubular domain $G_\epsilon$.
A natural question is how the geometry of $G_\epsilon$ influences the limiting dynamics, in particular, how the scaling of $r_j(\epsilon)$ affects the behavior near the vertices. 
We explore this question in detail in the present work.
(For related PDE results in this spirit for simpler domains, typically involving only a single small scale $\epsilon$, or domains that can be decomposed into several such simple pieces but possibly with more complex differential operators, see, e.g., \cite{DST}, recent developments \cite{nlpde}, and the references therein.)

Another perspective, drawn from the averaging principle as in \cite{SDE2}, reveals a local three-scale structure near each vertex. 
More precisely, the diffusion process exhibits a slow component corresponding to motion along the edge direction outside the vertex neighborhood, and two fast components: one arising from motion in directions perpendicular to the edge outside the neighborhood, due to the narrow cross sectional width of order $\epsilon$; 
and another from rapid motion within the vertex neighborhood itself, modeled as shrinking balls with size controlled by $r_j(\epsilon)$. 
This separation of scales plays a central role in determining the limiting behavior. 
The effective dynamics depends critically on the interplay between $\epsilon$, $r_j(\epsilon)$, and the dimension $d$.

In \cite{SDE2}, the asymptotic behavior of \eqref{narrow diffusion} is analyzed in the regime where $r_j(\epsilon)$ and $\epsilon$ are of the same order.
The authors show that $\Pi^\epsilon \circ Z^\epsilon$ converges weakly in $C([0,\infty); \Gamma)$ to a diffusion process $\bar{Z}(t)$ governed by the generator $\bar{L}$. 
Here, $\Pi^\epsilon$ denotes a continuous modification of the nearest projection $\Pi$ from $G_\epsilon$ onto the graph $\Gamma$.
The limiting operator $\bar{L}$ acts as the second derivative in the interior of each edge and satisfies the following Neumann-type gluing condition at each vertex
\begin{equation*}
	\sum_{k:I_k \sim O_j}   p_{j,k} \frac{d f}{d x_k}(O_j) 
    = 0,
\end{equation*}
where $I_k \sim O_j$ indicates that $O_j$ is a vertex of the edge $I_k$, the constants $p_{j,k} > 0$ and $\frac{d }{d x_k}$ denotes the directional derivative along edge $I_k$. See Section \ref{sec diff graph prelim} and \ref{sec diff tube prelim} for further details.

In \cite{SDE1}, a more general class of $r_j(\epsilon)$ is considered, and the author conjectures that the limiting process remains a diffusion on $\Gamma$, governed by gluing conditions of the form 
\begin{equation*}
	\sum_{k:I_k \sim O_j}   p_{j,k} \frac{d f}{d x_k}(O_j) 
    = \alpha_j \bar{L}f(O_j),
\end{equation*}
where $\bar{L}f(O_j)$ denotes the common second derivative value of $f$ at the vertex $O_j$ along each edge $I_k$ and $p_{j,k}, \alpha_j$ are non-negative constants depending on the scaling behavior of $r_j(\epsilon)$.
Three distinct scaling regimes are identified:
\begin{itemize}
    \item Small balls: $\lim_{\epsilon \to 0}  \frac{r_j(\epsilon)^d}{\epsilon^{d-1}} = 0 \Rightarrow p_{j,k} >0, \ \alpha_j =0 $,
    \item Intermediate balls: $\lim_{\epsilon \to 0}  \frac{r_j(\epsilon)^d}{\epsilon^{d-1}} \in (0,\infty) \Rightarrow p_{j,k} >0, \ \alpha_j > 0 $,
    \item Large balls: $\lim_{\epsilon \to 0}  \frac{r_j(\epsilon)^d}{\epsilon^{d-1}} = \infty \Rightarrow p_{j,k} = 0, \ \alpha_j =1 $.
\end{itemize}
Intuitively, larger vertex neighborhoods lead to longer residence times for particles near the vertices, introducing delay effects that modify the gluing conditions.
In the small ball regime, particles pass through vertices rapidly, yielding behavior similar to that described in \cite{SDE2}.
In the large ball regime, vertices effectively behave like absorbing states: once the process reaches a vertex, it remains there indefinitely.
The intermediate regime corresponds to finite, non-negligible delays at the vertices, akin to sticky boundary behavior in diffusion processes.

Our main result establishes the weak convergence of $\Pi^\epsilon \circ Z^\epsilon$ to $\bar{Z}$ in $C([0,\infty); \Gamma)$.
Our approach, following \cite{SDE2}, is based on the martingale problem framework: 
we first establish tightness of the laws of $\Pi^\epsilon \circ Z^\epsilon$, and then show that every limit point solves the martingale problem associated with the generator $\bar{L}$.
The uniqueness of the solution to this martingale problem then implies convergence in distribution.
The main difficulty arises in the second step, where we must analyze the time the diffusion spends near the vertices.
When $r_j(\epsilon)$ and $\epsilon$ are comparable, this can be handled using a change of variables combined with Brownian scaling arguments.
In the general case, however, such methods fail, and we instead employ technique from narrow escape theory to estimate the relevant exit times.

The narrow escape problem, which concerns Brownian motion in domains with reflecting boundaries except for small escape regions\footnote[2]{In the literature on the narrow escape problem, the term absorbing windows is commonly used instead of escape regions. However, in Feller's terminology (see, e.g., \cite{Feller54}, \cite{Ito}), absorbing has a different meaning, as previously described. Here, we adopt the terminology of Feller.}, has been extensively studied in the applied mathematics and physics literature (see, e.g., \cite{NET}, \cite{NETbook} for general smooth domains, and \cite{NETthird}, \cite{NETBN1} for domains featuring bottlenecks).
By contrast, relatively fewer works have addressed the mathematical aspects:
in \cite{LPNET} and \cite{NET4}, the authors study the mean first exit time from general smooth domains without bottlenecks in $\mathbb{R}^2$ and $\mathbb{R}^3$.
The work \cite{NETsecond} examines the exit behavior from a disk through an exit arc, while \cite{NETsp} investigates the first exit time and exit point distribution from a disk under quasi-stationary initial data.

Our exit problem closely resembles the narrow escape problem in bottleneck domains, as examined in \cite{NETthird}, \cite{NET}, and \cite{NETBN1}.
(Strictly speaking, in the first two regimes, the escape is not truly “narrow.”)
More precisely, let $C_{\epsilon,j}(\delta)$ denote the boundary of $\delta$-neighborhood of $O_j$ in $G_\epsilon$, and consider the first hitting time $\sigma^{\epsilon,\delta}$ of $\bigcup_{j=1}^{\abs{V}} C_{\epsilon,j}(\delta)$ for initial points near $O_j$ (see Section \ref{sec prelim} for the precise definition).
Our main interest lies in understanding the asymptotic behavior of the expected hitting time $\mathbb{E}_z \sigma^{\epsilon,\delta} $ as $\epsilon \downarrow 0$. 
One of the main technical challenges in proving the convergence of the diffusion processes is to establish the following asymptotic estimate:
for each $j=1,\cdots,\abs{V}$,
\begin{equation}\label{intro est}
	\lim_{\epsilon \to 0} \sup_{z \in C_{\epsilon,j}(\rho_j r_j(\epsilon)+3\epsilon)}
	\abs{
	\mathbb{E}_z \sigma^{\epsilon,\delta}  \left( \frac{\delta^2}{2} + \alpha_j(\epsilon) \delta \right)^{-1} 
	-1
	}
	=0,
\end{equation}
where 
\begin{equation*}
	\alpha_j (\epsilon) \coloneqq  \frac{ \rho_j^d r_j(\epsilon)^d V_d}{\sum_{k:I_k \sim O_j} \lambda_k^{d-1} \epsilon^{d-1} V_{d-1}}.
\end{equation*}
Here, $V_d$ is the volume constant in dimension $d$.

To the best of our knowledge, a rigorous derivation of such estimates was open in the current literature.
The key challenge in domains with bottlenecks, as opposed to general smooth domains, lies in the presence of multiple spatial scales, specifically, the disparity between the neck and the ball regions, which introduces asymptotic nonsmoothness. 
As a result, standard approaches based on Neumann or Green's functions, such as those used in \cite{LPNET} and \cite{NET4}, do not apply directly. Indeed, as predicted in \cite{NET}, the presence of bottlenecks intensifies the singular behavior of the solution. 
In particular, in two dimensions, the singularity strengthens from logarithmic to algebraic.

The main argument in the aforementioned works (\cite{NETthird}, \cite{NET}, and \cite{NETBN1}) proceeds as follows: first solve the exit problem in the neck region using a suitable compatibility condition, and then use a form of continuity for $\mathbb{E}_z \sigma^{\epsilon,\delta} = v_{\epsilon,\delta}(z)$ to reduce the problem to an escape problem within the ball.
In the first step, the key assumption is that the PDE on the neck can be effectively approximated by a one dimensional problem, allowing for an explicit solution in this region.
In the second step, while the continuity of $v_{\epsilon,\delta}(z)$ follows from classical PDE theory, connecting the neck and the ball requires continuity uniform in $\epsilon$, which is far from trivial due to the $\epsilon$-dependence of the domain.
Establishing this uniform continuity is a central and delicate part of our analysis and plays a critical role in both stages.

In addition to estimating the exit time, we also provide an estimate on the exit place:
for each $j=1,\cdots,\abs{V}$ and $k$ such that $I_k \sim O_j$,
    \begin{equation}\label{intro place est}
        \displaystyle
        \lim_{\delta \to 0}  \lim_{\epsilon \to 0} \sup_{z \in C_{\epsilon,j}(\rho_j r_j(\epsilon)+3\epsilon)} 
        \abs{ \mathbb{P}_z \left( Z^\epsilon(\sigma^{\epsilon,\delta}) \in C_{\epsilon,j}^k(\delta) \right)  - \frac{\lambda_k^{d-1}}{\sum_{l:I_l \sim O_j} \lambda_l^{d-1}}} = 0,
    \end{equation}
    where $C_{\epsilon,j}^k(\delta)  = C_{\epsilon,j}(\delta) \cap \Pi^{-1}(I_k)$.
	While \eqref{intro place est} provides only a coarse approximation and does not fully capture the exit distribution, we are not aware of a comparable result in the existing literature (see, however, \cite{NETsecond} and \cite{NETsp} for the unit disk case under different restrictions on the initial data).
	It is important to note that the estimate in \eqref{intro place est} holds only after taking the limit $\delta \to 0$, i.e., when the bottlenecks are sufficiently short.
Although it may be possible to relax this condition, the current form suffices for our purposes.

The derivation of \eqref{intro est} and \eqref{intro place est} relies on their respective uniform continuity estimates:
for each $j=1,\cdots,\abs{V}$ and $k$ such that $I_k \sim O_j$,
\begin{equation}\label{intro conti est}
	\lim_{\epsilon \to 0} \sup_{z_1,z_2 \in C_{\epsilon,j}(\rho_j r_j(\epsilon)+3\epsilon)} 
           \abs{\mathbb{E}_{z_1} \sigma^{\epsilon,\delta} - \mathbb{E}_{z_2} \sigma^{\epsilon,\delta} } \left( \frac{r_j(\epsilon)^d}{\epsilon^{d-1}} \vee 1 \right)^{-1} =0,
\end{equation}
and
\begin{equation}\label{intro conti place est}
	\lim_{\epsilon \to 0} \sup_{z_1,z_2 \in C_{\epsilon,j}(\rho_j r_j(\epsilon)+3\epsilon)} \abs{ \mathbb{P}_{z_1} \left( Z^\epsilon(\sigma^{\epsilon,\delta}) \in  C_{\epsilon,j}^k(\delta) \right) 
         - \mathbb{P}_{z_2} \left( Z^\epsilon(\sigma^{\epsilon,\delta}) \in  C_{\epsilon,j}^k(\delta) \right)} = 0.
\end{equation}
The key idea is that, before escape, the particle has sufficient time to mix within the domain. 
As a result, the exit behavior becomes asymptotically independent of the initial point within $C_{\epsilon,j}(\rho_j r_j(\epsilon)+3\epsilon)$.

To make this intuition rigorous, we employ the framework of quasi-stationary distributions: distributions that characterize the long time behavior of a process conditioned on survival (i.e., not having exited the domain).
More precisely, we discretize the process $Z^\epsilon(t)$ into a discrete time Markov process $X_n^{\epsilon,\delta',\delta}$ with state space $C_{\epsilon,j}(\delta') \cup \{ \partial \}$, where $ \partial $ denotes an absorbing state.
Once the process enters $ \partial $, it remains there permanently. 
We then study the convergence of $X_n^{\epsilon,\delta',\delta}$ to its quasi-stationary distribution, conditioned on survival.
To control the convergence rate uniformly in $\epsilon,\delta'$, we adapt the Dobrushin's type condition introduced in \cite{QSD} to a uniform setting suitable for our multiscale context.
Justifying this condition requires a priori estimates on both the exit time and exit place. 
The exit time bound ensures that the mixing time of the process is negligible relative to the expected exit time.
The exit place bound ensures that each neck region is accessible with positive probability, which is essential for ensuring sufficient mixing across the domain.

In the large ball regime, the expected exit time in \eqref{intro est} diverges, reflecting the presence of an effective absorbing state.
We also provide a rigorous justification for key claims in the narrow escape literature: specifically, the connection formula derived in \cite[Equation 1]{NETthird} for domains with a single bottleneck, and the statement in \cite[Section 8.1.1]{NET} concerning domains with multiple bottlenecks.

Moreover, we show the asymptotic exponential distribution of first exit times from large balls with multiple bottlenecks:
for each $j$ such that $\lim_{\epsilon \to 0} \frac{r_j(\epsilon)^d}{\epsilon^{d-1}} = \infty$,
\begin{equation}\label{intro exp}
	\lim_{\epsilon \to 0} \sup_{z \in C_{\epsilon,j}(\rho_j r_j(\epsilon)+3\epsilon)}\abs{ \mathbb{P}_z \left( \left( \frac{\rho_j^d r_j(\epsilon)^d V_d}{\sum_{k:I_k \sim O_j} \lambda_k^{d-1} \epsilon^{d-1}V_{d-1}} \delta \right)^{-1} \sigma^{\epsilon,\delta} \geq t \right) - e^{-t}}=0.
\end{equation}
As noted in \cite{NETsp}, if the initial distribution is quasi-stationary, the exit time is exponentially distributed, with the principal eigenvalue as the rate parameter.
Furthermore, as discussed in \cite{NETconj}, at the intuitive level, examining higher moments of the exit time $\sigma^{\epsilon,\delta}$ reveals that $\mathbb{E}_z \left( \sigma^{\epsilon,\delta} \right)^m \sim m! \left( \mathbb{E}_z \sigma^{\epsilon,\delta} \right)^m$, suggesting that $\sigma^{\epsilon,\delta}$ is approximately exponentially distributed.
Section 1.7 of \cite{NETconj} offers an intuitive justification: the escape process resembles a Poisson process, naturally leading to exponential exit times.

We rigorously justify the asymptotic exponential distribution for the narrow escape problem from balls with multiple bottlenecks and explicitly characterize its rate parameter, without assuming that the process starts from the quasi-stationary distribution.
Our argument relies on an abstract result from \cite{SMP}, which provides continuous analogue of the classical Poisson limit theorem. 
Intuitively, \eqref{intro exp} states that the exit time $\sigma^{\epsilon,\delta}$ from $z \in C_{\epsilon,j}(\rho_j r_j(\epsilon)+3\epsilon)$ behaves like an exponential random variable with rate
\begin{equation*}
\left( \frac{\rho_j^d r_j(\epsilon)^d V_d}{\sum_{k: I_k \sim O_j} \lambda_k^{d-1} \epsilon^{d-1} V_{d-1}} \delta \right)^{-1}.
\end{equation*}
As a consequence, one expects that the moment generating function satisfies 
\begin{equation}\label{intro mgf}
	\lim_{\epsilon \to 0} \sup_{z \in C_{\epsilon,j}(\rho_j r_j(\epsilon)+3\epsilon)} \mathbb{E}_z e^{-\lambda \sigma^{\epsilon,\delta}} = 0, \text{ for all } \lambda > 0,
\end{equation}
a property that plays a crucial role in our convergence analysis.

Lastly, we note that spectral analysis of differential operators on graphs has been extensively studied (e.g., \cite{spectral}, \cite{Convbook}, \cite{QGbook}). 
In particular, \cite{spectral} (see also \cite{Convbook})  analyzes differential operators on graphs subject to gluing conditions of the form (\cite[Equation 7.4, 2.4]{spectral})
\begin{equation}\label{gl spec}
	\sum_{k:I_k \sim O_j}   p_{j,k} \frac{d f}{d x_k}(O_j) 
    = \alpha_j f(O_j).
\end{equation}
They prove that the spectrum of the Laplacian, viewed as an operator on a carefully constructed Hilbert space defined over the domain
\begin{equation}
	\mathcal{G}_\epsilon \coloneqq \left( \bigsqcup_{k=1}^{\abs{E}} I_k + B_k(0,\lambda_k \epsilon)  \right)
\bigsqcup \left( \bigsqcup_{j=1}^{\abs{V}}  B(O_j,\rho_j r_j(\epsilon)) \right),
\end{equation}
where $\bigsqcup$ denotes disjoint union, converges to that of the limiting operator on the graph satisfying the gluing condition \eqref{gl spec}.
To achieve this, they construct extended Hilbert spaces that incorporate both edge and vertex neighborhoods, which are initially treated as separate geometric components and then glued together (see, e.g., \cite[Section 3.C]{spectral}, \cite[Figure 5.2]{Convbook}, \cite[Section 6.2.1]{Convbook}).
In contrast, our approach treats the entire tubular domain at once, rather than decomposing it into disjoint pieces. We rely on continuous function spaces and draw on probabilistic and PDE-based methods, thereby avoiding the use of generalized Hilbert space frameworks and abstract operator convergence theory.
Although the subject matter appears related, the objectives and techniques in \cite{spectral} and \cite{Convbook} differ substantially from those pursued in the present work.

The paper is organized as follows.
In Section \ref{sec prelim}, we construct the narrow tube $G_\epsilon$ from the graph $\Gamma$ and provide a brief overview of previous results on diffusion processes on graphs and narrow tubes.
Section \ref{sec exit} develops estimates for exit times and exit place. 
Assuming a uniform continuity estimate, we derive the exit behavior of the process.
Specifically, in Section \ref{sec NET bottle neck} we deal with the large ball regime, along with a discussion of the narrow escape problem.
In Section \ref{sec apriori}, we obtain a priori bounds for exit time and exit place.
Section \ref{sec qsd} shows that the discretized  Markov chain $X_n^{\epsilon,\delta',\delta}$ satisfies a uniform Dobrushin condition, ensuring convergence to its quasi-stationary distribution conditioned on survival.
Section \ref{sec a prior est} establishes the key uniform continuity estimates.
Section \ref{sec exit exp} is devoted to the large ball regime and proves the asymptotic exponential distribution for the narrow escape time.
The convergence of the diffusion process is proved in Section \ref{sec conv diff}.
Section \ref{sec rem} concludes with remarks and possible extensions.

The appendices contain supporting materials:
Appendix \ref{sec NET} reviews background on the (narrow) escape problem.
Appendix \ref{App reg per} justifies technical estimates used in Section \ref{sec NET connect}.
Appendix \ref{app conv qsd} proves that the Dobrushin's condition implies exponential convergence to the quasi-stationary distribution.
Finally, Appendix \ref{app list dom} summarizes the key domains frequently used in Sections \ref{sec exit}-\ref{sec exit exp}.

\section*{Acknowledgment}
The author would like to thank Sandra Cerrai for bringing this problem to his attention and for many helpful discussions.
He is also grateful to Leonid Koralov for drawing his attention to the paper \cite{SMP} and for valuable discussions.
Finally, he thanks Mark Freidlin for insightful conversations regarding the martingale problem.

\section{Notations and preliminaries}\label{sec prelim}
In this section, we present a brief overview of the notations and recall important findings from \cite{SDE1} and \cite{SDE2}.

\subsection{Asymptotic notations}
Here, we define some asymptotic notations, which we will use very often throughout the paper:
\begin{enumerate}
	\item $f(\epsilon) \ll g(\epsilon)$ or $f(\epsilon)=o(g(\epsilon))$ if $\lim_{\epsilon \to 0} \frac{f(\epsilon)}{g(\epsilon)} = 0 $.
	\item $f(\epsilon) \sim g(\epsilon)$ if $\lim_{\epsilon \to 0} \frac{f(\epsilon)}{g(\epsilon)} =1$.
	\item $f(\epsilon)  \asymp g(\epsilon)$ if there exist $c,c'>0$ and $\epsilon_0>0$ such that $c g(\epsilon) \leq f(\epsilon) \leq c' g(\epsilon)$, for all $\epsilon \in (0,\epsilon_0)$.
	\item $f(\epsilon)=O(g(\epsilon))$ or $f(\epsilon) \lesssim g(\epsilon)$ if there exist $c>0$ and $\epsilon_0>0$ such that $\abs{f(\epsilon)} \leq c g(\epsilon)$, for all $\epsilon \in (0,\epsilon_0)$.
\end{enumerate}
In situations involving multiple parameters (e.g., $\epsilon$, $\delta'$, $\delta$), we use subscripts to indicate the relevant asymptotic dependencies.
If we only consider the limit $\epsilon \to 0$, we do not add any subscripts.
For instance, we write $f(\epsilon,\delta) \ll_{\epsilon,\delta} g(\epsilon,\delta)$ or $f(\epsilon,\delta) = o_{\epsilon,\delta} (g(\epsilon,\delta))$ to mean $\lim_{\delta \to 0} \lim_{\epsilon \to 0} \frac{f(\epsilon,\delta)}{g(\epsilon,\delta)}=0$.
In particular:
\begin{itemize}
\item In Section \ref{sec proof lem place} and \ref{sec conv diff}, we adopt the convention of taking the limit $\epsilon \to 0$ first, followed by $\delta \to 0$.

\item In Section \ref{sec exit exp}, we take the limit $\epsilon \to 0$ first, followed by $\delta' \to 0$.
\end{itemize}

\subsection{The graph \texorpdfstring{$\Gamma$}{Lg}}\label{sec graph}
Throughout this paper, we consider only simple, finite, connected graph $\Gamma = (V,E) \subset \mathbb{R}^d$ with $d=2$ or $3$.
The graph consists of vertices $O_1, \cdots, O_{\abs{V}}$ and edges $I_1,\cdots, I_{\abs{E}}$, where $V$ and $E$ denote the sets of vertices and edges, respectively, and $\abs{V}$ and $\abs{E}$ their cardinalities.
Each edge is a straight line segment connecting a pair of distinct vertices. 
See Figure \ref{fig:graph}.
For each $k=1,\cdots,\abs{E}$, we write $I_k \sim O_j$ to indicate that $O_j$ is an endpoint of edge $I_k$.
\begin{figure}[ht]
    \centering
    \subfigure[An example of a simple, finite, connected graph $\Gamma \subset \mathbb{R}^2$.]{\includegraphics[width=0.65\textwidth]{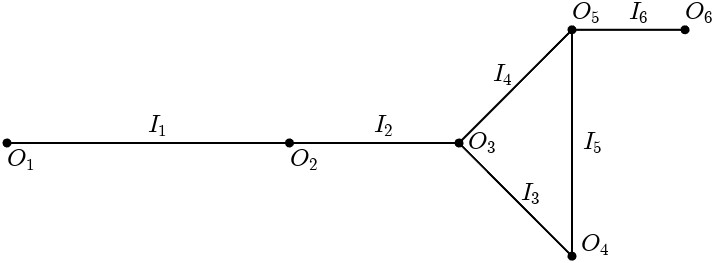}  
    \label{fig:graph}} 
    \subfigure[The corresponding narrow tube domain $G_\epsilon \subset \mathbb{R}^2$.]{\includegraphics[width=0.65\textwidth]{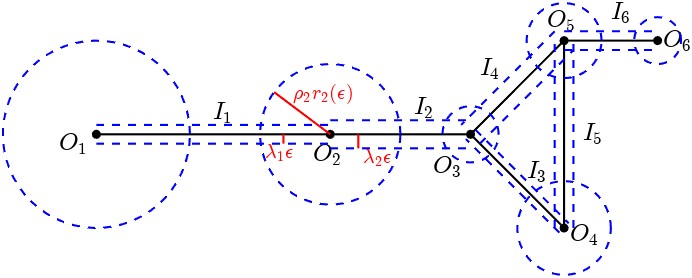}   
    \label{fig:tube}}     
    \caption{Illustration of the graph $\Gamma$ and its associated narrow tube domain $G_\epsilon$.}
\end{figure}

We define a metric $d_\Gamma$ on the graph $\Gamma$ as follows:
if $x,x'$ lie on the same edge, then $d_\Gamma(x,x') \coloneqq d(x,x')$, where $d(\cdot,\cdot) $ denotes the Euclidean distance.
If $x,x'$ lie on different edges, then 
\begin{equation*}
	d_\Gamma(x,x') \coloneqq 
		\min \{ d(x, O_{j_1})+d(O_{j_1}, O_{j_2})+\cdots+d(O_{j_n}, x') \}, 
\end{equation*}
where the minimum is taken over all possible paths from $x$ to $x'$ through a sequence of vertices $O_{j_1}, \cdots, O_{j_n}$ connecting them.

We now construct a coordinate system in a neighborhood of the vertex $O_j$ on the graph $\Gamma$.
First, for each $j =1,\cdots,\abs{V}$ and each $k$ such that $I_k \sim O_j$, we define the direction $e_{j,k}$ as the unit vector pointing outward from the vertex $O_j$ along the edge $I_k$.
Using this, we parametrize the graph $\Gamma$ as follows:
let $x \in I_k$ with $O_i$ and $O_j$ as its two vertices and assume that $O_j$ is the closest vertex to $x$; that is, $d_\Gamma(x,O_j) < d_\Gamma(x,O_i) $. (If the distances $d_\Gamma(x,O_j) $ and $ d_\Gamma(x,O_i) $ are equal, choose the vertex with the smaller index.) Then define
\begin{equation*}
	x(\tilde{x}) \coloneqq \tilde{x} e_{j,k} + O_{j},
\end{equation*}
with $\tilde{x} \geq 0$.

\subsection{The narrow tube \texorpdfstring{$G_\epsilon$}{Lg}}\label{sec narrow tube}
We consider a domain $G_\epsilon$ consisting of narrow tubes $\Gamma^\epsilon_k$ surrounding the edges $I_k \subset \Gamma$, along with small neighborhoods $\mathcal{E}_j^\epsilon$ around the vertices $O_j \subset \Gamma$. See Figure \ref{fig:tube}. 
More precisely, for each $j =1,\cdots,\abs{V}$, let $\mathcal{E}_j^\epsilon \coloneqq B(O_j, \rho_j r_j(\epsilon))$ denote the Euclidean ball in $\mathbb{R}^d$ centered at $O_j$ with radius $\rho_j r_j(\epsilon)$, where $\rho_j >0$ and $r_j(\epsilon) > 0$ for all $\epsilon >0$, with $ \lim_{\epsilon \to 0} r_j(\epsilon) = 0$.
For each $k=1,\cdots,\abs{E}$, we consider the edge $I_k$ and define its orthogonal complement $I_k^\perp$ by
\begin{equation*}
	I_k^\perp \coloneqq \{ y \in \mathbb{R}^d : \langle y, e_{j,k} \rangle_{\mathbb{R}^d} = 0 \},
\end{equation*} 
where $e_{j,k}$ is the unit vector pointing outward from a vertex $O_j$ (with $I_k \sim O_j$) along the edge $I_k$.
This definition is well-posed, as the choice of $j$ does not affect $I_k^\perp$; the vectors $e_{j,k}$ corresponding to different endpoints of $I_k$ differ only by a sign.
Additionally, we define the ball $B_k(y,r)$ as
\begin{equation}
	B_k(y,r) \coloneqq \{ y' \in \mathbb{R}^d: y'-y \in I_k^\perp, \abs{y'-y} \leq r \}.
\end{equation}
Now let $\Gamma_k^\epsilon \coloneqq I_k + B_k(0,\lambda_k \epsilon)$, where $\lambda_k > 0$ is a constant.
We then define 
\begin{equation}
    G_\epsilon \coloneqq \bigcup_{j=1}^{\abs{V}} \mathcal{E}_j^\epsilon \cup \bigcup_{k=1}^{\abs{E}} \Gamma_k^\epsilon.
\end{equation}

\begin{Definition}
For every $j=1,\cdots,\abs{V}$
\begin{enumerate}
	\item A vertex $j$ is a small vertex if $\epsilon \ll r_j(\epsilon) \ll \epsilon^{(d-1)/d}$ and we say it belongs to $\mathfrak{S}$.
	\item A vertex $j$ is an intermediate vertex if $r_j(\epsilon) \sim \epsilon^{(d-1)/d}$ and we say it belongs to $\mathfrak{M}$.
	\item A vertex $j$ is a large vertex if $r_j(\epsilon) \gg \epsilon^{(d-1)/d}$ and we say it belongs to $ \mathfrak{L}$.
\end{enumerate}
Moreover, the ball corresponding to a small, intermediate or large vertex is called a small, intermediate or large ball, respectively.
\end{Definition}

\begin{remark}
\em{
	For the definition of small vertices/balls, one could also include the case $r_j(\epsilon) \lesssim \epsilon$. This situation has been addressed in \cite{SDE1}; where the case $r_j(\epsilon) \asymp \epsilon$ is treated in detail. For $ \rho_j r_j(\epsilon) \leq \max_{k:I_k \sim O_j} \lambda_k \epsilon$, the same arguments applies, although the neighborhood near each vertex no longer has a ball-like structure. 
	To avoid delving into these technicalities, we assume throughout that $\epsilon \ll r_j(\epsilon)$. However, it is not hard to see how to incorporate these cases.

	Regarding the definition of intermediate vertices/balls, assuming $r_j(\epsilon) \sim \epsilon^{(d-1)/d}$ doesn't imply any loss of generality. If the limit is differs by a constant factor, one can simply adjust $\rho_j$ accordingly.
	Indeed, this is precisely the motivation for introducing the parameter $\rho_j$.
	}
\end{remark}

We first define the projection $\Pi: G_\epsilon \to \Gamma$, which maps $z \in G_\epsilon$ to the point in $\Gamma$ closest to $z$. 
While this point may not be unique, we can choose it in a unique way. For example, we already have labels on the edges $I_k$. We then assign the projection to the point with the least edge index. 
To be more precise, for every $x \in \Gamma \setminus \bigcup_{j=1}^{\abs{V}} O_j$, let $i(x)$ be the index map such that $i(x)=k$ if and only if $x \in I_k$. 
For each $j=1,\cdots,\abs{V}$, set $i(O_j) = k$ if and only if $O_j \in I_k$ and $k < l$ for all $l \neq k$ such that $O_j \in I_l$.
Then, for any $z \in G_\epsilon$, we define $\Pi(z) = x $ if and only if $d(z,x) < d(z,x')$ for all $x' \in \Gamma$, or if $d(z,x) = d(z,x')$ for some $x' \in \Gamma$, but $i(x)<i(x')$ for all such $x' \in \Gamma$.
Note that the projection doesn't depend on $\epsilon$; in fact, one can define the projection in the whole $\mathbb{R}^d$.

We now introduce a local coordinate system on $G_\epsilon$. For every $z \in G_\epsilon$, assume $\Pi(z) = x  \in I_k$ and let $O_j$ be the closest vertex to $x$ with the convention that if $x$ is the midpoint of $I_k$, we choose the vertex with the smaller index.
Then we can write
\begin{equation}
	x = \tilde{x} e_{j,k} + O_{j}, 
\end{equation}
with $\tilde{x} \geq 0$.
Next, we construct an orthonormal basis $\{ e_{j,k},n^1_{j,k},\cdots,n^{d-1}_{j,k} \}$ of $\mathbb{R}^d$, where the vectors $n^1_{j,k},\cdots,n^{d-1}_{j,k}$ span the orthogonal complement of the edge direction.
Define $y \coloneqq z-x$, which lies in the span of $\{ n^1_{j,k},\cdots,n^{d-1}_{j,k} \}$
and write
\begin{equation}
	y = \tilde{y}_1 n_{j,k}^1+\cdots+\tilde{y}_{d-1} n_{j,k}^{d-1},
\end{equation}
with $\tilde{y} = (\tilde{y}_1 ,\cdots, \tilde{y}_{d-1} ) \in \mathbb{R}^{d-1}$.
In this way, each point $z = x+y $ is uniquely represented by the local coordinate tuple 
\begin{equation*}
(j, k, \tilde{x},\tilde{y}) \in \{1,\cdots,\abs{V}\} \times \{1,\cdots,\abs{E}\} \times [0,\infty) \times \mathbb{R}^{d-1}.
\end{equation*}
Conversely, given $(j, k, \tilde{x},\tilde{y})$, with $I_k \sim O_j$, $\tilde{x} \geq 0$ and $\tilde{y} \in \mathbb{R}^{d-1}$, one can reconstruct the point $z \in G_\epsilon$ via the formula
\begin{equation*}
	z = O_j + \tilde{x} e_{j,k} + \sum_{l=1}^{d-1} \tilde{y}_l n^{l}_{j,k}.
\end{equation*}
Thus, this local coordinate system provides a one-to-one correspondence between $z \in G_\epsilon$ and the tuple $(j, k, \tilde{x},\tilde{y})$, with $I_k \sim O_j$, $\tilde{x} \geq 0$ and $\tilde{y} \in \mathbb{R}^{d-1}$.

It is important to note that $G_\epsilon$ is not initially a smooth domain due to the presence of cusps where the tubular regions meet the vertex neighborhoods. 
To address this, we mollify the boundary near each cusp to ensure that $B(O_j, \rho_j r_j(\epsilon)) \subset G_\epsilon$ and that the modified domain is locally smooth near these singularities, uniformly in $\epsilon$, as $\epsilon \to 0$. See Figure \ref{fig:smooth dom} below.
More precisely, for each $j=1,\cdots,\abs{V}$ and each $k$ such that $I_k \sim O_j$, we consider the local coordinate $(j,k,\tilde{x},\tilde{y})$.
Let $\tilde{x}' \geq 0$ be such that the point $x'+y'$, corresponding to the coordinate $(j, k, \tilde{x}', \tilde{y}') $, lies on the boundary $ \partial B(O_j,\rho_j r_j(\epsilon))$ with $\abs{\tilde{y}'} = 2 \lambda_k \epsilon$.
Define  $\tilde{x}'' \coloneqq \rho_j r_j(\epsilon)+\frac{1}{2} \epsilon$.
We replace the portion of the boundary consisting of points $z \in \partial G_\epsilon$ that correspond (via the local coordinate system) to tuples $(j, k,\tilde{x},\tilde{y})$ with $\tilde{x} \in [\tilde{x}',\tilde{x}'']$ by a smooth transition region.
This modification ensures that the new domain still contains the ball $B(O_j, \rho_j r_j(\epsilon))$ and remains smooth near the original cusp, uniformly in $\epsilon$.
Specifically, let
\begin{equation*}
	z^*_{j,k} \coloneqq B(O_j, \rho_j r_j(\epsilon)) \cap I_k
\end{equation*}
and consider the change of variables
\begin{equation*}
z \mapsto \frac{z-z^*_{j,k}}{\epsilon}, \ \ \  z \in B(z^*_{j,k},10 \lambda_k \epsilon).
\end{equation*}
Under this transformation, the image of the boundary segment $\partial G_\epsilon \cap B(z^*_{j,k},10 \lambda_k \epsilon)$ converges smoothly as $\epsilon \to 0$.
(This is possible since, after rescaling, the distance between $\tilde{x}'$ and $\tilde{x}''$ exceeds $\frac{1}{2}$.)
As a result, we obtain a new smooth bounded domain with the desired properties.
Throughout this paper, we will work with this modified smooth domain and continue to denote it by $G_\epsilon$.
\begin{figure}[ht]
  \centering
  \includegraphics[width=0.65\linewidth]{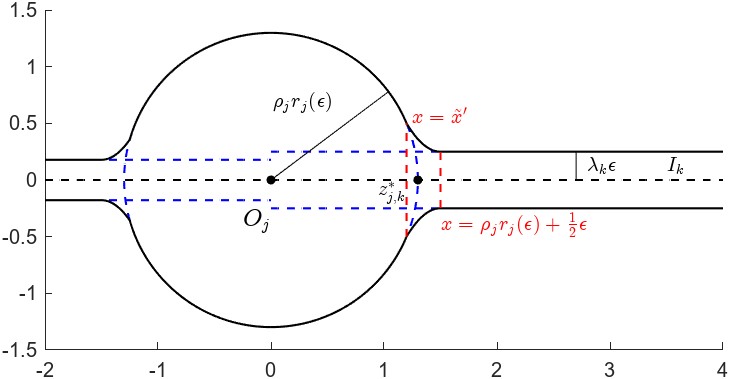}
  \caption{The mollified domain $G_\epsilon$ near the junction of $O_j$ and $I_k$ with $\rho_j r_j(\epsilon)=1.3$, $\lambda_k \epsilon = 0.25$, $\epsilon = 0.4$.}\label{fig:smooth dom}
\end{figure}

Finally, we denote $\Pi^\epsilon: G_\epsilon \to \Gamma$ by any continuous function satisfying the following properties
\begin{equation*}
	\Pi^\epsilon(z)
		= \Pi(z), \ \text{ if } \ d_\Gamma(\Pi(z), O_j) \geq \rho_j r_j(\epsilon) + 2\epsilon  \text{ for all } j = 1,\cdots,\abs{V},
\end{equation*}
and
\begin{equation*}
	d_\Gamma (\Pi^\epsilon(z), O_j) \leq \rho_j r_j(\epsilon) + 2\epsilon, \ \text{ if } \ d_\Gamma(\Pi(z),O_j) \leq \rho_j r_j(\epsilon) + 2\epsilon \text{ for some } j \in \{1,\cdots,\abs{V}\} .
\end{equation*}
For concreteness, one may consider the following explicit choice for $\Pi^\epsilon$
	\begin{equation*}
	\begin{aligned}
\Pi^\epsilon(z) \coloneqq 
	\begin{dcases}
	\displaystyle
		\Pi(z), \text{ if } \ d_\Gamma(\Pi(z), O_j) \geq \rho_j r_j(\epsilon) + 2\epsilon \text{ for all } j=1,\cdots,\abs{V} , \\
		O_j + \left( \frac{\rho_j r_j(\epsilon)+2\epsilon}{4 \epsilon} d_{\Gamma}(\Pi(z),O_j)-\frac{\rho^2_j r_j(\epsilon)^2-4 \epsilon^2}{4\epsilon} \right) e_{j,k}, \\      
		\ \ \ \ \ \ \ \text{ if } \ d_\Gamma(\Pi(z),O_j) \in [ \rho_j r_j(\epsilon) - 2\epsilon, \rho_j r_j(\epsilon) + 2\epsilon] \text{ for some } j \in \{1,\cdots,\abs{V}\} \\
		\ \ \ \ \ \ \ \ \ \ \ \ \ \ \text{ and } \Pi(z) \in I_k, \\
		O_j,  \ \   \text{ if } \ d_\Gamma(\Pi(z),O_j) \leq \rho_j r_j(\epsilon) - 2\epsilon \text{ for some } j \in \{1,\cdots,\abs{V}\} .
	\end{dcases}
\end{aligned}
\end{equation*}

\subsection{Diffusion processes on graphs}\label{sec diff graph prelim}
In \cite{SDE2}, a continuous Markov process $\bar{Z}(t)$ on the graph $\Gamma$ is introduced. Its generator $\bar{L}$ is given by suitable differential operators $\mathcal{L}_k$ within each edge of the graph and is subject to certain gluing conditions at the vertices $O_j$. 
To be more precise, for any point $x$ in the interior of the edge $I_k$, 
\begin{equation*}
    \bar{L} f(x) = \mathcal{L}_k f(x),
\end{equation*}
where $\mathcal{L}_k$ is a second order differential operator on $I_k$.
The domain of $D(\bar{L})$ consists of continuous functions on the graph that are twice continuously differentiable in the interior of each edge. 
Furthermore, for each vertex $O_j$, the limit 
\begin{equation*}
	\lim_{x \in I_k, x \to O_j} \mathcal{L}_k f(x)
\end{equation*}
exists and is independent of the choice of edge $I_k \sim O_j$.
	We therefore define
\begin{equation*}
	\bar{L}f(O_j) \coloneqq \lim_{x \in I_k, x \to O_j} \mathcal{L}_k f(x).
\end{equation*}
In addition, the following one sided limit exist
\begin{equation*}
	\frac{d f}{d x_k}(O_j) \coloneqq \lim_{x \in I_k, x \to O_j} \lim_{h \to 0^+} \frac{f(x+h e_{j,k} ) - f(x)}{h},
\end{equation*}
where $e_{j,k}$ is the unit vector pointing outward from $O_j$ along $I_k$, as defined in Section \ref{sec graph}.
Finally, at each vertex $O_j$ the following gluing condition is imposed
\begin{equation}\label{abs glu}
    \sum_{k:I_k \sim O_j}   p_{j,k} \frac{d f}{d x_k}(O_j) 
    = \alpha_j \bar{L}f(O_j),
\end{equation}
for some nonnegative constants $p_{j,k}, \alpha_j$ with $\alpha_j+\sum_{k:I_k \sim O_j}p_{j,k}>0$.

The following theorem is proved.
\begin{Theorem}\cite[Theorem 3.1]{SDE2}\label{op on graph}
    The operator $\bar{L}$ is the infinitesimal generator of a strongly continuous semigroup of linear operators on $C(\Gamma)$, corresponding to a conservative Markov process $(\bar{Z}(t),\bar{\mathbb{P}})$ on $\Gamma$ with continuous paths. 
\end{Theorem}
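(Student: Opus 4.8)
The plan is to prove Theorem~\ref{op on graph} (i.e.\ \cite[Theorem~3.1]{SDE2}) via the Hille--Yosida/Lumer--Phillips route combined with the martingale-problem formalism for processes on graphs, exactly in the spirit of Freidlin--Wentzell's treatment of gluing conditions. The first step is to realize $\bar{L}$ as a well-defined dissipative operator on $C(\Gamma)$: one checks that $D(\bar{L})$ — continuous functions on $\Gamma$, twice continuously differentiable on the interior of each $I_k$, with matching second-derivative limits at each $O_j$ and satisfying the gluing condition \eqref{abs glu} — is dense in $C(\Gamma)$ (polynomials on each edge, smoothly interpolated near vertices, suffice). Dissipativity follows from the positive maximum principle: if $f \in D(\bar L)$ attains its maximum at an interior edge point, $\mathcal{L}_k f \le 0$ there by the classical one-dimensional argument; if the max is attained at a vertex $O_j$, then $\frac{df}{dx_k}(O_j)\le 0$ for every $k\sim j$, so $\sum_k p_{j,k}\frac{df}{dx_k}(O_j)\le 0$, whence $\alpha_j \bar{L}f(O_j)\le 0$, and since $\alpha_j\ge 0$ this forces $\bar{L}f(O_j)\le 0$ whenever $\alpha_j>0$; when $\alpha_j=0$ some $p_{j,k}>0$ and one argues with the edge on which the interior maximum persists. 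This yields $\|(\lambda - \bar L)f\|_\infty \ge \lambda\|f\|_\infty$ for $\lambda>0$.

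The second and main step is to establish the range condition: for some (hence all) $\lambda>0$, $\mathrm{Range}(\lambda - \bar L) = C(\Gamma)$, i.e.\ given $g \in C(\Gamma)$ one must solve $\lambda f - \bar L f = g$ in $D(\bar L)$. The approach is to solve the ODE $\lambda f - \mathcal{L}_k f = g$ on each edge $I_k$ separately: the general solution on $I_k$ is a particular solution plus a two-parameter family spanned by the two linearly independent solutions of the homogeneous equation (one bounded/decaying from each endpoint). This leaves $2|E|$ free constants, determined by: (i) continuity of $f$ at each vertex — i.e.\ all edge-traces at $O_j$ agree with a common value $f(O_j)$ — which is $\sum_j(\deg O_j) - |E|$ conditions, err, more carefully $2|E| - |V|$ matching conditions across vertices reducing the free parameters to the $|V|$ common vertex values plus possibly boundary constants at degree-one vertices; and (ii) the $|V|$ gluing conditions \eqref{abs glu}, which upon substituting the solved edge functions become a linear system in the vertex values. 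One shows this linear system is invertible using the strict positivity built into $p_{j,k},\alpha_j$ and the sign of the derivatives of the fundamental solutions (a diagonal-dominance / $M$-matrix argument); the Wronskian-type identities for $\mathcal{L}_k$ guarantee the relevant coefficients have the correct signs. Regularity $f \in C^2$ on edge interiors is automatic from ODE theory, and the constructed $f$ satisfies all the compatibility conditions at vertices by construction, so $f \in D(\bar L)$.

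With density, dissipativity, and the range condition in hand, the Hille--Yosida theorem (in the Lumer--Phillips form for contraction semigroups) gives that $\bar L$ generates a strongly continuous contraction semigroup $\{T_t\}$ on $C(\Gamma)$. The third step is to upgrade this to a conservative Markov process with continuous paths: positivity of $T_t$ follows from the positive maximum principle (resolvent positivity, then $T_t = \lim (I - \tfrac{t}{n}\bar L)^{-n}$ preserves positivity), and conservativeness ($T_t \mathbf{1} = \mathbf{1}$) follows since $\mathbf 1 \in D(\bar L)$ with $\bar L \mathbf 1 = 0$ — note the gluing condition is satisfied trivially as both sides vanish. A positive, conservative, strongly continuous (Feller) semigroup on the compact metric space $\Gamma$ corresponds to a Markov process; path continuity is obtained either by checking that $\bar L$ satisfies the local property (a Dynkin-type criterion: $\bar L$ does not create jumps because $\mathcal{L}_k$ is a local diffusion operator and the gluing condition involves only first derivatives, i.e.\ no integral/jump term), or equivalently by verifying the well-posedness of the associated martingale problem and invoking the standard criterion that a solution with generator of diffusion type has a continuous modification. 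The main obstacle is the range condition in the second step — specifically, proving invertibility of the vertex linear system uniformly, which is where the structural assumptions $p_{j,k}\ge 0$, $\alpha_j\ge 0$, $\alpha_j + \sum_k p_{j,k}>0$ are essential and where the degenerate case $\alpha_j = 0$ (pure Kirchhoff/Neumann vertex) versus $\alpha_j>0$ (sticky vertex) must be handled with care; everything else is a fairly standard assembly of one-dimensional ODE theory and semigroup theory.
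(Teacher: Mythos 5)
The paper does not reprove this statement: it is cited verbatim from \cite[Theorem 3.1]{SDE2}, so there is no ``paper's own proof'' to compare against. That said, your Hille--Yosida/Lumer--Phillips outline is essentially the route taken in \cite{SDE2} (and in the standard literature on one-dimensional diffusions with Feller--Wentzell vertex conditions): positive maximum principle for dissipativity, explicit resolvent construction edge-by-edge followed by a linear system at the vertices for the range condition, then positivity and conservativeness ($\bar L\mathbf 1=0$, since the gluing condition \eqref{abs glu} is trivially satisfied by constants) and locality for path continuity.

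Two points deserve a bit more care than your sketch gives them. First, the degree-of-freedom count in step two is muddled as written; the clean version is: $2|E|$ free constants from the second-order ODE on each edge, matched against $\sum_j(d_j-1)=2|E|-|V|$ continuity constraints plus $|V|$ gluing constraints (where after substituting $\bar Lf(O_j)=\lambda f(O_j)-g(O_j)$ the gluing condition becomes a genuine Robin-type condition on $f$), giving a square $2|E|\times 2|E|$ system whose invertibility for $\lambda>0$ is the crux. Second, to conclude that $\bar L$ itself (and not merely its closure) generates the semigroup, you also need to verify that $\bar L$ is a closed operator and that $D(\bar L)$ is dense in $C(\Gamma)$; density is not automatic since $D(\bar L)$ is carved out by the vertex conditions, and your ``polynomials smoothly interpolated near vertices'' remark needs to be checked against those conditions (alternatively, one can deduce both from the surjectivity and injectivity of $\lambda-\bar L$ once the range condition is established, which is likely what \cite{SDE2} does). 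Neither gap is fatal, but both must be closed for a complete proof.
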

We denote the expectation associate with the process $\bar{Z}(t)$ starting from $x \in \Gamma$ by $\bar{\mathbb{E}}_{x}$.

\subsection{Diffusion processes on narrow tubes}\label{sec diff tube prelim}
We consider the diffusion process confined to live on the domain $G_\epsilon$,

\begin{equation}\label{diff eq}
    dZ^\epsilon(t) = \sqrt{2}dB(t) + \nu_\epsilon (Z^\epsilon(t)) d\phi^\epsilon(t), \ Z^\epsilon(0) = z,
\end{equation}
where $z \in G_\epsilon$.
Here $B(t)$ is a $d$-dimensional Brownian motion defined on a stochastic basis $(\Omega, \mathcal{F}, \{\mathcal{F}_t\}_{t \geq 0}, \mathbb{P})$, $\nu_\epsilon(z)$ is the unit inward normal vector at the point $z \in \partial G_\epsilon$, and $\phi^\epsilon(t)$ is the local time of the process $Z^\epsilon(t)$ on the boundary $\partial G_\epsilon$. 
That is, $\phi^\epsilon(t)$ is an adapted, continuous with probability $1$, non-decreasing process that increases only when $Z^\epsilon(t) \in \partial G$. 
Formally, one can write
\begin{equation*}
    \phi^\epsilon(t) = \int_0^t  \mathbbm{1}_{\{ Z^\epsilon(s) \in \partial G_\epsilon \} } d\phi^\epsilon(s).
\end{equation*}
It can be shown that the generator of $Z^\epsilon(t)$ is the Laplacian $\Delta$, with Neumann boundary conditions on $\partial G_\epsilon$.
We refer to this process as the scaled reflected Brownian motion on $G_\epsilon$.

When $r_j(\epsilon) \asymp \epsilon$, for all $j=1,\cdots,\abs{V}$, in \cite[Theorem 6.1]{SDE2}, the authors show the weak convergence of the non-Markov process $\Pi^\epsilon \circ Z^\epsilon$ to the process $\bar{Z}$, whose generator is $\bar{L}$ with $\mathcal{L}_k f = f''$, for all $k=1,\cdots,\abs{E}$, and
\begin{equation}
p_{j,k}=\frac{\lambda_k^{d-1}}{\sum_{l:I_l \sim O_j}\lambda_l^{d-1}}, \ \ \ \alpha_j=0 \ \ \text{ for all $j=1,\cdots,\abs{V}$, \ \  $k:I_k \sim O_j$},
\end{equation} 
in the space $C([0,\infty);\Gamma)$.

In \cite[Chapter 7]{SDE1}, the author conjectures the following more general convergence result for the cases of small balls, intermediate balls, large balls. 
In each case, the corresponding continuous Markov process $\bar{Z}(t)$ on $\Gamma$ is governed by a different generator $\bar{L}$.
More precisely, for points not on the vertices, $\bar{L}$ acts the same as before. 
However, the gluing conditions differ for each case.
\begin{equation}
    \label{smG}
    \sum_{k:I_k \sim O_j}  \frac{\lambda_k^{d-1}}{\sum_{l :I_l \sim O_j} \lambda_l^{d-1} } \frac{d f}{d x_k}(O_j) =0, \text{ if } j \in \mathfrak{S}
\end{equation}
\begin{equation} 
    \label{meG}
    \sum_{k:I_k \sim O_j}    \frac{\lambda_k^{d-1} }{\sum_{l :I_l \sim O_j} \lambda_l^{d-1} } \frac{d f}{d x_k}(O_j) 
    = \frac{\rho_j^{d} V_d}{\sum_{l :I_l \sim O_j} \lambda_l^{d-1} V_{d-1}} \frac{d^2 f}{d x^2}(O_j), \text{ if } j \in \mathfrak{M}
\end{equation}   
\begin{equation}
    \label{lG}
     \frac{d^2 f}{d x^2}(O_j) =0, \text{ if } j \in \mathfrak{L}. 
\end{equation}

\begin{Conj}\cite[Chapter 7]{SDE1}\label{prevconv}
	The process $\Pi^\epsilon \circ Z^\epsilon$ converges weakly to $\bar{Z}$ in $C([0,\infty);\Gamma)$, uniformly with respect to the initial condition.
\end{Conj}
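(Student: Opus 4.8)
The plan is to follow the martingale-problem strategy of \cite{SDE2}, decomposing the proof into three stages: \emph{(i)} tightness of the laws of $\Pi^\epsilon \circ Z^\epsilon$ in $C([0,\infty);\Gamma)$, \emph{(ii)} identification of every subsequential limit as a solution of the martingale problem for $\bar L$ with the appropriate gluing condition \eqref{smG}, \eqref{meG}, or \eqref{lG}, and \emph{(iii)} uniqueness of that martingale problem, which is already supplied by Theorem \ref{op on graph}. Stage (i) is relatively standard: since $Z^\epsilon$ is a reflected Brownian motion in $G_\epsilon$ with $G_\epsilon$ shrinking to $\Gamma$, the transverse fluctuations are $O(\epsilon)$ off the vertex neighborhoods and $O(\rho_j r_j(\epsilon))$ inside them, both $\to 0$, so $\Pi^\epsilon \circ Z^\epsilon$ stays within $o(1)$ of the genuinely one-dimensional motion; the Aldous/Kurtz tightness criterion applies once we control oscillations, which follows from the strong Markov property together with the a priori exit-time bounds established in Section \ref{sec apriori} (e.g.\ the time to traverse a fixed distance along an edge is $O(1)$ with overwhelming probability, and the time spent near a vertex before leaving $C_{\epsilon,j}(\delta)$ is controlled by \eqref{intro est}).

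Stage (ii) is the heart of the argument and where the exit estimates \eqref{intro est}, \eqref{intro place est}, and \eqref{intro mgf} are used. Fix $f \in D(\bar L)$ and a subsequential limit $\bar Z$; we must show $f(\bar Z(t)) - f(\bar Z(0)) - \int_0^t \bar L f(\bar Z(s))\,ds$ is a martingale. Away from the vertices this is immediate from It\^o's formula applied to $f \circ \Pi^\epsilon$ composed with $Z^\epsilon$, since there $\Pi^\epsilon = \Pi$ and the reflection term integrates against $\nabla(f\circ\Pi)$ which is tangent to $\Gamma$, hence orthogonal to $\nu_\epsilon$, producing no boundary contribution and yielding exactly $\mathcal L_k f = f''$ in the limit. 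The delicate part is the vertex contribution: one introduces, for each vertex $O_j$, a sequence of stopping times marking successive entries into $C_{\epsilon,j}(\rho_j r_j(\epsilon)+3\epsilon)$ and exits from $C_{\epsilon,j}(\delta)$, and uses the martingale property on the complement together with a careful accounting of what happens during each vertex excursion. The gluing condition emerges from balancing the "flux" $\sum_{k:I_k\sim O_j} p_{j,k}\,df/dx_k(O_j)$ picked up during edge traversals against the expected sojourn time near the vertex, which by \eqref{intro est} equals $\tfrac{\delta^2}{2} + \alpha_j(\epsilon)\delta$; the three regimes for $r_j(\epsilon)$ determine whether $\alpha_j(\epsilon)\delta$ is negligible against, comparable to, or dominant over $\delta^2/2$, yielding respectively \eqref{smG}, \eqref{meG}, and (after taking $\delta\to 0$) \eqref{lG}. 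The exit-place estimate \eqref{intro place est} is needed to show that the process re-enters each adjacent edge $I_k$ with the correct asymptotic weight $\lambda_k^{d-1}/\sum_l \lambda_l^{d-1}$, which is precisely what produces the coefficients $p_{j,k}$ in the flux term; and \eqref{intro mgf} guarantees that vertex excursions do not accumulate in a way that destroys the martingale structure in the limit (it ensures the relevant Laplace-transform remainder terms vanish).

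For the large-ball regime $j \in \mathfrak L$, an additional point is that $\alpha_j(\epsilon) \to \infty$, so the vertex is asymptotically absorbing; here one uses the exponential law \eqref{intro exp} together with a time-change argument to show that $\bar Z$, once it reaches $O_j$, satisfies $\tfrac{d^2f}{dx^2}(O_j)=0$ in the martingale-problem sense, i.e.\ the vertex becomes a trap on the $O(1)$ time scale and the gluing condition degenerates to \eqref{lG}. The final step passes to the limit in the approximate martingale identity: one writes $\mathbb{E}\big[(M^\epsilon_t - M^\epsilon_s)\,g(\Pi^\epsilon Z^\epsilon(s_1),\dots,\Pi^\epsilon Z^\epsilon(s_n))\big]$ for $s_1 \le \dots \le s_n \le s \le t$ and bounded continuous $g$, shows the error is $o(1)$ using the estimates above plus the uniform continuity \eqref{intro conti est}, \eqref{intro conti place est} (which make the vertex contributions independent of the precise entry point, so that the conditional expectations converge), and concludes that the limit identity holds; uniqueness from Theorem \ref{op on graph} then upgrades subsequential convergence to full weak convergence, and the uniformity in the initial condition follows because all the estimates \eqref{intro est}--\eqref{intro mgf} are uniform over $z$. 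The main obstacle is unquestionably the vertex analysis in Stage (ii): correctly bookkeeping the infinitely many short vertex excursions as $\epsilon \to 0$, and extracting the exact constant in the gluing condition rather than merely its order, which is why the sharp asymptotics \eqref{intro est} with the precise $\alpha_j(\epsilon)$—and not just a two-sided bound—are indispensable.
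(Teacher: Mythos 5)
Your proposal is correct and follows essentially the same architecture as the paper: tightness of the laws of $\Pi^\epsilon \circ Z^\epsilon$, identification of limits as solutions of the martingale problem for $\bar L$ via the exit time, exit place, and moment generating function estimates, and uniqueness from Theorem \ref{op on graph}. The only notable differences are in implementation: for tightness, the paper uses the submartingale criterion of \cite[Theorem 2.1]{SDE2} (constructing test functions $f_\eta^a$ with $f_\eta^a(\Pi^\epsilon(Z^\epsilon(t))) + A_\eta t$ a submartingale) rather than the Aldous/Kurtz criterion you invoke, and for identification, the paper works with the resolvent identity $\mathbb{E}_z\int_0^\infty e^{-\lambda t}(\lambda f - \bar L f)\,dt = f(z)$ rather than the time-$t$ martingale formulation; these are equivalent routes to the same conclusion. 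You also correctly identify the crucial structural point: the stopping time cycle decomposition (entries to $C_{\epsilon,j}(\rho_j r_j(\epsilon)+3\epsilon)$, exits from $C_{\epsilon,j}(\delta)$), the role of \eqref{intro est} and \eqref{intro place est} in producing the exact gluing coefficients, and the special role of \eqref{intro mgf} for summing over infinitely many vertex excursions in the large-ball regime — in the paper this last point is precisely why the sum $\sum_n \mathbb{E}_z e^{-\lambda \tau_{\mathfrak L,n}^{\epsilon,\delta}}$ is bounded by a geometric series of ratio $<1$ rather than being merely $O(1/\delta)$ as in the non-large case, which is needed because the vertex contribution there is only $o_{\epsilon,\delta}(1)$ rather than $o_{\epsilon,\delta}(\delta)$. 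One minor caveat: your heuristic that $\Pi^\epsilon \circ Z^\epsilon$ "stays within $o(1)$ of the genuinely one-dimensional motion" is not literally what drives tightness — the process can be trapped near a large vertex for a macroscopic time — but the submartingale estimate circumvents this since the test functions $f_\eta^a$ are constant on the vertex neighborhoods.
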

The primary goal of this paper is to prove this conjecture.
Before proceeding to the details, we briefly discuss the interpretation of the associated gluing conditions.

In the first case, \eqref{smG} corresponds to a classical boundary condition, if, for a given vertex $O_j$, there is only one edge $I_k$ such that $I_k \sim O_j$, then \eqref{smG} reduces to the usual Neumann boundary condition, meaning that the Brownian particle is instantaneously reflected upon reaching $O_j$.
If multiple edges $I_k$ are incident to $O_j$, then the resulting motion resembles a skew Brownian motion at the vertex.
More precisely, when the Brownian particle hits $O_j$, it leaves along edge $I_k$ with probability
\begin{equation}\label{p}
	p_{j,k} = \frac{\lambda_k^{d-1}}{\sum_{l :I_l \sim O_j} \lambda_l^{d-1} }.
\end{equation}

The second and third types of gluing conditions correspond to general boundary conditions for diffusion introduced by Feller \cite{Feller54} (see also \cite{Ito}).
Condition \eqref{meG} describes a situation where, upon reaching the vertex $O_j$, the Brownian particle spends a positive amount of time at the vertex before leaving along edge $I_k$ with probability $p_{j,k}$.
The parameter 
\begin{equation*}
	\alpha_j = \frac{\rho_j^{d} V_d}{\sum_{l :I_l \sim O_j} \lambda_l^{d-1} V_{d-1}}
\end{equation*}
quantifies the strength of the delay effect at the vertex $O_j$.
In fact, one can show that (see, e.g., \cite{sticky},  \cite{MPD})
\begin{equation*}
	\lim_{\delta \to 0} \frac{1}{\delta} \bar{\mathbb{E}}_{O_j} \bar{\sigma}^{\delta} = \alpha_j,
\end{equation*}
where $\bar{\sigma}^{\delta}$ is the first hitting time of the set $\{x \in \Gamma: d_\Gamma(x,O_j) = \delta \}$.
In contrast, condition \eqref{lG} represents an absorbing state: once the Brownian particle reaches $O_j$, it remains there forever.

Formally, conditions \eqref{smG} and \eqref{lG} can be viewed as limits of \eqref{meG} by letting $\alpha_j \to 0$ and $\alpha_j \to \infty$, respectively.

\section{Exit time and exit place estimates}\label{sec exit}

In this section, our goal is to prove the exit time and exit place estimates that will be used in the subsequent analysis.

First, we define 
\begin{equation*}
\begin{aligned}
    C_{\epsilon,j}(\delta) &\coloneqq  \{ z \in G_\epsilon : d_{\Gamma}(\Pi(z),O_j) = \delta \}, \\
    C_{\epsilon,j}^k(\delta) &\coloneqq  C_{\epsilon,j}(\delta) \cap \Pi^{-1}(I_k), \\
    G_{\epsilon,j}(\delta) &\coloneqq  \{ z \in G_\epsilon : d_{\Gamma}(\Pi(z),O_j) \geq \delta \}, \\
    B_{\epsilon,j}(\delta) &\coloneqq  \{ z \in G_\epsilon : d_{\Gamma}(\Pi(z),O_j) \leq \delta \}.
\end{aligned}
\end{equation*}
We also define
\begin{equation*}
\begin{aligned}
\sigma^{\epsilon,\delta} &\coloneqq \inf  \{t \geq  0 : Z^\epsilon(t) \in \bigcup_{j=1}^{\abs{V}} C_{\epsilon,j}(\delta) \}.
\end{aligned}
\end{equation*}

The following three lemmas are the main ingredients to prove the convergence of the diffusion process $\Pi^\epsilon \circ Z^\epsilon$. 

\begin{Lemma}\label{exit place est}
For every $j=1,\cdots,\abs{V}$ and $k$ such that $I_k \sim O_j$,we have
    \begin{equation*}
        \displaystyle
        \lim_{\delta \to 0}  \lim_{\epsilon \to 0} \sup_{z \in C_{\epsilon,j}(\rho_j r_j(\epsilon)+3\epsilon)} 
        \abs{ \mathbb{P}_z \left( Z^\epsilon(\sigma^{\epsilon,\delta}) \in C_{\epsilon,j}^k(\delta) \right)  - p_{j,k}} = 0,
    \end{equation*}
    where $p_{j,k}$ is the constant introduced in \eqref{p}.
\end{Lemma}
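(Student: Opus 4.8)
\emph{Sketch of the approach.} The plan is to reduce the assertion, via the uniform continuity estimate \eqref{intro conti place est}, to a one-dimensional flux balance along the necks emanating from $O_j$. Fix $\delta>0$, small relative to the edge lengths, and set $h_{\epsilon,\delta}(z)\coloneqq \mathbb{P}_z\left(Z^\epsilon(\sigma^{\epsilon,\delta})\in C_{\epsilon,j}^k(\delta)\right)$. Since a trajectory started near $O_j$ stays inside $\overline{B_{\epsilon,j}(\delta)}$ until time $\sigma^{\epsilon,\delta}$ (and then lies on $C_{\epsilon,j}(\delta)$), the strong Markov property and standard elliptic regularity identify $h_{\epsilon,\delta}$ with the bounded solution of $\Delta h_{\epsilon,\delta}=0$ in the interior of $B_{\epsilon,j}(\delta)$, $\partial_\nu h_{\epsilon,\delta}=0$ on $\partial G_\epsilon\cap B_{\epsilon,j}(\delta)$, and $h_{\epsilon,\delta}=\mathbbm{1}_{C_{\epsilon,j}^k(\delta)}$ on $C_{\epsilon,j}(\delta)$. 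By \eqref{intro conti place est}, the oscillation of $h_{\epsilon,\delta}$ over the entrance sphere $C_{\epsilon,j}(\rho_j r_j(\epsilon)+3\epsilon)$ tends to $0$ as $\epsilon\to0$; let $c_{\epsilon,\delta}$ denote its average over that sphere, so $h_{\epsilon,\delta}=c_{\epsilon,\delta}+o(1)$ uniformly there (limit $\epsilon\to0$, $\delta$ fixed). It therefore suffices to prove $c_{\epsilon,\delta}\to p_{j,k}$ as $\epsilon\to0$; the outer limit $\delta\to0$ is then immediate, the limit being $\delta$-independent.

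To identify $\lim_{\epsilon\to0}c_{\epsilon,\delta}$, note that for each edge $l$ with $I_l\sim O_j$ and each $s$ beyond the mollification collar -- in particular for $s\in[\rho_j r_j(\epsilon)+3\epsilon,\delta]$ -- the cross section $C_{\epsilon,j}^l(s)$ is the flat $(d-1)$-disk of radius $\lambda_l\epsilon$ at longitudinal coordinate $s$, of measure $\lambda_l^{d-1}\epsilon^{d-1}V_{d-1}$. Write $\bar h_l(s)$ for the average of $h_{\epsilon,\delta}$ over $C_{\epsilon,j}^l(s)$. Applying the divergence theorem to $\Delta h_{\epsilon,\delta}=0$ on a slab $\{s_1\le s\le s_2\}$ inside neck $l$, and using $\partial_\nu h_{\epsilon,\delta}=0$ on the lateral boundary of the tube (where the outward normal is orthogonal to the edge direction), shows that the longitudinal flux $F_l\coloneqq\lambda_l^{d-1}\epsilon^{d-1}V_{d-1}\,\bar h_l'(s)$ is independent of $s$ in that range; hence $\bar h_l$ is affine there. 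Its endpoint values are pinned down: $\bar h_l(\delta)=\mathbbm{1}_{\{l=k\}}$, because $C_{\epsilon,j}^l(\delta)$ is a face of $\partial B_{\epsilon,j}(\delta)$ on which $h_{\epsilon,\delta}$ is the indicator; and $\bar h_l(\rho_j r_j(\epsilon)+3\epsilon)=c_{\epsilon,\delta}+o(1)$, because $C_{\epsilon,j}(\rho_j r_j(\epsilon)+3\epsilon)$ is the disjoint union of the faces $C_{\epsilon,j}^l(\rho_j r_j(\epsilon)+3\epsilon)$, on all of which $h_{\epsilon,\delta}$ is close to the common value $c_{\epsilon,\delta}$. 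Therefore
\begin{equation*}
F_l=\lambda_l^{d-1}\epsilon^{d-1}V_{d-1}\,\frac{\mathbbm{1}_{\{l=k\}}-c_{\epsilon,\delta}+o(1)}{\delta-\rho_j r_j(\epsilon)-3\epsilon}.
\end{equation*}
On the other hand, the divergence theorem applied on $B_{\epsilon,j}(\rho_j r_j(\epsilon)+3\epsilon)$ -- whose boundary in $G_\epsilon$ consists of the reflecting part $\partial G_\epsilon\cap B_{\epsilon,j}(\rho_j r_j(\epsilon)+3\epsilon)$ together with the disks $C_{\epsilon,j}^l(\rho_j r_j(\epsilon)+3\epsilon)$, $I_l\sim O_j$ -- yields the conservation law $\sum_{l:I_l\sim O_j}F_l=0$. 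Since $\rho_j r_j(\epsilon)+3\epsilon\to0$, combining the two displays gives $\sum_{l:I_l\sim O_j}\lambda_l^{d-1}\left(\mathbbm{1}_{\{l=k\}}-c_{\epsilon,\delta}\right)\to0$, i.e. $c_{\epsilon,\delta}\to\lambda_k^{d-1}\big/\sum_{l:I_l\sim O_j}\lambda_l^{d-1}=p_{j,k}$. Together with the reduction above this gives $\lim_{\epsilon\to0}\sup_{z\in C_{\epsilon,j}(\rho_j r_j(\epsilon)+3\epsilon)}\abs{h_{\epsilon,\delta}(z)-p_{j,k}}=0$ for every fixed $\delta$, which proves the lemma.

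The entire difficulty is concentrated in the input \eqref{intro conti place est}: one must control, uniformly as $\epsilon\to0$, the sensitivity of the exit-place probability to the starting point on the entrance sphere. This is exactly the asymptotic nonsmoothness at the ball--neck junction flagged in the introduction, and it is established separately through the quasi-stationary/Dobrushin analysis of Sections \ref{sec apriori}--\ref{sec a prior est}, which itself rests on a priori exit-time and exit-place bounds. Once \eqref{intro conti place est} is available, the remaining steps above -- exactness of the one-dimensional reduction for cross-sectional averages in a straight cylinder, and conservation of the harmonic flux across the ball -- are elementary consequences of harmonicity together with the reflecting boundary condition, and are used only to leading order; in particular no quantitative elliptic estimate on the degenerating domain $B_{\epsilon,j}(\delta)$ enters this lemma.
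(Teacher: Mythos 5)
Your argument is correct, but it takes a genuinely different route from the paper's. Where the paper identifies $h_{\epsilon,\delta}(z)=\mathbb{P}_z(Z^\epsilon(\sigma^{\epsilon,\delta})\in C_{\epsilon,j}^k(\delta))$ via a Khasminskii-type invariant-measure/occupation-time identity over the \emph{whole} tube (integrating against the invariant measure $\mu_\epsilon$ of the cycle chain $Z^\epsilon(\tau_n^{\epsilon,\delta})$, and then comparing occupation times of thin slabs $A^k_{j,\epsilon,\delta}$), you instead solve the local mixed Dirichlet--Neumann problem for $h_{\epsilon,\delta}$ on $B_{\epsilon,j}(\delta)$ by exactly the cross-sectional averaging plus flux-conservation device that the paper uses for the \emph{exit time} in the proof of Lemma~\ref{small exit est 2}, and you combine it with Lemma~\ref{exit place conti} in the same closing role. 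Both routes are legitimate, and yours has two advantages: it is purely local to the vertex neighborhood, so none of the $o_{\epsilon,\delta}(\delta^2)$/$O_{\epsilon,\delta}(\delta^3)$ cross-vertex errors of the paper's computation appear, and it actually gives the stronger statement $\lim_{\epsilon\to0}\sup_{z\in C_{\epsilon,j}(\rho_j r_j(\epsilon)+3\epsilon)}|h_{\epsilon,\delta}(z)-p_{j,k}|=0$ for each fixed small $\delta$, with the outer $\delta\to0$ limit vacuous. What the paper's route buys, and what your proof should not be presented as replacing, is that the intermediate sandwich~\eqref{exit place eq 3} and hence the lower bound~\eqref{exit place sup lb} are obtained \emph{without} invoking Lemma~\ref{exit place conti}; this is essential because~\eqref{exit place sup lb} is an input (through Lemma~\ref{exit place apriori} and the quasi-stationary analysis) to the eventual proof of Lemma~\ref{exit place conti} itself. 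Your flux identity $\sum_{l:I_l\sim O_j}\lambda_l^{d-1}\bar h_l(\rho_j r_j(\epsilon)+3\epsilon)=\lambda_k^{d-1}$ holds exactly and together with the maximum principle does recover $\sup_z h_{\epsilon,\delta}(z)\ge p_{j,k}$ without continuity input, so your method could also supply~\eqref{exit place sup lb}; it would be worth making that explicit to preserve the paper's logical chain. Two small technical points you should acknowledge, both handled in the paper's Section~\ref{sec proof lem}: the interchange of $d^2/ds^2$ with the cross-sectional average requires staying away from the Dirichlet/Neumann junction at $s=\delta$ (so work on the open interval and take $\bar h_l(\delta^-)=\mathbbm{1}_{\{l=k\}}$ by continuity), and the affine reduction for $\bar h_l$ is valid only for $s$ beyond the mollification collar, which your choice $s\ge\rho_j r_j(\epsilon)+3\epsilon>\rho_j r_j(\epsilon)+\tfrac12\epsilon$ indeed guarantees.
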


\begin{Lemma}\label{small exit est 2}
    For every $j=1,\cdots,\abs{V}$, 
    \begin{equation}
        \displaystyle
           \lim_{\epsilon \to 0} \sup_{z \in C_{\epsilon,j}(\rho_j r_j(\epsilon)+3\epsilon)} \abs{\mathbb{E}_z \sigma^{\epsilon,\delta} \left( \alpha_j(\epsilon) \delta + \frac{\delta^2}{2} \right)^{-1} -1 }=0,
    \end{equation} 
    where 
    \begin{equation}
\alpha_j(\epsilon) = 
 \frac{\rho_j^d r_j(\epsilon)^d V_{d}}{\sum_{k:I_k \sim O_j} \lambda_k^{d-1} \epsilon^{d-1} V_{d-1}}
.
\end{equation}
\end{Lemma}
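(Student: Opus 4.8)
\textbf{Plan of proof for Lemma \ref{small exit est 2}.} The strategy is to reduce the problem to a one-dimensional ODE on the neck region via the uniform continuity estimate \eqref{intro conti est}, which I assume available (it is proved later in Section \ref{sec a prior est}). Write $v_{\epsilon,\delta}(z) = \mathbb{E}_z \sigma^{\epsilon,\delta}$. By standard PDE theory for reflected diffusions, $v_{\epsilon,\delta}$ solves the Neumann problem $\Delta v_{\epsilon,\delta} = -1$ in the interior of $B_{\epsilon,j}(\delta)$, $\partial_\nu v_{\epsilon,\delta} = 0$ on $\partial G_\epsilon \cap B_{\epsilon,j}(\delta)$, with Dirichlet data $v_{\epsilon,\delta} = 0$ on $C_{\epsilon,j}(\delta)$. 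The key structural fact is that $B_{\epsilon,j}(\delta)$ decomposes (up to the mollification) into the large/intermediate/small ball $\mathcal{E}_j^\epsilon = B(O_j,\rho_j r_j(\epsilon))$ together with the $\abs{\{k: I_k\sim O_j\}}$ straight neck pieces $\Gamma_k^\epsilon \cap B_{\epsilon,j}(\delta)$, each a cylinder of cross-sectional radius $\lambda_k\epsilon$ and length $\delta - \rho_j r_j(\epsilon)$ (to leading order).

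First I would handle the neck. On each cylindrical neck $\Gamma_k^\epsilon$, introduce the axial coordinate $\tilde x \in [\rho_j r_j(\epsilon), \delta]$ and compare $v_{\epsilon,\delta}$ with the one-dimensional profile $w_k(\tilde x)$ solving $w_k'' = -1$ with $w_k(\delta)=0$ and $w_k'(\rho_j r_j(\epsilon)) $ matched by a flux (compatibility) condition: integrating $\Delta v_{\epsilon,\delta} = -1$ over $B_{\epsilon,j}(\delta)$ and using the Neumann condition gives $\sum_{k:I_k\sim O_j} \lambda_k^{d-1}\epsilon^{d-1} V_{d-1}\, (\text{inward flux at }\tilde x = \rho_j r_j(\epsilon)) = \abs{B_{\epsilon,j}(\delta)} = \rho_j^d r_j(\epsilon)^d V_d + \sum_k \lambda_k^{d-1}\epsilon^{d-1}V_{d-1}(\delta - \rho_j r_j(\epsilon)) + o(\cdots)$. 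The difference $v_{\epsilon,\delta} - w_k$ restricted to a neck solves $\Delta(v_{\epsilon,\delta}-w_k)=0$ with zero Neumann data on the lateral boundary, vanishing on $C_{\epsilon,j}^k(\delta)$, and with boundary data on the cross-section $\{\tilde x = \rho_j r_j(\epsilon) + 3\epsilon\}$ controlled by the oscillation of $v_{\epsilon,\delta}$ there — which by \eqref{intro conti est} is $o((r_j(\epsilon)^d/\epsilon^{d-1})\vee 1)$, hence negligible against the target scale $\alpha_j(\epsilon)\delta + \delta^2/2 \asymp (r_j(\epsilon)^d/\epsilon^{d-1})\vee 1$ (for the intermediate/large regimes) or against $\delta^2/2$ (for the small regime, where $\alpha_j(\epsilon) \to 0$). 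A Poincaré/spectral-gap argument in the transverse direction on the cylinder then shows $v_{\epsilon,\delta}$ is, uniformly, well-approximated by $w_k$ on the neck.

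Next I would evaluate $w_k$: solving $w_k'' = -1$, $w_k(\delta) = 0$ gives $w_k(\tilde x) = \tfrac12(\delta^2 - \tilde x^2) + c(\delta - \tilde x)$ where $c = w_k'(\delta) + \delta$ is fixed by the flux condition, yielding (after using $\rho_j r_j(\epsilon)\ll \delta$ and $\epsilon \ll \delta$) $c \sim \alpha_j(\epsilon) + \rho_j r_j(\epsilon)$, so that at $\tilde x = \rho_j r_j(\epsilon) + 3\epsilon$ we get $w_k \sim \tfrac{\delta^2}{2} + \alpha_j(\epsilon)\delta$. The remaining point is to show that the value of $v_{\epsilon,\delta}$ inside the ball $\mathcal{E}_j^\epsilon$ does not enter: because the ball is small (diameter $\rho_j r_j(\epsilon) \to 0$ and, in all regimes, $\rho_j r_j(\epsilon) \ll \delta$), the maximum principle bounds $\operatorname{osc}_{\mathcal{E}_j^\epsilon} v_{\epsilon,\delta}$ by $\operatorname{osc}_{C_{\epsilon,j}(\rho_j r_j(\epsilon)+3\epsilon)} v_{\epsilon,\delta}$ plus a term of order $r_j(\epsilon)^2 \ll \delta^2$, again handled by \eqref{intro conti est}. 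Combining, $\sup_{z\in C_{\epsilon,j}(\rho_j r_j(\epsilon)+3\epsilon)} \abs{v_{\epsilon,\delta}(z) - (\alpha_j(\epsilon)\delta + \delta^2/2)} = o((\alpha_j(\epsilon)\delta+\delta^2/2))$, which is the claim.

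\textbf{Main obstacle.} The delicate part is not the one-dimensional ODE but the \emph{transition from the neck to the ball}: the domain is non-smooth at the junction on a scale intermediate between $\epsilon$ and $r_j(\epsilon)$, so the transverse Poincaré argument on the cylinder must be carried out carefully where the cross-section is changing (the mollified region of axial length $O(\epsilon)$), and one must verify that the flux through $C_{\epsilon,j}(\rho_j r_j(\epsilon)+3\epsilon)$ is genuinely $\sim \abs{B_{\epsilon,j}(\delta)}/(\sum_k \lambda_k^{d-1}\epsilon^{d-1}V_{d-1})$ uniformly in $\epsilon$. This is exactly where the uniform continuity estimate \eqref{intro conti est} is indispensable: without it the boundary data for the harmonic correction $v_{\epsilon,\delta} - w_k$ on the junction cross-section could be of the same order as the answer. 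I expect the bulk of the work to be in propagating \eqref{intro conti est} through the maximum principle on the (geometrically awkward) region $B_{\epsilon,j}(\rho_j r_j(\epsilon) + 3\epsilon)$ and in making the compatibility/flux computation uniform.
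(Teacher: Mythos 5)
Your plan follows the same broad strategy as the paper: characterize $v_{\epsilon,\delta}(z)=\mathbb{E}_z \sigma^{\epsilon,\delta}$ via a mixed Neumann--Dirichlet problem, reduce to a one-dimensional ODE on each cylindrical neck through a compatibility (flux) condition, and invoke the uniform continuity estimate (Lemma \ref{exit est conti}) to pin down the boundary constant. One simplification the paper uses that you do not: rather than comparing $v_{\epsilon,\delta}$ to an ad hoc one-dimensional profile $w_k$ and bounding the error via a transverse Poincar\'e/spectral-gap argument, it observes that on each cylinder the cross-sectional average $v^\wedge_{\epsilon,\delta}(x)=\abs{B^{d-1}(0,\lambda_k\epsilon)}^{-1}\int_{B^{d-1}(0,\lambda_k\epsilon)} v_{\epsilon,\delta}(x,y)\,dy$ solves the one-dimensional ODE \emph{exactly} (the lateral Neumann condition annihilates the transverse Laplacian upon integration over the cross-section), so the pointwise error is controlled solely by the endpoint oscillation from Lemma \ref{exit est conti} and no spectral-gap estimate is needed.

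Two points in your plan need repair. First, you integrate $\Delta v_{\epsilon,\delta}=-1$ over all of $B_{\epsilon,j}(\delta)$ but then equate the result to the flux through the junction cross-section $\{\tilde x=\rho_j r_j(\epsilon)\}$; the divergence theorem applied to $B_{\epsilon,j}(\delta)$ produces the flux through $C_{\epsilon,j}(\delta)$, not through the junction. The paper instead integrates over $\Omega_{1,\epsilon}\cup\Omega_{3,\epsilon,\delta}$ (the ball together with the mollified transition pieces), whose non-Neumann boundary is exactly $\bigcup_k C^k_{\epsilon,j}(\rho_j r_j(\epsilon)+3\epsilon)$. Second, and more substantively, the compatibility relation you derive is a \emph{sum} over $k$; it does not by itself determine the individual fluxes $\kappa^k_{\epsilon,\delta}$, and in writing $\sum_k \lambda_k^{d-1}\epsilon^{d-1}V_{d-1}\,(\text{inward flux})$ with the flux pulled out of the sum you are implicitly assuming the flux is the same on every neck. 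That is not automatic and needs justification. The paper closes this gap explicitly: the ODE solution value at the junction is an affine function of $\kappa^k_{\epsilon,\delta}$, namely $v^\wedge_{\epsilon,\delta}(\delta-\rho r(\epsilon)-3\epsilon)=\tfrac12(\delta-\rho r(\epsilon)-3\epsilon)^2-\kappa^k_{\epsilon,\delta}(\delta-\rho r(\epsilon)-3\epsilon)$, so the uniform continuity estimate applied \emph{across different necks} forces $\abs{\kappa^k_{\epsilon,\delta}-\kappa^l_{\epsilon,\delta}}\to 0$, which combined with the summed flux identity yields $-\kappa^k_{\epsilon,\delta}=\alpha_j(\epsilon)+o(1)$ for each $k$ separately. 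You already cite the right tool (Lemma \ref{exit est conti} controls oscillation on all of $C_{\epsilon,j}(\rho_j r_j(\epsilon)+3\epsilon)$, not just within one neck), but the deduction of equal fluxes from it must be made explicit before the evaluation of $w_k$ is valid. With these two repairs your argument and the paper's coincide.
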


\begin{Lemma}\label{inter exit est 1}
    For every $j=1,\cdots,\abs{V}$, we have
    \begin{equation*}
         \lim_{\epsilon \to 0} \sup_{z \in B_{\epsilon,j}(\delta)}  \mathbb{E}_z  (\sigma^{\epsilon,\delta})^2  \left( \left( \frac{r_j(\epsilon)^d}{\epsilon^{d-1}} \delta \right)^2 +\delta^4 \right)^{-1} < \infty.
    \end{equation*}
\end{Lemma}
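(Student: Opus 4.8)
The plan is to reduce the estimate for the second moment to the already-understood estimate for the first moment via the standard recursive formula for moments of exit times. Let $v_{\epsilon,\delta}(z) \coloneqq \mathbb{E}_z \sigma^{\epsilon,\delta}$ and $w_{\epsilon,\delta}(z) \coloneqq \mathbb{E}_z (\sigma^{\epsilon,\delta})^2$. By the Dynkin-type identity for moments of the first exit time of reflected Brownian motion from $B_{\epsilon,j}(\delta)$, the function $w_{\epsilon,\delta}$ solves $\Delta w_{\epsilon,\delta} = -2 v_{\epsilon,\delta}$ in $B_{\epsilon,j}(\delta)$, with Neumann boundary condition on the reflecting part $\partial G_\epsilon \cap B_{\epsilon,j}(\delta)$ and $w_{\epsilon,\delta} = 0$ on $C_{\epsilon,j}(\delta)$; equivalently, probabilistically,
\begin{equation*}
    w_{\epsilon,\delta}(z) = \mathbb{E}_z \int_0^{\sigma^{\epsilon,\delta}} 2 v_{\epsilon,\delta}(Z^\epsilon(s))\, ds
    = 2\,\mathbb{E}_z \int_0^{\sigma^{\epsilon,\delta}} \mathbb{E}_{Z^\epsilon(s)} \sigma^{\epsilon,\delta}\, ds.
\end{equation*}
So the first step is to record this identity (it follows from the strong Markov property, or from Itô's formula applied to $w_{\epsilon,\delta}$ together with the PDE characterization; no regularity subtlety beyond what is already used for $v_{\epsilon,\delta}$).

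The second step is to bound the integrand uniformly. I would first establish, as an a priori input (this is the kind of bound developed in Section \ref{sec apriori}), that
\begin{equation*}
    \sup_{z \in B_{\epsilon,j}(\delta)} v_{\epsilon,\delta}(z) = \sup_{z \in B_{\epsilon,j}(\delta)} \mathbb{E}_z \sigma^{\epsilon,\delta} \lesssim \frac{r_j(\epsilon)^d}{\epsilon^{d-1}} \delta + \delta^2,
\end{equation*}
i.e. of the same order as the sharp value $\alpha_j(\epsilon)\delta + \delta^2/2$ appearing in Lemma \ref{small exit est 2} (indeed Lemma \ref{small exit est 2} gives this on the slice $C_{\epsilon,j}(\rho_j r_j(\epsilon)+3\epsilon)$; a crude a priori version valid on all of $B_{\epsilon,j}(\delta)$, with the $\delta^2$ term dominating inside the ball region and the $r_j(\epsilon)^d/\epsilon^{d-1}\cdot\delta$ term governing the neck, is what is needed here). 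Plugging this into the displayed identity,
\begin{equation*}
    w_{\epsilon,\delta}(z) \leq 2 \left( \sup_{z' \in B_{\epsilon,j}(\delta)} v_{\epsilon,\delta}(z') \right) \mathbb{E}_z \sigma^{\epsilon,\delta}
    \lesssim \left( \frac{r_j(\epsilon)^d}{\epsilon^{d-1}} \delta + \delta^2 \right)^2
    \asymp \left( \frac{r_j(\epsilon)^d}{\epsilon^{d-1}} \delta \right)^2 + \delta^4,
\end{equation*}
which is exactly the claimed bound, uniformly in $z \in B_{\epsilon,j}(\delta)$, and in particular $\limsup_{\epsilon \to 0}$ of the normalized quantity is finite.

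I expect the main obstacle to be not this soft argument but the a priori first-moment bound $\sup_{z \in B_{\epsilon,j}(\delta)} \mathbb{E}_z \sigma^{\epsilon,\delta} \lesssim r_j(\epsilon)^d \epsilon^{-(d-1)}\delta + \delta^2$ holding uniformly over the \emph{whole} ball region $B_{\epsilon,j}(\delta)$ — including starting points deep inside the shrinking ball $\mathcal{E}_j^\epsilon$, where the geometry is most singular and where the compatibility/continuity-in-$\epsilon$ issues flagged in the introduction bite hardest. The natural route is a comparison (supersolution) argument: construct an explicit function $\psi_{\epsilon,\delta}$ on $B_{\epsilon,j}(\delta)$ with $\Delta \psi_{\epsilon,\delta} \leq -1$, $\partial_\nu \psi_{\epsilon,\delta} \geq 0$ on the reflecting boundary, and $\psi_{\epsilon,\delta} \geq 0$ on $C_{\epsilon,j}(\delta)$, built by patching a radial profile $\sim (\rho_j r_j(\epsilon))^2 - |z-O_j|^2$ inside the ball with a one-dimensional profile $\sim c_{\epsilon}(\delta - d_\Gamma(\Pi(z),O_j))$ along each neck, choosing the neck slope $c_\epsilon$ so that the fluxes match at the ball–neck junction (this is where the factor $r_j(\epsilon)^d/\epsilon^{d-1}$ enters, from balancing the ball volume $\asymp r_j(\epsilon)^d$ against the total neck cross-section $\asymp \epsilon^{d-1}$); then $\mathbb{E}_z \sigma^{\epsilon,\delta} \leq \psi_{\epsilon,\delta}(z)$ by the maximum principle / optional stopping. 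Verifying the supersolution inequalities across the mollified junction region, uniformly in $\epsilon$, is the delicate point, but it is a one-sided (inequality) statement and hence considerably softer than the two-sided asymptotics of Lemma \ref{small exit est 2}; alternatively, one may simply cite the a priori estimates of Section \ref{sec apriori} once they are in place and treat Lemma \ref{inter exit est 1} as an immediate corollary of the moment identity above.
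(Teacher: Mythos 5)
Your proposal is correct and takes essentially the same approach as the paper: the paper invokes Theorem \ref{SMFPT Main} (which gives $\mathbb{E}_z(\sigma^{\epsilon,\delta})^2 \leq 2 \bigl(\sup_z \mathbb{E}_z \sigma^{\epsilon,\delta}\bigr)^2$, proved there via the Green-function representation $v_2 = \int 2vG$, equivalent to your probabilistic identity) together with the a priori first-moment bound of Lemma \ref{small exit est 1.5}. The lengthy speculation in your final paragraph about a supersolution construction for the first-moment bound is not needed for this lemma, since Lemma \ref{small exit est 1.5} is stated as a standing hypothesis here and proved separately in Section \ref{sec apriori}.
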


\begin{remark}
\em{
	In the regime where $r_j(\epsilon)^d \gg \epsilon^{d-1}$, Lemma \ref{small exit est 2} provides an asymptotic estimate for the narrow escape time, consistent with the prediction in the literature (see, e.g., \cite{NET}), while Lemma \ref{exit place est} offers a corresponding estimate for the exit place.

	We will return to a more detailed discussion of the exit time in Section \ref{sec NET bottle neck}.
	}
\end{remark}

The following result on the exit time is crucial for our analysis.
\begin{Lemma}\label{small exit est 1.5}
    For every $j=1,\cdots,\abs{V}$, we have
    \begin{equation*}
        \displaystyle
        \lim_{\epsilon \to 0} \sup_{z \in B_{\epsilon,j}(\delta)} \mathbb{E}_z \sigma^{\epsilon,\delta} \left( \frac{r_j(\epsilon)^d }{ \epsilon^{d-1}} \delta + \delta^2 \right)^{-1} < \infty .
    \end{equation*}
\end{Lemma}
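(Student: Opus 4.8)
\textbf{Proof proposal for Lemma \ref{small exit est 1.5}.}

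The plan is to obtain the upper bound on $\mathbb{E}_z \sigma^{\epsilon,\delta}$ for $z \in B_{\epsilon,j}(\delta)$ by decomposing the domain $B_{\epsilon,j}(\delta)$ into the ball region $\mathcal{E}_j^\epsilon = B(O_j,\rho_j r_j(\epsilon))$ together with the (at most $\abs{V}$-bounded number of) neck segments $\Gamma_k^\epsilon \cap B_{\epsilon,j}(\delta)$ attached to it, and estimating the time spent in each piece separately via a supermartingale/comparison argument. Write $v_{\epsilon,\delta}(z) \coloneqq \mathbb{E}_z \sigma^{\epsilon,\delta}$, which solves $\Delta v_{\epsilon,\delta} = -1$ on $B_{\epsilon,j}(\delta)$ with Neumann conditions on $\partial G_\epsilon \cap B_{\epsilon,j}(\delta)$ and $v_{\epsilon,\delta} = 0$ on $C_{\epsilon,j}(\delta)$. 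The target rate is $R_j(\epsilon,\delta) \coloneqq \frac{r_j(\epsilon)^d}{\epsilon^{d-1}}\delta + \delta^2$, which is exactly the scale appearing in Lemma \ref{small exit est 2} and in the a priori bound of Lemma \ref{inter exit est 1}; in fact Lemma \ref{inter exit est 1} already gives $\mathbb{E}_z (\sigma^{\epsilon,\delta})^2 \lesssim R_j(\epsilon,\delta)^2$, so by Cauchy--Schwarz $\mathbb{E}_z \sigma^{\epsilon,\delta} \lesssim R_j(\epsilon,\delta)$ follows immediately. So the real content is to make sure this deduction is legitimate, i.e.\ that Lemma \ref{inter exit est 1} is available and proved independently; the natural route is to prove Lemma \ref{small exit est 1.5} first, directly, and then use it inside the proof of Lemma \ref{inter exit est 1}.

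For the direct argument I would build an explicit supersolution $w(z)$ on $B_{\epsilon,j}(\delta)$ satisfying $\Delta w \le -1$, $\partial_{\nu_\epsilon} w \ge 0$ on the reflecting part of $\partial G_\epsilon$, and $w \ge 0$ on $C_{\epsilon,j}(\delta)$; then Dynkin's formula (applied to $w(Z^\epsilon(t \wedge \sigma^{\epsilon,\delta}))$, using that the local time term has a favorable sign by the Neumann inequality) gives $\mathbb{E}_z \sigma^{\epsilon,\delta} \le w(z) \le \sup_{B_{\epsilon,j}(\delta)} w$. The supersolution is assembled in local coordinates $(\tilde{x},\tilde{y})$: on each neck $\tilde{x} \in [\rho_j r_j(\epsilon) + O(\epsilon), \delta]$ one takes $w$ of the one-dimensional form $a(\delta - \tilde{x}) + \tfrac12(\delta^2 - \tilde{x}^2)$ type (contributing $\lesssim \delta^2$ from the quadratic part once one reaches the neck) plus a linear term carrying the ``cost'' $a \asymp \alpha_j(\epsilon) = \frac{\rho_j^d r_j(\epsilon)^d V_d}{\sum_k \lambda_k^{d-1}\epsilon^{d-1}V_{d-1}} \asymp \frac{r_j(\epsilon)^d}{\epsilon^{d-1}}$ of escaping the ball; inside the ball $B(O_j,\rho_j r_j(\epsilon))$ one uses a radial corrector of the form $c - \tfrac{1}{2d}\abs{z - O_j}^2$ which is $\lesssim r_j(\epsilon)^2 \ll \frac{r_j(\epsilon)^d}{\epsilon^{d-1}}\delta$ in the regime $\epsilon \ll r_j(\epsilon)$, and glue the two pieces across the mollified cusp region $B(z^*_{j,k},10\lambda_k\epsilon)$ using the uniform-in-$\epsilon$ smoothness of the domain there. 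One must check that the gluing can be done so that $\Delta w \le -1$ survives (allowing an $O(1)$ additive slack absorbed into the constant) and that the normal-derivative sign condition holds on the mollified boundary; the rescaling $z \mapsto (z - z^*_{j,k})/\epsilon$ used in Section \ref{sec narrow tube} makes the transition geometry $\epsilon$-independent, so the corrector on that annulus contributes only $O(\epsilon^2)$, which is negligible.

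I expect the main obstacle to be the construction and verification of the supersolution across the cusp/mollification region: away from it both the one-dimensional neck ansatz and the radial ball corrector are routine, but patching them into a single $C^1$ (or at least viscosity) supersolution with the correct Neumann sign on the curved, $\epsilon$-dependent boundary requires care, and is precisely where the ``asymptotic nonsmoothness'' flagged in the introduction bites. An alternative that sidesteps an explicit global supersolution is a two-stage stopping argument: let $\tau_1$ be the first hitting time of $C_{\epsilon,j}(\rho_j r_j(\epsilon) + 3\epsilon)$ (exit of the ball plus collar) and bound $\mathbb{E}_z \tau_1 \lesssim \frac{r_j(\epsilon)^d}{\epsilon^{d-1}}\delta$ by comparison with a narrow-escape estimate for the ball with the necks as windows of aggregate width $\asymp \epsilon^{d-1}$ (this is the classical bottleneck computation, and only an upper bound is needed, so crude test functions suffice), then bound the remaining time on $G_{\epsilon,j}(\rho_j r_j(\epsilon)+3\epsilon) \cap B_{\epsilon,j}(\delta)$, a union of genuine tubes of width $O(\epsilon)$, by $\lesssim \delta^2$ via the one-dimensional exit-time estimate for reflected Brownian motion in a tube (after a standard Brownian-scaling/projection argument, since in a tube the exit time in the axial direction dominates, contributing $O(\delta^2)$, and the transverse fluctuations add only $O(\epsilon^2)$). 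Summing the two contributions with the strong Markov property at $\tau_1$ gives the claim with a constant uniform in $\epsilon$ (for fixed $\delta$), which is exactly what the lemma asserts.
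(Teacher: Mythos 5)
Your recognition that one cannot simply invoke Lemma \ref{inter exit est 1} to deduce Lemma \ref{small exit est 1.5} is correct — the paper proves the latter first and uses it, together with Theorem \ref{SMFPT Main}, to obtain the former — so a direct argument is indeed what is needed. The paper's proof is probabilistic: it decomposes $B_{\epsilon,j}(\delta)$ into three pieces (the ball $\Omega_{1,\epsilon}$, the connecting/cusp regions $\Omega_{4,\epsilon}$, and the straight cylinders $\Omega_{5,\epsilon,\delta}$), and proves three transition estimates (Lemmas \ref{lem NET ball}–\ref{lem NET neck}) that are then assembled via the strong Markov property, with the cusp geometry handled by the rescaling $z \mapsto (z-z^*_{j,k})/\epsilon$ and a regular-perturbation argument (Appendix \ref{App reg per}), not by constructing a global supersolution.

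Your first alternative, the explicit supersolution, is a genuinely different route. It is plausible in spirit, but the hard part is exactly where you flag it and more: the radial ansatz $c - \tfrac{1}{2d}\lvert z - O_j \rvert^2$ has radial slope $\asymp r_j(\epsilon)$ at the sphere, while the neck ansatz must carry slope $a \asymp r_j(\epsilon)^d/\epsilon^{d-1} \gg r_j(\epsilon)$; the two do not match in $C^1$, and the size of $a$ is dictated by flux conservation over the cross-sections of disparate areas ($\asymp r_j(\epsilon)^{d-1}$ vs.\ $\asymp \epsilon^{d-1}$), a boundary-layer effect that a smooth quadratic corrector in the ball does not see. One can take the pointwise minimum of two supersolutions (a downward kink preserves the viscosity supersolution property for $\Delta w \le -1$), but then one still has to verify the Neumann sign on the curved mollified boundary where the kink surface meets $\partial G_\epsilon$, and this is precisely the nonsmooth cusp geometry the paper circumvents with the probabilistic decomposition. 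So this alternative is not wrong, but it is substantially less developed than you suggest, and I would expect the gluing to require a more structured ansatz (a genuine matched-asymptotic boundary layer near each neck) rather than the simple two-piece patching you describe.

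Your second alternative, the two-stage stopping argument, has a concrete gap that makes it fail as stated. You set $\tau_1$ to be the first hitting time of the collar $C_{\epsilon,j}(\rho_j r_j(\epsilon)+3\epsilon)$ and bound $\mathbb{E}_z\tau_1$ (an overestimate is fine here), then claim the remaining time after $\tau_1$ is $\lesssim \delta^2$ ``via the one-dimensional exit-time estimate for reflected Brownian motion in a tube.'' But the process restarted on the collar does not stay in the tubes: it re-enters the ball with probability $\approx 1 - O(\epsilon/\delta)$ per excursion, and makes $\asymp \delta/\epsilon$ such excursions before escaping to $C_{\epsilon,j}(\delta)$; each ball re-visit costs $\asymp r_j(\epsilon)^d/\epsilon^{d-2}$. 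The dominant term $\frac{r_j(\epsilon)^d}{\epsilon^{d-1}}\delta$ in the lemma is the product of these two quantities, and it lives entirely in your stage 2, not stage 1 (the first passage to the collar is only $\asymp r_j(\epsilon)^d/\epsilon^{d-2}$). The paper's Lemma \ref{lem NET neck} is exactly the correct replacement for your stage 2: it counts the number of failed escape attempts from $\Gamma_{3,\epsilon}$ via the stopping times $\sigma_n^{3,\epsilon,\delta}, \tau_n^{3,\epsilon,\delta}$ and the Wald-type identity \eqref{eq wald type identity}–\eqref{eq wald type identity 2}, bounding the per-return ball cost by Lemma \ref{lem NET connect}. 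Without this accounting, your second argument produces only $\frac{r_j(\epsilon)^d}{\epsilon^{d-1}}\delta + \delta^2$ if you already put the dominant term in stage 1 by fiat, but then you need a proof that $\mathbb{E}_z\tau_1 \lesssim \frac{r_j(\epsilon)^d}{\epsilon^{d-1}}\delta$, which you cannot get from the narrow-escape estimate alone (it gives the weaker $\asymp r_j(\epsilon)^d/\epsilon^{d-2}$, which is smaller), while the real content is that the \emph{total} time, with re-entries, has the claimed order. So the decomposition needs to be a ``ball $\to$ collar $\to$ cylinder, repeat'' loop, which is the paper's three-piece strong-Markov iteration, not a single splitting at $\tau_1$.
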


In fact, given this a priori bound and Theorem \ref{SMFPT Main} applied to $\sigma^{\epsilon,\delta}$, Lemma \ref{inter exit est 1} follows directly.
\begin{proof}[Proof of Lemma \ref{inter exit est 1}]
	By Theorem \ref{SMFPT Main} and Lemma \ref{small exit est 1.5}, for $\epsilon>0$ small enough,
	\begin{equation*}
        \sup_{z \in B_{\epsilon,j}(\delta)}  \mathbb{E}_z  (\sigma^{\epsilon,\delta})^2 
        \leq c  \left( \sup_{z \in B_{\epsilon,j}(\delta)}   \mathbb{E}_z \sigma^{\epsilon,\delta} \right)^2 
        \leq c \left(\left( \frac{r_j(\epsilon)^d}{\epsilon^{d-1}} \delta \right)^2 +\delta^4 \right).
    \end{equation*}
\end{proof}

Lemmas \ref{exit place est} and \ref{small exit est 2} are significantly more challenging to prove.
To proceed, we require the following two lemmas, which, intuitively, assert that the asymptotic behavior of all points in $C_{\epsilon,j}(\rho_j r_j(\epsilon)+3\epsilon)$ is essentially identical, for each $j=1,\cdots,\abs{V}$.
This uniformity is crucial because, in the limiting process, these points collapse to a single vertex.
Thus, to ensure the limit is well-defined, the behavior of the process at these points must converge uniformly.

\begin{Lemma}\label{exit place conti}
For every $j=1,\cdots,\abs{V}$ and $k$ such that $I_k \sim O_j$, we have
    \begin{equation*}
        \displaystyle
        \lim_{\epsilon \to 0} \sup_{z_1,z_2 \in C_{\epsilon,j}(\rho_j r_j(\epsilon)+3\epsilon)} \abs{ \mathbb{P}_{z_1} \left( Z^\epsilon(\sigma^{\epsilon,\delta}) \in  C_{\epsilon,j}^k(\delta) \right) 
         - \mathbb{P}_{z_2} \left( Z^\epsilon(\sigma^{\epsilon,\delta}) \in  C_{\epsilon,j}^k(\delta) \right)} = 0.
    \end{equation*}
\end{Lemma}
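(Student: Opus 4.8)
\textbf{Proof proposal for Lemma \ref{exit place conti}.}

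The plan is to establish the uniform continuity of the exit-place probability via a coupling argument combined with the a priori exit-time bound of Lemma \ref{small exit est 1.5}. Fix $j$ and $k$ with $I_k \sim O_j$, and abbreviate $q^\epsilon(z) := \mathbb{P}_z(Z^\epsilon(\sigma^{\epsilon,\delta}) \in C_{\epsilon,j}^k(\delta))$. The function $q^\epsilon$ is harmonic in the interior of $B_{\epsilon,j}(\delta)$ with Neumann conditions on $\partial G_\epsilon \cap B_{\epsilon,j}(\delta)$ and boundary data $\mathbbm{1}_{C_{\epsilon,j}^k(\delta)}$ on $C_{\epsilon,j}(\delta)$, so in particular it is bounded by $1$. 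The key geometric observation is that the sphere $C_{\epsilon,j}(\rho_j r_j(\epsilon)+3\epsilon)$ sits entirely inside the ball region $\mathcal{E}_j^\epsilon$ enlarged by a few multiples of $\epsilon$, so any two points $z_1, z_2$ there are separated by a distance of order at most $r_j(\epsilon)$, and the ambient subdomain there is, after rescaling by $r_j(\epsilon)$, uniformly close to a fixed smooth domain (a ball with finitely many thin cylindrical stubs shrinking to segments, or just a ball when $\epsilon \ll r_j(\epsilon)$).

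First I would run the reflected diffusion from $z_1$ and $z_2$ and couple them: start a synchronous (or mirror) coupling inside the ball region and show that the two copies meet before either one escapes $B_{\epsilon,j}(\delta')$ for a suitable intermediate radius $\delta' \asymp r_j(\epsilon)$, with probability bounded below uniformly in $\epsilon$. Concretely, on the event that the two trajectories couple before the first exit of $B_{\epsilon,j}(\delta)$, the exit locations coincide and contribute nothing to $|q^\epsilon(z_1) - q^\epsilon(z_2)|$; on the complementary event one uses the trivial bound $|q^\epsilon(z_1)-q^\epsilon(z_2)| \le 1$. Hence it suffices to show the non-coupling probability tends to $0$ uniformly. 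For this I would use the strong Markov property at successive returns to (say) the sphere $C_{\epsilon,j}(\rho_j r_j(\epsilon)+3\epsilon)$: by the a priori estimate Lemma \ref{small exit est 1.5}, $\sigma^{\epsilon,\delta}$ is of order $\frac{r_j(\epsilon)^d}{\epsilon^{d-1}}\delta + \delta^2$, which is much larger than the time $O(r_j(\epsilon)^2)$ needed for one "attempt" at coupling within the ball region (here the disparity $r_j(\epsilon) \gg \epsilon$ and the scaling regimes are used). Therefore the process makes a number of independent coupling attempts that diverges as $\epsilon \to 0$, each succeeding with probability bounded below by a constant $c_0 > 0$ coming from the rescaled limit geometry (a Harnack / support-theorem estimate for reflected Brownian motion in a fixed smooth domain), so the probability of never coupling before exit is at most $(1-c_0)^{N(\epsilon)} \to 0$.

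The lower bound $c_0$ on a single coupling attempt is where the rescaling is essential: after dividing space by $r_j(\epsilon)$ and time by $r_j(\epsilon)^2$, the two rescaled reflected diffusions live in domains converging smoothly (uniformly in $\epsilon$, by the mollification construction of Section \ref{sec narrow tube}) to a fixed limit domain, and on that limit domain the mirror-coupling of reflected Brownian motion couples before exiting a fixed ball with positive probability, depending only on the (bounded) separation of the starting points. Passing this lower bound back through the rescaling, and using that the starting separation $|z_1 - z_2| \le 2(\rho_j r_j(\epsilon) + 3\epsilon) \lesssim r_j(\epsilon)$ rescales to a bounded quantity, yields $c_0$ uniform in $\epsilon$. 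One must be slightly careful that the "attempts" are genuinely renewal-like: I would define stopping times $\tau_0 < \tau_1 < \cdots$ at which the (say, first) copy returns to $C_{\epsilon,j}(\rho_j r_j(\epsilon)+3\epsilon)$ after having wandered a fixed rescaled distance away, reset the coupling at each $\tau_m$, and invoke the strong Markov property; the bound from Lemma \ref{small exit est 1.5} together with a Markov-inequality argument guarantees that with overwhelming probability at least $N(\epsilon) \to \infty$ such returns occur before $\sigma^{\epsilon,\delta}$.

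The main obstacle I anticipate is making the coupling argument genuinely uniform in $\epsilon$ despite the $\epsilon$-dependent non-smoothness of $G_\epsilon$ near the cusps — that is, establishing the single-attempt lower bound $c_0$ and the claim that the rescaled domains converge to a fixed smooth limit with uniform constants. This is exactly the delicate "uniform continuity" issue flagged in the introduction, and it hinges on the mollification performed in Section \ref{sec narrow tube} (which makes the boundary smooth uniformly in $\epsilon$ after rescaling by $\epsilon$, hence a fortiori after rescaling by $r_j(\epsilon) \gg \epsilon$). A secondary technical point is handling the thin cylindrical necks attached to the ball: a coupling started in the ball proper might be driven into a neck before meeting, but since the necks shrink to $1$-dimensional segments under the $r_j(\epsilon)$-rescaling, excursions into them cost only bounded rescaled time and do not destroy the positive coupling probability; alternatively one restricts the first attempt to the event that both copies stay in the ball proper, which already has probability bounded below.
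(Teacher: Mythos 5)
Your proposal takes a genuinely different route from the paper. The paper does not attempt a direct coupling of the $d$-dimensional reflected diffusions inside the ball region. Instead it discretizes the process into a Markov chain $X_n^{\epsilon,\delta',\delta}$ on the cross-section $C_{\epsilon,j}(\delta')$ (with an absorbing state $\partial$), proves a uniform Dobrushin-type minorization condition for this chain (Section \ref{sec qsd}), and deduces uniform geometric convergence to the quasi-stationary distribution (Theorem \ref{conv qsd}). Uniform continuity of the exit-place probability is then obtained because, after enough steps, both starting points see essentially the same conditional law. The "mixing mechanism" in the paper is the transverse component $Y^\epsilon$ inside the cylindrical necks, which is a reflected Brownian motion in a fixed (rescaled) $(d-1)$-ball and satisfies an explicit uniform ergodicity estimate; the Dobrushin condition additionally needs the inter-neck communicability bound (Lemma \ref{exit place apriori}). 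This exploits the exact product structure in the necks and entirely sidesteps coupling in the full domain.

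There are two substantive gaps in your coupling approach as written. First, the single-attempt lower bound $c_0$. After rescaling by $r_j(\epsilon)$, the domain does not converge to a fixed smooth domain: the cylindrical necks have rescaled width $\lambda_k\epsilon/r_j(\epsilon)\to 0$, and the starting points $z_1,z_2\in C_{\epsilon,j}(\rho_j r_j(\epsilon)+3\epsilon)$ sit precisely on those shrinking cross-sections, not in the interior of the rescaled ball. The "limit domain" is a ball with attached segments, which is not a domain at all, and standard mirror/synchronous coupling results for reflected Brownian motion in a bounded smooth domain do not apply; no bound uniform in $\epsilon$ is available from this picture. You acknowledge the issue ("a secondary technical point") but the proposed fix — restrict the first attempt to the event that both copies stay in the ball proper — does not help, because they do not start in the ball proper. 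This degeneracy is exactly why the paper rescales by $\epsilon$ near the junction (Appendix \ref{App reg per}), uses one-dimensional behaviour along the cylinder, and never tries to couple inside the vertex neighbourhood.

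Second, your count of coupling attempts is inverted relative to what Lemma \ref{small exit est 1.5} supplies. That lemma gives an \emph{upper} bound $\mathbb{E}_z\sigma^{\epsilon,\delta}\lesssim \tfrac{r_j(\epsilon)^d}{\epsilon^{d-1}}\delta+\delta^2$; it does not show that $\sigma^{\epsilon,\delta}$ is typically large, and Markov's inequality applied to an upper bound cannot force a diverging number of returns. What you actually need is an upper bound on the \emph{per-return escape probability} — the analogue of the paper's $p^1_{\epsilon,\delta',\delta}$ in \eqref{est p}, obtained from the explicit gambler's-ruin formula in the one-dimensional radial coordinate — to conclude that the chain of returns survives $N(\epsilon)\to\infty$ steps with high probability. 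This is a separate estimate, not a consequence of Lemma \ref{small exit est 1.5} as cited, and without it the geometric-decay conclusion $(1-c_0)^{N(\epsilon)}\to 0$ is unsupported. In short, the overall strategy (mix inside, then argue the exit distribution forgets the initial point) is the right intuition, but both the mixing step and the step-count need to be routed through the cylindrical structure, essentially as the paper does.
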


\begin{Lemma}\label{exit est conti}
    For every $j=1,\cdots,\abs{V}$, we have
    \begin{equation}
        \displaystyle
           \lim_{\epsilon \to 0} \sup_{z_1,z_2 \in C_{\epsilon,j}(\rho_j r_j(\epsilon)+3\epsilon)} 
           \abs{\mathbb{E}_{z_1} \sigma^{\epsilon,\delta} - \mathbb{E}_{z_2} \sigma^{\epsilon,\delta} } \left( \frac{r_j(\epsilon)^d}{\epsilon^{d-1}} \vee 1 \right)^{-1} =0.
    \end{equation}
\end{Lemma}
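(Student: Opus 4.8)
\textbf{Proof proposal for Lemma \ref{exit est conti}.}

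The plan is to exploit the quasi-stationary distribution (QSD) machinery announced in the introduction, which is exactly the tool designed to make precise the heuristic that "before escaping, the process mixes within the domain." Fix $j$ and write $\delta' = \rho_j r_j(\epsilon) + 3\epsilon$, so that the boundary sphere of interest is $C_{\epsilon,j}(\delta')$. Discretize the diffusion $Z^\epsilon$ by recording its successive returns to $C_{\epsilon,j}(\delta')$ before absorption at the outer sphere $\bigcup_l C_{\epsilon,l}(\delta)$; this produces the discrete-time Markov chain $X_n^{\epsilon,\delta',\delta}$ on $C_{\epsilon,j}(\delta') \cup \{\partial\}$ referred to in Section \ref{sec qsd}. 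The expected exit time decomposes, via the strong Markov property at these return times, as $\mathbb{E}_z \sigma^{\epsilon,\delta} = \mathbb{E}_z N \cdot (\text{average excursion length})$-type expression; more precisely, writing $N$ for the number of returns before absorption and summing the successive excursion durations, one gets a renewal-type identity whose dependence on the starting point $z \in C_{\epsilon,j}(\delta')$ enters only through the law of the first excursion and the number of subsequent returns. The first excursion contributes an amount of order at most $\sup_z \mathbb{E}_z \sigma^{\epsilon,\delta} \wedge (\text{mixing scale})$, and after one step the chain's distribution is already close (in total variation, uniformly in $\epsilon,\delta'$) to the QSD by the uniform Dobrushin condition proved in Section \ref{sec qsd}; hence the contributions of all later excursions are the same for $z_1$ and $z_2$ up to an exponentially small correction.

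Concretely, I would proceed as follows. First, represent $\mathbb{E}_{z} \sigma^{\epsilon,\delta}$ as $g(z) + \mathbb{E}_z[\,\mathbf{1}_{\{\text{return}\}}\, \mathbb{E}_{X_1^{\epsilon,\delta',\delta}} \sigma^{\epsilon,\delta}\,]$, where $g(z)$ is the expected time of the first excursion from $z$ (either to absorption or back to $C_{\epsilon,j}(\delta')$). Subtracting the identities for $z_1$ and $z_2$,
\begin{equation*}
\mathbb{E}_{z_1}\sigma^{\epsilon,\delta} - \mathbb{E}_{z_2}\sigma^{\epsilon,\delta}
= \big(g(z_1)-g(z_2)\big) + \Big( \mathbb{E}_{z_1}\big[\mathbf{1}\,\psi(X_1)\big] - \mathbb{E}_{z_2}\big[\mathbf{1}\,\psi(X_1)\big] \Big),
\end{equation*}
where $\psi(\cdot) = \mathbb{E}_{\cdot}\sigma^{\epsilon,\delta}$ on $C_{\epsilon,j}(\delta')$. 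For the first bracket, use the a priori bound: both $g(z_1)$ and $g(z_2)$ are bounded by $\sup_{z\in B_{\epsilon,j}(\delta')}\mathbb{E}_z(\text{excursion time})$, which is of order $\delta'^2 \wedge (\cdots) = o\!\big(\tfrac{r_j(\epsilon)^d}{\epsilon^{d-1}}\vee 1\big)$ since $\delta' \to 0$ with $\epsilon$ while $\tfrac{r_j(\epsilon)^d}{\epsilon^{d-1}}\vee 1$ does not; here one invokes the reasoning behind Lemma \ref{small exit est 1.5} applied on the smaller ball $B_{\epsilon,j}(\delta')$, or a direct comparison with a one-dimensional problem in the neck. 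For the second bracket, write it as $\int \psi\, d\mu_{z_1} - \int \psi\, d\mu_{z_2}$ where $\mu_{z_i}$ is the (sub-probability) law of the first return point, and bound it by $\|\psi\|_\infty \cdot \|\mu_{z_1} - \mu_{z_2}\|_{\mathrm{TV}}$. The uniform Dobrushin/minorization condition of Section \ref{sec qsd} gives $\|\mu_{z_1}-\mu_{z_2}\|_{\mathrm{TV}} \le 1 - c_0$ for a constant $c_0 > 0$ independent of $\epsilon, \delta'$; iterating (i.e. conditioning on more returns) upgrades this to $\le (1-c_0)^n$, so the dependence on the initial point decays geometrically in the number of returns. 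Combining, $|\mathbb{E}_{z_1}\sigma^{\epsilon,\delta}-\mathbb{E}_{z_2}\sigma^{\epsilon,\delta}|$ is controlled by $c_0^{-1}$ times the single-excursion scale, which after dividing by $\big(\tfrac{r_j(\epsilon)^d}{\epsilon^{d-1}}\vee 1\big)$ tends to $0$.

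The main obstacle is establishing the uniform (in $\epsilon$ and $\delta'$) minorization/Dobrushin estimate for the return kernel of $X_n^{\epsilon,\delta',\delta}$ — that is, showing that from any $z \in C_{\epsilon,j}(\delta')$ the process, before absorption, distributes its first return over $C_{\epsilon,j}(\delta')$ in a way that dominates a fixed fraction of a common reference measure. This is precisely where the multiscale geometry (neck of width $\sim\epsilon$ glued to a ball of radius $\sim r_j(\epsilon)$) fights back, since naive hitting-probability bounds degenerate as the scale ratio blows up; it is handled in Section \ref{sec qsd} using the a priori exit-time bound (Lemma \ref{small exit est 1.5}, to control the time horizon) and the exit-place bound (Lemma \ref{exit place conti} and its consequence, ensuring every neck is reached with probability bounded below). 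A secondary technical point is justifying the renewal decomposition rigorously when $\delta'$ and $\delta$ are genuinely $\epsilon$-dependent, which requires the strong Markov property at the (a.s. finite, countably many) return times together with the finiteness of all moments of $\sigma^{\epsilon,\delta}$ — this finiteness is itself a byproduct of the Dobrushin condition (geometrically many returns, each of finite expected length). Modulo those inputs, the argument above is essentially a soft renewal/coupling computation.
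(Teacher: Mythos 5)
Your overall instinct is right: the proof does rest on (i) a renewal decomposition at an intermediate circle, (ii) uniform (in $\epsilon$) convergence of the conditioned chain to its QSD via a Dobrushin-type minorization, and (iii) the observation that whatever depends on the starting point is confined to a small number of initial excursions whose total duration is negligible relative to $\frac{r_j(\epsilon)^d}{\epsilon^{d-1}}\vee 1$. This matches the architecture of Sections \ref{sec qsd}--\ref{sec a prior est}. However, two of your steps contain genuine gaps.

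First, you set $\delta' = \rho_j r_j(\epsilon)+3\epsilon$ and identify the resulting return chain with the chain $X_n^{\epsilon,\delta',\delta}$ from Section \ref{sec qsd}. This misreads the construction: in the paper, $\delta'$ is an \emph{independent} small parameter satisfying $\rho_j r_j(\epsilon)+3\epsilon \ll \delta' < \delta$ for $\epsilon$ small, and the limits are taken in the order $\epsilon\to 0$ first, then $\delta'\to 0$. The chain records hits of $C_\epsilon(\delta')$, with an intermediate stop at $C_\epsilon(\rho r(\epsilon)+3\epsilon)\cup C_\epsilon(2\delta')$; the survival probability formula \eqref{est p} and the $y$-mixing estimate in Claim~1 both depend on $\delta'$ being a neck length that stays fixed as $\epsilon\to 0$. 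With $\delta'$ collapsing onto the ball boundary, the survival probability $p^1_{\epsilon,\delta',\delta}$ degenerates, and the argument used to verify \ref{itm: c1} (which conditions on a time $t_{\eta,\delta'}$ having elapsed before the first return so that the $y$-marginal is mixed) loses its uniform-in-$\epsilon$ control. The paper's actual proof begins by running $Z^\epsilon$ from $C_{\epsilon,j}(\rho_j r_j(\epsilon)+3\epsilon)$ until the first hit of $C_\epsilon(\delta')$, bounding that preliminary excursion by $\sup_z \mathbb{E}_z\tilde\tau^{\epsilon,\delta'}\lesssim \frac{r(\epsilon)^d}{\epsilon^{d-1}}\delta' + (\delta')^2$, and \emph{only then} invoking the QSD machinery at the level of $C_\epsilon(\delta')$. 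That first step — the transition from the ball boundary to the free intermediate radius — is absent from your outline.

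Second, the iteration "$\lVert\mu_{z_1}-\mu_{z_2}\rVert_{TV}\le (1-c_0)^n$, hence a geometric series" presumes an exact minorization with an $\epsilon$-uniform constant $c_0$. But the Dobrushin condition proved in Section \ref{sec qsd} is only the softened statement
\begin{equation*}
\inf_{z\in C_\epsilon(\delta')}\mathbb{P}_z\!\left(X_{n_0}^{\epsilon,\delta',\delta}\in\cdot \,\middle|\, \tilde\tau_{n_0}^{\epsilon,\delta'}<\sigma^{\epsilon,\delta}\right)\ge c_1\,\nu_{\epsilon,\delta'}(\cdot)-\eta,
\end{equation*}
with the additive error $\eta$ controllable only after $\epsilon\to 0$ and then $\delta'\to 0$. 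Iterating a perturbed contraction does not yield a clean geometric decay; the paper handles this by fixing a finite number of steps $N_\eta$ up front (depending only on $\eta$), splitting $\mathbb{E}_z\sigma^{\epsilon,\delta}$ into the part up to step $N_\eta$ (bounded by $N_\eta$ times the one-step timescale, then $\delta'\to 0$) and the part after, and tracking the accumulated additive errors via the product in \eqref{app qsd eq 2}. Relatedly, the contribution of the killing event $\{\tilde\tau_{N_\eta}^{\epsilon,\delta'}\ge\sigma^{\epsilon,\delta}\}$ to $\mathbb{E}_z\sigma^{\epsilon,\delta}$ has to be controlled by Cauchy--Schwarz together with the second-moment bound of Lemma \ref{inter exit est 1}, a step your sketch omits entirely. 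So the proposal captures the conceptual skeleton, but the quantifier structure ($\epsilon\to 0$ before $\delta'\to 0$, with $N_\eta$ fixed first) and the handling of the killing term are both essential and missing.
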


The remaining subsections are devoted to the proofs of Lemma
\ref{exit place est} and \ref{small exit est 2} assuming
Lemma \ref{small exit est 1.5}, \ref{exit place conti} and \ref{exit est conti} hold.

\subsection{Proof of Lemma \ref{exit place est}}\label{sec proof lem place}
The proof closely follows the argument of Theorem 6.1 in \cite{SDE2}.
As we will need some of these arguments later, we include the proof here for clarity and completeness.

Recall that $Z^\epsilon(t)$ is the scaled reflected Brownian motion on $G_\epsilon$.
Define the sequence of stopping times
\begin{equation*}
\begin{aligned}
\sigma_{n}^{\epsilon,\delta} &\coloneqq \inf  \{t \geq  \tau_{n}^{\epsilon,\delta} : Z^\epsilon(t) \in \bigcap_{j=1}^{\abs{V}} G_{\epsilon,j}(\delta) \}, \\
\tau_{n}^{\epsilon,\delta} &\coloneqq \inf  \{t \geq  \sigma_{n-1}^{\epsilon,\delta} : Z^\epsilon(t) \in \bigcup_{j=1}^{\abs{V}} C_{\epsilon,j}(\rho_j r_j(\epsilon)+ 3\epsilon ) \},
\end{aligned}
\end{equation*}
with $\tau_{0}^{\epsilon,\delta} \coloneqq 0$.
Also, define
\begin{equation*}
\begin{aligned}
\tau^{\epsilon} &\coloneqq \inf  \{t \geq  0 : Z^\epsilon(t) \in \bigcup_{j=1}^{\abs{V}} C_{\epsilon,j}(\rho_j r_j(\epsilon) + 3\epsilon ) \}.
\end{aligned}
\end{equation*}
and recall that above we defined
\begin{equation*}
\begin{aligned}
\sigma^{\epsilon,\delta} &= \inf  \{t \geq  0 : Z^\epsilon(t) \in \bigcup_{j=1}^{\abs{V}} C_{\epsilon,j}(\delta) \}.
\end{aligned}
\end{equation*}
Note that if $Z^\epsilon(t)$ starts at $z \in \bigcup_{j=1}^{\abs{V}} B_{\epsilon,j}(\delta)$ (in particular, if $z \in \bigcup_{j=1}^{\abs{V}} C_{\epsilon,j}(\rho_j r_j(\epsilon) + 3\epsilon )$), then
\begin{equation*}
	\sigma^{\epsilon,\delta} = \sigma_{0}^{\epsilon,\delta}.
\end{equation*}
On the other hand, if $Z^\epsilon(t)$ starts at $z \in \bigcup_{j=1}^{\abs{V}} C_{\epsilon,j}(\delta )$, then
\begin{equation*}
	\sigma^{\epsilon,\delta} = \sigma_{0}^{\epsilon,\delta} = 0, \ \ \ \ \ 
	\tau^{\epsilon} = \tau_{1}^{\epsilon,\delta}.
\end{equation*}

It is known that $Z^\epsilon(t)$ has the Lebesgue measure $Leb$ on $G_\epsilon$ as its invariant measure.
Thus if we define $\mu_\epsilon$ as the invariant measure of the process $Z^\epsilon(\tau_{n}^{\epsilon,\delta})$ on $\bigcup_{j=1}^{\abs{V}} C_{\epsilon,j}(\rho_j r_j(\epsilon) + 3\epsilon)$, we have the following relation (see \cite[Theorem 2.1]{EPD} for a proof; also see \cite{SDE2}), for every Borel set $A_\epsilon \subset G_\epsilon$ ,
\begin{equation}\label{exit place int}
	Leb(A_\epsilon) = \int_{\bigcup_{j=1}^{\abs{V}} C_{\epsilon,j}(\rho_j r_j(\epsilon) + 3\epsilon)} \left( \mathbb{E}_z \int_0^{\tau_{1}^{\epsilon,\delta}} \mathbbm{1}_{A_\epsilon} \left( Z^\epsilon(t) \right) dt \right) \mu_\epsilon(dz).
\end{equation}

First, for each $j=1,\cdots,\abs{V}$ and $k$ such that $I_k \sim O_j$, let $i=i(k)$ be the other endpoint of $I_k$.
We define $A_{j,\epsilon,\delta}^k$ as the region enclosed by $C_{\epsilon,j}^k(\delta)$, $C_{\epsilon,j}^k(2\delta)$ and $\partial G_\epsilon$.
Note that only $C_{\epsilon,i}(\rho_i r_i(\epsilon) + 3\epsilon)$ and $C_{\epsilon,j}(\rho_j r_j(\epsilon) + 3\epsilon)$ contribute to the integral \eqref{exit place int}.
Hence, by the strong Markov property, 
\begin{equation*}
\begin{aligned}
	& Leb(A^k_{j,\epsilon,\delta})  \\
	&= \int_{C_{\epsilon,j}(\rho_j r_j(\epsilon)+3\epsilon )} 
	\mathbb{E}_z \left( \mathbbm{1}_{\{Z^\epsilon(\sigma^{\epsilon,\delta}) \in C_{\epsilon,j}^k(\delta) \}} 
	\mathbb{E}_{Z^\epsilon(\sigma^{\epsilon,\delta})} \left( \int_0^{\tau^{\epsilon}} \mathbbm{1}_{A^k_{j,\epsilon,\delta}} \left( Z^\epsilon(t) \right) dt  \right)  \right) \mu_\epsilon(dz) \\
	& \ \ \  + \int_{C_{\epsilon,i}(\rho_i r_i(\epsilon) + 3\epsilon)}  
	\mathbb{E}_z \left( \mathbbm{1}_{\{Z^\epsilon(\sigma^{\epsilon,\delta}) \in C_{\epsilon,i}^k(\delta) \}} 
	\mathbb{E}_{Z^\epsilon(\sigma^{\epsilon,\delta})} \left( \int_0^{\tau^{\epsilon}} \mathbbm{1}_{A^k_{j,\epsilon,\delta}} \left( Z^\epsilon(t) \right) dt \right)  \right) \mu_\epsilon(dz).
\end{aligned}
\end{equation*}
By the properties of one dimensional Brownian motion, we obtain, for $z' \in C_{\epsilon,j}^k(\delta)$, the occupation time 
\begin{equation*}
\mathbb{E}_{z'} \int_0^{\tau^{\epsilon}} \mathbbm{1}_{A^k_{j,\epsilon,\delta}} \left( Z^\epsilon(t) \right) dt 
= \delta^2+o_{\epsilon,\delta}(\delta^2).
\end{equation*}
and for $z'' \in C_{\epsilon,i}^k(\delta)$, the occupation time 
\begin{equation*}
\mathbb{E}_{z''} \int_0^{\tau^{\epsilon}} \mathbbm{1}_{A^k_{j,\epsilon,\delta}} \left( Z^\epsilon(t) \right) dt = O_{\epsilon,\delta}(\delta^3).
\end{equation*}
In fact, the occupation time can be obtain by solving the ODE on $[0,L_{k}]$
\begin{equation*}
\begin{dcases}
	f_{j,k}''(x) = -\mathbbm{1}_{[\delta-\rho_j r_j(\epsilon)-3\epsilon,2\delta-\rho r(\epsilon)-3\epsilon] }(x), \ \ \ x \in [0,L_{k}] ,\\
	f_{j,k}(0) = 0 ,  \\
	f_{j,k}(L_{k}) = 0,
\end{dcases}
\end{equation*}
where $L_{k}=\abs{I_{k}}-\rho_j r_j(\epsilon)-3\epsilon-\rho_i r_i(\epsilon)-3\epsilon$.

For all $j = 1,\cdots, \abs{V}$ and $k$ such that $I_k \sim O_j$, we define
\begin{equation}\label{exit place qj}
	q_{j,\epsilon,\delta}^k \coloneqq \int_{C_{\epsilon,j}(\rho_j r_j(\epsilon) + 3\epsilon )} 
	\mathbb{P}_z \left( Z^\epsilon(\sigma^{\epsilon,\delta}) \in C_{\epsilon,j}^k(\delta) \right) \mu_\epsilon(dz),
\end{equation}
then we obtain
\begin{equation}\label{exit place eq 1}
	\delta \lambda_k^{d-1} \epsilon^{d-1} V_{d-1} = Leb(A^k_{j,\epsilon,\delta}) 
	= q_{j,\epsilon,\delta}^k (\delta^2+o_{\epsilon,\delta}(\delta^2)) +  q_{i,\epsilon,\delta}^k (O_{\epsilon,\delta}(\delta^3)).
\end{equation}
Summing over all $k$ such that $I_k \sim O_j$ and then over all $j=1,\cdots,\abs{V}$ (noting that $i$ depends on $k$), we have
\begin{equation*}
	\delta  \epsilon^{d-1} V_{d-1} \sum_{j=1}^{\abs{V}} \sum_{k:I_k \sim O_j} \lambda_k^{d-1} 
	= \sum_{j=1}^{\abs{V}} \mu_\epsilon(C_{\epsilon,j}(\rho_j r_j(\epsilon)+ 3\epsilon)) (\delta^2+o_{\epsilon,\delta}(\delta^2) + O_{\epsilon,\delta}(\delta^3))
\end{equation*}

Therefore, for all $j=1,\cdots,\abs{V}$, and $k$ such that $I_k \sim O_j$,
\begin{equation*}
	0 \leq q_{j,\epsilon,\delta}^k \leq \mu_\epsilon(C_{\epsilon,j}(\rho_j r_j(\epsilon)+ 3\epsilon)) =O_{\epsilon,\delta}(\epsilon^{d-1} \delta^{-1}).
\end{equation*}
Plugging back to \eqref{exit place eq 1}, we obtain
\begin{equation}\label{exit place eq 2}
	\mu_\epsilon(C_{\epsilon,j}( \rho_j r_j(\epsilon)+3\epsilon )) \sim_{\epsilon,\delta}  \delta^{-1} \sum_{k:I_k \sim O_j} \epsilon^{d-1} \lambda_k^{d-1} V_{d-1}.
\end{equation}
In addition, by \eqref{exit place qj}-\eqref{exit place eq 2},
\begin{equation*}
\begin{aligned}
\displaystyle
	&\inf_{z \in C_{\epsilon,j}( \rho_j r_j(\epsilon) + 3\epsilon)}  
	\mathbb{P}_z \left( Z^\epsilon(\sigma^{\epsilon,\delta}) \in C_{\epsilon,j}^k(\delta) \right) 
	\mu_\epsilon(C_{\epsilon,j}( \rho_j r_j(\epsilon)+3\epsilon ))  \left( \delta^2 + o_{\epsilon,\delta}(\delta^2) \right) + O_{\epsilon,\delta}(\epsilon^{d-1}\delta^2) \\
	\displaystyle
	& \leq \delta \lambda_k^{d-1} \epsilon^{d-1} V_{d-1} \\
	\displaystyle
	& \leq  \sup_{z \in C_{\epsilon,j}( \rho_j r_j(\epsilon) + 3\epsilon)}  
	\mathbb{P}_z \left( Z^\epsilon(\sigma^{\epsilon,\delta}) \in C_{\epsilon,j}^k(\delta) \right) 
	\mu_\epsilon(C_{\epsilon,j}( \rho_j r_j(\epsilon)+3\epsilon )) \left( \delta^2 + o_{\epsilon,\delta}(\delta^2) \right) + O_{\epsilon,\delta}(\epsilon^{d-1}\delta^2)
\end{aligned}
\end{equation*}
and thus
\begin{equation}\label{exit place eq 3}
\begin{aligned}
\displaystyle
	&\inf_{z \in C_{\epsilon,j}( \rho_j r_j(\epsilon) + 3\epsilon)}  
	\mathbb{P}_z \left( Z^\epsilon(\sigma^{\epsilon,\delta}) \in C_{\epsilon,j}^k(\delta) \right) 
	  + o_{\epsilon,\delta}(1) \\
	& \leq \frac{\lambda_k^{d-1} }{\sum_{l:I_l \sim O_j} \lambda_l^{d-1} } \\
	& \leq  \sup_{z \in C_{\epsilon,j}( \rho_j r_j(\epsilon) + 3\epsilon)}  
	\mathbb{P}_z \left( Z^\epsilon(\sigma^{\epsilon,\delta}) \in C_{\epsilon,j}^k(\delta) \right) 
	 + o_{\epsilon,\delta}(1).
\end{aligned}
\end{equation}

Finally, thanks to Lemma \ref{exit place conti}, we can conclude that
\begin{equation}
	\lim_{\delta \to 0} \lim_{\epsilon \to 0} 
	\sup_{z \in C_{\epsilon,j}( \rho_j r_j(\epsilon) + 3\epsilon)} \abs{ \mathbb{P}_z \left( Z^\epsilon(\sigma^{\epsilon,\delta}) \in C_{\epsilon,j}^k(\delta) \right)   - \frac{\lambda_k^{d-1} }{\sum_{l:I_l \sim O_j} \lambda_l^{d-1} } } = 0.
\end{equation}

\begin{remark}
\em{
We only use Lemma \ref{exit place conti} at the conclusion of the proof.
In particular, even without assuming Lemma \ref{exit place conti}, equation \eqref{exit place eq 3} implies that there exists a constant $c>0$ such that
\begin{equation}\label{exit place sup lb}
	\lim_{\delta \to 0} \lim_{\epsilon \to 0} \sup_{z \in C_{\epsilon,j}( \rho_j r_j(\epsilon) + 3\epsilon)}  
	\mathbb{P}_z \left( Z^\epsilon(\sigma^{\epsilon,\delta}) \in C_{\epsilon,j}^k(\delta) \right) \geq c.
\end{equation}
We will use this observation in Section \ref{sec exit place}.
}
\end{remark}

\subsection{Proof of Lemma \ref{small exit est 2} for small and intermediate balls}\label{sec proof lem}
In this subsection, we prove Lemma \ref{small exit est 2} for small and intermediate balls.
The case of large balls will be discussed in the next subsection.
These results concern the local behavior near a vertex, so for simplicity, we will omit the subscript $j$ throughout the remaining section.

In particular, we will denote
\begin{equation*}
\begin{aligned}
C_\epsilon(\delta) & = \{ z \in G_\epsilon: d_{\Gamma} (\Pi(z),O) = \delta \}, \\
C_\epsilon^k(\delta) & = C_\epsilon(\delta) \cap \Pi^{-1}(I_k), \\
B_\epsilon(\delta) & = \{ z \in G_\epsilon: d_{\Gamma} (\Pi(z),O) \leq \delta \}.
\end{aligned}
\end{equation*}
Next, we define
\begin{equation*}
\begin{aligned}
	\Omega_{1,\epsilon} & \coloneqq B(O, \rho r(\epsilon)), \\
	\Omega_{2,\epsilon,\delta}^k &\coloneqq \text{the region enclosed by } C_{\epsilon}^k (\rho r(\epsilon)+3\epsilon), C_{\epsilon}^k(\delta) \text{ and } \partial G_\epsilon, \\
	\Omega_{2,\epsilon,\delta} &\coloneqq \bigcup_k \Omega_{2,\epsilon,\delta}^k, \\
	\Omega_{3,\epsilon,\delta} & \coloneqq B_\epsilon(\delta) \setminus \left( \Omega_{1,\epsilon} \cup \Omega_{2,\epsilon,\delta} \right).
\end{aligned}
\end{equation*}

It is known that (see Appendix \ref{sec NET} for more details), $v_{\epsilon,\delta}(z) = \mathbb{E}_z \sigma^{\epsilon,\delta}$ solves the PDE
\begin{equation}
\displaystyle
\begin{dcases}
	\Delta v_{\epsilon,\delta}(z) = -1, \ \ z \in B_\epsilon(\delta) \\
	v_{\epsilon,\delta}(z) = 0, \ \ z \in C_\epsilon(\delta), \\
	\frac{\partial v_{\epsilon,\delta}(z)}{\partial \nu_{\epsilon,\delta}} = 0, \ \ z \in \partial  B_\epsilon(\delta) \setminus  C_\epsilon(\delta).
\end{dcases}
\end{equation}
Using integration by parts, we have
\begin{equation}\label{compat sum}
	\sum_{k:I_k \sim O} \int_{C_\epsilon^k(\rho r(\epsilon)+ 3 \epsilon)} \frac{\partial v_{\epsilon,\delta}}{\partial \nu_\epsilon^{out}}(z) d\sigma(z) 
	= \int_{\Omega_{1,\epsilon} \cup \Omega_{3,\epsilon,\delta}} \Delta v_{\epsilon,\delta}(z) dz
	= -  \left( \abs{\Omega_{1,\epsilon}} + \abs{\Omega_{3,\epsilon,\delta}} \right),
\end{equation}
where $\nu_\epsilon^{out}$ is the unit normal vector pointing outward from $\Omega_{1,\epsilon} \cup \Omega_{3,\epsilon,\delta}$ into the neck region.
For $k$ such that $I_k \sim O$, we define
\begin{equation}\label{compat graph}
	\kappa_{\epsilon,\delta}^k \coloneqq \frac{1}{\abs{B_k(0,\lambda_k \epsilon)}} \int_{C_\epsilon^k(\rho r(\epsilon)+ 3 \epsilon)} \frac{\partial v_{\epsilon,\delta}}{\partial \nu_\epsilon^{out}} (z) d \sigma(z).
\end{equation}

In the following, we fix an index $k$ such that $I_k \sim O$ and focus on the cylinder domain $ \Omega_{2,\epsilon,\delta}^k$.
Therefore, by rotation and translation, we may, without loss of generality, assume that
\begin{equation*}
	\Pi(\Omega_{2,\epsilon,\delta}^k) = [0,\delta-\rho r(\epsilon)-3\epsilon],
\end{equation*}
where $\Pi(C_\epsilon^k(\rho r(\epsilon)+ 3 \epsilon)) = \delta-\rho r(\epsilon)-3\epsilon$ and $\Pi(C_\epsilon^k(\delta)) = 0$.
We thus have 
\begin{equation*}
	\Omega_{2,\epsilon,\delta}^k=[0,\delta-\rho r(\epsilon)-3\epsilon] \times B^{d-1}(0,\lambda_k \epsilon),
\end{equation*}
where $B^{d-1}(0,r)$ is a $(d-1)$-dimension ball center at $0$ with radius $r$. We then write $z=(x,y)$.

On $\Omega_{2,\epsilon,\delta}^k$, we consider 
\begin{equation*}
	v_{\epsilon,\delta}^\wedge(x)  
	\coloneqq \frac{1}{\abs{B^{d-1}(0,\lambda_k \epsilon)}} \int_{B^{d-1}(0,\lambda_k \epsilon)} v_{\epsilon,\delta}(x,y) dy.
\end{equation*}
First, observe that for $x >0$, the function $v_{\epsilon,\delta}^\wedge(x)$ is smooth, as we are away from the point where the mixed boundary conditions intersect.
Although the smoothness may depend on $x,\epsilon,\delta$, it is sufficient to justify the interchange of differentiation and integration for $x>0$.
Thanks to \eqref{compat graph}, we have
\begin{equation*}
\begin{aligned}
	& \frac{d}{d x} v_{\epsilon,\delta}^\wedge(\delta-\rho r(\epsilon)-3\epsilon) \\
	&= \frac{d}{d x} \frac{1}{\abs{B^{d-1}(0,\lambda_k \epsilon)}} \int_{B^{d-1}(0,\lambda_k \epsilon)} v_{\epsilon,\delta}(x,y) dy \Big|_{x = \delta-\rho r(\epsilon)-3\epsilon}	\\
	&= \frac{1}{\abs{B^{d-1}(0,\lambda_k \epsilon)}} \int_{B^{d-1}(0,\lambda_k \epsilon)} \frac{\partial}{\partial x}  v_{\epsilon,\delta}(\delta-\rho r(\epsilon)-3\epsilon,y) dy \\
	&= \frac{1}{\abs{B^{d-1}(0,\lambda_k \epsilon)}} \int_{C_\epsilon^k(\rho r(\epsilon)+ 3 \epsilon)} \frac{\partial}{\partial \nu_\epsilon^{in}} v_{\epsilon,\delta}(z) d\sigma(z) = -\kappa_{\epsilon,\delta}^k
\end{aligned}
\end{equation*}
since $\nu_\epsilon^{in} = -\nu_\epsilon^{out}$. 
Also, using integration by parts and the Neumann boundary condition, for $x>0$,
\begin{equation*}
	\int_{B^{d-1}(0,\lambda_k \epsilon)} \Delta_y  v_{\epsilon,\delta}(x,y) dy
	=\int_{\partial B^{d-1}(0,\lambda_k \epsilon)} \frac{\partial v_{\epsilon,\delta}(x,y)}{\partial \nu_y}  d\sigma(y) = 0,
\end{equation*}
which implies
\begin{equation*}
\begin{aligned}
	&\frac{d^2}{d x^2} v_{\epsilon,\delta}^\wedge(x)
	=\frac{d^2}{d x^2} \frac{1}{\abs{B^{d-1}(0,\lambda_k \epsilon)}} 
	\int_{B^{d-1}(0,\lambda_k \epsilon)} v_{\epsilon,\delta}(x,y) dy \\
	&=  \frac{1}{\abs{B^{d-1}(0,\lambda_k \epsilon)}} \int_{B^{d-1}(0,\lambda_k \epsilon)} \frac{\partial^2}{\partial x^2} v_{\epsilon,\delta}(x,y) dy \\
	&=  \frac{1}{\abs{B^{d-1}(0,\lambda_k \epsilon)}} \int_{B^{d-1}(0,\lambda_k \epsilon)} \left( \frac{\partial^2}{\partial x^2} v_{\epsilon,\delta}(x,y) + \Delta_y  v_{\epsilon,\delta}(x,y) \right)  dy
	=-1.
\end{aligned}
\end{equation*}
We conclude that $v_{\epsilon,\delta}^\wedge(x)$ satisfies the following ODE on $\Pi(\Omega_{2,\epsilon,\delta}^k)=[0,\delta-\rho r(\epsilon)-3\epsilon]$
\begin{equation}
\displaystyle
\begin{dcases}
	\frac{d^2}{dx^2}v_{\epsilon,\delta}^\wedge(x) = -1, \ \ x \in (0,\delta-\rho r(\epsilon)-3\epsilon) \\
	v_{\epsilon,\delta}^\wedge(0) = 0, \\
	\frac{d}{dx} v_{\epsilon,\delta}^\wedge (\delta-\rho_j r_j(\epsilon)-3\epsilon) = -\kappa_{\epsilon,\delta}^k.
\end{dcases}
\end{equation}
We can solve it and obtain
\begin{equation}\label{graph sol}
	v_{\epsilon,\delta}^\wedge(x) = -\frac{x^2}{2}+\left( (\delta-\rho r(\epsilon)-3\epsilon) -\kappa_{\epsilon,\delta}^k \right)x.
\end{equation}
In particular,
\begin{equation}\label{graph sol boundary}
	v_{\epsilon,\delta}^\wedge(\delta-\rho r(\epsilon)-3\epsilon) = \frac{(\delta - \rho r(\epsilon)-3\epsilon)^2}{2} -\kappa_{\epsilon,\delta}^k (\delta - \rho r(\epsilon)-3\epsilon).
\end{equation}

Note that, for any $y \in B^{d-1}(0, \lambda_k \epsilon)$,
\begin{equation}\label{osc est cal}
\begin{aligned}
	&\abs{v_{\epsilon,\delta}^\wedge(\delta-\rho r(\epsilon)-3\epsilon) - v_{\epsilon,\delta}(\delta-\rho r(\epsilon)-3\epsilon,y)} \\
	& \leq  \sup_{y',y'' \in B^{d-1}(0,\lambda_k \epsilon )} \abs{v_{\epsilon,\delta}(\delta-\rho r(\epsilon)-3\epsilon,y')-v_{\epsilon,\delta}(\delta-\rho r(\epsilon)-3\epsilon,y'')}.
\end{aligned}
\end{equation}
Thus, by Lemma \ref{exit est conti} and equation \eqref{graph sol boundary}, since we are dealing with small and intermediate balls, we have
\begin{equation*}
	v_{\epsilon,\delta}(\delta-\rho r(\epsilon)-3\epsilon,y) =  \frac{(\delta - \rho r(\epsilon)-3\epsilon)^2}{2} -\kappa_{\epsilon,\delta}^k (\delta - \rho r(\epsilon)-3\epsilon)+o(1),
\end{equation*}
uniformly for $y \in B^{d-1}(0,\lambda_k \epsilon )$.
In addition, the above equation holds for every $l$ such that $I_l \sim O$, we obtain
\begin{equation*}
	v_{\epsilon,\delta}(z) =  \frac{(\delta - \rho r(\epsilon)-3\epsilon)^2}{2} -\kappa_{\epsilon,\delta}^l (\delta - \rho r(\epsilon)-3\epsilon)+o(1)
\end{equation*}
uniformly in $C_{\epsilon}^l(\rho r(\epsilon)+3\epsilon)$, so that, for every $k,l$ such that $I_k,I_l \sim O$,
\begin{equation*}
\begin{aligned}
	\sup_{ z \in C_{\epsilon}^k(\rho r(\epsilon)+3\epsilon), z' \in C_{\epsilon}^l(\rho r(\epsilon)+3\epsilon)}\abs{ v_{\epsilon,\delta}(z) - v_{\epsilon,\delta}(z')}  = \abs{(\kappa_{\epsilon,\delta}^k - \kappa_{\epsilon,\delta}^l )(\delta - \rho r(\epsilon)-3\epsilon)}+o(1).
\end{aligned}
\end{equation*}
In particular, due to Lemma \ref{exit est conti}, this implies that $\abs{ \kappa_{\epsilon,\delta}^k -\kappa_{\epsilon,\delta}^l} \to 0$ as $\epsilon \to 0$.
Together with \eqref{compat sum} and \eqref{compat graph}, this implies
\begin{equation*}
	-\kappa^k_{\epsilon,\delta} = \frac{\rho^d r(\epsilon)^d V_d}{\sum_{k:I_k \sim O} \lambda_k^{d-1} \epsilon^{d-1} V_{d-1}} +o(1),
\end{equation*}
and thus we conclude that
\begin{equation}
	 v_{\epsilon,\delta}(z) = \frac{\delta^2}{2}+\alpha(0) \delta + o(1)
	 =\left( \frac{\delta^2}{2}+\alpha(0) \delta \right) (1+o(1)),
\end{equation}
uniformly in $C_\epsilon(\rho r(\epsilon)+3\epsilon)$, where $\alpha(0) = \lim_{\epsilon \to 0} \frac{\rho^d r(\epsilon)^d V_d}{\sum_{k:I_k \sim O}\lambda_k^{d-1} \epsilon^{d-1} V_{d-1}} < \infty$.

\subsection{Proof of Lemma \ref{small exit est 2} for large balls}\label{sec NET bottle neck}
In this subsection, we consider the case of large balls; that is, $r(\epsilon)^d \gg \epsilon^{d-1}$. 
The only difference is that Lemma \ref{exit est conti} in this case becomes
\begin{equation*}
	\sup_{z, z' \in C_{\epsilon}(\rho r(\epsilon)+3\epsilon)} \abs{v_{\epsilon,\delta}(z) - v_{\epsilon,\delta}(z')} 
	= o\left( \frac{r(\epsilon)^d}{\epsilon^{d-1}} \right).
\end{equation*}
Arguing similarly as above, we have for every $k$ such that $I_k \sim O$
\begin{equation*}
v_{\epsilon,\delta}(z) =  \frac{(\delta -  \rho r(\epsilon)-3\epsilon)^2}{2} -\kappa_{\epsilon,\delta}^k (\delta - \rho r(\epsilon)-3\epsilon)+o\left( \frac{r(\epsilon)^d}{\epsilon^{d-1}} \right),
\end{equation*}
uniformly in $C_{\epsilon}^k(\rho r(\epsilon)+3\epsilon)$
and
\begin{equation*}
	-\kappa^k_{\epsilon,\delta} = (1+o(1))\frac{\rho^d r(\epsilon)^d V_d}{\sum_{k:I_k \sim O} \lambda_k^{d-1} \epsilon^{d-1} V_{d-1}}.
\end{equation*}
We conclude that
\begin{equation}
	v_{\epsilon,\delta}(z) = \left( \frac{\delta^2}{2} 
		+ \frac{ \rho^d r(\epsilon)^d V_d}{\sum_{k:I_k \sim O} \lambda_k^{d-1} \epsilon^{d-1}  V_{d-1}} \delta \right) (1+o(1))
\end{equation}
uniformly in $C_{\epsilon}(\rho r(\epsilon)+3\epsilon)$.

\begin{remark}
\em{
	For $z \in \Omega_{1,\epsilon}$, let $\mathbb{E}_z \sigma^{\epsilon}$ be the expected hitting time of $C_\epsilon(\rho r(\epsilon)+3\epsilon)$ from $z$.
	By the strong Markov property, we have
\begin{equation}
\begin{aligned}
	v_{\epsilon,\delta}(z) &= \mathbb{E}_z \sigma^{\epsilon} + \left( \frac{\delta^2}{2} 
		+ \frac{ \rho^d r_\epsilon^d V_d}{ \sum_{k:I_k \sim O} \lambda_k^{d-1} \epsilon^{d-1}  V_{d-1}} \delta \right) (1+o(1)) \\
		& = \mathbb{E}_z \sigma^{\epsilon} + \left( \frac{\delta^2}{2} 
		+ \frac{ \abs{\Omega_{1,\epsilon}}}{  \abs{C_\epsilon(\rho r(\epsilon)+3\epsilon)}} \delta \right) (1+o(1)),
\end{aligned}
\end{equation}
which provides a rigorous justification of the connection formula derived in \cite[Equation 1]{NETthird}.

In Section \ref{sec exit time apriori}, we will show that the dominant term of $v_{\epsilon,\delta}(z)$ is given by
\begin{equation*}
	\frac{ \rho^d r(\epsilon)^d V_d}{\sum_{k:I_k \sim O} \lambda_k^{d-1} \epsilon^{d-1}  V_{d-1}} \delta
	= \left( \sum_{k:I_k \sim O}  \frac{ \lambda_k^{d-1} \epsilon^{d-1}  V_{d-1}}{\rho^d r(\epsilon)^d V_d \delta } \right)^{-1}.
\end{equation*}
This rigorously confirms the statement from \cite[Section 8.1.1]{NET} that "the reciprocal of the expected exit time from a domain with several bottlenecks is the sum of the reciprocal of the expected exit times from a domain with a single bottleneck."
}

\end{remark}

\section{A priori estimates}\label{sec apriori}
This section is devoted to prove a priori estimates for the exit time and exit place, which is crucial in our further analysis.
As our focus remains on the local behavior near a vertex, for simplicity of notations, we omit the subscript $j$ throughout this section.

\subsection{A priori estimates: exit time}\label{sec exit time apriori}

The main goal in this subsection is to prove Lemma \ref{small exit est 1.5}.

Before establishing the a priori estimates we need, we begin by decomposing the domain in several pieces.
\begin{figure}[ht]
  \centering
  \includegraphics[width=0.65\linewidth]{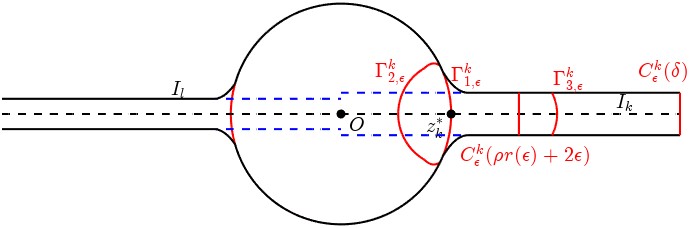}
  \caption{Domains associated with the edge $I_k$.}\label{fig:smooth dom time}
\end{figure}
We recall that we already defined
\begin{equation*}
\begin{aligned}
z_{k}^* &\coloneqq B(O,\rho r(\epsilon)) \cap I_k \\
C_\epsilon(\delta) & \coloneqq \{ z \in G_\epsilon: d_{\Gamma} (\Pi(z),O) = \delta \}, \\
C_\epsilon^k(\delta) & \coloneqq C_\epsilon(\delta) \cap \Pi^{-1}(I_k), \\
B_\epsilon(\delta) & \coloneqq \{ z \in G_\epsilon: d_{\Gamma} (\Pi(z),O) \leq \delta \}, \\
\Omega_{1,\epsilon} & \coloneqq B(O, \rho r(\epsilon)).
\end{aligned}
\end{equation*}
Now we define 
\begin{equation*}
\begin{aligned}
\Gamma_{1,\epsilon} &\coloneqq  \partial B(O,\rho r(\epsilon)) \setminus \partial G_\epsilon , \\
\Gamma_{1,\epsilon}^k & \coloneqq \Gamma_{1,\epsilon} \cap \Pi^{-1}(I_k), \\
\Gamma_{2,\epsilon}^k &\coloneqq \text{a smooth mollification of } \left\{ z \in \Omega_{1,\epsilon}  : d(z,z_{k}^*)=4 \lambda_k \epsilon \right\}, \\
\Gamma_{2,\epsilon} &\coloneqq \bigcup_k \Gamma_{2,\epsilon}^k, \\
\Gamma_{3,\epsilon}^k &\coloneqq \text{a smooth mollification of } C_\epsilon^k(\rho r(\epsilon) + 3 \epsilon ), \\
\Gamma_{3,\epsilon} &\coloneqq \bigcup_k \Gamma_{3,\epsilon}^k ,\\
\Omega_{4,\epsilon}^k & \coloneqq \text{the region enclosed by $\Gamma_{2,\epsilon}^k$, $\Gamma_{3,\epsilon}^k$ and $\partial G_\epsilon,$ } \\
\Omega_{4,\epsilon} &\coloneqq \bigcup_k \Omega_{4,\epsilon}^k.
\end{aligned}
\end{equation*}
Here, the smooth mollification of $\Gamma_{2,\epsilon}^k$ and $\Gamma_{3,\epsilon}^k$ is introduced to ensure that the image of $\partial \Omega_{4,\epsilon}^k$ under the transformation $z \mapsto \frac{z-z_{k}^*}{\epsilon}$ converges smoothly as $\epsilon \to 0$.
Moreover, we assume that the distance between $z_k^*$ and any point $z \in \Gamma_{2,\epsilon}^k$ lies between $3\lambda_k \epsilon$ and $5\lambda_k \epsilon$, and that $\Gamma^k_{3,\epsilon}$ is contained in the region bounded by $C_\epsilon^k \left(  \rho r(\epsilon) + \frac{5}{2}\epsilon \right), C_\epsilon^k \left(  \rho r(\epsilon) + \frac{7}{2}\epsilon \right) \text{ and } \partial G_\epsilon$.

In Section \ref{sec NET ball to connect}, we will study the escape problem from $\Gamma_{2,\epsilon}$ to $\Gamma_{1,\epsilon}$ inside $\Omega_{1,\epsilon}$.

In Section \ref{sec NET connect}, we will study the escape problem from either $\Gamma_{1,\epsilon}$ or $C_\epsilon(\rho r(\epsilon)+ 2 \epsilon)$ to $\Gamma_{3,\epsilon}$ with particular emphasis on the domain $\Omega_{4,\epsilon}$.

Finally, we define
\begin{equation*}
\begin{aligned}
\Omega_{5,\epsilon,\delta}^k & \coloneqq \text{ the region enclosed by $C_{\epsilon}^k(\rho r(\epsilon)+ 2 \epsilon)$, $C_{\epsilon}^k(\delta)$ and $\partial G_\epsilon$, } \\
\Omega_{5,\epsilon,\delta} &\coloneqq \bigcup_k \Omega_{5,\epsilon,\delta}^k ,
\end{aligned}
\end{equation*}
for $\delta \geq \rho r(\epsilon)+3\epsilon$.
In Section \ref{sec NET neck}, we will study the escape problem from $\Gamma_{3,\epsilon}$ to $C_{\epsilon}(\delta)$ with particular emphasis on the domain $\Omega_{5,\epsilon,\delta}$.

\subsubsection{Transitions in the ball}\label{sec NET ball to connect}
\begin{Lemma}\label{lem NET ball}
	Let $\sigma^{1,\epsilon}$ be the first hitting time of $\Gamma_{1,\epsilon}$.
	Then, we have,
	\begin{equation}
		\sup_{z \in \Gamma_{2,\epsilon}} \mathbb{E}_{z} \sigma^{1,\epsilon} \lesssim \frac{r(\epsilon)^d}{\epsilon^{d-2}}
	\end{equation}
	and
	\begin{equation}
		\sup \mathbb{E}_{z} \sigma^{1,\epsilon} \lesssim
		\begin{dcases} 
		r(\epsilon)^2 \ln \frac{r(\epsilon)}{\epsilon}, \ \ \ d=2, \\
		 \frac{r(\epsilon)^3}{\epsilon} , \ \ \ d = 3,
		\end{dcases}
	\end{equation}
	where supremum is taken over $z \in \Omega_{1,\epsilon}, d(z, z_k^*) \geq 2 \lambda_k \epsilon$ for all $k $ such that $I_k \sim O$.
\end{Lemma}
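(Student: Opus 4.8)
The key point is that $\Omega_{1,\epsilon} = B(O,\rho r(\epsilon))$ is, up to the mollifications near the necks, a ball of radius $\asymp r(\epsilon)$ with a few small windows (the sets $\Gamma_{1,\epsilon}^k$, of diameter $\asymp \epsilon$) cut out of its boundary; everywhere else the boundary is reflecting. So this is exactly a narrow escape problem from a ball in $\mathbb{R}^d$, and the plan is to produce explicit supersolutions of the associated boundary value problem
\begin{equation*}
\begin{dcases}
\Delta w = -1 & \text{in } \Omega_{1,\epsilon}, \\
w = 0 & \text{on } \Gamma_{1,\epsilon}, \\
\dfrac{\partial w}{\partial \nu} = 0 & \text{on } \partial\Omega_{1,\epsilon}\setminus\Gamma_{1,\epsilon},
\end{dcases}
\end{equation*}
whose solution is $\mathbb{E}_z\sigma^{1,\epsilon}$ (the escape region being $\Gamma_{1,\epsilon}$, not $\Gamma_{2,\epsilon}$; the statement just evaluates the solution on $\Gamma_{2,\epsilon}$ and on the bulk). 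By the maximum principle it suffices to exhibit smooth $w \geq 0$ with $\Delta w \leq -1$ in $\Omega_{1,\epsilon}$, $w = 0$ on $\Gamma_{1,\epsilon}$, and $\partial w/\partial\nu \leq 0$ on the reflecting part, and then read off $\sup_{\Gamma_{2,\epsilon}} w$ and $\sup_{\text{bulk}} w$; a standard stochastic-representation/optional-stopping argument then upgrades the PDE comparison to the bound on $\mathbb{E}_z\sigma^{1,\epsilon}$.

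\textbf{Construction of the supersolution.} I would build $w$ as a sum of two pieces. First a ``global'' piece handling the $-1$: on a ball of radius $R \asymp r(\epsilon)$ the function $\frac{1}{2d}(R^2 - |z-O|^2)$ solves $\Delta(\cdot) = -1$, but its normal derivative on the reflecting part of $\partial B(O,\rho r(\epsilon))$ points the wrong way only mildly, and on the windows $\Gamma_{1,\epsilon}^k$ it is $O(r(\epsilon)\cdot\epsilon) \ll r(\epsilon)^2$, not zero — so I instead use a quadratic of the form $a(\rho r(\epsilon))^2 - \tfrac{1}{2d}|z-O|^2$ plus harmonic corrections. The cleaner route: dominate $\mathbb{E}_z\sigma^{1,\epsilon}$ by the exit time from $B(O,\rho r(\epsilon))$ with \emph{all} of $\partial B$ absorbing, which gives the pure quadratic $\frac{1}{2d}((\rho r(\epsilon))^2 - |z-O|^2) \lesssim r(\epsilon)^2$ — but that is too crude since it ignores that the windows are small, i.e. it would give $r(\epsilon)^2$ rather than the claimed $r(\epsilon)^d/\epsilon^{d-2}$ etc. To capture the smallness of the windows I add a second, ``local'' piece: near each window $\Gamma_{1,\epsilon}^k$ the escape looks like escape through a hole of size $\epsilon$ in the boundary of a half-space (or ball) of size $r(\epsilon)$, whose mean exit time scales like $r(\epsilon)^2 \ln(r(\epsilon)/\epsilon)$ in $d=2$ and $r(\epsilon)^3/\epsilon$ in $d=3$ (this is the classical narrow-escape asymptotics, e.g. as in \cite{NET}, \cite{LPNET}, \cite{NET4}). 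Concretely I would take a singular harmonic function centered at (a point just outside) the window — in $d=3$ something like $\epsilon/|z - p_k|$ truncated, in $d=2$ a logarithm — scaled so that it dominates $1$ on $\Gamma_{1,\epsilon}$ and is $O$ of the stated bound on $\Gamma_{2,\epsilon}$ and in the bulk, and check it is superharmonic with the correct boundary inequalities after mollification. The bound $\sup_{\Gamma_{2,\epsilon}}\mathbb{E}_z\sigma^{1,\epsilon} \lesssim r(\epsilon)^d/\epsilon^{d-2}$ reflects that from distance $\asymp\epsilon$ away from the window the particle still has to do the full narrow escape but starts ``inside the funnel,'' so only the solid-angle factor $\epsilon^{d-1}/r(\epsilon)^{d-1}$ times the diffusive time $r(\epsilon)^2$ survives — that is, one gains a factor $(\epsilon/r(\epsilon))^{d-1}$ off a $r(\epsilon)^2 \cdot (r(\epsilon)/\epsilon)$-type local time, matching $r(\epsilon)^2 (\epsilon/r(\epsilon))^{d-2} = r(\epsilon)^d/\epsilon^{d-2}$ wait — more carefully, one compares to the escape time from the small ball $B(z_k^*, c\,r(\epsilon))$ through its own window, which is $\asymp r(\epsilon)^d/\epsilon^{d-2}$ by the single-window narrow-escape formula, and this requires showing that from $\Gamma_{2,\epsilon}$ the particle reaches $\Gamma_{1,\epsilon}$ no slower, in expectation, than it reaches the window of that auxiliary ball. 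Multiplicity of windows only helps (more ways out), so a union/coupling argument over $k$ suffices, and the ``bulk'' estimate follows because from a generic interior point at distance $\gtrsim\epsilon$ from every $z_k^*$ the particle first reaches $\Gamma_{2,\epsilon}$ (or some $C_\epsilon^k$) in time $\lesssim r(\epsilon)^2$ and then applies the $\Gamma_{2,\epsilon}$ bound.

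\textbf{Carrying it out and the main obstacle.} So the steps are: (i) reduce the first bound to dominating the exit time from the auxiliary ``ball with one window'' $B(z_k^*, c r(\epsilon))$, via a comparison/coupling of the two hitting times that uses only the inclusion of domains and the geometry of $\Gamma_{2,\epsilon}$ sitting at distance $\asymp\epsilon$ from $z_k^*$; (ii) prove the single-window narrow-escape bound for that auxiliary ball by an explicit supersolution — quadratic term for the $-1$ plus a singular harmonic ``window'' term — being careful that the mollified window boundary does not spoil the Neumann inequality (here the uniform-in-$\epsilon$ smoothness of $\partial\Omega_{4,\epsilon}^k$ under $z\mapsto(z-z_k^*)/\epsilon$ noted in the excerpt is what is used); (iii) for the bulk estimate, run the particle until it hits $\Gamma_{2,\epsilon} \cup \bigcup_k C_\epsilon^k(\cdots)$, bound that time by $\lesssim r(\epsilon)^2$ by comparison with a plain ball, and glue with (i) via the strong Markov property — the $r(\epsilon)^2\ln(r(\epsilon)/\epsilon)$ and $r(\epsilon)^3/\epsilon$ then come out as $\sup$ over the window region of the singular term. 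I expect the main obstacle to be step (ii): getting a supersolution whose normal derivative is genuinely $\leq 0$ on the entire reflecting boundary including the mollified transition regions near the necks, and simultaneously controlling it from above by the stated rate — the singular harmonic term wants to blow up near the window, so one must truncate/match it to the quadratic at the right scale and verify the matching is superharmonic. A secondary technical point is making the comparison in step (i) rigorous rather than heuristic: the honest way is to note $\mathbb{E}_z\sigma^{1,\epsilon}$ is the solution of the BVP above, that $\Omega_{1,\epsilon}$ minus a neighborhood of $O$ is contained in a translate/rescaling of the auxiliary domain, and invoke the maximum principle on the larger domain's supersolution restricted back — so really (i) folds into (ii) once the supersolution is built on a domain large enough to contain $\Omega_{1,\epsilon}$ with its windows.
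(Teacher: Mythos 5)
The paper does not build supersolutions at all. Its proof rescales $z \mapsto (z-O)/(\rho r(\epsilon))$ and $t \mapsto \rho^2 r(\epsilon)^2 t$, so that $\mathbb{E}_z\sigma^{1,\epsilon} = \rho^2 r(\epsilon)^2\,\tilde{\mathbb{E}}_{z'}\tilde{\sigma}^{\epsilon'}$ with $\epsilon' = \epsilon/(\rho r(\epsilon))$, converting the problem into the classical narrow escape problem from the \emph{unit} ball with windows of radius $\asymp\epsilon'$, and then cites the asymptotics proved in \cite{NET4} (Section 3 for $d=3$; Sections 5.1 and 5.3 for $d=2$, single and multiple windows respectively). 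Your plan — a two-part supersolution (quadratic term for the source plus a singular harmonic barrier per window) together with a reduction to a single-window auxiliary ball — is a genuinely different route that, in effect, re-derives the Chen--Friedman estimates from scratch. It is standard in spirit and could be carried out, but it trades a ten-line citation-and-rescaling argument for a nontrivial barrier construction plus a coupling/comparison across multiple windows, and several of your intermediate claims are off.

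Three specific points you should fix. First, the ``cleaner route'' aside has the comparison inequality backwards: the exit time from $B(O,\rho r(\epsilon))$ with all of $\partial B$ absorbing is a \emph{lower} bound for $\mathbb{E}_z\sigma^{1,\epsilon}$ (hitting the whole sphere is easier than hitting a small piece of it), so it does not ``dominate'' the narrow escape time. Second, you assert that the escape time from the auxiliary ball $B(z_k^*, c\,r(\epsilon))$ through a single window is $\asymp r(\epsilon)^d/\epsilon^{d-2}$ and that this gives the $\Gamma_{2,\epsilon}$ bound; but the \emph{bulk} single-window escape time is $r(\epsilon)^2\ln(r(\epsilon)/\epsilon)$ in $d=2$ (with the logarithm), matching the second estimate of the lemma, not the first. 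The $\Gamma_{2,\epsilon}$ bound $r(\epsilon)^2$ in $d=2$ specifically uses that points of $\Gamma_{2,\epsilon}$ are at distance $\asymp\epsilon$ from the window, so the logarithmic factor is not yet present — precisely what \cite[Section 5.1]{NET4} records, and what your ``funnel'' heuristic gestures at but does not cleanly state; note also that in $d=3$ the $\Gamma_{2,\epsilon}$ bound and the bulk bound coincide, so the gain is a $d=2$ phenomenon only. Third, ``multiplicity of windows only helps, so a union/coupling argument suffices'' needs to be made precise if you insist on the auxiliary-ball reduction: the auxiliary domain is not contained in $\Omega_{1,\epsilon}$ and has different boundary conditions, so you cannot simply restrict a supersolution — the correct move, as you suspect in your last sentence, is to build the supersolution directly on $\Omega_{1,\epsilon}$ (one quadratic term plus a sum over $k$ of truncated singular harmonic terms, one per window, verifying the Neumann inequality on the mollified boundary), which makes step (i) unnecessary rather than something to be patched by coupling.
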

\begin{proof}
	Consider the process 
\begin{equation*}
	\tilde{Z}^\epsilon(t) =  \frac{1}{\rho r (\epsilon)} \left( Z^\epsilon(\rho^2 r(\epsilon)^2 t) - O \right),
\end{equation*}	
	which reduces the problem to a narrow escape problem in the unit ball. 
	Actually, we have
	\begin{equation*}
		\mathbb{E}_{z} \sigma^{1,\epsilon} = \rho^2 r(\epsilon)^2 \tilde{\mathbb{E}}_{z'} \tilde{\sigma}^{\epsilon'},
	\end{equation*}
	where $z' = \frac{z-O}{\rho r(\epsilon)}$ and $\tilde{\mathbb{E}}_{z'} \tilde{\sigma}^{\epsilon'}$ denotes the expected escape time from the unit ball through several small escape regions whose radii are of order $\epsilon' = \frac{\epsilon}{\rho r(\epsilon)}$.
The narrow escape problem in the unit ball has been extensively studied in \cite{NET4}.
	For $d=3$, using the result in \cite[Section 3]{NET4}, we have
	\begin{equation*}
		\sup \tilde{\mathbb{E}}_{z'} \tilde{\sigma}^{\epsilon'}
		\lesssim \frac{1}{\epsilon'} \lesssim \frac{r(\epsilon)}{\epsilon},
	\end{equation*}		
	where the supremum is taken over $z' \in B(0,1), d(z',(z_k^*)') \geq \frac{2 \lambda_k}{\rho} \epsilon'$ for all $k$ such that $I_k \sim O$.
	
	For $d=2$, if there is only one escape region, \cite[Section 5.1]{NET4} shows \begin{equation*}
		\sup \tilde{\mathbb{E}}_{z'} \tilde{\sigma}^{\epsilon'}
		\lesssim 1,
	\end{equation*}
	where the supremum is taken over $z' \in B(0,1), d(z',(z_k^*)') \asymp \epsilon'$ for some $k$ such that $I_k \sim O$,
	and
	\begin{equation*}
		\sup \tilde{\mathbb{E}}_{z'} \tilde{\sigma}^{\epsilon'}
		\lesssim \ln \frac{1}{\epsilon'} \lesssim \ln \frac{r(\epsilon)}{\epsilon},
	\end{equation*}		
	where the supremum is taken over $z' \in B(0,1), d(z',(z_k^*)') \geq \frac{2 \lambda_k}{\rho} \epsilon'$ for all $k$ such that $I_k \sim O$.
	When multiple escape windows are present, one can obtain the same result by solving the algebraic system in \cite[Section 5.3]{NET4}.
	In our setting, since we only require a rough estimate and all escape window radii are of the same order, verifying the result is straightforward.
\end{proof}

\subsubsection{Transitions in connecting parts}\label{sec NET connect}
\begin{Lemma}\label{lem NET connect}
	Let $\sigma_s^{2,\epsilon}$ be the first hitting time of $\Gamma_{3,\epsilon}$.
	Then we have
	\begin{equation}
		\sup_{z \in \Gamma_{1,\epsilon} \cup C_\epsilon(\rho r(\epsilon) + 2 \epsilon)} \mathbb{E}_{z} \sigma_s^{2,\epsilon} \lesssim \frac{r(\epsilon)^d}{\epsilon^{d-2}}.
	\end{equation}
\end{Lemma}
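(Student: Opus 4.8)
The plan is to estimate the escape time from the ``connecting'' region $\Omega_{4,\epsilon}^k$ by a rescaling argument, just as in Lemma~\ref{lem NET ball}, except that now the rescaled domain is not a fixed ball but a limiting cylinder-with-junction domain. Concretely, I would first reduce to a single edge: since the sets $\Gamma_{3,\epsilon}^k$ all lie at graph-distance $\asymp \rho r(\epsilon)+\epsilon$ from $O$, a particle started on $\Gamma_{1,\epsilon}$ or on $C_\epsilon(\rho r(\epsilon)+2\epsilon)$ reaches $\Gamma_{3,\epsilon} = \bigcup_k \Gamma_{3,\epsilon}^k$ no later than it reaches the particular $\Gamma_{3,\epsilon}^k$ associated to the edge on which it sits (or, for $z\in\Gamma_{1,\epsilon}$, no later than it first enters some neck and then exits through that neck's $\Gamma_{3,\epsilon}^k$). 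So it suffices to bound the hitting time of $\Gamma_{3,\epsilon}^k$ inside the sub-domain $\Omega_{1,\epsilon}\cup\Omega_{4,\epsilon}^k$ started from $\Gamma_{1,\epsilon}^k\cup C_\epsilon^k(\rho r(\epsilon)+2\epsilon)$; strong Markov then pieces the two regimes together.

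Next I would apply the change of variables $z\mapsto (z-z_k^*)/\epsilon$ used in the construction of the mollified domain. Under this map, $Z^\epsilon(t)$ becomes (after the time change $t\mapsto \epsilon^2 t$) a reflected Brownian motion in the rescaled domain, and by construction of $G_\epsilon$ near the cusp, the image of $\partial G_\epsilon\cap B(z_k^*,10\lambda_k\epsilon)$ converges \emph{smoothly} as $\epsilon\to 0$ to a fixed limiting boundary (a half-cylinder of radius $\lambda_k$ attached to a half-space, smoothed at the junction). The images of $\Gamma_{2,\epsilon}^k$ and $\Gamma_{3,\epsilon}^k$ converge to fixed cross-sections at distance $\asymp 1$ apart. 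On this fixed (uniformly-in-$\epsilon$) limiting geometry, the expected hitting time of the image of $\Gamma_{3,\epsilon}^k$, started from the image of $\Gamma_{1,\epsilon}^k\cup C_\epsilon^k(\rho r(\epsilon)+2\epsilon)$, is bounded by a constant (the relevant sub-domain is bounded with smooth enough boundary and the target set has positive surface measure bounded below; so one can dominate by an exit time from a fixed ball, cf.\ the argument of Lemma~\ref{lem NET ball}). Undoing the time change multiplies by $\epsilon^2$; but a subtlety is that the rescaled starting set $C_\epsilon^k(\rho r(\epsilon)+2\epsilon)$ and the region $\Gamma_{1,\epsilon}^k$ are, in the original coordinates, separated at distance $\asymp r(\epsilon)$ along the ball boundary, so the trip from the far side of $\Gamma_{1,\epsilon}$ to the mouth of the relevant neck is a macroscopic excursion in $\Omega_{1,\epsilon}$ of expected duration controlled by Lemma~\ref{lem NET ball}, namely $\lesssim r(\epsilon)^d/\epsilon^{d-2}$. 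Combining: the in-neck/junction part costs $O(\epsilon^2)$, the in-ball navigation part costs $\lesssim r(\epsilon)^d/\epsilon^{d-2}$, and since $r(\epsilon)\gg\epsilon$ the latter dominates, giving the claimed bound.

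I would organize the argument as: (i) reduce to a fixed edge $k$ via strong Markov and a monotonicity-of-hitting-times observation; (ii) use Lemma~\ref{lem NET ball} to get from an arbitrary point of $\Gamma_{1,\epsilon}$ into $\Omega_{4,\epsilon}^k$ (more precisely to $\Gamma_{2,\epsilon}^k$) within expected time $\lesssim r(\epsilon)^d/\epsilon^{d-2}$; (iii) rescale by $\epsilon$ near $z_k^*$ and invoke the uniform smooth convergence of the mollified domain to bound the expected hitting time of $\Gamma_{3,\epsilon}^k$ from $\Gamma_{2,\epsilon}^k$ (and from $C_\epsilon^k(\rho r(\epsilon)+2\epsilon)$) by $C\epsilon^2$; (iv) add the two contributions and note $\epsilon^2\lesssim r(\epsilon)^d/\epsilon^{d-2}$. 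The main obstacle I anticipate is step~(iii): making rigorous that the rescaled reflected Brownian motions converge, uniformly in $\epsilon$, in a way strong enough to transfer the \emph{uniform} (over all starting points in the relevant set, and over small $\epsilon$) bound on expected hitting times — i.e.\ that the hitting-time functionals are uniformly bounded and do not degenerate as the target cross-section or the domain boundary wiggles with $\epsilon$. This is exactly the type of uniform-in-$\epsilon$ regularity whose detailed justification the paper defers to Appendix~\ref{App reg per}, so here I would state the needed convergence, cite that appendix, and keep the main-text argument at the level of the scaling bookkeeping above.
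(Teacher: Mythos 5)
Your overall bookkeeping lands on the right answer, and the scaling by $z \mapsto (z - z_k^*)/\epsilon$ combined with the regular-perturbation argument of Appendix~\ref{App reg per} is indeed the right tool for the neck region, but there is a real gap in step~(iii). You claim that, after rescaling, the expected hitting time of $\Gamma_{3,\epsilon}^k$ started from $\Gamma_{2,\epsilon}^k$ (or from $C_\epsilon^k(\rho r(\epsilon)+2\epsilon)$) is $\lesssim \epsilon^2$, on the grounds that ``the relevant sub-domain is bounded.'' But after the $\epsilon$-rescaling the domain $\Omega_{4,\epsilon}^k$ becomes bounded, while the ball $\Omega_{1,\epsilon}$ becomes enormous (radius $\asymp r(\epsilon)/\epsilon \to \infty$), and the process is not confined to $\tilde\Omega_{4,\epsilon}^k$: started from $\Gamma_{2,\epsilon}^k$, with probability bounded below \emph{uniformly in $\epsilon$} it escapes back into the ball through $\Gamma_{2,\epsilon}^k$'s inner side before ever reaching $\Gamma_{3,\epsilon}^k$. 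Consequently, the expected time to hit $\Gamma_{3,\epsilon}^k$ from $\Gamma_{2,\epsilon}^k$ is \emph{not} $O(\epsilon^2)$; it is itself of order $r(\epsilon)^d/\epsilon^{d-2}$, because it entails repeated macroscopic excursions in the ball. What is genuinely $\lesssim \epsilon^2$ is the exit time from $\Omega_{4,\epsilon}^k$ to $\Gamma_{2,\epsilon}^k \cup \Gamma_{3,\epsilon}^k$ — that is, one ``trial'' — and this is what Appendix~\ref{App reg per} actually furnishes (estimate~\eqref{reg per 2}).

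This means the proof cannot be a linear ``first navigate the ball, then cross the neck'' two-step. One needs the alternating hitting-time scheme $\sigma_n^{2,\epsilon}$, $\tau_n^{2,\epsilon}$ and the geometric random variable $N^{1,\epsilon}$ counting the trials until a neck-crossing succeeds; the essential ingredients are (a) $\mathbb{E}_z N^{1,\epsilon} \lesssim 1$ uniformly in $\epsilon$, which comes from a uniform lower bound on the per-trial success probability (the other half of Appendix~\ref{App reg per}, estimate~\eqref{reg per 1}, proved via a Harnack-up-to-the-boundary argument), and (b) the Wald-type identity~\eqref{eq wald type identity}--\eqref{eq wald type identity 2} that combines the per-trial $\epsilon^2$ cost with the between-trial $r(\epsilon)^d/\epsilon^{d-2}$ cost from Lemma~\ref{lem NET ball} into an overall $\mathbb{E}_z N^{1,\epsilon}\,(\epsilon^2 + r(\epsilon)^d/\epsilon^{d-2}) \asymp r(\epsilon)^d/\epsilon^{d-2}$. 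Your proposal has the correct per-trial cost and the correct between-trial cost but omits the trial-counting layer, and as written its key inequality in step~(iii) is false. If you insert the geometric-trials/Wald structure and replace ``hitting time of $\Gamma_{3,\epsilon}^k$ is $\lesssim \epsilon^2$'' with ``exit time from $\Omega_{4,\epsilon}^k$ is $\lesssim \epsilon^2$ and the success probability is $\gtrsim 1$,'' the argument aligns with the paper's.
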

\begin{proof}
We first consider a sequence of stopping times 
\begin{equation*}
\begin{aligned}
	\sigma_n^{2,\epsilon} &\coloneqq \inf \{t \geq \tau_n^{2,\epsilon} : Z^\epsilon(t) \in \Gamma_{2,\epsilon} \cup \Gamma_{3,\epsilon} \}, \\
	\tau_n^{2,\epsilon} &\coloneqq \inf \{ t \geq \sigma_{n-1}^{2,\epsilon} : Z^\epsilon(t) \in \Gamma_{1,\epsilon} \cup C_\epsilon(\rho r(\epsilon) + 2 \epsilon) \},
\end{aligned}
\end{equation*}
with $\tau_0^{2,\epsilon} \coloneqq 0$.
Let $N^{1,\epsilon}$ be the random number such that $\sigma_{N^{1,\epsilon}}^{2,\epsilon} = \sigma_s^{2,\epsilon}$; that is, the numbers of trials needed to successfully escape to $\Gamma_{3,\epsilon}$ (instead of $\Gamma_{2,\epsilon}$).
We claim that the following two estimates hold
\begin{equation}\label{reg per 1}
	\sup_{z \in \Gamma_{1,\epsilon} \cup C_\epsilon(\rho r(\epsilon) + 2 \epsilon) } \mathbb{E}_z N^{1,\epsilon} \lesssim 1
\end{equation}
and
\begin{equation}\label{reg per 2}
	\sup_{z \in \Gamma_{1,\epsilon} \cup C_\epsilon(\rho r(\epsilon) + 2 \epsilon)} \mathbb{E}_z \sigma_0^{2,\epsilon} \lesssim \epsilon^2.
\end{equation}

It is not difficult to see that equation \eqref{reg per 1} and \eqref{reg per 2} depend only on the exit problem in $\Omega_{4,\epsilon}^k$. 
Indeed, for \eqref{reg per 1} it suffices to show that for each $k$, there exists a constant $c_k>0$ such that
\begin{equation}\label{reg per 3}
	\inf_{z \in \Gamma_{1,\epsilon}^k \cup C_\epsilon^k(\rho r(\epsilon) + 2 \epsilon)} \mathbb{P}_z \left( Z^\epsilon(\sigma_0^{2,\epsilon}) = \Gamma_{3,\epsilon} \right) \geq c_k,
\end{equation}
which implies
\begin{equation*}
	\sup_{z \in \Gamma_{1,\epsilon} \cup C_\epsilon(\rho r(\epsilon) + 2 \epsilon) } \mathbb{E}_z N^{1,\epsilon} 
	\leq  \frac{1}{\min_{k:I_k \sim O} c_k}.
\end{equation*}
Therefore, it is enough to analyze the exit problem within each $\Omega_{4,\epsilon}^k$. 
Our strategy is to perform the change of variables $z \mapsto \frac{z-z_k^*}{\epsilon}$ within the domain $\Omega_{4,\epsilon}^k$ in order to reduce or eliminate its dependence on $\epsilon$. 
If the transformed domain were exactly independent of $\epsilon$, then \eqref{reg per 2} would follow directly from the Brownian scaling property, and \eqref{reg per 1} would follow from the fact that the success probability in each trial is independent of $\epsilon$; that is, \eqref{reg per 3} would hold with equality, implying that $N^{1,\epsilon}$ remains of constant order.
However, the curvature induced by $r(\epsilon)$ means that the rescaled domain still retains some mild $\epsilon$-dependence. 
Nevertheless, estimates \eqref{reg per 1} and \eqref{reg per 2} remain valid.
The key idea is to regard the domain as a regular perturbation of a fixed smooth domain.
By applying an additional suitable change of variables, the problem can be reformulated on a fixed domain with coefficients that depend smoothly on $\epsilon$.
A detailed justification of this procedure is given in Appendix \ref{App reg per}, where we prove \eqref{reg per 1} and \eqref{reg per 2}.

Finally, for all $z \in \Gamma_{1,\epsilon} \cup C_\epsilon(\rho r(\epsilon) + 2 \epsilon)$, we have
\begin{equation}\label{eq wald type identity}
\begin{aligned}
	&\mathbb{E}_{z} \sigma_s^{2,\epsilon} 
	=\sum_{l=0}^\infty \mathbb{E}_{z} \left (\sigma^{2,\epsilon}_{N^{1,\epsilon}} \mathbbm{1}_{ \{N^{1,\epsilon}=l \} } \right) \\
	&= \sum_{l=0}^\infty \mathbb{E}_z \left[  \sum _{i=0}^l  \left( \sigma_i^{2,\epsilon} - \tau_i^{2,\epsilon} \right) \mathbbm{1}_{ \{N^{1,\epsilon}=l \} } \right]
	+ \sum_{l=1}^\infty \mathbb{E}_z \left[  \sum _{i=1}^l \left( \tau_i^{2,\epsilon} - \sigma_{i-1}^{2,\epsilon} \right) \mathbbm{1}_{ \{N^{1,\epsilon}=l \} }  \right]  \\
	&= \sum_{i=0}^\infty \sum _{l=i}^\infty \mathbb{E}_z \left[    \left( \sigma_i^{2,\epsilon} - \tau_i^{2,\epsilon} \right) \mathbbm{1}_{ \{N^{1,\epsilon}=l \} } \right]
	+ \sum_{i=1}^\infty \sum _{l=i}^\infty \mathbb{E}_z \left[   \left( \tau_i^{2,\epsilon} - \sigma_{i-1}^{2,\epsilon} \right) \mathbbm{1}_{ \{N^{1,\epsilon}=l \} }  \right]  \\
	&=  \sum_{i=0}^\infty \mathbb{E}_z \left[    \left( \sigma_i^{2,\epsilon} - \tau_i^{2,\epsilon} \right) \mathbbm{1}_{ \{N^{1,\epsilon} \geq i \} } \right]
	+ \sum_{i=1}^\infty \mathbb{E}_z \left[   \left( \tau_i^{2,\epsilon} - \sigma_{i-1}^{2,\epsilon} \right) \mathbbm{1}_{ \{N^{1,\epsilon} \geq i \} }  \right] .
\end{aligned}
\end{equation}
Notice that $\{N^{1,\epsilon} \geq i \} = \{N^{1,\epsilon} \leq i-1\}^c \in \mathcal{F}_{\sigma_{i-1}^{2,\epsilon}} \subset \mathcal{F}_{\tau_{i}^{2,\epsilon}}$, so that, by the strong Markov property, 
\begin{equation}\label{eq wald type identity 2}
\begin{aligned}
	&\mathbb{E}_{z} \sigma_s^{2,\epsilon} 
	=  \sum_{i=0}^\infty \mathbb{E}_z \left[    \left( \sigma_i^{2,\epsilon} - \tau_i^{2,\epsilon} \right) \mathbbm{1}_{ \{N^{1,\epsilon} \geq i \} } \right]
	+ \sum_{i=1}^\infty \mathbb{E}_z \left[   \left( \tau_i^{2,\epsilon} - \sigma_{i-1}^{2,\epsilon} \right) \mathbbm{1}_{ \{N^{1,\epsilon} \geq i \} }  \right]  \\
	&=  \sum_{i=0}^\infty  \mathbb{E}_z \left[ \mathbbm{1}_{ \{N^{1,\epsilon} \geq i \} }  \mathbb{E}_{Z^\epsilon(\tau_i^{2,\epsilon})}  \sigma_0^{2,\epsilon}  \right]
	+ \sum_{i=1}^\infty \mathbb{E}_z \left[ \mathbbm{1}_{ \{N^{1,\epsilon} \geq i \} } \mathbb{E}_{Z^\epsilon(\sigma_{i-1}^{2,\epsilon})}  \tau_1^{2,\epsilon}   \right] \\
	&\lesssim  \sum_{i=0}^\infty \mathbb{P}_z (N^{1,\epsilon} \geq i) \epsilon^{2} + \sum_{i=1}^\infty \mathbb{P}_z (N^{1,\epsilon} \geq i) \frac{r(\epsilon)^d}{\epsilon^{d-2}} \\
	&\asymp \mathbb{E}_z N^{1,\epsilon} \left( 
	\epsilon^2 + \frac{r(\epsilon)^d}{\epsilon^{d-2}}
	\right) \asymp  \frac{r(\epsilon)^d}{\epsilon^{d-2}},
\end{aligned}
\end{equation}
since, by Lemma \ref{lem NET ball}, for all $i =1,2,\cdots, N^{1,\epsilon}$, we have  $ Z^\epsilon(\sigma_{i-1}^{2,\epsilon}) \in \Gamma_{2,\epsilon}$ and thus
\begin{equation*}
	\mathbb{E}_{Z^\epsilon(\sigma_{i-1}^{2,\epsilon})} \tau_1^{2,\epsilon}  	
	\leq \sup_{z'' \in \Gamma_{2,\epsilon}} \mathbb{E}_{z''} \sigma^{1,\epsilon} 
	\lesssim \frac{r(\epsilon)^d}{\epsilon^{d-2}}, \ \ \ \mathbb{P} \ a.s.
\end{equation*}
\end{proof}

\subsubsection{Transitions in cylinders}\label{sec NET neck}
\begin{Lemma}\label{lem NET neck}
	Recall that $\sigma^{\epsilon,\delta}$ is the first hitting time of $C_\epsilon(\delta)$.
	We have
	\begin{equation}
		\sup_{z \in \Gamma_{3,\epsilon}} \mathbb{E}_{z} \sigma^{\epsilon,\delta} \lesssim \delta^2 + \frac{r(\epsilon)^d}{\epsilon^{d-1}} \delta.
	\end{equation}
\end{Lemma}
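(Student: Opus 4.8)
The plan is to estimate $\mathbb{E}_z \sigma^{\epsilon,\delta}$ for $z \in \Gamma_{3,\epsilon}$ by decomposing the trajectory according to the alternating excursions between the cylindrical neck region $\Omega_{5,\epsilon,\delta}$ and the mollified cross-section $\Gamma_{3,\epsilon}$, combined with a one-dimensional reduction inside the cylinder. First I would introduce the sequence of stopping times $\tau_n^{3,\epsilon}$ (successive hits of $\Gamma_{3,\epsilon}$) and $\sigma_n^{3,\epsilon}$ (successive hits of $C_\epsilon(\rho r(\epsilon)+\tfrac{5}{2}\epsilon)\cup C_\epsilon(\delta)$ after $\tau_n^{3,\epsilon}$), so that between consecutive visits to $\Gamma_{3,\epsilon}$ the process either escapes to $C_\epsilon(\delta)$ (success) or returns to the inner boundary near the ball. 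Let $N^{2,\epsilon}$ be the number of trials until success. As in the proof of Lemma \ref{lem NET connect}, a Wald-type identity (exactly the telescoping computation in \eqref{eq wald type identity}--\eqref{eq wald type identity 2}) reduces the bound to controlling (i) $\mathbb{E}_z N^{2,\epsilon}$, (ii) the expected duration of one ``failed'' excursion starting from $\Gamma_{3,\epsilon}$, and (iii) the expected duration of the final successful excursion.

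For step (i), the key point is that once the process is in the neck $\Omega_{5,\epsilon,\delta}$ its projection onto $I_k$ behaves like a one-dimensional reflected/absorbed Brownian motion on an interval of length $\asymp \delta$ with one end near $\rho r(\epsilon)+\tfrac{5}{2}\epsilon$ and the other at $\delta$; starting from $\Gamma_{3,\epsilon}$ (at distance $\asymp\epsilon$ from the inner end but $\asymp\delta$ from the outer end), the gambler's-ruin estimate gives that the probability of reaching $C_\epsilon(\delta)$ before returning to the ball side is $\asymp \epsilon/\delta$. Hence $\mathbb{E}_z N^{2,\epsilon} \lesssim \delta/\epsilon$. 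For step (ii), a failed excursion decomposes further into: the time to go from $\Gamma_{3,\epsilon}$ back to $\Gamma_{1,\epsilon}\cup C_\epsilon(\rho r(\epsilon)+2\epsilon)$ (handled inside the connecting region and the ball — of order $r(\epsilon)^d/\epsilon^{d-2}$ by Lemmas \ref{lem NET ball}, \ref{lem NET connect}, or simply $\lesssim \epsilon^2$ if it immediately turns back within the cylinder), plus the time spent wandering in the cylinder before the decision is made, which is $\lesssim \epsilon\delta$ by the one-dimensional occupation-time estimate for the projection (a Brownian motion started $O(\epsilon)$ from a reflecting-type configuration spends $O(\epsilon\delta)$ time before hitting distance $\delta$ or returning). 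For step (iii), the successful excursion from $\Gamma_{3,\epsilon}$ to $C_\epsilon(\delta)$ conditioned on success has expected length $\lesssim \delta^2$ by the standard one-dimensional estimate (the projected process, conditioned to hit $\delta$, takes time $O(\delta^2)$). Combining via the Wald identity,
\begin{equation*}
\sup_{z\in\Gamma_{3,\epsilon}}\mathbb{E}_z\sigma^{\epsilon,\delta}
\lesssim \mathbb{E}_z N^{2,\epsilon}\left(\epsilon\delta + \frac{r(\epsilon)^d}{\epsilon^{d-2}}\right) + \delta^2
\lesssim \frac{\delta}{\epsilon}\left(\epsilon\delta + \frac{r(\epsilon)^d}{\epsilon^{d-2}}\right) + \delta^2
\asymp \delta^2 + \frac{r(\epsilon)^d}{\epsilon^{d-1}}\delta,
\end{equation*}
which is the claimed bound.

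The main obstacle I anticipate is making the one-dimensional reduction rigorous and uniform in $\epsilon$: the genuine process lives in the $d$-dimensional cylinder $\Omega_{5,\epsilon,\delta}^k$ with its mollified caps, not on the interval, so the projection $\Pi\circ Z^\epsilon$ is not itself Markov, and I need comparison arguments (e.g., coupling with one-dimensional Brownian motions, or Green's function bounds on the thin cylinder, as in the averaging estimates already used in \cite{SDE2}) to transfer the gambler's-ruin and occupation-time estimates to $Z^\epsilon$ with constants independent of $\epsilon$ and $\delta$. The mollification regions near $\Gamma_{3,\epsilon}$ and near the ball contribute only boundary layers of width $O(\epsilon)$, so their effect on exit times is $O(\epsilon^2)$ and absorbed into the estimates; verifying this carefully (using the uniform-in-$\epsilon$ smoothness of the rescaled domain, as in Appendix \ref{App reg per}) is the delicate bookkeeping step, but it does not change the order of the final bound.
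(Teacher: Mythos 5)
Your proposal takes essentially the same route as the paper: the alternating-excursion decomposition between $\Gamma_{3,\epsilon}$ and $C_\epsilon(\rho r(\epsilon)+2\epsilon)\cup C_\epsilon(\delta)$, the Wald-type identity of \eqref{eq wald type identity}--\eqref{eq wald type identity 2}, the gambler's-ruin bound $\mathbb{E}_z N^{2,\epsilon,\delta}\lesssim \delta/\epsilon$, the per-excursion cost $\epsilon\delta + r(\epsilon)^d/\epsilon^{d-2}$ (cylinder time plus the Lemma~\ref{lem NET connect} return from $C_\epsilon(\rho r(\epsilon)+2\epsilon)$ to $\Gamma_{3,\epsilon}$), and separation of variables in the cylinder for the one-dimensional reduction. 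The only cosmetic difference is that you treat the final successful excursion separately with a conditional $\lesssim\delta^2$ bound, whereas the paper's Wald identity absorbs it via the unconditional bound $\sup_{z\in\Gamma_{3,\epsilon}}\mathbb{E}_z\sigma_0^{3,\epsilon,\delta}\lesssim\epsilon\delta$ applied to every excursion including the last, so your extra $+\delta^2$ term is redundant but harmless.
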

\begin{proof}
We introduce the sequence of stopping times 
\begin{equation*}
\begin{aligned}
	\sigma_n^{3,\epsilon,\delta} &\coloneqq \inf \{t \geq \tau_n^{3,\epsilon,\delta} : Z^\epsilon(t) \in C_\epsilon(\rho r(\epsilon) + 2 \epsilon) \cup C_\epsilon(\delta) \}, \\
	\tau_n^{3,\epsilon,\delta} &\coloneqq \inf \{ t \geq \sigma_{n-1}^{3,\epsilon,\delta} : Z^\epsilon(t) \in \Gamma_{3,\epsilon} \},
\end{aligned}
\end{equation*}
with $\tau_0^{3,\epsilon,\delta} \coloneqq 0$.
Let $N^{2,\epsilon,\delta}$ be the random number such that $\sigma_{N^{2,\epsilon,\delta}}^{3,\epsilon,\delta} = \sigma^{\epsilon,\delta}$; that is, the number of trials needed to successfully escape to $C_\epsilon(\delta)$ (instead of $C_\epsilon(\rho r(\epsilon) + 2 \epsilon)$) from $\Gamma_{3,\epsilon}$.

Note that by construction, $\Omega_{5,\epsilon,\delta}$ is composed of several cylinder domains. 
Since we assume that $\Gamma^k_{3,\epsilon}$ lies within the region enclosed by $C_\epsilon^k \left( \rho r(\epsilon)+\frac{5}{2}\epsilon  \right)$, $ C_\epsilon^k \left(  \rho r(\epsilon) + \frac{7}{2}\epsilon \right)$ and $\partial G_\epsilon$,
we can use the behavior of one-dimensional Brownian motion to derive effective bounds for the exit problem within $\Omega_{5,\epsilon,\delta}$.  
Specifically, we have
\begin{equation*}
	\sup_{z \in \Gamma_{3,\epsilon}} \mathbb{E}_z N^{2,\epsilon,\delta} \lesssim \frac{\delta}{\epsilon}.
\end{equation*}
In fact, whenever the process hits $\Gamma_{3,\epsilon}$, it hits $\Gamma_{3,\epsilon}^k$ for some $k$.
The escape rate $p^{2,\epsilon,\delta}(z)$ is related only to the one dimension Brownian motion, and we have
\begin{equation*}
	\inf_{z \in \Gamma_{3,\epsilon}^k}  p^{2,\epsilon,\delta}(z) \gtrsim \frac{\epsilon}{\delta}.
\end{equation*}
Thus
\begin{equation*}
\sup_{z \in \Gamma_{3,\epsilon}} \mathbb{E}_z N^{2,\epsilon,\delta} \leq \left( \min_{k:I_k \sim O} \inf_{z \in \Gamma_{3,\epsilon}^k}  p^{2,\epsilon,\delta}(z) \right)^{-1}  \lesssim \frac{\delta}{\epsilon}.
\end{equation*}
In addition,
\begin{equation*}
	\sup_{z \in \Gamma_{3,\epsilon}} \mathbb{E}_{z}\sigma_0^{3,\epsilon,\delta} \lesssim \epsilon \delta.
\end{equation*}

Then, for all $z \in \Gamma_{3,\epsilon}$, using arguments analogous to those in \eqref{eq wald type identity} and \eqref{eq wald type identity 2}, we have
\begin{equation*}
\begin{aligned}
	&\mathbb{E}_{z} \sigma^{\epsilon,\delta} \\
	& = \sum_{i=0}^\infty  \mathbb{E}_z \left[ \mathbbm{1}_{ \{N^{2,\epsilon,\delta} \geq i \} } \mathbb{E}_{Z^\epsilon(\tau_i^{3,\epsilon,\delta})} \sigma_0^{3,\epsilon,\delta}  \right] 
	 + \sum_{i=1}^\infty \mathbb{E}_z \left[ \mathbbm{1}_{ \{N^{2,\epsilon,\delta} \geq i \} } \mathbb{E}_{Z^\epsilon(\sigma_{i-1}^{3,\epsilon,\delta} )}  \tau_1^{3,\epsilon,\delta} \right] \\
	& \lesssim  \mathbb{E}_z N^{2,\epsilon,\delta} \left( \epsilon \delta + \frac{r(\epsilon)^d}{\epsilon^{d-2}} \right)
	\lesssim \delta^2 + \frac{r(\epsilon)^d}{\epsilon^{d-1}} \delta
\end{aligned}
\end{equation*}
since, by Lemma \ref{lem NET connect}, for every $i =1,2,\cdots,N^{2,\epsilon,\delta}$, we have $Z^\epsilon(\sigma_{i-1}^{3,\epsilon,\delta} ) \in C_\epsilon(\rho r(\epsilon)+2\epsilon)$ and thus
\begin{equation*}
	\mathbb{E}_{Z^\epsilon(\sigma_{i-1}^{3,\epsilon,\delta} )} \tau_1^{3,\epsilon,\delta}
	\leq \sup_{z'' \in C_\epsilon(\rho r(\epsilon) + 2\epsilon)} \mathbb{E}_{z''} \sigma^{2,\epsilon}
	\lesssim \frac{r(\epsilon)^d}{\epsilon^{d-2}}, \ \ \ \mathbb{P} \ a.s.
\end{equation*}

\end{proof}

\subsubsection{Proof of Lemma \ref{small exit est 1.5}}
	For $d=2$, we first consider $z \in \Omega_{1,\epsilon}$ such that $d(z,z_k^*) \geq 2 \lambda_k \epsilon$.
	By the strong Markov property, Lemma \ref{lem NET ball}, Lemma \ref{lem NET connect} and Lemma \ref{lem NET neck},
	\begin{equation}
	\begin{aligned}
		&\sup_{z \in \Omega_{1,\epsilon}, d(z,z_k^*) \geq 2 \lambda_k \epsilon} \mathbb{E}_z \sigma^{\epsilon,\delta}
		= \sup_{z \in \Omega_{1,\epsilon}, d(z,z_k^*) \geq 2 \lambda_k \epsilon} \mathbb{E}_z \left[ \sigma^{1,\epsilon} 
		+ \mathbb{E}_{Z^\epsilon(\sigma^{1,\epsilon} )} \left( \sigma^{2,\epsilon} 
		+ \mathbb{E}_{Z^\epsilon(\sigma^{2,\epsilon})} \sigma^{3,\epsilon,\delta} \right) \right]  \\
		& \lesssim  \sup_{z \in \Omega_{1,\epsilon}, d(z,z_k^*) \geq 2 \lambda_k \epsilon} \mathbb{E}_z \sigma^{1,\epsilon}  
		+r(\epsilon)^2 
		+ \delta^2 + \frac{r(\epsilon)^2}{\epsilon} \delta 
	\end{aligned}
	\end{equation}
	Thanks to Lemma \ref{lem NET ball}, we know that $\sup_{z \in \Omega_{1,\epsilon}, d(z,z_k^*) \geq 2 \lambda_k \epsilon} \mathbb{E}_z \sigma^{1,\epsilon} $ is at most $r(\epsilon)^2 \ln \frac{r(\epsilon)}{\epsilon}.$ 
	Given the assumption that $\epsilon \ll r(\epsilon)$ and $\lim_{\epsilon \to 0}  r(\epsilon) = 0$, we conclude that $\sup_{z \in \Omega_{1,\epsilon}, d(z,z_k^*) \geq 2 \lambda_k \epsilon} \mathbb{E}_z^\epsilon \sigma^{1,\epsilon} $ and $r(\epsilon)^2$ are dominated by $\delta^2+\frac{r(\epsilon)^2}{\epsilon} \delta$ and the proof is done.
	
	Next, for each $k$ such that $I_k \sim O$, consider $z$ lying in the region enclosed by $\{z \in \Omega_{1,\epsilon}: d(z,z_k^*) = 2 \lambda_k \epsilon\}$, $C_\epsilon^k(\rho r(\epsilon)+2\epsilon)$ and $\partial G_\epsilon$.
	By the strong Markov property, an analogue of \eqref{reg per 2}, Lemma \ref{lem NET connect}, and Lemma \ref{lem NET neck}, we have
	\begin{equation}
	\begin{aligned}
		&\sup_{z} \mathbb{E}_z \sigma^{\epsilon,\delta}
		=\sup_{z} \mathbb{E}_{z} \left[ \sigma^{2,\epsilon}_0 
		+ \mathbb{E}_{Z^\epsilon(\sigma^{2,\epsilon} _0)} \left( \sigma_s^{2,\epsilon}  
		+ \mathbb{E}_{Z^\epsilon(\sigma_s^{2,\epsilon} )}  \sigma^{3,\epsilon,\delta} \right) \right] \\
		& \lesssim \epsilon^2 + r(\epsilon)^2 + \delta^2 + \frac{r(\epsilon)^2}{\epsilon} \delta
		\lesssim \delta^2 + \frac{r(\epsilon)^2}{\epsilon} \delta,
	\end{aligned}
	\end{equation}
	where the supremum is taken over the $z$ in the described region.
	More precisely, equation \eqref{reg per 2} remains valid for this region as well; its justification follows the same argument as in Appendix \ref{App reg per}, except that the first term in \eqref{reg eq 1} is adapted to the corresponding subdomain.
	
	Finally, for each $k$ such that $I_k \sim O$, consider $z$ lying in the region enclosed by $C_\epsilon^k(\rho r(\epsilon)+2\epsilon)$, $C_\epsilon^k(\delta)$ and $\partial G_\epsilon$, for some $k$.
	Define $\sigma^{4,\epsilon,\delta}$ be the first hitting time to $C_\epsilon(\rho r(\epsilon)+2\epsilon) \cup C_\epsilon(\delta)$.
	By the preceding estimates, we have
	\begin{equation}
	\begin{aligned}
		&\sup_z \mathbb{E}_z \sigma^{\epsilon,\delta}
		= \sup_z \left( \mathbb{E}_z \left[ \sigma^{\epsilon,\delta} \mathbbm{1}_{ \{ \sigma^{4,\epsilon,\delta} < \sigma^{\epsilon,\delta} \} } \right]
		+\mathbb{E}_z \left[ \sigma^{\epsilon,\delta} \mathbbm{1}_{ \{\sigma^{4,\epsilon,\delta} = \sigma^{\epsilon,\delta} \} } \right] \right) \\
		& \leq \sup_z \mathbb{E}_z \left[ \mathbbm{1}_{ \{ \sigma^{4,\epsilon,\delta} < \sigma^{\epsilon,\delta} \} } \left( \sigma^{4,\epsilon,\delta} + \mathbb{E}_{Z^\epsilon(\sigma^{4,\epsilon,\delta})}  \sigma^{\epsilon,\delta}  \right) \right]
		+\sup_z \mathbb{E}_z \left[ \sigma^{4,\epsilon,\delta} \mathbbm{1}_{ \{\sigma^{\epsilon,\delta} = \sigma^{4,\epsilon,\delta} \} } \right] \\
		& \leq \sup_{z \in C_\epsilon(\rho r(\epsilon)+2\epsilon)} \mathbb{E}_z \sigma^{\epsilon,\delta} + \sup_z \mathbb{E}_z  \sigma^{4,\epsilon,\delta} 
		\lesssim \delta^2 + \frac{r(\epsilon)^2}{\epsilon} \delta + \delta^2
		\lesssim \delta^2 + \frac{r(\epsilon)^2}{\epsilon} \delta ,
	\end{aligned}
	\end{equation}
	where the supremum is taken over the $z$ in the specified region.
	We then completes the proof for $d=2$.
	The case $d = 3$ can be treated with similar arguments.

\subsection{A priori estimates: exit place}\label{sec exit place}
In this subsection, we aim to prove the following lemma
\begin{Lemma}\label{exit place apriori}
	There exists a constant $c>0$ such that for all $l$ such that $I_l \sim O$, we have
	\begin{equation}
		\lim_{\delta' \to 0} \lim_{\epsilon \to 0} \inf_{z \in C_\epsilon(\rho r(\epsilon)+3\epsilon)} 
\mathbb{P}_{z} \left( Z^\epsilon(\tilde{\tau}^{\epsilon,\delta'}) \in C_\epsilon^l(\delta') \right)
\geq c,
	\end{equation}
	where $\tilde{\tau}^{\epsilon,\delta'}$ is the first hitting time of $C_\epsilon(\delta')$.
	In particular, for all $k,l$ such that $I_k, I_l \sim O$,
	\begin{equation}
		\lim_{\delta' \to 0} \lim_{\epsilon \to 0} \inf_{z \in C_\epsilon^k(\rho r(\epsilon)+3\epsilon)} 
\mathbb{P}_{z} \left( Z^\epsilon(\tilde{\tau}^{\epsilon,\delta'}) \in C_\epsilon^l(\delta') \right)
\geq c.
	\end{equation}
\end{Lemma}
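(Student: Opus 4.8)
The plan is to reduce the statement to a single uniform escape estimate and then conclude by the interior Harnack inequality together with \eqref{exit place sup lb}. For $z\in B_\epsilon(\delta')$ set $u_l(z)\coloneqq\mathbb{P}_z\big(Z^\epsilon(\tilde\tau^{\epsilon,\delta'})\in C_\epsilon^l(\delta')\big)$, so that $u_l\geq0$ and $u_l$ is harmonic in the interior of $B_\epsilon(\delta')\setminus C_\epsilon(\delta')$ with Neumann boundary values on $\partial G_\epsilon$. Write $\tilde B_\epsilon\coloneqq\overline{B}(O,\rho r(\epsilon)/2)$ and $\tau_{\tilde B}\coloneqq\inf\{t\geq0:Z^\epsilon(t)\in\tilde B_\epsilon\}$. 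Everything rests on the assertion that, before escaping, the process overwhelmingly reaches the bulk of the ball: for every fixed $\delta'>0$,
\begin{equation}\label{reach bulk}
	\lim_{\epsilon\to0}\ \sup_{z\in C_\epsilon(\rho r(\epsilon)+3\epsilon)}\mathbb{P}_z\big(\tilde\tau^{\epsilon,\delta'}<\tau_{\tilde B}\big)=0.
\end{equation}

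Granting \eqref{reach bulk}, the lemma follows quickly. For $\epsilon$ small enough that $\rho r(\epsilon)<\delta'$, the ball $B(O,\rho r(\epsilon))$ is contained in the interior of $G_\epsilon$ (by the mollification) and does not meet $C_\epsilon(\delta')$, so $u_l$ is nonnegative and harmonic there; interior Harnack then gives a constant $C_d$, uniform in $\epsilon$ by scaling since $\tilde B_\epsilon$ is the concentric ball of half the radius, with $\sup_{\tilde B_\epsilon}u_l\leq C_d\inf_{\tilde B_\epsilon}u_l$. On the other hand, \eqref{exit place sup lb} --- which is obtained there without invoking Lemma \ref{exit place conti} --- provides a constant $c_0>0$ independent of $\delta'$ such that, for all small $\delta'$ and all small $\epsilon$, one can choose $z_*=z_*(\epsilon)\in C_\epsilon(\rho r(\epsilon)+3\epsilon)$ with $u_l(z_*)\geq c_0$ (for such initial points and small $\delta'$ one has $\tilde\tau^{\epsilon,\delta'}=\sigma^{\epsilon,\delta'}$). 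Applying the strong Markov property at $\tau_{\tilde B}$, and noting that $\{\tilde\tau^{\epsilon,\delta'}=\tau_{\tilde B}\}$ is null because $C_\epsilon(\delta')$ and $\tilde B_\epsilon$ are disjoint,
\begin{equation*}
	c_0\leq u_l(z_*)\leq\mathbb{P}_{z_*}\big(\tau_{\tilde B}<\tilde\tau^{\epsilon,\delta'}\big)\sup_{\tilde B_\epsilon}u_l+\mathbb{P}_{z_*}\big(\tilde\tau^{\epsilon,\delta'}<\tau_{\tilde B}\big)\leq\sup_{\tilde B_\epsilon}u_l+o(1),
\end{equation*}
so $\inf_{\tilde B_\epsilon}u_l\geq(c_0-o(1))/C_d\geq c_0/(2C_d)$ for $\epsilon$ small. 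Finally, for any $z\in C_\epsilon(\rho r(\epsilon)+3\epsilon)$, the strong Markov property at $\tau_{\tilde B}$ and \eqref{reach bulk} yield
\begin{equation*}
	u_l(z)\geq\mathbb{E}_z\big[\mathbbm{1}_{\{\tau_{\tilde B}<\tilde\tau^{\epsilon,\delta'}\}}u_l(Z^\epsilon(\tau_{\tilde B}))\big]\geq\mathbb{P}_z\big(\tau_{\tilde B}<\tilde\tau^{\epsilon,\delta'}\big)\inf_{\tilde B_\epsilon}u_l\geq\big(1-o(1)\big)\frac{c_0}{2C_d}\geq\frac{c_0}{4C_d}=:c>0
\end{equation*}
for $\epsilon$ small. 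Since $c$ does not depend on $\delta'$, letting $\delta'\to0$ gives the first assertion, and the second follows from $C_\epsilon^k(\rho r(\epsilon)+3\epsilon)\subset C_\epsilon(\rho r(\epsilon)+3\epsilon)$.

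The remaining task, and the heart of the matter, is to prove \eqref{reach bulk}; I would do this by a renewal decomposition built on the domain pieces of Section \ref{sec exit time apriori}. First, a one dimensional Brownian comparison along the edge direction in the neck shows that from any $z\in C_\epsilon(\rho r(\epsilon)+3\epsilon)$, say $z\in C_\epsilon^k(\rho r(\epsilon)+3\epsilon)$, the process enters $\overline{B}(O,\rho r(\epsilon))$ before $C_\epsilon(\delta')$ with probability $1-O(\epsilon/\delta')$. From inside the ball one tracks the successive visits of $Z^\epsilon$ to a fixed family of ``neck mouth'' surfaces at distance of order $\epsilon$ from the points $z_m^*$, with $\tilde B_\epsilon$ and $C_\epsilon(\delta')$ as absorbing sets: at each such visit the probability of reaching $\tilde B_\epsilon$ before returning to a neck mouth is of order $\epsilon/r(\epsilon)$ when $d=3$ and of order $1/\ln(r(\epsilon)/\epsilon)$ when $d=2$ --- a narrow escape estimate for the ball $B(O,\rho r(\epsilon))$ with its small windows, of the type underlying Lemma \ref{lem NET ball} --- whereas the probability of escaping through some $C_\epsilon^m(\delta')$ is only of order $\epsilon/\delta'$, by a one dimensional estimate for the necks as in Lemma \ref{lem NET neck}. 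Since the ratio of the latter to the former is of order $r(\epsilon)/\delta'$ (resp.\ $\epsilon\ln(r(\epsilon)/\epsilon)/\delta'$), which tends to $0$ as $\epsilon\to0$ for fixed $\delta'$, a routine computation with this killed renewal chain shows that $Z^\epsilon$ reaches $\tilde B_\epsilon$ before $C_\epsilon(\delta')$ with probability $1-o(1)$, uniformly in the starting point, and together with the first step this is \eqref{reach bulk}. The main obstacle is precisely this: it requires quantitative narrow escape \emph{probability} bounds for the ball with several small windows of comparable size --- uniform lower bounds on the chance of reaching the interior of the ball before escaping through a window --- together with matching one dimensional estimates in the necks, and a careful separation of the three relevant scales ($\epsilon$ at the windows and across the necks, $r(\epsilon)$ inside the ball, $\delta'$ along the necks); such estimates are in the spirit of Section \ref{sec exit time apriori} and Appendix \ref{sec NET}, but the uniformity in $\epsilon$ and the handling of the excursion structure are delicate.
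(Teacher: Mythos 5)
Your high-level reduction is structurally the same as the paper's: pick a bulk subdomain of $\Omega_{1,\epsilon}=B(O,\rho r(\epsilon))$ on which an $\epsilon$-uniform Harnack inequality holds, show that the process started on $C_\epsilon(\rho r(\epsilon)+3\epsilon)$ reaches that bulk before reaching $C_\epsilon(\delta')$ with probability $1-o(1)$, and then combine the strong Markov property with \eqref{exit place sup lb} to transfer the lower bound at a single good starting point to all starting points. Your choice of the concentric half-ball $\tilde B_\epsilon=\overline B(O,\rho r(\epsilon)/2)$ is in fact a small simplification over the paper's $\Omega_{6,\epsilon}$ and $\Gamma_{4,\epsilon}$: since $\tilde B_\epsilon$ is compactly contained in the open ball, you can invoke the interior Harnack inequality (with a dimension-only constant, uniform in $\epsilon$ by scaling), whereas the paper's region touches the Neumann part of $\partial B(O,\rho r(\epsilon))$ and must invoke a boundary Harnack inequality via reflection and \cite{BH}.

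The place where the proposal diverges, and where the genuine gap lies, is in the proof of \eqref{reach bulk} (the paper's counterpart is Lemma \ref{exit place lem 1}). The paper proves this \emph{without} any pointwise per-excursion probability estimate: it passes to the auxiliary single-bottleneck domain $\Omega_{7,\epsilon,\delta}^k$, applies the Khaminskii occupation-time identity \eqref{exit place int} to compute the invariant-measure weighted escape probability $q_{\epsilon,\delta}^k$ in \eqref{exit place ineq 1}, bounds the invariant mass on $C_\epsilon^k(\rho r(\epsilon)+3\epsilon)$ from below via the occupation-time estimate \eqref{exit place occ time} (which in turn relies on the exit-time bounds of Section \ref{sec NET ball to connect}), and then closes the loop with the Harnack inequality \eqref{exit place har}. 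Your alternative --- a killed renewal chain on visits to neck-mouth surfaces, comparing a per-excursion success probability $p_s$ for reaching $\tilde B_\epsilon$ against a per-excursion escape probability $p_f\asymp\epsilon/\delta'$ --- is plausible in spirit, but the key quantitative input is asserted rather than proved, and the asserted value of $p_s$ is wrong in three dimensions: from a point at distance $\asymp\epsilon$ from the window, the classical annulus estimate $\frac{1/r-1/\rho}{1/r-1/R}$ with $r\asymp\rho\asymp\epsilon$ and $R\asymp r(\epsilon)$ gives $p_s=\Theta(1)$, not $\Theta(\epsilon/r(\epsilon))$. (The error points in your favour, so it does not break the argument; but it does mean your claimed ratio $r(\epsilon)/\delta'$ in $d=3$ should be $\epsilon/\delta'$, matching what the paper's occupation-time calculation produces.) More importantly, turning that sketch into a proof requires uniform-in-$\epsilon$ excursion bounds that account for the reflecting boundary near the windows and for excursions through different neck mouths; you flag this yourself as delicate, and it is precisely the point at which the paper substitutes the occupation-time/invariant-measure machinery to avoid it. So the proposal is correct in outline but leaves the hardest step at the level of a heuristic, and the paper's Lemma \ref{exit place lem 1} uses a genuinely different (and cleaner) tool for that step.
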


We recall that
\begin{equation*}
\begin{aligned}
z_{k}^* &\coloneqq B(O,\rho r(\epsilon)) \cap I_k, \\
C_\epsilon(\delta) & \coloneqq \{ z \in G_\epsilon: d_{\Gamma} (\Pi(z),O) = \delta \}, \\
C_\epsilon^k(\delta) & \coloneqq C_\epsilon(\delta) \cap \Pi^{-1}(I_k), \\
B_\epsilon(\delta) & \coloneqq \{ z \in G_\epsilon: d_{\Gamma} (\Pi(z),O) \leq \delta \}, \\
\Omega_{1,\epsilon} & \coloneqq B(O, \rho r(\epsilon)), \\
\Gamma_{1,\epsilon} & \coloneqq \partial B(O,\rho r(\epsilon)) \setminus \partial G_\epsilon , \\
\Gamma_{1,\epsilon}^k &\coloneqq \Gamma_{1,\epsilon} \cap \Pi^{-1}(I_k).
\end{aligned}
\end{equation*}
Note that, since our bottlenecks are well-separated, we can find a constant $\tilde{\rho} \in (0,\rho)$ such that for every $k \neq l$, $B(z_k^*, \tilde{\rho}  r(\epsilon)) \cap B(z_l^*, \tilde{\rho} r(\epsilon)) = \emptyset$ when $\epsilon$ is small enough.
Once fixed $\tilde{\rho}$, we define
\begin{equation*}
\begin{aligned}
\Gamma_{4,\epsilon}^k &\coloneqq \left\{ z \in \Omega_{1,\epsilon} : d(z,z_k^*)= \tilde{\rho}  r(\epsilon) \right\} , \\
\Gamma_{4,\epsilon} & \coloneqq \bigcup_k \Gamma_{4,\epsilon}^k, \\
\Omega_{6,\epsilon}^k & \coloneqq \left\{ z \in \Omega_{1,\epsilon} : d(z,z_k^*) \geq \tilde{\rho}  r(\epsilon) \right\} , \\
\Omega_{6,\epsilon} & \coloneqq \bigcap_k \Omega_{6,\epsilon}^k , \\
\Omega_{7,\epsilon,\delta}^k & \coloneqq \left( B_\epsilon(\delta+1) \cap \Pi^{-1}(I_k) \right) \cup \Omega_{1,\epsilon}, \\
\Omega_{8,\epsilon,\delta}^k & \coloneqq \text{ the region enclosed by } \Gamma_{4,\epsilon}^k, C_\epsilon^k(\delta+1) \text{ and } \partial G_\epsilon.
\end{aligned}
\end{equation*}
In other words, the domain $\Omega_{7,\epsilon,\delta}^k$ is a ball with single bottleneck of length approximately $\delta+1$; see Figure \ref{fig:smooth dom place}. 
The following lemma is the only place where we consider this domain; afterward, we return to the general case with multiple bottlenecks.
\begin{figure}[ht]
  \centering
  \includegraphics[width=0.65\linewidth]{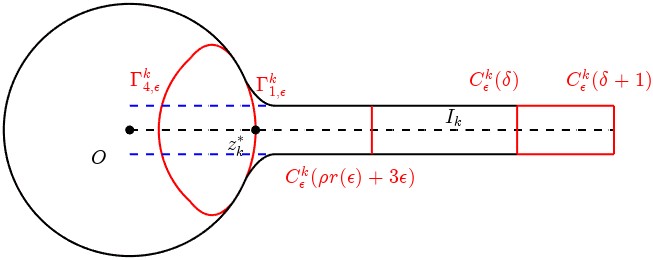}
  \caption{Illustration of the domain $\Omega_{7,\epsilon,\delta}^k$.}\label{fig:smooth dom place}
\end{figure}

\begin{Lemma}\label{exit place lem 1}
	Let $\sigma_k^{5,\epsilon,\delta}$ be the first hitting time of $\Gamma_{4,\epsilon}^k \cup C_\epsilon^k(\delta)$. Then
	\begin{equation}
		\lim_{\epsilon \to 0} \sup_{z \in C_\epsilon^k(\rho r(\epsilon)+3\epsilon)} \mathbb{P}_z ( Z^\epsilon (\sigma^{5,\epsilon,\delta}_k) \in C_\epsilon^k(\delta) ) = 0.
	\end{equation}
\end{Lemma}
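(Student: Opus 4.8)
The plan is to recast the probability as the boundary value of a harmonic function and dominate it by an explicit affine barrier in the edge direction. Fix $k$ with $I_k\sim O$ and let $\Omega$ be the subdomain of $G_\epsilon$ enclosed by $\Gamma_{4,\epsilon}^k$, $C_\epsilon^k(\delta)$ and $\partial G_\epsilon$: concretely, $\Omega$ is the lens-shaped region $B(z_k^*,\tilde\rho r(\epsilon))\cap B(O,\rho r(\epsilon))$ near $z_k^*$, together with the mollified cusp and the neck cylinder up to $C_\epsilon^k(\delta)$ (i.e.\ the subregion of $\Omega_{8,\epsilon,\delta}^k$ cut off by $C_\epsilon^k(\delta)$ on the side of $\Gamma_{4,\epsilon}^k$). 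Since $Z^\epsilon$ is the scaled reflected Brownian motion on $G_\epsilon$, once started inside $\Omega$ it remains there until hitting $\Gamma_{4,\epsilon}^k\cup C_\epsilon^k(\delta)$; thus $\sigma_k^{5,\epsilon,\delta}$ is the exit time from $\Omega$ and
\[
h_\epsilon(z):=\mathbb{P}_z\big(Z^\epsilon(\sigma_k^{5,\epsilon,\delta})\in C_\epsilon^k(\delta)\big)
\]
is the bounded solution of $\Delta h_\epsilon=0$ in $\Omega$ with $h_\epsilon=1$ on $C_\epsilon^k(\delta)$, $h_\epsilon=0$ on $\Gamma_{4,\epsilon}^k$ and $\partial h_\epsilon/\partial\nu=0$ on $\partial G_\epsilon\cap\partial\Omega$. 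The task becomes to show $\sup_{z\in C_\epsilon^k(\rho r(\epsilon)+3\epsilon)}h_\epsilon(z)\to 0$.

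Let $e_{O,k}$ be the unit vector from $O$ along $I_k$, so $z_k^*=O+\rho r(\epsilon)e_{O,k}$, and set $\hat x(z):=\langle z-O,e_{O,k}\rangle$; along the neck cylinder $\hat x$ coincides with $d_\Gamma(\Pi(z),O)$, so $\hat x\equiv\rho r(\epsilon)+3\epsilon$ on $C_\epsilon^k(\rho r(\epsilon)+3\epsilon)$ and $\hat x\equiv\delta$ on $C_\epsilon^k(\delta)$. Put
\[
\psi_\epsilon(z):=\psi_*(\epsilon)+\big(1-\psi_*(\epsilon)\big)\,\frac{\hat x(z)-\rho r(\epsilon)}{\delta-\rho r(\epsilon)},\qquad \psi_*(\epsilon):=\frac{2\tilde\rho\, r(\epsilon)}{\delta},
\]
an affine, hence harmonic, function with $\psi_*(\epsilon)\to 0$. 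I would then verify, for $\epsilon$ small: (i) $\psi_\epsilon\equiv 1$ on $C_\epsilon^k(\delta)$ and $\psi_\epsilon\geq 0$ on $\Gamma_{4,\epsilon}^k$ — the latter because $\hat x\geq(\rho-\tilde\rho)r(\epsilon)$ on $\Gamma_{4,\epsilon}^k$ by Cauchy--Schwarz, so $\psi_\epsilon\geq\psi_*(\epsilon)-\tilde\rho r(\epsilon)/(\delta-\rho r(\epsilon))\geq 0$; (ii) $\partial\psi_\epsilon/\partial\nu_{\mathrm{out}}=\big(1-\psi_*(\epsilon)\big)\langle\nu_{\mathrm{out}},e_{O,k}\rangle/(\delta-\rho r(\epsilon))\geq 0$ on $\partial G_\epsilon\cap\partial\Omega$, since this boundary is composed only of the cylindrical walls of the neck (where $\nu_{\mathrm{out}}\perp e_{O,k}$), the mollified cusp near $z_k^*$ (whose outward normal flares toward $+e_{O,k}$), and the spherical cap of $\partial B(O,\rho r(\epsilon))$ bounding the lens (where $\langle\nu_{\mathrm{out}},e_{O,k}\rangle=\hat x/(\rho r(\epsilon))>0$). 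Granting (i)--(ii), $w:=h_\epsilon-\psi_\epsilon$ is harmonic in $\Omega$, is $\leq 0$ on the Dirichlet part of $\partial\Omega$, and has $\partial w/\partial\nu_{\mathrm{out}}\leq 0$ on the Neumann part; the strong maximum principle together with Hopf's lemma — applicable because $\partial G_\epsilon\cap\partial\Omega$ is smooth, the corners of $\partial\Omega$ lying on the closure of its Dirichlet part where $w\leq 0$ anyway — then forces $w\leq 0$ on $\bar\Omega$, i.e.\ $h_\epsilon\leq\psi_\epsilon$. Evaluating on the starting set yields $h_\epsilon\leq\psi_*(\epsilon)+\big(1-\psi_*(\epsilon)\big)\cdot 3\epsilon/(\delta-\rho r(\epsilon))\to 0$, which is the assertion.

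The one genuinely delicate point is step (ii): that the outward normal of $\partial G_\epsilon$ along $\partial\Omega$ never points into $\{\langle\cdot,e_{O,k}\rangle<0\}$. On the cylindrical and spherical pieces this is immediate, but on the mollified cusp it depends on how the corner between neck and ball was smoothed in Section \ref{sec narrow tube}. I would handle it either by noting that the mollification may be chosen so that the cross-sectional radius of the transition is nonincreasing in $\hat x$, making $\langle\nu_{\mathrm{out}},e_{O,k}\rangle\geq 0$ exact, or by passing to the blow-up coordinates $z\mapsto(z-z_k^*)/\epsilon$, in which the cusp is a regular perturbation of a fixed smooth junction domain (as in Appendix \ref{App reg per}), and absorbing the resulting $O(\epsilon/r(\epsilon))$ defect in $\langle\nu_{\mathrm{out}},e_{O,k}\rangle$ by enlarging the constant in $\psi_*(\epsilon)$; either way the conclusion is insensitive to the precise shape of the mollification. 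The remaining steps — identifying $h_\epsilon$ with the stated mixed boundary value problem and applying the maximum principle on the piecewise-smooth domain $\Omega$ — are routine.
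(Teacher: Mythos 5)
Your argument is genuinely different from the paper's, and, with one caveat, it works. The paper proves this lemma via the invariant-measure (Khaminskii) identity on the auxiliary domain $\Omega_{7,\epsilon,\delta}^k$: it expresses the invariant-measure average $q_{\epsilon,\delta}^k$ of the hitting probability through the Lebesgue volume of an outer slab divided by its occupation time, separately bounds $\hat\mu^k_\epsilon(C_\epsilon^k(\rho r+3\epsilon))$ from below using the occupation time in $\Omega_{6,\epsilon}^k$, deduces $\inf_z \hat{\mathbb{P}}_z(\cdot)\lesssim\frac{\epsilon}{\delta}\ln\frac{r(\epsilon)}{\epsilon}$ (in $d=2$), and then upgrades the $\inf$ to a $\sup$ with a Harnack inequality of the type proved in Lemma \ref{lem reg har}. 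You instead produce an explicit affine supersolution $\psi_\epsilon$ in the edge coordinate $\hat x$ and invoke the comparison principle for the mixed Dirichlet--Neumann problem, which gives the $\sup$ bound directly; neither the invariant-measure machinery nor the Harnack step is needed. Your bound is coarser (roughly $r(\epsilon)/\delta$ rather than the paper's $\epsilon\ln(r/\epsilon)/\delta$), but it still tends to $0$, which is all the lemma asserts.

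The delicate point, which you correctly identify, is the sign condition $\langle\nu_{\mathrm{out}},e_{O,k}\rangle\geq 0$ along the Neumann boundary, specifically on the mollified cusp. Your first proposed fix — choosing the mollification so that the cross-sectional radius is nonincreasing in $\hat x$ — is legitimate: the constraint from Section \ref{sec narrow tube} is only that the mollified domain contain $B(O,\rho r(\epsilon))$, look smooth after the blow-up $z\mapsto(z-z_{k}^*)/\epsilon$, and be confined to the slab $\tilde x\in[\tilde x',\tilde x'']$; a monotone interpolation from $|y|=2\lambda_k\epsilon$ to $|y|=\lambda_k\epsilon$ satisfies all of these, and since the paper leaves the mollification free, one may simply fix such a choice. (Your second fix, absorbing an $O(\epsilon/r(\epsilon))$ normal defect by enlarging $\psi_*$, is less routine — a defective Neumann sign is not automatically repaired by shifting an affine barrier by a constant, since the constant shift has zero normal derivative; the monotone mollification is the cleaner route and I would rely on it alone.) You also correctly dispose of the Dirichlet--Neumann corners by continuity of $h_\epsilon$ up to $\bar\Omega$, so the strong maximum principle plus Hopf's lemma on the smooth Neumann part closes the argument. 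Overall: correct, and a nice simplification that trades the paper's robustness (no geometric hypothesis on the mollification) for explicitness.
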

\begin{proof}
Consider the scaled reflected Brownian motion $\hat{Z}^\epsilon_k(t)$ on $\Omega_{7,\epsilon,\delta}^k $ and the sequence of stopping times
\begin{equation*}
\begin{aligned}
\sigma_{k,n}^{5,\epsilon,\delta} &\coloneqq \inf  \{t \geq  \tau_{k,n}^{5,\epsilon,\delta} : \hat{Z}^\epsilon_k(t) \in \Gamma_{4,\epsilon}^k \cup C_\epsilon^k(\delta) \}, \\
\tau_{k,n}^{5,\epsilon,\delta} &\coloneqq \inf  \{t \geq  \sigma_{k,n-1}^{5,\epsilon,\delta} : \hat{Z}^\epsilon_k(t) \in C_\epsilon^k(\rho r(\epsilon)+3\epsilon) \},
\end{aligned}
\end{equation*}
with $\tau_{k,0}^{5,\epsilon,\delta} \coloneqq 0$.
We also define
\begin{equation*}
\begin{aligned}
\sigma_{k}^{5,\epsilon,\delta} &\coloneqq \inf  \{t \geq  0 : \hat{Z}^\epsilon_k(t) \in \Gamma_{4,\epsilon}^k \cup C_\epsilon^k(\delta) \}, \\
\tau_{k}^{5,\epsilon,\delta} &\coloneqq \inf  \{t \geq  0 : \hat{Z}^\epsilon_k(t) \in C_\epsilon^k(\rho r(\epsilon)+3\epsilon) \}.
\end{aligned}
\end{equation*}
It is known that $\hat{Z}^\epsilon_k(t)$ has Lebesgue measure $Leb$ on $\Omega_{7,\epsilon,\delta}^k $ as its invariant measure.
Define $\hat{\mu}_\epsilon^k$ be the invariant measure of the process $\hat{Z}^\epsilon_k(\tau_{k,n}^{5,\epsilon,\delta})$ on $C_\epsilon^k(\rho r(\epsilon)+3\epsilon)$.
We have the following relation (see \cite{EPD}, also \cite{SDE2}), for every Borel set $A_{\epsilon,\delta}^k \subset \Omega_{7,\epsilon,\delta}^k$ ,
\begin{equation*}
	Leb(A_{\epsilon,\delta}^k) = \int_{C_\epsilon^k(\rho r(\epsilon)+3\epsilon)} \left( \hat{\mathbb{E}}_z \int_0^{\tau_{k,1}^{5,\epsilon,\delta}} \mathbbm{1}_{A_{\epsilon,\delta}^k} \left( \hat{Z}^\epsilon_k(t) \right) dt \right) \hat{\mu}_\epsilon^k(dz).
\end{equation*}
First, let $A_{1,\epsilon,\delta}^k$ be the region enclosed by $C_\epsilon^k(\delta)$, $C_\epsilon^k(\delta+1)$ and $\partial G_\epsilon$.
By the strong Markov property, 
\begin{equation*}
\begin{aligned}
	& Leb(A^k_{1,\epsilon,\delta})  \\
	&= \int_{C_\epsilon^k(\rho r(\epsilon)+3\epsilon)}  \hat{\mathbb{E}}_z \left( \mathbbm{1}_{\{\hat{Z}^\epsilon_k(\sigma_{k}^{5,\epsilon,\delta}) \in C_\epsilon^k(\delta) \}} \hat{\mathbb{E}}_{\hat{Z}^\epsilon_k(\sigma_{k}^{5,\epsilon,\delta})}  \int_0^{\tau_{k}^{5,\epsilon,\delta}} \mathbbm{1}_{A^k_{1,\epsilon,\delta}}  \left( \hat{Z}^\epsilon_k(t) \right) dt   \right) \hat{\mu}_\epsilon^k(dz) .
\end{aligned}
\end{equation*}
For $z' \in C_\epsilon^k(\delta)$, we claim that the occupation time 
\begin{equation*}
\hat{\mathbb{E}}_{z'} \int_0^{\tau_{k}^{5,\epsilon,\delta}} \mathbbm{1}_{A^k_{1,\epsilon,\delta}} \left( \hat{Z}^\epsilon_k(t) \right) dt = \delta-\rho r(\epsilon)-3\epsilon.
\end{equation*}
In fact, the occupation time only relates to a one dimensional Brownian motion, and thus it suffices to solve the ODE on $[0,1+\delta-\rho r(\epsilon)-3\epsilon]$
\begin{equation*}
\begin{dcases}
	f''(x) = -\mathbbm{1}_{[\delta-\rho r(\epsilon)-3\epsilon,1+\delta-\rho r(\epsilon)-3\epsilon] }(x), \ \ \ x \in [0,1+\delta-\rho r(\epsilon)-3\epsilon] ,\\
	f'(1+\delta-\rho r(\epsilon)-3\epsilon) = 0 ,  \\
	f(0) = 0.
\end{dcases}
\end{equation*}
One then gets, for $z' \in C_\epsilon^k(\delta)$,
\begin{equation*}
	\hat{\mathbb{E}}_{z'} \int_0^{\tau_{k}^{5,\epsilon,\delta}} \mathbbm{1}_{A^k_{1,\epsilon,\delta}} \left( \hat{Z}^\epsilon_k(t) \right) dt
	= f(\delta-\rho r(\epsilon)-3\epsilon)
	= \delta-\rho r(\epsilon)-3\epsilon.
\end{equation*}
Therefore, if we define
\begin{equation}\label{exit place q}
	q_{\epsilon,\delta}^k \coloneqq \int_{C_\epsilon^k(\rho r(\epsilon)+3\epsilon)} \hat{\mathbb{P}}_z \left( \hat{Z}^\epsilon_k(\sigma_{k}^{5,\epsilon,\delta}) \in C_\epsilon^k(\delta) \right) \hat{\mu}_\epsilon^k(dz),
\end{equation}
we obtain
\begin{equation}\label{exit place ineq 1}
	q_{\epsilon,\delta}^k = \frac{Leb(A^k_{1,\epsilon,\delta})}{ \delta-\rho r(\epsilon)-3\epsilon } = \frac{\lambda_k^{d-1} \epsilon^{d-1}V_{d-1}}{\delta-\rho r(\epsilon)-3\epsilon}.
\end{equation}

Next, if we take $A^k_{2,\epsilon} = \Omega_{6,\epsilon}^k$, by the strong Markov property, we have
\begin{equation*}
	Leb(A^k_{2,\epsilon}) = \int_{C_\epsilon^k(\rho r(\epsilon)+3\epsilon)}  \hat{\mathbb{E}}_z \left( \mathbbm{1}_{\{\hat{Z}^\epsilon_k(\sigma_{k}^{5,\epsilon,\delta}) \in \Gamma_{4,\epsilon}^k\}} \hat{\mathbb{E}}_{\hat{Z}^\epsilon_k(\sigma_{k}^{5,\epsilon,\delta})}  \int_0^{\tau_{k}^{5,\epsilon,\delta}} \mathbbm{1}_{A^k_{2,\epsilon}} \left( \hat{Z}^\epsilon_k(t) \right) dt   \right) \hat{\mu}_\epsilon^k(dz) .
\end{equation*}
Consequently,
\begin{equation*}
Leb(A^k_{2,\epsilon}) 
\leq \left( \sup_{z' \in \Gamma_{4,\epsilon}^k} \hat{\mathbb{E}}_{z'}  \int_0^{\tau_{k}^{5,\epsilon,\delta}} \mathbbm{1}_{A^k_{2,\epsilon}} \left( \hat{Z}^\epsilon_k(t) \right) dt  \right)
\left(\hat{\mu}_\epsilon^k(C_\epsilon^k(\rho r(\epsilon)+3\epsilon)) -q_{\epsilon,\delta}^k \right).
\end{equation*}
Now we claim that
\begin{equation}\label{exit place occ time}
	\sup_{z' \in \Gamma_{4,\epsilon}^k} \hat{\mathbb{E}}_{z'}  \int_0^{\tau_{k}^{5,\epsilon,\delta}} \mathbbm{1}_{A^k_{2,\epsilon}} \left( \hat{Z}^\epsilon_k(t) \right) dt
	\lesssim 
	\begin{dcases}
	r(\epsilon)^2 \ln \frac{r(\epsilon)}{\epsilon}, \ \  d=2, \\
	\frac{r(\epsilon)^3}{\epsilon}, \ \ d =3.
	\end{dcases}
\end{equation}
To see this, let $\sigma^{\epsilon}_k$ be the first hitting time of $C_\epsilon^k(\rho r(\epsilon)+3\epsilon)$.
Since $\Omega_{7,\epsilon,\delta}^k$ is a single bottleneck domain, an argument similar to that in Section \ref{sec NET ball to connect} and \ref{sec NET connect} yields
\begin{equation*}
	\sup_{z' \in \Gamma_{4,\epsilon}^k } \hat{\mathbb{E}}_{z'}  \int_0^{\tau_{k}^{5,\epsilon,\delta}} \mathbbm{1}_{A^k_{2,\epsilon}} \left( \hat{Z}^\epsilon_k(t) \right) dt
	\leq \sup_{z' \in \Omega_{1,\epsilon} } \hat{\mathbb{E}}_{z'}  \sigma^{\epsilon}_k
	\lesssim 
	\begin{dcases}
	r(\epsilon)^2 \ln \frac{r(\epsilon)}{\epsilon}, \ \  d=2, \\
	\frac{r(\epsilon)^3}{\epsilon}, \ \ d =3.
	\end{dcases}.
\end{equation*}
In the case $d=2$, \eqref{exit place occ time} gives
\begin{equation*}
	\hat{\mu}_\epsilon^k(C_\epsilon^k(\rho r(\epsilon)+3\epsilon)) -q_{\epsilon,\delta}^k
	\gtrsim Leb(A^k_{2,\epsilon}) \left(r(\epsilon)^2 \ln \frac{r(\epsilon)}{\epsilon} \right)^{-1}
	\geq c \left( \ln \frac{r(\epsilon)}{\epsilon} \right)^{-1}
\end{equation*}
and thus
\begin{equation*}
	\hat{\mu}_\epsilon^k(C_\epsilon^k(\rho r(\epsilon)+3\epsilon)) \gtrsim \left( \ln \frac{r(\epsilon)}{\epsilon} \right)^{-1}.
\end{equation*}

Moreover, by definition \eqref{exit place q},
\begin{equation*}
	q_{\epsilon,\delta}^k \geq \inf_{ z \in C_\epsilon^k(\rho r(\epsilon)+3\epsilon)} \hat{\mathbb{P}}_z \left( \hat{Z}^\epsilon_k(\sigma_{k}^{5,\epsilon,\delta}) \in C_\epsilon^k(\delta) \right) 
	\hat{\mu}_\epsilon^k(C_\epsilon^k(\rho r(\epsilon)+3\epsilon))
\end{equation*}
and hence using \eqref{exit place ineq 1}, we have
\begin{equation}\label{exit place ineq fianl}
	\inf_{ z \in C_\epsilon^k(\rho r(\epsilon)+3\epsilon)} \hat{\mathbb{P}}_z \left( \hat{Z}^\epsilon_k(\sigma_{k}^{5,\epsilon,\delta}) \in C_\epsilon^k(\delta) \right) 
	\leq q_{\epsilon,\delta}^k \left( \hat{\mu}_\epsilon^k(C_\epsilon^k(\rho r(\epsilon)+3\epsilon)) \right)^{-1}
	\lesssim \frac{\epsilon}{\delta}  \ln \frac{r(\epsilon)}{\epsilon} \to 0
\end{equation}
as $\epsilon \to 0$.
The case $d=3$ can be treated similarly.

Finally, we claim that, there exists a constant $c>0$ independent of $\epsilon$, such that
\begin{equation}\label{exit place har}
	\sup_{ z \in C_\epsilon^k(\rho r(\epsilon)+3\epsilon)} \hat{\mathbb{P}}_z \left( \hat{Z}^\epsilon_k(\sigma_{k}^{5,\epsilon,\delta}) \in C_\epsilon^k(\delta) \right) 
	 \leq c \inf_{ z \in C_\epsilon^k(\rho r(\epsilon)+3\epsilon)} \hat{\mathbb{P}}_z \left( \hat{Z}^\epsilon_k(\sigma_{k}^{5,\epsilon,\delta}) \in C_\epsilon^k(\delta) \right) .
\end{equation}
Once \eqref{exit place har} is established, combining it with \eqref{exit place ineq fianl}, it yields
\begin{equation}
\begin{aligned}
	&\lim_{\epsilon \to 0} \sup_{ z \in C_\epsilon^k(\rho r(\epsilon)+3\epsilon)} \mathbb{P}_z \left( Z^\epsilon(\sigma_{k}^{5,\epsilon,\delta}) \in C_\epsilon^k(\delta) \right) 	
	= \lim_{\epsilon \to 0}\sup_{ z \in C_\epsilon^k(\rho r(\epsilon)+3\epsilon)} \hat{\mathbb{P}}_z \left( \hat{Z}^\epsilon_k(\sigma_{k}^{5,\epsilon,\delta}) \in C_\epsilon^k(\delta) \right) \\
	 &\leq c \lim_{\epsilon \to 0} \inf_{ z \in C_\epsilon^k(\rho r(\epsilon)+3\epsilon)} \hat{\mathbb{P}}_z \left( \hat{Z}^\epsilon_k(\sigma_{k}^{5,\epsilon,\delta}) \in C_\epsilon^k(\delta) \right)
	 =0,
\end{aligned}
\end{equation}
completing the argument.

Equation \eqref{exit place har} is a Harnack type inequality.
The main challenge in applying it here is that Harnack inequality is typically valid only for interior points, away from the boundary.
However, due to the Neumann boundary condition, we can introduce an even extension of the solution and apply the Harnack inequality in the extended domain.
More precisely, define 
$\hat{p}_{\epsilon,\delta}^k(z) = \hat{\mathbb{P}}_z \left( \hat{Z}^\epsilon_k(\sigma_{k}^{5,\epsilon,\delta}) \in C_\epsilon^k(\delta) \right) $ and let $\Omega_{8,\epsilon,\delta,var}^k$ be the region enclosed by $\Gamma_{4,\epsilon}^k$, $C_\epsilon^k(\delta)$ and $\partial G_\epsilon$.
It is known that $\hat{p}_{\epsilon,\delta}^k(z)$ satisfies the following boundary value problem
	\begin{equation*}
\begin{aligned}
    \begin{dcases}
    \displaystyle
        \Delta \hat{p}_{\epsilon,\delta}^k(z) = 0 \ \ \ \text{for} \ \ z \in \Omega_{8,\epsilon,\delta,var}^k, \\
        \hat{p}_{\epsilon,\delta}^k(z)=1 \ \ \ \text{for} \ \ z \in C_\epsilon^k(\delta), \\
        \hat{p}_{\epsilon,\delta}^k(z)=0 \ \ \ \text{for} \ \ z \in \Gamma_{4,\epsilon}^k, \\
        \frac{\partial \hat{p}_{\epsilon,\delta}^k(z)}{\partial \nu_{\epsilon,\delta}}=0 \ \ \  \text{for} \ \ z \in \partial \Omega_{8,\epsilon,\delta,var}^k \setminus \left( C_\epsilon^k(\delta) \cup \Gamma_{4,\epsilon}^k \right).
    \end{dcases}
\end{aligned}
\end{equation*}
The proof follows a strategy similar to that of Lemma \ref{lem reg har}.
Since we are only concerned with the behavior on $C_\epsilon^k(\rho r(\epsilon)+3\epsilon)$, we perform a local change of variables via $z \mapsto \frac{z-z_k^{**}}{\epsilon}$, where $z_k^{**} = C_\epsilon^k(\rho r(\epsilon)+3\epsilon) \cap I_k$.
In the case $d=2$, the key distinction is that no flattening map is needed; the reflection can be carried out directly.
The rest of the argument proceeds in the same way as in Lemma \ref{lem reg har}.
\end{proof}

\begin{proof}[Proof of Lemma \ref{exit place apriori}]
By the strong Markov property, for $z \in C_\epsilon^k(\rho r(\epsilon)+3\epsilon)$, we have
	\begin{equation}\label{st eq}
	\begin{aligned}
		& 
		\mathbb{P}_{z} \left( Z^\epsilon(\tilde{\tau}^{\epsilon,\delta'}) \in C_\epsilon^l(\delta') \right) \\
		& =   \mathbb{P}_{z} \left( Z^\epsilon(\tilde{\tau}^{\epsilon,\delta'}) \in C_\epsilon^l(\delta'), Z^\epsilon(\sigma_k^{5,\epsilon,\delta}) \in \Gamma_{4,\epsilon}^k  \right)  \\
		& \ \ \ \ \ + \mathbb{P}_{z} \left( Z^\epsilon(\tilde{\tau}^{\epsilon,\delta'}) \in C_\epsilon^l(\delta'), Z^\epsilon(\sigma_k^{5,\epsilon,\delta}) \in C_\epsilon^k(\delta)  \right)  \\
		& = \mathbb{E}_{z} \left [ \mathbbm{1}_{ \{ Z^\epsilon(\sigma_k^{5,\epsilon,\delta}) \in \Gamma_{4,\epsilon}^k\}} 
		\mathbb{P}_{Z^\epsilon(\sigma_k^{5,\epsilon,\delta})} \left( Z^\epsilon(\tilde{\tau}^{\epsilon,\delta'}) \in C_\epsilon^l(\delta') \right) \right]\\
		& \ \ \ \ \ + \mathbb{P}_{z} \left( Z^\epsilon(\tilde{\tau}^{\epsilon,\delta'}) \in C_\epsilon^l(\delta'), Z^\epsilon(\sigma_k^{5,\epsilon,\delta}) \in C_\epsilon^k(\delta)  \right).
	\end{aligned}
	\end{equation}
	By Lemma \ref{exit place lem 1}, for all $\eta >0$ there exists $\epsilon_{\eta} >0$ such that for all $\epsilon \in (0, \epsilon_\eta)$ and all $k$,
	\begin{equation}\label{inf est}
	\begin{aligned}
		& \inf_{z \in C_\epsilon^k( \rho r(\epsilon) + 3\epsilon)} 
		\mathbb{P}_{z} \left( Z^\epsilon(\tilde{\tau}^{\epsilon,\delta'}) \in C_\epsilon^l(\delta') \right) \\
		& \geq \inf_{z \in C_\epsilon^k(\rho r(\epsilon) + 3\epsilon)}  \mathbb{E}_{z} \left [ \mathbbm{1}_{ \{ Z^\epsilon(\sigma_k^{5,\epsilon,\delta}) \in \Gamma_{4,\epsilon}^k\}} 
		\mathbb{P}_{Z^\epsilon(\sigma_k^{5,\epsilon,\delta})} \left( Z^\epsilon(\tilde{\tau}^{\epsilon,\delta'}) \in C_\epsilon^l(\delta')  \right)  \right]\\
		& \geq \inf_{z_1 \in \Gamma_{4,\epsilon}^k} \mathbb{P}_{z_1} \left( Z^\epsilon(\tilde{\tau}^{\epsilon,\delta'}) \in C_\epsilon^l(\delta') \right) 
		\inf_{z \in C_\epsilon^k( \rho r(\epsilon)+3\epsilon)}  \mathbb{P}_{z^k}  \left( Z^\epsilon(\sigma_k^{5,\epsilon,\delta}) \in \Gamma_{4,\epsilon}^k  \right) \\
		&\geq \inf_{z_1 \in \Gamma_{4,\epsilon}^k} \mathbb{P}_{z_1} \left( Z^\epsilon(\tilde{\tau}^{\epsilon,\delta'}) \in C_\epsilon^l(\delta') \right) -\eta. 
	\end{aligned}
	\end{equation}
	Moreover, using \eqref{st eq}, we have for all $\epsilon \in (0, \epsilon_\eta)$ and all $k$,
	\begin{equation}\label{sup est}
	\begin{aligned}
		&\sup_{z \in C_\epsilon^k( \rho r(\epsilon)+3\epsilon )} \mathbb{P}_{z^k} \left( Z^\epsilon(\tilde{\tau}^{\epsilon,\delta'}) \in C_\epsilon^l(\delta') \right)\\
		&\leq \sup_{z_1 \in \Gamma_{4,\epsilon}^k} \mathbb{P}_{z_1} \left( Z^\epsilon(\tilde{\tau}^{\epsilon,\delta'}) \in C_\epsilon^l(\delta') \right)
		 + \sup_{z \in C_\epsilon^k( \rho r(\epsilon)+3\epsilon )} \mathbb{P}_{z^k} \left( Z^\epsilon(\sigma_k^{5,\epsilon,\delta}) \in C_\epsilon^k(\delta)  \right)\\
		&\leq \sup_{z_1 \in \Gamma_{4,\epsilon}^k} \mathbb{P}_{z_1} \left( Z^\epsilon(\tilde{\tau}^{\epsilon,\delta'}) \in C_\epsilon^l(\delta') \right) +\eta .
	\end{aligned}
	\end{equation}
	
	Now for $z \in \Gamma_{4,\epsilon}$, we can apply a similar argument as in the proof of Lemma \ref{lem reg har}, to obtain that there exists a constant $c>0$ such that
	\begin{equation}\label{har est}
		\sup_{z \in \Gamma_{4,\epsilon}} \mathbb{P}_{z} \left( Z^\epsilon(\tilde{\tau}^{\epsilon,\delta'}) \in C_\epsilon^l(\delta') \right)
		\leq c \inf_{z \in \Gamma_{4,\epsilon}} \mathbb{P}_{z} \left( Z^\epsilon(\tilde{\tau}^{\epsilon,\delta'}) \in C_\epsilon^l(\delta') \right).
	\end{equation}
	To be more precise, we perform the change of variable $z \mapsto \frac{z-O}{r(\epsilon)}$ on $\Omega_{1,\epsilon}$.
	Let $\tilde{\Omega}_{6,\epsilon}$ and $\tilde{\Gamma}_{1,\epsilon}'$ denote the images of $\Omega_{6,\epsilon}$ and $\Gamma_{1,\epsilon}$ under this transformation, respectively.
	By construction, there exists $\rho' < \tilde{\rho}$ such that $d(\tilde{\Omega}_{6,\epsilon},\tilde{\Gamma}_{1,\epsilon}') \geq \rho' $. 
	We then apply the same argument as in Lemma \ref{lem reg har}, using balls of radius less than $\frac{\rho'}{2}$ to cover $\tilde{\Omega}_{6,\epsilon}$.
	
	Combining \eqref{inf est}, \eqref{sup est}, and \eqref{har est}, we obtain
	\begin{equation}
	\begin{aligned}
	&\inf_{z \in C_\epsilon(\rho r(\epsilon)+3\epsilon ))} 
		\mathbb{P}_{z} \left( Z^\epsilon(\tilde{\tau}^{\epsilon,\delta'}) \in C_\epsilon^l(\delta') \right) \\
		&\geq \inf_{z \in \Gamma_{4,\epsilon}} \mathbb{P}_{z} \left( Z^\epsilon(\tilde{\tau}^{\epsilon,\delta'}) \in C_\epsilon^l(\delta') \right) -\eta \\
		& \geq c^{-1} \sup_{z \in \Gamma_{4,\epsilon}} \mathbb{P}_{z} \left( Z^\epsilon(\tilde{\tau}^{\epsilon,\delta'}) \in C_\epsilon^l(\delta') \right) - \eta \\
		& \geq c^{-1}  \sup_{z \in C_\epsilon(\rho r(\epsilon)+3\epsilon ))} \mathbb{P}_{z} \left( Z^\epsilon(\tilde{\tau}^{\epsilon,\delta'}) \in C_\epsilon^l(\delta') \right) - (c^{-1}+1) \eta .
	\end{aligned}
	\end{equation}	
	Applying \eqref{exit place sup lb} with the substitution of $\sigma^{\epsilon,\delta}$ by $\tilde{\tau}^{\epsilon,\delta'}$ (noting that both represent exit problems with neck lengths $\delta$ and $\delta'$ respectively), due to the arbitrariness of $\eta >0$, we conclude that there exists $c>0$ such that
	\begin{equation}
		\lim_{\delta' \to 0} \lim_{\epsilon \to 0} \inf_{z \in C_\epsilon(\rho r(\epsilon)+3\epsilon ))} 
		\mathbb{P}_{z} \left( Z^\epsilon(\tilde{\tau}^{\epsilon,\delta'}) \in C_\epsilon^l(\delta') \right)
		\geq c.
	\end{equation}
\end{proof}

\section{Quasi-stationary distributions}\label{sec qsd}
This section is devoted to establishing the convergence of the discrete Markov chain $X_n^{\epsilon,\delta',\delta}$ to its quasi-stationary distribution, conditioned on survival.
As our focus remains on the local behavior near a vertex, we omit the subscript $j$ throughout this section.

Consider the sequence of stopping times
\begin{equation*}
	\tilde{\sigma}_n^{\epsilon,\delta'} \coloneqq \inf \{ t \geq \tilde{\tau}_n^{\epsilon,\delta'} : Z^\epsilon(t) \in C_\epsilon(\rho r(\epsilon)+3\epsilon) \cup C_\epsilon(2\delta') \}
\end{equation*}
and
\begin{equation*}
	\tilde{\tau}_n^{\epsilon,\delta'} \coloneqq \inf \{ t \geq \tilde{\sigma}_{n-1}^{\epsilon,\delta'} : Z^\epsilon(t) \in C_\epsilon(\delta') \}
\end{equation*}
with 
$\tilde{\tau}_0^{\epsilon,\delta'} \coloneqq 0$.
Also, let $\tilde{\sigma}^{\epsilon,\delta'}$ be the first hitting time to $C_\epsilon(\rho r(\epsilon)+3\epsilon) \cup C_\epsilon(2\delta')$.
Recall that
$\sigma^{\epsilon,\delta} $ is the first hitting time to $C_\epsilon(\delta)$ and 
$\tilde{\tau}^{\epsilon,\delta'}$ is the first hitting time to $C_\epsilon(\delta')$.

We now define a discrete time Markov process $X_n^{\epsilon,\delta',\delta}$ with state space $C_\epsilon(\delta') \cup \{ \partial \}$ by
\begin{equation}
	X_n^{\epsilon,\delta',\delta} \coloneqq
	\begin{dcases}
		Z^\epsilon (\tilde{\tau}_n^{\epsilon,\delta'}), \ \  \text{ if } \tilde{\tau}_n^{\epsilon,\delta'} < \sigma^{\epsilon,\delta}, \\
		\partial , \ \  \text{ if } \tilde{\tau}_m^{\epsilon,\delta'} \geq \sigma^{\epsilon,\delta}, \text{ for some } m \leq n.
	\end{dcases}
\end{equation}
That is, the process $X_n^{\epsilon,\delta',\delta}$ is killed (sent to the absorbing state $\partial$) upon the first time $Z^\epsilon$ reaches $C_\epsilon(\delta)$.

We now introduce the quasi-stationary distribution associated with the process $X_n^{\epsilon,\delta',\delta}$ .

\begin{Definition}
	A probability measure $\alpha_{\epsilon,\delta',\delta}$ is a quasi-stationary distribution for the process $X_n^{\epsilon,\delta',\delta}$ if for all Borel measurable subset $A_{\epsilon,\delta'} \subset C_\epsilon(\delta')$ and $n \geq 0$, we have
	\begin{equation}
		\mathbb{P}_{\alpha_{\epsilon,\delta',\delta}} \left( X_n^{\epsilon,\delta',\delta} \in A_{\epsilon,\delta'} \middle|  \tilde{\tau}_n^{\epsilon,\delta'} < \sigma^{\epsilon,\delta} \right) 
		= \alpha_{\epsilon,\delta',\delta}(A_{\epsilon,\delta'}).
	\end{equation}
\end{Definition}

\subsection{Uniform convergence to the quasi-stationary distribution}\label{sec conv qsd}
In this subsection, we aim to prove uniform convergence to the unique quasi-stationary distribution for the process $X_n^{\epsilon,\delta,\delta'}$.
\begin{Theorem}\label{conv qsd}
	There exists a unique quasi-stationary distribution $\alpha_{\epsilon,\delta',\delta}$ for the process $X_n^{\epsilon,\delta',\delta}$.
	Moreover, for every $\eta > 0$, there exists $N_{\eta} \in \mathbb{N}$ and $\delta_\eta' > 0$ such that for all $\delta' \in (0,\delta_\eta')$ there exists $\epsilon_{\eta,\delta'}>0$ such that
	\begin{equation}
		\sup_{z \in C_\epsilon(\delta')} \left \lVert 
		\mathbb{P}_{z} \left( X_{N_\eta}^{\epsilon,\delta',\delta} \in \cdot \middle|  \tilde{\tau}_{N_\eta}^{\epsilon,\delta'} < \sigma^{\epsilon,\delta} \right) 
		-\alpha_{\epsilon,\delta',\delta}(\cdot) \right\rVert_{TV}
		< \eta, \ \ \ \epsilon \in (0,\epsilon_{\eta,\delta'}).
	\end{equation}
\end{Theorem}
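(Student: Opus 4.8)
The plan is to deduce Theorem~\ref{conv qsd} from an abstract criterion — the uniform Dobrushin-type condition of \cite{QSD}, in the form recalled in Appendix~\ref{app conv qsd} — applied to the killed chain $X_n^{\epsilon,\delta',\delta}$, with constants that do not degenerate as $\epsilon\to0$ and (after $\delta'$ is taken small) do not depend on $\delta'$. Concretely, it suffices to produce $n_0\in\mathbb{N}$, constants $c_1,c_2\in(0,1]$, $\delta_0'>0$, and for each admissible pair $(\epsilon,\delta')$ a probability measure $\nu_{\epsilon,\delta',\delta}$ on $C_\epsilon(\delta')$, such that for all $\delta'\in(0,\delta_0')$ and all sufficiently small $\epsilon$ (depending on $\delta'$):
(A1), a uniform minorization, $\mathbb{P}_z\bigl(X_{n_0}^{\epsilon,\delta',\delta}\in\cdot \mid \tilde\tau_{n_0}^{\epsilon,\delta'}<\sigma^{\epsilon,\delta}\bigr)\geq c_1\,\nu_{\epsilon,\delta',\delta}(\cdot)$ for every $z\in C_\epsilon(\delta')$; and
(A2), a uniform comparability of survival probabilities, $\mathbb{P}_{\nu_{\epsilon,\delta',\delta}}\bigl(\tilde\tau_n^{\epsilon,\delta'}<\sigma^{\epsilon,\delta}\bigr)\geq c_2\,\sup_{z\in C_\epsilon(\delta')}\mathbb{P}_z\bigl(\tilde\tau_n^{\epsilon,\delta'}<\sigma^{\epsilon,\delta}\bigr)$ for all $n\geq1$.
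Granting (A1)--(A2), Appendix~\ref{app conv qsd} yields the existence and uniqueness of $\alpha_{\epsilon,\delta',\delta}$ together with the bound $\sup_{z\in C_\epsilon(\delta')}\lVert\mathbb{P}_z(X_n^{\epsilon,\delta',\delta}\in\cdot\mid\tilde\tau_n^{\epsilon,\delta'}<\sigma^{\epsilon,\delta})-\alpha_{\epsilon,\delta',\delta}\rVert_{TV}\leq C_0(1-c_1c_2)^{\lfloor n/n_0\rfloor}$ for all $n$, with $C_0$ universal; since $c_1,c_2,n_0$ are uniform, one then sets $N_\eta$ to be any integer with $C_0(1-c_1c_2)^{\lfloor N_\eta/n_0\rfloor}<\eta$, $\delta_\eta':=\delta_0'$, and takes $\epsilon_{\eta,\delta'}$ to be the threshold furnished by the verification of (A1)--(A2) at level $\delta'$. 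This gives the claimed statement, with $N_\eta$ and $\delta_\eta'$ depending only on $\eta$.

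To verify (A1) I take $n_0=1$ and let $\nu_{\epsilon,\delta',\delta}$ be the law of $Z^\epsilon(\tilde\tau^{\epsilon,\delta'})$ (the first hitting point of $C_\epsilon(\delta')$) started from a fixed reference point $z_0\in C_\epsilon(\rho r(\epsilon)+3\epsilon)$. Fix $z\in C_\epsilon(\delta')$. First, comparing the graph coordinate $t\mapsto d_\Gamma(\Pi(Z^\epsilon(t)),O)$ with a one-dimensional Brownian motion on $[\rho r(\epsilon)+3\epsilon,\,2\delta']$ and using that the two gaps $\delta'-\rho r(\epsilon)-3\epsilon$ and $\delta'$ are comparable, one obtains $\mathbb{P}_z\bigl(Z^\epsilon(\tilde\sigma_1^{\epsilon,\delta'})\in C_\epsilon(\rho r(\epsilon)+3\epsilon)\bigr)\geq c_0>0$, uniformly in $\epsilon,\delta',z$. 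On this event no killing can have occurred, since to reach $C_\epsilon(\delta)$ from a point with graph coordinate below $\delta'$ the path must first cross $C_\epsilon(2\delta')$; and from any point of $C_\epsilon(\rho r(\epsilon)+3\epsilon)$ the diffusion likewise cannot hit $C_\epsilon(\delta)$ before returning to $C_\epsilon(\delta')$, so on this event $\tilde\tau_1^{\epsilon,\delta'}<\sigma^{\epsilon,\delta}$ and, by the strong Markov property at $\tilde\sigma_1^{\epsilon,\delta'}$, $X_1^{\epsilon,\delta',\delta}=Z^\epsilon(\tilde\tau^{\epsilon,\delta'})$ restarted from $Z^\epsilon(\tilde\sigma_1^{\epsilon,\delta'})$. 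Finally, for each measurable $A\subset C_\epsilon(\delta')$ the map $z'\mapsto\mathbb{P}_{z'}(Z^\epsilon(\tilde\tau^{\epsilon,\delta'})\in A)$ is a bounded Neumann-harmonic function on $B_\epsilon(\delta')$, so by a Harnack inequality — applied after the rescalings $z\mapsto(z-z_k^{**})/\epsilon$ inside each neck and $z\mapsto(z-O)/r(\epsilon)$ inside the ball $\Omega_{1,\epsilon}$, even-reflecting across $\partial G_\epsilon$ and chaining finitely many interior balls on the (smoothly converging) rescaled domains, exactly as in Lemmas~\ref{lem reg har}, \ref{exit place lem 1} and \ref{exit place apriori} — there is $c_H>0$, uniform in $\epsilon,\delta',\delta$, such that $\mathbb{P}_{z'}(Z^\epsilon(\tilde\tau^{\epsilon,\delta'})\in\cdot)\geq c_H\,\nu_{\epsilon,\delta',\delta}(\cdot)$ for all $z'\in C_\epsilon(\rho r(\epsilon)+3\epsilon)$. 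Combining the three observations gives $\mathbb{P}_z\bigl(X_1^{\epsilon,\delta',\delta}\in A,\ \tilde\tau_1^{\epsilon,\delta'}<\sigma^{\epsilon,\delta}\bigr)\geq c_0c_H\,\nu_{\epsilon,\delta',\delta}(A)$ for all $z\in C_\epsilon(\delta')$, which is (A1) with $c_1=c_0c_H$ (divide by $\mathbb{P}_z(\tilde\tau_1^{\epsilon,\delta'}<\sigma^{\epsilon,\delta})\leq1$).

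For (A2), write $h_n(z):=\mathbb{P}_z(\tilde\tau_n^{\epsilon,\delta'}<\sigma^{\epsilon,\delta})$ and $h_n(\partial):=0$. By the strong Markov property $h_n=Ph_{n-1}$, where $P$ is the sub-stochastic one-step kernel of $X^{\epsilon,\delta',\delta}_\cdot$, and the computation above shows $P(z,\cdot)\geq c_1\nu_{\epsilon,\delta',\delta}(\cdot)$ for all $z$, whence $\inf_z h_n(z)\geq c_1\,\nu_{\epsilon,\delta',\delta}h_{n-1}$ and $\nu_{\epsilon,\delta',\delta}h_n\geq c_1\,\nu_{\epsilon,\delta',\delta}h_{n-1}$. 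To upgrade this to $\sup_zh_n(z)\leq c_2^{-1}\,\nu_{\epsilon,\delta',\delta}h_n$ with a uniform $c_2$, the essential input — as anticipated in the introduction — is the a priori exit-time bound of Lemma~\ref{small exit est 1.5} together with Lemma~\ref{inter exit est 1}: the duration of one excursion of $X^{\epsilon,\delta',\delta}_\cdot$ (down to $C_\epsilon(\rho r(\epsilon)+3\epsilon)$ and back, or up to $C_\epsilon(2\delta')$ and back) is, uniformly, a vanishing fraction of $\mathbb{E}_z\sigma^{\epsilon,\delta}\asymp\delta^2+\tfrac{r(\epsilon)^d}{\epsilon^{d-1}}\delta$ once $\delta'/\delta$ is small, so the killing is spread over many steps and the chain mixes long before it is killed. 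Quantitatively, one identifies $\{\tilde\tau_n^{\epsilon,\delta'}<\sigma^{\epsilon,\delta}\}$ with an event $\{\sigma^{\epsilon,\delta}>t_n\}$ for $t_n$ of order $n$ times the (uniform) excursion length, uses that $\mathbb{P}_z(\sigma^{\epsilon,\delta}>t)\asymp1$ uniformly in $z\in C_\epsilon(\delta')$ on that time scale (short excursion times plus Markov's inequality), uses (A1) to conclude that for larger $t$ the law of $Z^\epsilon$ has already mixed, and invokes a Neumann-Harnack bound for the one-step survival function $h_1$ (same rescalings as above) to close an induction on $n$; this yields $\mathbb{P}_{z_1}(\sigma^{\epsilon,\delta}>t)\asymp\mathbb{P}_{z_2}(\sigma^{\epsilon,\delta}>t)$ uniformly in $z_1,z_2\in C_\epsilon(\delta')$ and $t$, hence (A2) with a uniform $c_2$.

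The principal obstacle is precisely the uniformity in $\delta'$ in (A2): because the per-step probability of being killed from $C_\epsilon(\delta')$ is only of order $\delta'/\delta$, a naive step-by-step contraction of the oscillation of $h_n$ degenerates as $\delta'\to0$, and one is forced to work on the intrinsic time scale and exploit the concentration of the exit time (Lemmas~\ref{small exit est 1.5}--\ref{inter exit est 1}) to recover comparability. A secondary, but pervasive, technical point is that every Harnack inequality used above is applied on a domain with two genuinely different small scales — $\epsilon$-necks glued to an $r(\epsilon)$-ball — and with Neumann data on $\partial G_\epsilon$; uniformity of the Harnack constants in $\epsilon$ is obtained, exactly as in Lemmas~\ref{lem reg har}, \ref{exit place lem 1} and \ref{exit place apriori}, by rescaling each neck by $\epsilon$ and the ball by $r(\epsilon)$, reflecting across the Neumann boundary, and chaining a bounded number of interior Harnack inequalities on the resulting limiting domains.
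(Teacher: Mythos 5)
Your overall framework — verify a uniform Dobrushin-type condition and then invoke the abstract result of \cite{QSD} — is the same as the paper's, but several key steps are handled incorrectly or in a way that does not match the setting.

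First, you present (A2) as ``the principal obstacle'' and sketch an intricate argument about intrinsic time scales, spreading of killing, and an induction on $n$. In this setting (A2) is trivial: since every point of $C_\epsilon(\delta')$ has the same graph coordinate $\delta'$ and the longitudinal coordinate is a one-dimensional Brownian motion, the survival probability $\mathbb{P}_z(\tilde\tau_n^{\epsilon,\delta'}<\sigma^{\epsilon,\delta})$ is a function of the graph coordinate only and is therefore \emph{independent} of $z\in C_\epsilon(\delta')$. The paper records this explicitly in the proof of Claim~2, writing the one-step survival probability in closed form as in \eqref{est p} and noting $p^n_{\epsilon,\delta',\delta}(z)=(p^1_{\epsilon,\delta',\delta}(z))^n$ is constant on $C_\epsilon(\delta')$. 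Thus (A2) holds with $c_2=1$ for \emph{any} choice of the reference measure $\nu$, with no analysis of excursion lengths or exit-time concentration. Your claim that this is the hard part is a serious misreading of the structure of the problem; the genuine difficulty is entirely in the minorization (Claim~1).

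Second, your minorization (A1) is too strong and the Harnack argument offered for it has a gap. You take $n_0=1$, set $\nu_{\epsilon,\delta',\delta}$ equal to the return distribution from a fixed reference point $z_0\in C_\epsilon(\rho r(\epsilon)+3\epsilon)$, and claim a clean bound $\mathbb{P}_{z'}(Z^\epsilon(\tilde\tau^{\epsilon,\delta'})\in\cdot)\geq c_H\,\nu_{\epsilon,\delta',\delta}(\cdot)$ uniformly over $z'\in C_\epsilon(\rho r(\epsilon)+3\epsilon)$ via a Harnack chain on the rescaled domain. This does not follow by chaining alone. If $z'$ and $z_0$ sit on different necks, a Harnack chain connecting them must traverse the junctions between the $\epsilon$-scale neck and the $r(\epsilon)$-scale ball, and in the rescaled picture the apertures at those junctions shrink like $\epsilon/r(\epsilon)\to0$; the number of balls required blows up. The paper never attempts a direct Harnack chain between necks: in Lemma~\ref{exit place apriori} the Harnack inequality is applied only on $\Gamma_{4,\epsilon}$, which after rescaling by $r(\epsilon)$ sits at a fixed positive distance from all boundary components, and the reduction from $C_\epsilon(\rho r(\epsilon)+3\epsilon)$ to $\Gamma_{4,\epsilon}$ is achieved by an entirely different mechanism (the invariant-measure relation and occupation-time bounds in Lemma~\ref{exit place lem 1}). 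Moreover, even granting comparability of hitting probabilities of sets such as $C_\epsilon^l(\delta')$ (which is what Lemma~\ref{exit place apriori} gives), you need minorization of the full hitting \emph{distribution} on $C_\epsilon(\delta')$, which is strictly stronger. The paper obtains this via separation of variables: inside a neck the transverse component $Y^\epsilon$ is an independent reflected Brownian motion in a $(d-1)$-ball of radius $\lambda_k\epsilon$ which mixes to the uniform distribution on the cross section, and this mixing is only asymptotic as $\epsilon\to0$. This is exactly why the paper's Claim~1 carries an explicit $-\eta$ slack and $n_0=2$ (one step to reach the target neck with positive probability via Lemma~\ref{exit place apriori}, a second step to mix the transverse coordinate), and why the theorem statement restricts to $\epsilon\in(0,\epsilon_{\eta,\delta'})$. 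A clean $n_0=1$ minorization with no slack, as you assert, is not established and is unlikely to hold for fixed $\epsilon$.

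Finally, because your (A1) has no $\eta$-slack you implicitly appeal to the classical form of \cite{QSD}; the paper's Appendix~\ref{app conv qsd} is needed precisely because the minorization only holds up to an error that vanishes as $\epsilon\to0$, and the iteration in \eqref{app qsd eq 2}--\eqref{app qsd eq 4} has to track how that error propagates (it is divided by survival probabilities that can be small). Your quantitative bound $C_0(1-c_1c_2)^{\lfloor n/n_0\rfloor}$ would be the clean outcome if the clean minorization held, but it doesn't, so this step also needs the paper's modified argument.
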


	The proof of Theorem \ref{conv qsd} is largely based on an argument similar to that of \cite[Theorem 2.1]{QSD}, where the so called Dobrushin's condition is used.
To this purpose, we introduce the following uniform Dobrushin's condition tailored to our setting.

	There exists a probability measure $\nu_{\epsilon,\delta'}$ on $C_{\epsilon}(\delta')$ such that
	\begin{enumerate}[label=\textbf{Claim \arabic*},ref=Claim \arabic*]
	\item \label{itm: c1}
	For all $\eta>0$, there exist $c_1>0$, $n_0 \geq 0$, and $\delta_{\eta}'>0$ such that for all $\delta' \in (0,\delta_{\eta}')$, there exists $\epsilon_{\eta,\delta'} > 0$ such that 
	\begin{equation}
		\inf_{z \in C_{\epsilon}(\delta')} \mathbb{P}_{z} \left( X_{n_0}^{\epsilon,\delta',\delta} \in \cdot \middle|  \tilde{\tau}_{n_0}^{\epsilon,\delta'} < \sigma^{\epsilon,\delta} \right) \geq c_1 \nu_{\epsilon,\delta'}(\cdot) - \eta, \ \ \ \epsilon \in (0,\epsilon_{\eta,\delta'}).
	\end{equation}
	\item \label{itm: c2}
	There exists $c_2>0$ such that for all $\epsilon,\delta'$ small enough, and all $n \geq 0$, we have
	\begin{equation}
		\mathbb{P}_{\nu_{\epsilon,\delta'}} \left(   \tilde{\tau}_n^{\epsilon,\delta'} < \sigma^{\epsilon,\delta} \right) \geq c_2 \sup_{z \in C_{\epsilon}(\delta')} \mathbb{P}_{z} \left(   \tilde{\tau}_n^{\epsilon,\delta'} < \sigma^{\epsilon,\delta} \right).
	\end{equation}
	\end{enumerate}

In Appendix \ref{app conv qsd}, we will show how \ref{itm: c1} and \ref{itm: c2} together imply Theorem \ref{conv qsd}, by following an argument similar to that of \cite[Theorem 2.1]{QSD}.
In what follows, we prove that \ref{itm: c1} and \ref{itm: c2} hold with $\nu_{\epsilon,\delta'}$ being the uniform distribution on $C_\epsilon(\delta')$ and $n_0=2$.

\begin{proof}[Proof of \ref{itm: c2}]
Let $z \in C_\epsilon(\delta')$ and $p_{\epsilon,\delta',\delta}^n(z) \coloneqq \mathbb{P}_z \left(\tilde{\tau}_n^{\epsilon,\delta'} < \sigma^{\epsilon,\delta}  \right)$ for $n \geq 1$.
Since it is only related to a one dimensional Brownian motion, $p_{\epsilon,\delta',\delta}^n(z)$ is independent of $z \in C_\epsilon(\delta')$; in fact,
\begin{equation}\label{est p}
	p_{\epsilon,\delta',\delta}^1(z) = \frac{\delta'}{2\delta'-\rho r(\epsilon)-3\epsilon} + \frac{\delta'-\rho r(\epsilon)-3\epsilon}{2\delta'-\rho r(\epsilon)-3\epsilon} \cdot \frac{\delta-2 \delta'}{\delta - \delta'}
\end{equation}
and $p_{\epsilon,\delta',\delta}^n(z)=\left(p_{\epsilon,\delta',\delta}^1(z) \right)^n$.
\end{proof}

\begin{proof}[Proof of \ref{itm: c1}]
It suffices to consider two cases: $z \in C_\epsilon^k(\delta')$, $A_{\epsilon,\delta'}^l \subset C_\epsilon^l(\delta')$ for $k=l$ or $k \neq l$.
In addition, by \eqref{est p}, it suffices to show that
for all $\eta>0$, there exists $c_1'>0$, $n_0 \geq 0$, $\delta_{\eta}'>0$ such that for all $\delta' \in (0,\delta_{\eta}')$, there exists $\epsilon_{\eta,\delta'} > 0$ such that 
\begin{equation}
	\inf_{z \in C_{\epsilon}^k(\delta')} \mathbb{P}_{z} \left( X_{n_0}^{\epsilon,\delta',\delta} \in A_{\epsilon,\delta'}^l,  \tilde{\tau}_{n_0}^{\epsilon,\delta'} < \sigma^{\epsilon,\delta} \right) \geq c_1' \nu_{\epsilon,\delta'}(A_{\epsilon,\delta'}^l) - \eta, \ \ \ \epsilon \in (0,\epsilon_{\eta,\delta'}).
\end{equation}

Let us first assume $k=l$ and $n_0=1$. By the strong Markov property,
\begin{equation}\label{c1 eq 1}
\begin{aligned}
	&\inf_{ z \in C_\epsilon^k(\delta') } \mathbb{P}_{z} \left(X_1^{\epsilon,\delta',\delta} \in A_{\epsilon,\delta'}^k, \tilde{\tau}_1^{\epsilon,\delta'} < \sigma^{\epsilon,\delta} \right) \\
	&\geq \inf_{ z \in C_\epsilon^k(\delta') } \mathbb{P}_{z} \left(X_1^{\epsilon,\delta',\delta} \in A_{\epsilon,\delta'}^k, Z^\epsilon(\tilde{\sigma}_0^{\epsilon,\delta'}) \in C_\epsilon^k(2\delta') , \tilde{\tau}_1^{\epsilon,\delta'} < \sigma^{\epsilon,\delta} \right) \\
	&= \inf_{ z \in C_\epsilon^k(\delta') } \mathbb{E}_{z} \left[ \mathbbm{1}_{ \{ Z^\epsilon(\tilde{\sigma}_0^{\epsilon,\delta'}) \in C_\epsilon^k(2\delta') \} }
	\mathbb{P}_{Z^\epsilon(\tilde{\sigma}_0^{\epsilon,\delta'})} \left( Z^\epsilon(\tilde{\tau}^{\epsilon,\delta'} ) \in A_{\epsilon,\delta'}^k ,  \tilde{\tau}^{\epsilon,\delta'} < \sigma^{\epsilon,\delta} \right)	
	 \right] \\
	 & \geq   \inf_{ z \in C_\epsilon^k(\delta') } \mathbb{P}_{z} \left( Z^\epsilon(\tilde{\sigma}_0^{\epsilon,\delta'}) \in C_\epsilon^k(2\delta') \right) \nu_{\epsilon,\delta'}^k(A_{\epsilon,\delta'}^k)  \\
	 & \ \ \ -
	 \sup_{ z \in C_\epsilon^k(\delta') } \abs{
	 \mathbb{E}_{z} \left[ \mathbbm{1}_{ \{ Z^\epsilon(\tilde{\sigma}_0^{\epsilon,\delta'}) \in C_\epsilon^k(2\delta') \} }
	 \left(
	\mathbb{P}_{Z^\epsilon(\tilde{\sigma}_0^{\epsilon,\delta'})} \left( Z^\epsilon(\tilde{\tau}^{\epsilon,\delta'} ) \in A_{\epsilon,\delta'}^k , \tilde{\tau}^{\epsilon,\delta'} < \sigma^{\epsilon,\delta} \right)	
	- \nu_{\epsilon,\delta'}^k(A_{\epsilon,\delta'}^k) \right)
	 \right]},
\end{aligned}
\end{equation}
where $\nu_{\epsilon,\delta'}^k$ is the uniform distribution of $C_\epsilon^k(\delta')$.
The term $\mathbb{P}_{z} \left( Z^\epsilon(\tilde{\sigma}_0^{\epsilon,\delta'}) \in C_\epsilon^k(2\delta') \right)$ converges to $\frac{1}{2}$, as $\epsilon \to 0$.
As a result, it it greater than $\frac{1}{4}$, for $\epsilon$ small enough.
Moreover, since, for $A_{\epsilon,\delta'}^k \subset C_\epsilon^k(\delta')$,
\begin{equation}
	\nu_{\epsilon,\delta'}^k(A_{\epsilon,\delta'}^k) 
	= \left( \frac{\lambda_k^{d-1}}{\sum_{l:I_l \sim O} \lambda_l^{d-1}} \right)^{-1} \nu_{\epsilon,\delta'} (A_{\epsilon,\delta'}^k),
\end{equation}
the proof is done if we can show the second term converges to $0$.

For all $t>0$ and $z \in C_\epsilon^k(\delta')$,
\begin{equation}\label{c1 eq 2}
\begin{aligned}
	&  \abs{ \mathbb{E}_{z} \left[ \mathbbm{1}_{ \{ Z^\epsilon(\tilde{\sigma}_0^{\epsilon,\delta'}) \in C_\epsilon^k(2\delta') \} }
	 \left(
	 \mathbb{P}_{Z^\epsilon(\tilde{\sigma}_0^{\epsilon,\delta'})} \left( Z^\epsilon(\tilde{\tau}^{\epsilon,\delta'} ) \in A_{\epsilon,\delta'}^k , \tilde{\tau}^{\epsilon,\delta'} < \sigma^{\epsilon,\delta} \right)	
	- \nu_{\epsilon,\delta'}^k(A_{\epsilon,\delta'}^k) \right)
	 \right]}  \\
	 & \leq  \abs{ \mathbb{E}_{z} \left[ \mathbbm{1}_{ \{ Z^\epsilon(\tilde{\sigma}_0^{\epsilon,\delta'}) \in C_\epsilon^k(2\delta') \} } 
	  \left(
	\mathbb{P}_{Z^\epsilon(\tilde{\sigma}_0^{\epsilon,\delta'})} \left( Z^\epsilon(\tilde{\tau}^{\epsilon,\delta'} ) \in A_{\epsilon,\delta'}^k , t < \tilde{\tau}^{\epsilon,\delta'} < \sigma^{\epsilon,\delta} \right)	
	- \nu_{\epsilon,\delta'}^k(A_{\epsilon,\delta'}^k) \right)
	 \right] } \\
	 & \ \ \ +  \abs{\mathbb{E}_{z} \left[ \mathbbm{1}_{ \{ Z^\epsilon(\tilde{\sigma}_0^{\epsilon,\delta'}) \in C_\epsilon^k(2\delta') \} }
	\mathbb{P}_{Z^\epsilon(\tilde{\sigma}_0^{\epsilon,\delta'})} \left( Z^\epsilon(\tilde{\tau}^{\epsilon,\delta'} ) \in A_{\epsilon,\delta'}^k , t \geq \tilde{\tau}^{\epsilon,\delta'} , \tilde{\tau}^{\epsilon,\delta'} < \sigma^{\epsilon,\delta} \right)	
	 \right]} \\
	 & \leq \sup_{z_1 \in C_\epsilon^k(2 \delta')}
	\abs{ \mathbb{P}_{z_1} \left(Z^\epsilon(\tilde{\tau}^{\epsilon,\delta'} ) \in A_{\epsilon,\delta'}^k, t < \tilde{\tau}^{\epsilon,\delta'} < \sigma^{\epsilon,\delta} \right) 
	-\nu_{\epsilon,\delta'}^k(A_{\epsilon,\delta'}^k) } \\
	& \ \ \ +
	\sup_{z_1 \in C_\epsilon^k(2 \delta')} 
	\mathbb{P}_{z_1} \left( t \geq \tilde{\tau}^{\epsilon,\delta'}, \tilde{\tau}^{\epsilon,\delta'} < \sigma^{\epsilon,\delta} \right)  \\
	&\eqqcolon I_1^{\epsilon,\delta',\delta}(t) +I_2^{\epsilon,\delta',\delta}(t).
\end{aligned}
\end{equation}

We claim that there exists $t_{\eta,\delta'} > 0$ such that 
\begin{equation}\label{c1 eq 2-2}
\begin{aligned}
\sup_{z_1 \in C_\epsilon^k(2 \delta')} 
	\mathbb{P}_{z_1} \left( \tilde{\tau}^{\epsilon,\delta'} \leq t_{\eta,\delta'}, \tilde{\tau}^{\epsilon,\delta'} < \sigma^{\epsilon,\delta} \right)
	\leq \sup_{z_1 \in C_\epsilon^k(2 \delta')} 
	\mathbb{P}_{z_1} \left( \tilde{\tau}^{\epsilon,\delta'} \wedge \sigma^{\epsilon,\delta} \leq t_{\eta,\delta'}  \right)
	< \frac{\eta}{4} .
\end{aligned}
\end{equation}
Let $B_1(t)$ be a one dimensional Brownian motion starting at $0$, and define the hitting time
\begin{equation*}
T_{\delta'} = \inf\{ t \geq 0: B_1(t)=\delta' \}.
\end{equation*}
Let $\mathbf{P}$ denote the associated probability measure.
Since $Z^\epsilon(t)$ is the scaled Brownian motion, we have 
\begin{equation*}
\sup_{z_1 \in C_\epsilon^k(2 \delta')} 
	\mathbb{P}_{z_1} \left( \tilde{\tau}^{\epsilon,\delta'} \wedge \sigma^{\epsilon,\delta} \leq t_{\eta,\delta'}  \right)
	\leq 
\mathbf{P} \left( T_{\delta'} \wedge T_{-\delta'} \leq 2 t_{\eta,\delta'} \right).
\end{equation*}
By the reflection principle and Mill's inequality,
\begin{equation*}
\begin{aligned}
	&\mathbf{P} \left( T_{\delta'} \wedge T_{-\delta'} \leq t \right)
	\leq \mathbf{P} \left( T_{\delta'}  \leq t \right)
	+ \mathbf{P} \left( T_{-\delta'} \leq t \right)
	= 2\mathbf{P} \left( T_{\delta'}  \leq t \right) \\
	&= 2 \mathbf{P} \left( \sup_{0 \leq s \leq t} B_1(s) \geq \delta' \right)
	= 4 	\mathbf{P} \left(  B_1(t) \geq \delta' \right)
	\leq \frac{2\sqrt{2t}}{\delta'\sqrt{\pi}} e^{-\frac{(\delta')^2}{2t}}.
\end{aligned}
\end{equation*}
Since the right hand side tends to zero as $t \to 0$, we choose $t_{\eta,\delta'} > 0$ small enough so that \eqref{c1 eq 2-2} holds.
Substituting $t=t_{\eta,\delta'}$ into \eqref{c1 eq 2}, we obtain
\begin{equation}\label{c1 ineq 1}
	I_2^{\epsilon,\delta',\delta}(t_{\eta,\delta'})  < \frac{\eta}{4} .
\end{equation}

Next, consider $I_1^{\epsilon,\delta',\delta}$. Since we are working within a cylindrical domain, we can apply separation of variables.
Specifically, we write $Z^\epsilon(t) = (X^\epsilon(t),Y^\epsilon(t))$, where $X^\epsilon(t) \in I_k$ and $Y^\epsilon(t) \in I_k^\perp$. 
Moreover, the processes $X^\epsilon(t)$ and $Y^\epsilon(t)$ are independent.
Since $A_{\epsilon,\delta'}^{k} \subset C_\epsilon(\delta')$, all points in $A_{\epsilon,\delta'}^{k} \subset C_\epsilon(\delta')$ share the same $x$-coordinate. 
We denote by $A_{\epsilon,\delta'}^{k,y}$ the projection of $A_{\epsilon,\delta'}^{k}$ onto the transverse direction $I_k^\perp$, i.e., its cross sectional component in the $y$-variables.
Let $\nu_{\epsilon,\delta'}^{k,y}$ be the uniform distribution on $I_k^\perp$, then we have
\begin{equation*}
	\nu_{\epsilon,\delta'}^k (A_{\epsilon,\delta'}^k)
	= \nu_{\epsilon,\delta'}^{k,y} (A_{\epsilon,\delta'}^{k,y}).
\end{equation*}
Finally, letting $z_1 = (x_1,y_1) \in C_\epsilon^k(2\delta')$, we have
\begin{equation}
\begin{aligned}
	& \mathbb{P}_{z_1} \left(Z^\epsilon(\tilde{\tau}^{\epsilon,\delta'}) \in A_{\epsilon,\delta'}^k, t< \tilde{\tau}^{\epsilon,\delta'} < \sigma^{\epsilon,\delta} \right) \\
	& = \int \int  \mathbbm{1}_{ \{ Y^\epsilon(\omega, \tilde{\tau}^{\epsilon,\delta'}(\omega')) \in A_{\epsilon,\delta'}^{k,y} \}} 
	\mathbbm{1}_{ \{ Y^\epsilon(\omega, 0) = y_1 \}} 
	\mathbbm{1}_{ \{ t < \tilde{\tau}^{\epsilon,\delta'}(\omega') < \sigma^{\epsilon,\delta}(\omega') \} }
	\mathbbm{1}_{ \{ X^\epsilon(\omega', 0) = x_1 \}}
	d \mathbb{P}^Y(\omega) d \mathbb{P}^X(\omega') \\
	& = \int \mathbb{P}_{y_1}^Y \left( Y^\epsilon( \tilde{\tau}^{\epsilon,\delta'}(\omega')) \in A_{\epsilon,\delta'}^{k,y} \right)
	\mathbbm{1}_{ \{ t < \tilde{\tau}^{\epsilon,\delta'}(\omega') < \sigma^{\epsilon,\delta}(\omega') \} }
	\mathbbm{1}_{ \{ X^\epsilon(\omega', 0) = x_1 \}}
	d \mathbb{P}^X(\omega') \\
	& = \int \left[ \mathbb{P}_{y_1}^Y \left( Y^\epsilon( \tilde{\tau}^{\epsilon,\delta'}(\omega')) \in A_{\epsilon,\delta'}^{k,y} \right) - \nu_{\epsilon,\delta'}^{k,y} (A_{\epsilon,\delta'}^{k,y}) \right]
	\mathbbm{1}_{ \{ t < \tilde{\tau}^{\epsilon,\delta'}(\omega') < \sigma^{\epsilon,\delta}(\omega') \} }
	\mathbbm{1}_{ \{ X^\epsilon(\omega', 0) = x_1 \}}
	d \mathbb{P}^X(\omega') \\
	& \ \ \ \ \ + \nu_{\epsilon,\delta'}^k (A_{\epsilon,\delta'}^k) \mathbb{P}_{z_1^k} \left( t < \tilde{\tau}^{\epsilon,\delta'} < \sigma^{\epsilon,\delta} \right).
\end{aligned}
\end{equation}
Therefore,
\begin{equation}
\begin{aligned}
&I_1^{\epsilon,\delta',\delta}(t_{\eta,\delta'}) = 
\sup_{z_1 \in C_\epsilon^k(2 \delta')}
	\abs{ \mathbb{P}_{z_1} \left(Z^\epsilon(\tilde{\tau}^{\epsilon,\delta'} ) \in A_{\epsilon,\delta'}^{k}, t_{\eta,\delta'} < \tilde{\tau}^{\epsilon,\delta'} < \sigma^{\epsilon,\delta} \right) 
	-\nu_{\epsilon,\delta'}^k(A_{\epsilon,\delta'}^{k}) } \\
	&\leq \sup_{z_1 \in C_\epsilon^k(2 \delta')}
	\Bigg| 
	\int \left[ \mathbb{P}_{y_1}^Y \left( Y^\epsilon( \tilde{\tau}^{\epsilon,\delta'}(\omega')) \in A_{\epsilon,\delta'}^{k,y} \right) - \nu_{\epsilon,\delta'}^{k,y} (A_{\epsilon,\delta'}^{k,y}) \right] \\
	& \ \ \ \ \ \ \ \  \ \ \ \ \ \ \ \ \ \ \ \ \  \ \ \ \ \ \ \ \ \ \ \ \ \ \ \  \ \ \ \ \ \ \ \ \ \ \ \ \  \ \ \ \ \ 
	\mathbbm{1}_{ \{ t_{\eta,\delta'} < \tilde{\tau}^{\epsilon,\delta'}(\omega') < \sigma^{\epsilon,\delta}(\omega') \} }
	\mathbbm{1}_{ \{ X^\epsilon(\omega', 0) = x_1 \}}
	d \mathbb{P}^X(\omega') \Bigg| \\
	& \ \ \ \ \ +
	\sup_{z_1 \in C_\epsilon^k(2 \delta')}\abs{ \mathbb{P}_{z_1} \left( t_{\eta,\delta'} < \tilde{\tau}^{\epsilon,\delta'} < \sigma^{\epsilon,\delta} \right)  - 1 }	\nu_{\epsilon,\delta'}^k (A_{\epsilon,\delta'}^k) \\
	& \leq \sup_{z_1 \in C_\epsilon^k(2 \delta')}
	\int \sup_{y_1 \in B^{d-1}(0,\lambda_k \epsilon)} \left \lVert \mathbb{P}_{y_1}^Y \left( Y^\epsilon( \tilde{\tau}^{\epsilon,\delta'}(\omega')) \in \cdot \right) - \nu_{\epsilon,\delta'}^{k,y} (\cdot) \right \rVert_{TV} \\
	& \ \ \ \ \ \ \ \  \ \ \ \ \ \ \ \ \ \ \ \ \  \ \ \ \ \ \ \ \ \ \ \ \ \ \ \  \ \ \ \ \ \ \ \ \ \ \ \ \  \ \ \ \ \ 
	\mathbbm{1}_{ \{ t_{\eta,\delta'} < \tilde{\tau}^{\epsilon,\delta'}(\omega') < \sigma^{\epsilon,\delta}(\omega') \} }
	\mathbbm{1}_{ \{ X^\epsilon(\omega', 0) = x_1 \}}
	d \mathbb{P}^X(\omega')  \\
	& \ \ \ \ \ +
	\sup_{z_1 \in C_\epsilon^k(2 \delta')} \mathbb{P}_{z_1} \left( t_{\eta,\delta'} \geq \tilde{\tau}^{\epsilon,\delta'} \wedge \sigma^{\epsilon,\delta} \right)  
	+ \sup_{z_1 \in C_\epsilon^k(2 \delta')} \mathbb{P}_{z_1} \left( \tilde{\tau}^{\epsilon,\delta'} > \sigma^{\epsilon,\delta} \right)    \\	
	& \eqqcolon J_{1,1}^{\epsilon,\delta',\delta} + J_{1,2}^{\epsilon,\delta',\delta} + J_{1,3}^{\epsilon,\delta',\delta}.
\end{aligned}
\end{equation}

For $J_{1,2}^{\epsilon,\delta',\delta}$ and $J_{1,3}^{\epsilon,\delta',\delta}$, by \eqref{c1 eq 2-2} and the fact that for all $z_1 \in C_\epsilon^k(2 \delta')$
\begin{equation*}
\mathbb{P}_{z_1} \left( \tilde{\tau}^{\epsilon,\delta'} > \sigma^{\epsilon,\delta} \right)  
= \frac{\delta'}{\delta-\delta'},
\end{equation*} 
we obtain, for $\delta'$ small enough,
\begin{equation}\label{c1 ineq 2}
J_{1,2}^{\epsilon,\delta',\delta}+J_{1,3}^{\epsilon,\delta',\delta} <  \frac{\eta}{2}.
\end{equation}

For $J_{1,1}^{\epsilon,\delta',\delta}$, note that $Y^\epsilon$ is a scaled reflected Brownian motion on a $(d-1)$-dimensional ball with radius $\lambda_k \epsilon$.
We have the following uniform ergodicity estimate (\cite{UEB}, see also \cite{BCM})
\begin{equation*}
	\sup_{y \in B^{d-1}(0,\lambda_k \epsilon)} \left\lVert \mathbb{P}_{y}^Y \left(Y^\epsilon(t) \in \cdot \right) - \nu_{\epsilon,\delta'}^{k,y}(\cdot) \right\rVert_{TV} \leq F(\lambda_k \epsilon,8t,0),
\end{equation*}
where
\begin{equation*}
	F(r,t,0) = \sum_{n=0}^\infty e^{-\frac{\pi^2}{8r^2}(2n+1)^2t} \frac{4 (-1)^n}{\pi(2n+1)}.
\end{equation*}
As a result, under the event $\{ t_{\eta,\delta'} < \tilde{\tau}^{\epsilon,\delta'} \}$,
\begin{equation*}
	\sup_{y \in B^{d-1}(0,\lambda_k \epsilon)} \left\lVert \mathbb{P}_y^Y \left(Y^\epsilon(\tilde{\tau}^{\epsilon,\delta'}) \in \cdot \right) - \nu_{\epsilon,\delta'}^{k,y}(\cdot) \right\rVert_{TV} \leq F(\lambda_k \epsilon,8\tilde{\tau}^{\epsilon,\delta'},0)
	\leq F(\lambda_k \epsilon, 8t_{\eta,\delta'}, 0).
\end{equation*}
It is clear that, there exist some constants $c, c'>0$ such that
\begin{equation*}
	\abs{ e^{-\frac{\pi^2}{8\lambda_k^2\epsilon^2}(2n+1)^2 8t_{\eta,\delta'}} \frac{4 (-1)^n}{\pi(2n+1)} }
	\leq c e^{-c' n^2},
\end{equation*}
which is summable.
By Lebesgue dominated convergence theorem, we have
\begin{equation*}
\lim_{\epsilon \to 0} F(\lambda_k \epsilon, 8t_{\eta,\delta'}, 0) = 0,
\end{equation*}
which means that for every $\eta >0$ there exists $\epsilon_{\eta,\delta'} >0$ such that for all $\epsilon \in (0, \epsilon_{\eta,\delta'})$, we have
\begin{equation}\label{c1 ineq 3}
\begin{aligned}
	& J_{1,1}^{\epsilon,\delta',\delta} 
	=\sup_{z_1 \in C_\epsilon(\delta')} \int \sup_{y_1 \in B^{d-1}(0,\lambda_k \epsilon)} \left \lVert \mathbb{P}_{y_1} \left( Y^\epsilon( \tilde{\tau}^{\epsilon,\delta'}(\omega')) \in \cdot \right) - \nu_{\epsilon,\delta'}^{k,y} (\cdot) \right \rVert_{TV} \\
	& \ \ \ \ \ \ \ \  \ \ \ \ \ \ \ \ \ \ \ \ \  \ \ \ \ \ \ \ \ \ \ \ \ \ \ \  \ \ \ \ \ \ \ \ \ \ \ \ \  \ \ \ \ \ 
	\mathbbm{1}_{ \{ t_{\eta,\delta'} < \tilde{\tau}^{\epsilon,\delta'}(\omega') < \sigma^{\epsilon,\delta}(\omega') \} }
	\mathbbm{1}_{ \{ X^\epsilon(\omega', 0) = x_1 \}}
	d \mathbb{P}^X(\omega') \\
	& < \frac{\eta}{4}.
\end{aligned}
\end{equation}
Combining \eqref{c1 ineq 1}, \eqref{c1 ineq 2} and \eqref{c1 ineq 3}, the proof is complete in the case $k=l$ and $n_0=1$.
Arguing similarly, one can show the result holds for $k=l$ and $n_0=2$.

Finally, we consider the case $k \neq l$ and $n_0=2$.
By the strong Markov property,
\begin{equation}\label{c1 eq 4}
\begin{aligned}
	&\inf_{z \in C_\epsilon^k(\delta')} \mathbb{P}_{z} \left(X_2^{\epsilon,\delta',\delta} \in A_{\epsilon,\delta'}^l, \tilde{\tau}_2^{\epsilon,\delta'} < \sigma^{\epsilon,\delta} \right) \\
	&\geq  \inf_{z \in C_\epsilon^k(\delta')} \mathbb{P}_{z} \left(X_2^{\epsilon,\delta',\delta} \in A_{\epsilon,\delta'}^l,   \tilde{\tau}_2^{\epsilon,\delta'} < \sigma^{\epsilon,\delta}, X_1^{\epsilon,\delta',\delta} \in C_\epsilon^l(\delta') \right) \\
	 & = \inf_{z \in C_\epsilon^k(\delta')} \mathbb{E}_{z} \left( \mathbbm{1}_{ \{ X_1^{\epsilon,\delta',\delta} \in C_\epsilon^l(\delta') \} } \mathbb{P}_{X_1^{\epsilon,\delta',\delta}} \left( X_1^{\epsilon,\delta',\delta} \in A_{\epsilon,\delta'}^l,   \tilde{\tau}_1^{\epsilon,\delta'} < \sigma^{\epsilon,\delta} \right)
	 \right)  \\
	 & \geq \inf_{z' \in C_\epsilon^l(\delta')} \mathbb{P}_{z'} \left( X_1^{\epsilon,\delta',\delta} \in A_{\epsilon,\delta'}^l,   \tilde{\tau}_1^{\epsilon,\delta'} < \sigma^{\epsilon,\delta} \right) 
	 \inf_{z \in C_\epsilon^k(\delta')} \mathbb{P}_{z} \left( X_1^{\epsilon,\delta',\delta} \in C_\epsilon^l(\delta')  \right).
\end{aligned}
\end{equation}
Note that by Lemma \ref{exit place apriori}, there exists $c>0$ such that for every $\delta'>0$ small enough there exists $\epsilon_{\delta'}>0$ such that for every $\epsilon \in (0,\epsilon_{\delta'})$, we have
\begin{equation}
\begin{aligned}
&\inf_{z \in C_\epsilon^k(\delta')} \mathbb{P}_{z} \left( X_1^{\epsilon,\delta',\delta} \in C_\epsilon^l(\delta')  \right)
=\inf_{z \in C_\epsilon^k(\delta')} \mathbb{P}_{z} \left( X_1^{\epsilon,\delta',\delta} \in C_\epsilon^l(\delta'), Z^\epsilon(\tilde{\sigma}_0^{\epsilon,\delta'}) \in C_\epsilon^k(\rho r(\epsilon)+ 3\epsilon)  \right)\\
& = \inf_{z \in C_\epsilon^k(\delta')} \mathbb{E}_{z} \left[ \mathbbm{1}_{ \{ Z^\epsilon(\tilde{\sigma}_0^{\epsilon,\delta'}) \in C_\epsilon^k(\rho r(\epsilon)+3\epsilon) \} } 
\mathbb{P}_{Z^\epsilon(\tilde{\sigma}_0^{\epsilon,\delta'})} \left( Z^\epsilon(\tilde{\tau}^{\epsilon,\delta'}) \in C_\epsilon^l(\delta')  \right) \right] \\
& \geq \inf_{z_1 \in C_\epsilon^k(\rho r(\epsilon)+ 3\epsilon)} 
\mathbb{P}_{z_1} \left( Z^\epsilon(\tilde{\tau}^{\epsilon,\delta'}) \in C_\epsilon^l(\delta') \right)
\inf_{z \in C_\epsilon^k(\delta')}  \mathbb{P}_{z} \left(  Z^\epsilon(\tilde{\sigma}_0^{\epsilon,\delta'}) \in C_\epsilon^k(\rho r(\epsilon)+3\epsilon)  \right) \\
& \geq c.
\end{aligned}
\end{equation}
Combining this with what we proved for $k=l$, we conclude the proof of the case $k \neq l$ and \ref{itm: c1} follows.
\end{proof}

\section{Proof of Lemmas \ref{exit place conti} and \ref{exit est conti}}\label{sec a prior est}

\subsection{Proof of Lemma \ref{exit place conti}}
It suffices to prove that, for every $k$ such that $I_k \sim O$,
\begin{equation*}
        \displaystyle
        \lim_{\delta' \to 0} \lim_{\epsilon \to 0} \sup_{z \in C_{\epsilon}(\rho r(\epsilon)+3\epsilon )} \abs{ \mathbb{P}_{z} \left( Z^\epsilon(\sigma^{\epsilon,\delta}) \in  C_{\epsilon}^k(\delta) \right) 
         - \mathbb{P}_{\alpha_{\epsilon,\delta',\delta}} \left( Z^\epsilon(\sigma^{\epsilon,\delta}) \in  C_{\epsilon}^k(\delta) \right)} = 0.
    \end{equation*}
    
    By Theorem \ref{conv qsd} and \eqref{est p}, for every $\eta >0$, there exists $N_{\eta} \in \mathbb{N}$ and $\delta_\eta' > 0$ such that for all $\delta' \in (0,\delta_\eta')$ there exists $\epsilon_{\eta,\delta'} >0$ with
	\begin{equation}
		\sup_{z \in  C_\epsilon(\delta') } \left \lVert 
		\mathbb{P}_{z} \left( X_{N_\eta}^{\epsilon,\delta',\delta} \in \cdot \middle|  \tilde{\tau}_{N_\eta}^{\epsilon,\delta'} < \sigma^{\epsilon,\delta} \right) 
		-\alpha_{\epsilon,\delta',\delta}(\cdot) \right\rVert_{TV}
		< \frac{\eta}{3}
	\end{equation}
	and
	\begin{equation}
		\sup_{z \in  C_\epsilon(\delta') }
		\left(  \mathbb{P}_{z} \left( \tilde{\tau}_{N_\eta}^{\epsilon,\delta'} \geq \sigma^{\epsilon,\delta} \right) \vee  
         \abs{ 1- \frac{1}{\mathbb{P}_{z} \left( \tilde{\tau}_{N_\eta}^{\epsilon,\delta'} < \sigma^{\epsilon,\delta} \right)}} \right)
		< \frac{\eta}{3},
	\end{equation}
	for all $\epsilon \in (0,\epsilon_{\eta,\delta'})$.
		Then, by the strong Markov property,
	\begin{equation}
	\begin{aligned}
		& \sup_{z \in C_\epsilon(\rho r(\epsilon) + 3\epsilon)} \abs{\mathbb{P}_{z} \left( Z^\epsilon(\sigma^{\epsilon,\delta}) \in  C_{\epsilon}^k(\delta) \right) 
         - \mathbb{P}_{\alpha_{\epsilon,\delta',\delta}} \left( Z^\epsilon(\sigma^{\epsilon,\delta}) \in  C_{\epsilon}^k(\delta) \right) } \\
         & \leq \sup_{z \in C_\epsilon(\rho r(\epsilon) + 3\epsilon)} \abs{\mathbb{P}_{z} \left( Z^\epsilon(\sigma^{\epsilon,\delta}) \in  C_{\epsilon}^k(\delta), \tilde{\tau}_{N_\eta+1}^{\epsilon,\delta'} < \sigma^{\epsilon,\delta} \right) 
         - \mathbb{P}_{\alpha_{\epsilon,\delta',\delta}} \left( Z^\epsilon(\sigma^{\epsilon,\delta}) \in  C_{\epsilon}^k(\delta) \right) } \\
         & \ \ \ \ \ +
         \sup_{z \in C_\epsilon(\rho r(\epsilon) + 3\epsilon)} \abs{\mathbb{P}_{z} \left( Z^\epsilon(\sigma^{\epsilon,\delta}) \in  C_{\epsilon}^k(\delta), \tilde{\tau}_{N_\eta+1}^{\epsilon,\delta'} \geq \sigma^{\epsilon,\delta} \right) } \\
		& \leq \sup_{z \in C_\epsilon(\rho r(\epsilon) + 3\epsilon)} \abs{ \mathbb{E}_{z} \left[ \mathbb{P}_{Z^\epsilon(\tilde{\tau}_1^{\epsilon,\delta'})} \left( Z^\epsilon(\sigma^{\epsilon,\delta}) \in  C_{\epsilon}^k(\delta), \tilde{\tau}_{N_\eta}^{\epsilon,\delta'} < \sigma^{\epsilon,\delta} \right) 
         - \mathbb{P}_{\alpha_{\epsilon,\delta',\delta}} \left( Z^\epsilon(\sigma^{\epsilon,\delta}) \in  C_{\epsilon}^k(\delta) \right) \right]} \\
         & \ \ \ \ \ +
         \sup_{z \in  C_\epsilon(\delta')} \mathbb{P}_{z} \left( \tilde{\tau}_{N_\eta}^{\epsilon,\delta'} \geq \sigma^{\epsilon,\delta} \right) \\
		& \leq \sup_{z \in C_\epsilon(\delta')} \abs{\mathbb{P}_{z} \left( Z^\epsilon(\sigma^{\epsilon,\delta}) \in  C_{\epsilon}^k(\delta), \tilde{\tau}_{N_\eta}^{\epsilon,\delta'} < \sigma^{\epsilon,\delta} \right) 
         - \mathbb{P}_{\alpha_{\epsilon,\delta',\delta}} \left( Z^\epsilon(\sigma^{\epsilon,\delta}) \in  C_{\epsilon}^k(\delta) \right) }
         + \frac{\eta}{3}.
	\end{aligned}
	\end{equation}
	For the first term, we have
	\begin{equation}
	\begin{aligned}
	&\sup_{z \in C_\epsilon(\delta')} \abs{\mathbb{P}_{z} \left( Z^\epsilon(\sigma^{\epsilon,\delta}) \in  C_{\epsilon}^k(\delta), \tilde{\tau}_{N_\eta}^{\epsilon,\delta'} < \sigma^{\epsilon,\delta} \right) 
         - \mathbb{P}_{\alpha_{\epsilon,\delta',\delta}} \left( Z^\epsilon(\sigma^{\epsilon,\delta}) \in  C_{\epsilon}^k(\delta) \right) } \\
	&\leq \sup_{z \in C_\epsilon(\delta')} \abs{\mathbb{P}_{z} \left( Z^\epsilon(\sigma^{\epsilon,\delta}) \in  C_{\epsilon}^k(\delta), \tilde{\tau}_{N_\eta}^{\epsilon,\delta'} < \sigma^{\epsilon,\delta} \right) 
         - \mathbb{P}_{z} \left( Z^\epsilon(\sigma^{\epsilon,\delta}) \in  C_{\epsilon}^k(\delta) \middle| \tilde{\tau}_{N_\eta}^{\epsilon,\delta'} < \sigma^{\epsilon,\delta} \right) } \\
         & \ \ \ \ \ +
         \sup_{z \in C_\epsilon(\delta')}
         \abs{ \mathbb{P}_{z} \left( Z^\epsilon(\sigma^{\epsilon,\delta}) \in  C_{\epsilon}^k(\delta) \middle| \tilde{\tau}_{N_\eta}^{\epsilon,\delta'} < \sigma^{\epsilon,\delta} \right)
         - \mathbb{P}_{\alpha_{\epsilon,\delta',\delta}} \left( Z^\epsilon(\sigma^{\epsilon,\delta}) \in  C_{\epsilon}^k(\delta) \right) } \\
         & \leq \sup_{z \in C_\epsilon(\delta')}
         \abs{ 1- \frac{1}{\mathbb{P}_{z} \left( \tilde{\tau}_{N_\eta}^{\epsilon,\delta'} < \sigma^{\epsilon,\delta} \right)}} \\
         & \ \ \ \  \ +
         \sup_{z \in C_\epsilon(\delta')} \abs{ \mathbb{E}_{z} \left[ \mathbb{P}_{X_{N_\eta}^{\epsilon,\delta',\delta}} \left( Z^\epsilon(\sigma^{\epsilon,\delta}) \in  C_{\epsilon}^k(\delta) \right) \middle| \tilde{\tau}_{N_\eta}^{\epsilon,\delta'} < \sigma^{\epsilon,\delta} \right]
         - \mathbb{P}_{\alpha_{\epsilon,\delta',\delta}} \left( Z^\epsilon(\sigma^{\epsilon,\delta}) \in  C_{\epsilon}^k(\delta) \right)} \\
         & \leq \frac{\eta}{3} + \sup_{z \in C_\epsilon(\delta')} \left\lVert\mathbb{P}_{z} \left( X_{N_\eta}^{\epsilon,\delta',\delta} \in \cdot \middle|  \tilde{\tau}_{N_\eta}^{\epsilon,\delta'} < \sigma^{\epsilon,\delta} \right) 
		-\alpha_{\epsilon,\delta',\delta}(\cdot) \right\rVert_{TV} < \frac{2 \eta}{3}.
	\end{aligned}
	\end{equation}
	This allows us to conclude that 
	\begin{equation*}
        \displaystyle
        \lim_{\delta' \to 0} \lim_{\epsilon \to 0} \sup_{z \in C_{\epsilon}(\rho r(\epsilon)+3\epsilon )} \abs{ \mathbb{P}_{z} \left( Z^\epsilon(\sigma^{\epsilon,\delta}) \in  C_{\epsilon}^k(\delta) \right) 
         - \mathbb{P}_{\alpha_{\epsilon,\delta',\delta}} \left( Z^\epsilon(\sigma^{\epsilon,\delta}) \in  C_{\epsilon}^k(\delta) \right)} = 0.
    \end{equation*}

\subsection{Proof of Lemma \ref{exit est conti}}
We first consider the case of small and intermediate balls; that is, $r(\epsilon)^d \ll \epsilon^{d-1}$ or $r(\epsilon)^d \sim \epsilon^{d-1}$.
It suffices to prove that
\begin{equation}
	\lim_{\delta' \to 0} \lim_{\epsilon \to 0} \sup_{z \in C_\epsilon(\rho r(\epsilon) + 3\epsilon)} \abs{\mathbb{E}_z \sigma^{\epsilon,\delta} - \mathbb{E}_{\alpha_{\epsilon,\delta,\delta'}} \sigma^{\epsilon,\delta}} = 0.
\end{equation}

First, we fix $\epsilon_1 > 0$ such that 
	\footnote[3]{We will keep track of the factor $\frac{r(\epsilon)^d}{\epsilon^{d-1}}$ even though in these two cases it can be easily bounded, as this makes it clearer how to adapt the argument to the case of large balls.} 
	\begin{equation}\label{asym 0}
		 \frac{r(\epsilon)^d}{\epsilon^{d-1}} \leq 2, \ \ \ \epsilon \in (0,\epsilon_1).
	\end{equation}
	Next, due to Lemma \ref{inter exit est 1}	\footnote[4]{Again, both $\sigma^{\epsilon,\delta}$ and $\tilde{\tau}^{\epsilon,\delta'}$ correspond to exit problems. Therefore, \eqref{cond 1} and \eqref{cond 2} follow directly from Lemma \ref{small exit est 1.5}.}  
	and \ref{small exit est 1.5}, there exist $c_3 >0$ and $\epsilon_2 \in (0,\epsilon_1)$ such that for every $\epsilon \in (0,\epsilon_2)$
\begin{equation}\label{cond 1}
		\sup_{z \in B_\epsilon(\delta')} \mathbb{E}_z \tilde{\tau}^{\epsilon,\delta'} \leq c_3 \left( \frac{r(\epsilon)^d}{\epsilon^{d-1}}\delta' + (\delta')^2 \right),
	\end{equation}
	and
	\begin{equation}\label{cond 2}
		\sup_{z \in B_\epsilon(\delta)} \mathbb{E}_z \sigma^{\epsilon,\delta} \leq c_3 \left( \frac{r(\epsilon)^d}{\epsilon^{d-1}}\delta + \delta^2 \right),
	\end{equation}
	and
	\begin{equation}\label{cond 3}
		\sup_{z \in B_\epsilon(\delta)} \mathbb{E}_z  \left( \sigma^{\epsilon,\delta} \right)^2 
		\leq c_3^2 \left( \frac{r(\epsilon)^d}{\epsilon^{d-1}}\delta + \delta^2 \right)^2.
	\end{equation}

Next, according to Theorem \ref{conv qsd} and \eqref{est p}, for every $\eta >0$, we can fix $N_{\eta} \in \mathbb{N}$ and $\delta_\eta' > 0$ such that for all $\delta' \in (0,\delta_\eta')$, there exists $\epsilon_{\eta,\delta'} \in (0,\epsilon_2)$ such that
\begin{equation}\label{cond 4}
		\sup_{z \in  C_\epsilon(\delta') } \left \lVert 
		\mathbb{P}_{z} \left( X_{N_\eta}^{\epsilon,\delta',\delta} \in \cdot \middle|  \tilde{\tau}_{N_\eta}^{\epsilon,\delta'} < \sigma^{\epsilon,\delta} \right) 
		-\alpha_{\epsilon,\delta',\delta}(\cdot) \right\rVert_{TV}
		< \frac{\eta}{4c_3(2\delta+\delta^2)},
	\end{equation}
	and
	\begin{equation}\label{cond 5}
		\sup_{z \in C_\epsilon(\delta')} \left(
		\left( \mathbb{P}_{z} \left( \tilde{\tau}_{N_\eta}^{\epsilon,\delta'} \geq \sigma^{\epsilon,\delta} \right) \right)^{1/2}
		 \vee
		\abs{1 - \mathbb{P}_z \left( \tilde{\tau}_{N_\eta}^{\epsilon,\delta'} < \sigma^{\epsilon,\delta}  \right) } \right)
		< \frac{\eta}{4c_3(2\delta+\delta^2)} 
	\end{equation}
	for all $\epsilon \in (0,\epsilon_{\eta,\delta'})$.
	
Moreover, we can pick $\delta_\eta' > 0$ such that \begin{equation}\label{cond 6}
		(c_3+3)N_\eta \left( \delta+ \frac{r(\epsilon)^d}{\epsilon^{d-1}} \right) \delta' < \frac{\eta}{4}
	\end{equation}	
	holds for every $\delta' \in (0,\delta_\eta')$ and $\epsilon \in (0,\epsilon_{\delta',\eta})$.
	
	Then 
	\begin{equation}
	\begin{aligned}
		& \sup_{z \in C_\epsilon(\rho r(\epsilon) + 3\epsilon)} \abs{\mathbb{E}_z \sigma^{\epsilon,\delta} - \mathbb{E}_{\alpha_{\epsilon,\delta,\delta'}} \sigma^{\epsilon,\delta}} \\
		& = \sup_{z \in C_\epsilon(\rho r(\epsilon) + 3\epsilon)} \abs{\mathbb{E}_z \left( \sigma^{\epsilon,\delta} - \tilde{\tau}^{\epsilon,\delta'} + \tilde{\tau}^{\epsilon,\delta'} \right)
		 - \mathbb{E}_{\alpha_{\epsilon,\delta,\delta'}} \sigma^{\epsilon,\delta}} \\
		 & = \sup_{z \in C_\epsilon(\rho r(\epsilon) + 3\epsilon)} \abs{\mathbb{E}_z \left( \tilde{\tau}^{\epsilon,\delta'} + \mathbb{E}_{Z^\epsilon(\tilde{\tau}^{\epsilon,\delta'})}  \sigma^{\epsilon,\delta}  \right)
		 - \mathbb{E}_{\alpha_{\epsilon,\delta,\delta'}} \sigma^{\epsilon,\delta}} \\
		 & \leq \sup_{z \in C_\epsilon(\rho r(\epsilon) + 3\epsilon)} \mathbb{E}_z \tilde{\tau}^{\epsilon,\delta'}
		 + \sup_{z \in C_\epsilon(\delta')} 
		 \abs{  \mathbb{E}_{z} \sigma^{\epsilon,\delta}  
		 - \mathbb{E}_{\alpha_{\epsilon,\delta,\delta'}} \sigma^{\epsilon,\delta}} \\
		&\leq \sup_{z \in C_\epsilon(\rho r(\epsilon) + 3\epsilon)} \mathbb{E}_z \tilde{\tau}^{\epsilon,\delta'}
		 + \sup_{z \in C_\epsilon(\delta')}  \abs{\mathbb{E}_{z} \left( \sigma^{\epsilon,\delta} \mathbbm{1}_{\{\tilde{\tau}_{N_\eta}^{\epsilon,\delta'} < \sigma^{\epsilon,\delta}  \} } \right) 
		- \mathbb{E}_{\alpha_{\epsilon,\delta,\delta'}} \sigma^{\epsilon,\delta}}  \\
		& \ \ \ \ \ + \sup_{z \in C_\epsilon( \delta')} \abs{\mathbb{E}_z \left( \sigma^{\epsilon,\delta} \mathbbm{1}_{\{\tilde{\tau}_{N_\eta}^{\epsilon,\delta'} \geq \sigma^{\epsilon,\delta}  \} } \right)}  \\
		&\eqqcolon I_1^{\epsilon,\delta',\delta} + I_2^{\epsilon,\delta',\delta} + I_3^{\epsilon,\delta',\delta}
	\end{aligned}
	\end{equation}
	
	First, for $I_1^{\epsilon,\delta',\delta}$, by \eqref{cond 1},
	\begin{equation}\label{t est 1}
	\begin{aligned}
		& I_1^{\epsilon,\delta',\delta}
		\leq c_3 \left( \frac{r(\epsilon)^d}{\epsilon^{d-1}}\delta' +(\delta')^2 \right).
	\end{aligned}
	\end{equation}

	For $I_2^{\epsilon,\delta',\delta}$, we have
	\begin{equation}
	\begin{aligned}
	&I_2^{\epsilon,\delta',\delta}
	= \sup_{z \in C_\epsilon(\delta')} \abs{\mathbb{E}_z \left( \left( \sigma^{\epsilon,\delta} - \tilde{\tau}_{N_\eta}^{\epsilon,\delta'} + \tilde{\tau}_{N_\eta}^{\epsilon,\delta'} \right)
		\mathbbm{1}_{\{ \tilde{\tau}_{N_\eta}^{\epsilon,\delta'} < \sigma^{\epsilon,\delta} \}} \right)
		- \mathbb{E}_{\alpha_{\epsilon,\delta,\delta'}} \sigma^{\epsilon,\delta}} \\
		& = \sup_{z \in C_\epsilon(\delta')} \abs{\mathbb{E}_z \left( \left( \tilde{\tau}_{N_\eta}^{\epsilon,\delta'} 
		+\mathbb{E}_{X_{N_\eta}^{\epsilon,\delta',\delta}}  \sigma^{\epsilon,\delta} \right) \mathbbm{1}_{\{ \tilde{\tau}_{N_\eta}^{\epsilon,\delta'} < \sigma^{\epsilon,\delta} \}} \right) 
		- \mathbb{E}_{\alpha_{\epsilon,\delta,\delta'}} \sigma^{\epsilon,\delta}} \\
		& \leq \sup_{z \in C_\epsilon(\delta')} \mathbb{E}_z \left( \tilde{\tau}_{N_\eta}^{\epsilon,\delta'} \mathbbm{1}_{\{ \tilde{\tau}_{N_\eta}^{\epsilon,\delta'} < \sigma^{\epsilon,\delta} \}} \right) \\
		& \ \ \ \ \ + \sup_{z \in C_\epsilon(\delta')} \mathbb{E}_{z}  \sigma^{\epsilon,\delta}  
		\sup_{z \in C_\epsilon(\delta')} \left( 1 - \mathbb{P}_z \left( \tilde{\tau}_{N_\eta}^{\epsilon,\delta'} < \sigma^{\epsilon,\delta} \right)  \right)  \\
		& \ \ \ \ \ \ \ +
		\sup_{z \in C_\epsilon(\delta')} 
		\abs{\mathbb{E}_{z} \left[ \mathbb{E}_{X_{N_\eta}^{\epsilon,\delta',\delta}}   \sigma^{\epsilon,\delta}   \middle| \tilde{\tau}_{N_\eta}^{\epsilon,\delta'} < \sigma^{\epsilon,\delta} \right] 
		- \mathbb{E}_{\alpha_{\epsilon,\delta,\delta'}} \sigma^{\epsilon,\delta} } \\
		&\eqqcolon J_{2,1}^{\epsilon,\delta',\delta} +  J_{2,2}^{\epsilon,\delta',\delta} + J_{2,3}^{\epsilon,\delta',\delta}.
	\end{aligned}
	\end{equation}
	For $J_{2,1}^{\epsilon,\delta',\delta}$, let us first note that
	\begin{equation}
	\begin{aligned}
		&\sup_{z \in C_\epsilon(\delta')} \mathbb{E}_z \left( \tilde{\tau}_{1}^{\epsilon,\delta'} \mathbbm{1}_{\{ \tilde{\tau}_{1}^{\epsilon,\delta'} < \sigma^{\epsilon,\delta} \}} \right) \\
		& \leq \sup_{z \in C_\epsilon(\delta')} \mathbb{E}_z \left( \left( \tilde{\tau}_{1}^{\epsilon,\delta'} 
		-\tilde{\sigma}_{0}^{\epsilon,\delta'}
		+\tilde{\sigma}_{0}^{\epsilon,\delta'}  \right)
		\mathbbm{1}_{\{ \tilde{\tau}_{1}^{\epsilon,\delta'} < \sigma^{\epsilon,\delta} \}} \right) \\
		& \leq \sup_{z \in C_\epsilon(\delta')} 
		\mathbb{E}_z 
		\tilde{\sigma}_0^{\epsilon,\delta'}  
		+\sup_{z \in C_\epsilon(\rho r(\epsilon) + 3\epsilon)} \mathbb{E}_z \tilde{\tau}^{\epsilon,\delta'}
		+\sup_{z \in  C_\epsilon(2\delta')} \mathbb{E}_z \left( \tilde{\tau}^{\epsilon,\delta'} \mathbbm{1}_{\{ \tilde{\tau}^{\epsilon,\delta'} < \sigma^{\epsilon,\delta} \}} \right) \\
		&\leq  \frac{(\delta')^2}{2} +  c_3 \left( \frac{r(\epsilon)^d}{\epsilon^{d-1}}\delta' + (\delta')^2 \right) + \frac{\delta \delta'}{2}
		\leq (c_3+2) \left( \delta+ \frac{r(\epsilon)^d}{\epsilon^{d-1}} \right) \delta' ,
	\end{aligned}
	\end{equation}
	where the first and the third terms follow from the properties of one dimensional Brownian motion, while the second term follows from \eqref{cond 1}. (Note that $Z^\epsilon(t)$ is a scaled Brownian motion.)
	By the strong Markov property, we then have
	\begin{equation}\label{t est 2}
		J_{2,1}^{\epsilon,\delta',\delta}
		\leq N_\eta \sup_{z \in C_\epsilon(\delta')} \mathbb{E}_z \left( \tilde{\tau}_{1}^{\epsilon,\delta'} \mathbbm{1}_{\{ \tilde{\tau}_{1}^{\epsilon,\delta'} < \sigma^{\epsilon,\delta} \}} \right)
		\leq (c_3+2) N_\eta \left(\delta + \frac{r(\epsilon)^d}{\epsilon^{d-1}} \right) \delta'.
	\end{equation}
	Next, if we define $\eta' \coloneqq \frac{\eta}{4c_3(2\delta+\delta^2)}$, by \eqref{cond 2} and \eqref{cond 5},
	\begin{equation}\label{t est 3}
		J_{2,2}^{\epsilon,\delta',\delta}
		\leq \sup_{z \in C_\epsilon(\delta')} \mathbb{E}_z \sigma^{\epsilon,\delta} 
		\sup_{z \in C_\epsilon(\delta')} \left( 1 - \mathbb{P}_z \left( \tilde{\tau}_{N_\eta}^{\epsilon,\delta'} < \sigma^{\epsilon,\delta} \right)  \right)
		< c_3 \left( \frac{r(\epsilon)^d}{\epsilon^{d-1}}\delta +\delta^2 \right) \eta'.
	\end{equation}
	For $J_{2,3}^{\epsilon,\delta',\delta}$, by \eqref{cond 2} and \eqref{cond 4},
	\begin{equation}\label{t est 4}
	\begin{aligned}
	&\sup_{z \in C_\epsilon(\delta')} \abs{\mathbb{E}_{z} \left[ \mathbb{E}_{X_{N_\eta}^{\epsilon,\delta',\delta}}   \sigma^{\epsilon,\delta}   \middle| \tilde{\tau}_{N_\eta}^{\epsilon,\delta',\delta} < \sigma^{\epsilon,\delta} \right] 
	 - \mathbb{E}_{\alpha_{\epsilon,\delta,\delta'}} \sigma^{\epsilon,\delta} } \\
	&\leq \sup_{z \in C_\epsilon(\delta')} \mathbb{E}_z \sigma^{\epsilon,\delta} 
	\sup_{z \in C_\epsilon(\delta')} \left \lVert 
		\mathbb{P}_{z} \left( X_{N_\eta}^{\epsilon,\delta',\delta} \in \cdot \middle|  \tilde{\tau}_{N_\eta}^{\epsilon,\delta',\delta} < \sigma^{\epsilon,\delta} \right) 
		-\alpha_{\epsilon,\delta',\delta}(\cdot) \right\rVert_{TV} \\
		&< c_3 \left( \frac{r(\epsilon)^d}{\epsilon^{d-1}}\delta +\delta^2 \right) \eta'.
	\end{aligned}
	\end{equation}

	Finally, for $I_3^{\epsilon,\delta',\delta}$, by \eqref{cond 3} and \eqref{cond 5},
	\begin{equation}\label{t est 5}
	\begin{aligned}
		& I_3^{\epsilon,\delta}
		\leq \sup_{z \in C_\epsilon(\delta')}
		\left[ \mathbb{E}_z \left( \sigma^{\epsilon,\delta} \right)^2 \right]^{1/2} 
		\left( \sup_{z \in C_\epsilon(\delta')} \mathbb{P}_{z} \left( \tilde{\tau}_{N_\eta}^{\epsilon,\delta'} \geq \sigma^{\epsilon,\delta} \right) \right)^{1/2} 
		 \leq
		c_3 \left( \frac{r(\epsilon)^d}{\epsilon^{d-1}}\delta +\delta^2 \right) \eta'.
	\end{aligned}
	\end{equation}
	
	Combining \eqref{t est 1}, \eqref{t est 2}-\eqref{t est 5} and \eqref{asym 0}, \eqref{cond 6}, we conclude that
	\begin{equation}
	\begin{aligned}
		&\sup_{z \in C_\epsilon(\rho r(\epsilon) + 3\epsilon)} \abs{\mathbb{E}_z \sigma^{\epsilon,\delta} - \mathbb{E}_{\alpha_{\epsilon,\delta,\delta'}} \sigma^{\epsilon,\delta}}
		\leq (c_3+3) N_\eta \left(\delta + \frac{r(\epsilon)^d}{\epsilon^{d-1}} \right) \delta' + 3c_3 \left( \frac{r(\epsilon)^d}{\epsilon^{d-1}}\delta +\delta^2 \right) \eta'  \\
		&\leq (c_3+3) N_\eta \left(\delta + \frac{r(\epsilon)^d}{\epsilon^{d-1}} \right) \delta' + 3c_3 \left( 2\delta+\delta^2 \right) \eta' < \eta.
	\end{aligned}
	\end{equation}
	
	In the case of large balls, where $r(\epsilon)^d \gg \epsilon^{d-1}$, we replace condition \eqref{asym 0} with $\frac{r(\epsilon)^d}{\epsilon^{d-1}} > 2$.
	Accordingly, we  modify condition \eqref{cond 6} to
	\begin{equation}
	(c_3+3)N_\eta (\delta+1) \delta' < \frac{\eta}{4}.
	\end{equation}
	We get
	\begin{equation}
	\begin{aligned}
		& \sup_{z \in C_\epsilon(\rho r(\epsilon) + 3\epsilon)} \abs{\mathbb{E}_z \sigma^{\epsilon,\delta} - \mathbb{E}_{\alpha_{\epsilon,\delta,\delta'}} \sigma^{\epsilon,\delta}}
		\left(\frac{r(\epsilon)^d}{\epsilon^{d-1}} \right)^{-1} \\
		& \leq (c_3+3) N_\eta (\delta+1) \delta' + 3c_3 \left( \delta + \delta^2 \right) \eta' \\ 
		& < \frac{\eta}{4} + 3c_3 \left( \delta + \delta^2 \right) \frac{\eta}{4c_3(2\delta+\delta^2)} < \eta
	\end{aligned}
	\end{equation}
	and we have the desired result.

\section{Asymptotic exponential law for the exit time in the large ball regime}\label{sec exit exp}
In this section, we consider only the large balls case; that is $r(\epsilon)^d \gg \epsilon^{d-1}$.
Since we continue to focus on the local behavior near a vertex, we omit the subscript $j$ throughout this section.

We introduce a sequence of stopping times 
\begin{equation*}
	\hat{\sigma}_n^{\epsilon,\delta',\delta} \coloneqq \inf \{ t \geq  \hat{\tau}_n^{\epsilon,\delta',\delta}: Z^\epsilon(t) \in C_\epsilon(\delta') \}
\end{equation*}
and
\begin{equation*}
	\hat{\tau}_n^{\epsilon,\delta',\delta} \coloneqq \inf \{ t \geq  \hat{\sigma}_{n-1}^{\epsilon,\delta',\delta}: Z^\epsilon(t) \in C_\epsilon(\rho r(\epsilon)+3\epsilon) \cup C_\epsilon(\delta) \},
\end{equation*}
with $\hat{\tau}_0^{\epsilon,\delta',\delta} \coloneqq 0$.
Additionally, we define $\hat{\sigma}^{\epsilon,\delta'}$ the first hitting time of $Z^\epsilon(t)$ to $C_\epsilon(\delta')$ and
$\hat{\tau}^{\epsilon,\delta}$ the first hitting time of $Z^\epsilon(t)$ to $C_\epsilon(\rho r(\epsilon)+3\epsilon) \cup C_\epsilon(\delta)$.
Let $\mathcal{S}_{\epsilon} = C_\epsilon(\rho r(\epsilon)+3\epsilon)$.
Define the discrete time Markov chain $ \hat{X}_n^{\epsilon,\delta',\delta} \coloneqq Z^\epsilon( \hat{\tau}_n^{\epsilon,\delta',\delta})$ and the filtration $ \mathcal{F}_n^{\epsilon,\delta',\delta} \coloneqq \left\{ \sigma \left( \hat{X}^{\epsilon,\delta',\delta}_m \right): m \leq n \right\}$.
Define $\xi_n^{\epsilon,\delta',\delta}$ as the non-negative random variable such that 
\begin{equation*}
\mathbb{P}_z \left(  \xi_0^{\epsilon,\delta',\delta} = 0 \right) =1, 
\end{equation*}
and for any Borel set $I$ in $[0,\infty)$,
\begin{equation*}
	\mathbb{P}_z \left( \xi_n^{\epsilon,\delta',\delta} \in I \right) = \mathbb{P}_z \left( \mathbbm{1}_{ \{ \hat{X}_{n-1}^{\epsilon,\delta',\delta} \in C_\epsilon(\delta) \}}e(\epsilon,\delta') 
	+ \mathbbm{1}_{ \{ \hat{X}_{n-1}^{\epsilon,\delta',\delta} \in C_\epsilon(\rho r(\epsilon)+3\epsilon)  \}} \left( \hat{\tau}_n^{\epsilon,\delta',\delta} - \hat{\tau}_{n-1}^{\epsilon,\delta',\delta} \right) \in I \right), 
\end{equation*}
where
\begin{equation}\label{est e}
e(\epsilon,\delta') = 
\frac{\rho^d r(\epsilon)^d V_d}{\sum_{k:I_k \sim O} \lambda_k^{d-1} \epsilon^{d-1} V_{d-1}} \delta'.
\end{equation}
Moreover, we define
\begin{equation*}
\zeta_n^{\epsilon,\delta',\delta} = 
\begin{dcases}
1, \text{ if } \hat{X}_{m}^{\epsilon,\delta',\delta} \in C_\epsilon(\rho r(\epsilon)+3\epsilon) \text{ for all } m \leq n, \\
0, \text{ otherwise}.
\end{dcases}
\end{equation*}
Then it is clear that for $z \in \mathcal{S}_{\epsilon}$,
\begin{equation*}
	\sum_{n=0}^\infty \xi_{n+1}^{\epsilon,\delta',\delta} \zeta_n^{\epsilon,\delta',\delta} = \sigma^{\epsilon,\delta}, \ \ \ \mathbb{P}_z-a.s.
\end{equation*}

The following theorem closely parallels \cite[Lemma 6.8]{SMP}, with the only difference being the inclusion of the additional parameter, $\delta'$. 
The proof proceeds along the same lines as the original.
\begin{Theorem}\label{thm abs}
	Suppose that there is a non-negative function $e(\epsilon,\delta')$ such that
	\begin{equation}\label{fir moment}
	\lim_{\delta' \to 0} \lim_{\epsilon \to 0}  \sup_{z \in \mathcal{S}_{\epsilon}} \sup_{ n \geq 0} \abs{ \frac{\mathbb{E}_z \left( \xi_{n+1}^{\epsilon,\delta',\delta} \middle| \mathcal{F}_n^{\epsilon,\delta',\delta} \right)}{e(\epsilon,\delta')} - 1} = 0,
	\end{equation}
	and 
	\begin{equation}\label{sec moment}
		\lim_{\delta' \to 0} \lim_{\epsilon \to 0}  \sup_{z \in \mathcal{S}_{\epsilon}} \sup_{ n \geq 0} \mathbb{E}_z \left( \left( \xi_{n+1}^{\epsilon,\delta',\delta} \right)^2 \middle| \mathcal{F}^{\epsilon,\delta',\delta}_{n} \right) (e(\epsilon,\delta'))^{-2} < \infty.
	\end{equation}
	
	Moreover, suppose there is a positive function $p(\epsilon,\delta',\delta)$ satisfying $\lim_{\delta' \to 0} \lim_{\epsilon \to 0} p(\epsilon,\delta',\delta) = 0$ such that
	\begin{equation}\label{trans prob}
		\lim_{\delta' \to 0} \lim_{\epsilon \to 0}  \sup_{z \in \mathcal{S}_{\epsilon}} \sup_{ n \geq 0} \abs{ \frac{\mathbb{P}_z \left( \zeta_{n+1}^{\epsilon,\delta',\delta}=0 \middle| \zeta_n^{\epsilon,\delta',\delta} = 1 \right)}{p(\epsilon,\delta',\delta)} - 1 }= 0.
	\end{equation}
	
	Let $K^{\epsilon,\delta',\delta} = \min\{ n: \zeta_n^{\epsilon,\delta',\delta} =0 \}$. Then for each $t \geq 0$,
	\begin{equation}\label{exp conv 1}
		\lim_{\delta' \to 0} \lim_{\epsilon \to 0} \sup_{z \in \mathcal{S}_{\epsilon}} \abs{ \mathbb{P}_z (p(\epsilon,\delta',\delta)K^{\epsilon,\delta',\delta} \geq t) - e^{-t} } =0,
	\end{equation}
	and, moreover,
	\begin{equation}\label{exp conv 2}
		\lim_{\delta' \to 0} \lim_{\epsilon \to 0} \sup_{z \in \mathcal{S}_{\epsilon}} \abs{ \mathbb{P}_z \left( \left( e(\epsilon,\delta') \right)^{-1} p(\epsilon,\delta',\delta) \sum_{n=0}^\infty \xi_{n+1}^{\epsilon,\delta',\delta} \zeta_n^{\epsilon,\delta',\delta}  \geq t \right) - e^{-t}} = 0.
	\end{equation}
\end{Theorem}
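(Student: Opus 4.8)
The plan is to follow the proof of \cite[Lemma 6.8]{SMP}, carrying the extra parameter $\delta'$ through and tracking every error uniformly in the starting point $z\in\mathcal{S}_\epsilon$ and in the step index $n$. Since $\zeta_n^{\epsilon,\delta',\delta}$ is non-increasing in $n$, the count $K^{\epsilon,\delta',\delta}=\min\{n:\zeta_n^{\epsilon,\delta',\delta}=0\}$ is an $\{\mathcal{F}_n^{\epsilon,\delta',\delta}\}$-stopping time and $\{K^{\epsilon,\delta',\delta}>n\}=\{\zeta_n^{\epsilon,\delta',\delta}=1\}$; moreover $\sum_{n=0}^\infty\xi_{n+1}^{\epsilon,\delta',\delta}\zeta_n^{\epsilon,\delta',\delta}=\sum_{n=0}^{K^{\epsilon,\delta',\delta}-1}\xi_{n+1}^{\epsilon,\delta',\delta}=:S_{K^{\epsilon,\delta',\delta}}$, where $S_m:=\sum_{n=1}^m\xi_n^{\epsilon,\delta',\delta}$. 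So \eqref{exp conv 1} is purely a statement about $K^{\epsilon,\delta',\delta}$, and \eqref{exp conv 2} will follow from it together with a law-of-large-numbers estimate for $S_{K^{\epsilon,\delta',\delta}}$.

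\textbf{Step 1 (proof of \eqref{exp conv 1}).} Iterating the elementary identity $\mathbb{P}_z(\zeta_m^{\epsilon,\delta',\delta}=1)=\mathbb{P}_z(\zeta_{m-1}^{\epsilon,\delta',\delta}=1)\bigl(1-\mathbb{P}_z(\zeta_m^{\epsilon,\delta',\delta}=0\mid\zeta_{m-1}^{\epsilon,\delta',\delta}=1)\bigr)$ and using $\mathbb{P}_z(\zeta_0^{\epsilon,\delta',\delta}=1)=1$ for $z\in\mathcal{S}_\epsilon$ (since $\hat{X}_0^{\epsilon,\delta',\delta}=z$), one gets $\mathbb{P}_z(\zeta_n^{\epsilon,\delta',\delta}=1)=\prod_{m=1}^n(1-a_m^z)$ with $a_m^z:=\mathbb{P}_z(\zeta_m^{\epsilon,\delta',\delta}=0\mid\zeta_{m-1}^{\epsilon,\delta',\delta}=1)$. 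By \eqref{trans prob}, $a_m^z=p(\epsilon,\delta',\delta)(1+\theta_m^z)$ with $\sup_{z\in\mathcal{S}_\epsilon,\,m\ge1}|\theta_m^z|\to0$ in the iterated limit $\lim_{\delta'\to0}\lim_{\epsilon\to0}$. Choosing $n=n(\epsilon,\delta',\delta)\sim t/p(\epsilon,\delta',\delta)$, taking logarithms, and using $\log(1-x)=-x+O(x^2)$ together with $p(\epsilon,\delta',\delta)\to0$ gives $\mathbb{P}_z(\zeta_n^{\epsilon,\delta',\delta}=1)\to e^{-t}$ uniformly in $z$. Since $\{p(\epsilon,\delta',\delta)K^{\epsilon,\delta',\delta}\ge t\}=\{K^{\epsilon,\delta',\delta}>\lceil t/p(\epsilon,\delta',\delta)\rceil-1\}=\{\zeta_{\lceil t/p\rceil-1}^{\epsilon,\delta',\delta}=1\}$, this is \eqref{exp conv 1}. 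The same product bound also yields $\mathbb{E}_z[K^{\epsilon,\delta',\delta}]=\sum_{n\ge0}\mathbb{P}_z(\zeta_n^{\epsilon,\delta',\delta}=1)\asymp p(\epsilon,\delta',\delta)^{-1}$, uniformly in $z\in\mathcal{S}_\epsilon$, a fact used below.

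\textbf{Step 2 (proof of \eqref{exp conv 2}).} Decompose $\xi_n^{\epsilon,\delta',\delta}=\mathbb{E}_z(\xi_n^{\epsilon,\delta',\delta}\mid\mathcal{F}_{n-1}^{\epsilon,\delta',\delta})+\tilde{\xi}_n$, so $\tilde{\xi}_n$ is a martingale difference and $\tilde{S}_m:=\sum_{n=1}^m\tilde{\xi}_n$ is a martingale. By \eqref{fir moment}, pointwise on paths $|\sum_{n=1}^{K^{\epsilon,\delta',\delta}}\mathbb{E}_z(\xi_n^{\epsilon,\delta',\delta}\mid\mathcal{F}_{n-1}^{\epsilon,\delta',\delta})-e(\epsilon,\delta')K^{\epsilon,\delta',\delta}|\le o(1)\,e(\epsilon,\delta')K^{\epsilon,\delta',\delta}$ uniformly. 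For the martingale part, optional stopping at $m\wedge K^{\epsilon,\delta',\delta}$ and \eqref{sec moment} give $\mathbb{E}_z[\tilde{S}_{m\wedge K^{\epsilon,\delta',\delta}}^2]=\mathbb{E}_z[\sum_{n=1}^{m\wedge K^{\epsilon,\delta',\delta}}\mathrm{Var}(\xi_n^{\epsilon,\delta',\delta}\mid\mathcal{F}_{n-1}^{\epsilon,\delta',\delta})]\lesssim e(\epsilon,\delta')^2\,\mathbb{E}_z[K^{\epsilon,\delta',\delta}]$; letting $m\to\infty$ (legitimate since $\mathbb{E}_z[K^{\epsilon,\delta',\delta}]<\infty$) and using Step 1, $\mathbb{E}_z[\tilde{S}_{K^{\epsilon,\delta',\delta}}^2]\lesssim e(\epsilon,\delta')^2/p(\epsilon,\delta',\delta)$ uniformly, whence by Chebyshev $\sup_{z}\mathbb{P}_z\bigl(|\tilde{S}_{K^{\epsilon,\delta',\delta}}|\ge\eta\,e(\epsilon,\delta')/p(\epsilon,\delta',\delta)\bigr)\lesssim p(\epsilon,\delta',\delta)/\eta^2\to0$ for each $\eta>0$. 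Combining,
\[
(e(\epsilon,\delta'))^{-1}p(\epsilon,\delta',\delta)\,S_{K^{\epsilon,\delta',\delta}}=p(\epsilon,\delta',\delta)K^{\epsilon,\delta',\delta}+R^{\epsilon,\delta',\delta},
\]
where $\sup_z\mathbb{P}_z(|R^{\epsilon,\delta',\delta}|\ge\eta)\to0$ for every $\eta>0$ (the first-moment error is controlled by the tightness of $p(\epsilon,\delta',\delta)K^{\epsilon,\delta',\delta}$ from Step 1, the martingale error by the Chebyshev bound). A Slutsky-type argument, combining this with \eqref{exp conv 1} and recalling $S_{K^{\epsilon,\delta',\delta}}=\sum_{n\ge0}\xi_{n+1}^{\epsilon,\delta',\delta}\zeta_n^{\epsilon,\delta',\delta}$, yields \eqref{exp conv 2}.

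\textbf{The main obstacle.} The principal difficulty is not any single estimate but the uniform bookkeeping: all errors must be uniform in $z\in\mathcal{S}_\epsilon$ and in $n$, and must survive the iterated limit $\lim_{\delta'\to0}\lim_{\epsilon\to0}$ with $\delta$ fixed. In particular, Steps 1 and 2 are coupled through the uniform-in-$z$ estimate $\mathbb{E}_z[K^{\epsilon,\delta',\delta}]\asymp p(\epsilon,\delta',\delta)^{-1}$, which must be extracted carefully from the geometric tail produced by \eqref{trans prob}; once it is in hand, the constants in the optional-stopping and Chebyshev bounds are automatically uniform. Apart from this, the argument is exactly that of \cite[Lemma 6.8]{SMP} with the extra parameter $\delta'$ carried along.
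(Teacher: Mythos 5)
Your proposal is correct and takes precisely the route the paper indicates: the paper itself gives no detailed proof, merely stating that the argument ``proceeds along the same lines'' as the cited \cite[Lemma 6.8]{SMP} with $\delta'$ carried along. Your sketch — the product/telescoping computation of $\mathbb{P}_z(\zeta_n=1)$ for the Poisson-type limit in \eqref{exp conv 1}, and the Doob decomposition of $\xi_n$ together with the optional-stopping $L^2$ bound and Chebyshev to pass from $K$ to the accumulated time $S_{K}$ in \eqref{exp conv 2} — is exactly what that reference does, with the uniformity in $z\in\mathcal{S}_\epsilon$ and in $n$ correctly extracted from the hypotheses \eqref{fir moment}--\eqref{trans prob} and the derived bound $\mathbb{E}_z K^{\epsilon,\delta',\delta}\asymp p(\epsilon,\delta',\delta)^{-1}$.
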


We will only verify \eqref{fir moment}, \eqref{sec moment}, \eqref{trans prob} with $e(\epsilon,\delta')$ defined in \eqref{est e},
and
\begin{equation}
p(\epsilon,\delta',\delta) = \frac{\delta'}{\delta}.
\end{equation}

\begin{Lemma}\label{lem trans prob}
We have
    \begin{equation*}
         \lim_{\epsilon \to 0}  \sup_{z \in \mathcal{S}_{\epsilon}} \sup_{ n \geq 0} \abs{ \frac{\mathbb{P}_z \left( \zeta_{n+1}^{\epsilon,\delta',\delta}=0 \middle| \zeta_n^{\epsilon,\delta',\delta} = 1 \right)}{p(\epsilon,\delta',\delta)} - 1 }= 0.
    \end{equation*}
\end{Lemma}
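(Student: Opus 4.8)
The quantity $\mathbb{P}_z(\zeta_{n+1}^{\epsilon,\delta',\delta}=0 \mid \zeta_n^{\epsilon,\delta',\delta}=1)$ is, by the strong Markov property, the probability that starting from a point of $\mathcal{S}_\epsilon = C_\epsilon(\rho r(\epsilon)+3\epsilon)$ the process $Z^\epsilon$ first reaches $C_\epsilon(\delta)$ before returning to $C_\epsilon(\rho r(\epsilon)+3\epsilon)$ — more precisely, that $Z^\epsilon(\hat{\tau}^{\epsilon,\delta})\in C_\epsilon(\delta)$, where $\hat{\tau}^{\epsilon,\delta}$ is the first hitting time of $C_\epsilon(\rho r(\epsilon)+3\epsilon)\cup C_\epsilon(\delta)$ after the process has gone through $C_\epsilon(\delta')$. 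So the plan is to show that
\begin{equation*}
	\lim_{\epsilon \to 0}\sup_{z \in \mathcal{S}_\epsilon}\abs{ \mathbb{P}_z\bigl( Z^\epsilon(\hat{\tau}^{\epsilon,\delta}) \in C_\epsilon(\delta)\bigr)\Bigl(\frac{\delta'}{\delta}\Bigr)^{-1} - 1} = 0.
\end{equation*}
The first step is to condition on the position $Z^\epsilon(\hat\sigma^{\epsilon,\delta'}) \in C_\epsilon(\delta')$ at which the process enters the inner sphere, and invoke the uniform continuity/mixing results already established: by Theorem \ref{conv qsd} together with Lemma \ref{exit place conti} (and the argument of its proof), the conditional law of $Z^\epsilon$ at the relevant hitting times is, up to $o(1)$ uniformly in $z$ and $n$, independent of the starting point in $\mathcal{S}_\epsilon$. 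Thus it suffices to compute the escape probability for a single, convenient starting configuration, or equivalently to show that the escape probability is asymptotically determined by one-dimensional data on the necks.

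For the actual evaluation, I would use the one-dimensional reduction that appears repeatedly in Section \ref{sec exit} (cf. the proof of Lemma \ref{lem NET neck} and equation \eqref{est p}): once the process is in the cylindrical neck region $\Omega_{5,\epsilon,\delta}^k$, the projection onto the edge direction $X^\epsilon(t)$ is an (independent) scaled one-dimensional Brownian motion, and the event of reaching $C_\epsilon^k(\delta)$ before $C_\epsilon^k(\rho r(\epsilon)+3\epsilon)$ is a gambler's-ruin event for $X^\epsilon$ on an interval of length $\delta - \rho r(\epsilon) - 3\epsilon$ started at distance $\delta'-\rho r(\epsilon)-3\epsilon$ from the far end. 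The corresponding probability is
\begin{equation*}
	\frac{\delta' - \rho r(\epsilon) - 3\epsilon}{\delta - \rho r(\epsilon) - 3\epsilon} = \frac{\delta'}{\delta}\bigl(1 + o(1)\bigr)
\end{equation*}
as $\epsilon\to 0$, where I have used $\rho r(\epsilon)+3\epsilon \to 0$ (since $r(\epsilon)\to 0$). One subtlety is bookkeeping the contribution of excursions into the large ball $\Omega_{1,\epsilon}$ before the process commits to escaping through a neck: by the a priori exit-place estimate, Lemma \ref{exit place apriori}, each neck is reached with probability bounded below, so the ball acts as a "fair" mixing region that does not bias the eventual escape probability, and the hitting probability of $C_\epsilon(\delta)$ is governed by the neck dynamics as claimed; the uniformity over $n$ is automatic because the bound depends only on the (fixed) geometry of $C_\epsilon(\delta')$, $C_\epsilon(\rho r(\epsilon)+3\epsilon)$ and $C_\epsilon(\delta)$.

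The main obstacle is making the uniformity over both $z\in\mathcal{S}_\epsilon$ and $n\ge 0$ rigorous while simultaneously controlling the excursions between the inner sphere $C_\epsilon(\delta')$ and $C_\epsilon(\rho r(\epsilon)+3\epsilon)$: the process may oscillate many times between these two surfaces before finally reaching $C_\epsilon(\delta)$, and one must argue that this oscillation does not destroy the $\delta'/\delta$ asymptotics. I expect to handle this exactly as in the derivation of \eqref{est p} in Section \ref{sec qsd} — the nested stopping-time decomposition there already isolates the one-dimensional gambler's-ruin structure — combined with the Harnack-type equidistribution estimates from Lemma \ref{exit place conti} and Lemma \ref{exit place apriori} to pass from "one starting point" to "uniformly over $\mathcal{S}_\epsilon$." The remaining error terms are all $o(1)$ multiples of $\delta'/\delta$, which is precisely the normalization in the statement.
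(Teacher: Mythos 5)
Your central computation --- the one-dimensional gambler's-ruin reduction in the cylinder, giving
\begin{equation*}
	\mathbb{P}_{z'}\bigl( Z^\epsilon(\hat{\tau}^{\epsilon,\delta}) \in C_\epsilon(\delta)\bigr)
	= \frac{\delta' - \rho r(\epsilon)-3\epsilon}{\delta - \rho r(\epsilon)-3\epsilon}
	\longrightarrow \frac{\delta'}{\delta}
	\quad\text{as } \epsilon \to 0,
\end{equation*}
uniformly over $z' \in C_\epsilon(\delta')$ --- is exactly what the paper does, and it is essentially the whole proof. The strong Markov property applied at $\hat\sigma_n^{\epsilon,\delta',\delta}$ reduces the question to this exit problem started from $C_\epsilon(\delta')$, and that exit problem lives entirely inside the disjoint cylinders between $C_\epsilon(\rho r(\epsilon)+3\epsilon)$ and $C_\epsilon(\delta)$, where the edge coordinate is a scaled one-dimensional Brownian motion.

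Most of the rest of your proposal, however, addresses difficulties that do not actually arise. You invoke Theorem \ref{conv qsd}, Lemma \ref{exit place conti} and Lemma \ref{exit place apriori} to remove the dependence on the starting point $z \in \mathcal{S}_\epsilon$ and on $n$; none of this is needed, because the hitting probability from $C_\epsilon(\delta')$ is not merely asymptotically constant but \emph{exactly} constant on $C_\epsilon(\delta')$ (it depends only on the edge coordinate, which is the same for every $z'\in C_\epsilon(\delta')$), so the uniformity in both $z$ and $n$ is automatic once one conditions at $\hat\sigma_n^{\epsilon,\delta',\delta}$. There is likewise no ``contribution of excursions into the large ball $\Omega_{1,\epsilon}$'' to bookkeep: conditioning at $\hat\sigma_n^{\epsilon,\delta',\delta}$ erases whatever the path did in the ball beforehand, and after that time the path cannot reach $\Omega_{1,\epsilon}$ from $C_\epsilon(\delta')$ without first being absorbed on $C_\epsilon(\rho r(\epsilon)+3\epsilon)$, which decides the event. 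Finally, the ``oscillation between $C_\epsilon(\delta')$ and $C_\epsilon(\rho r(\epsilon)+3\epsilon)$'' that you identify as the main obstacle is not part of this lemma at all: the quantity estimated here is the \emph{single-step} transition probability for $\zeta$, one first-passage event for $\hat\tau^{\epsilon,\delta}$, and the geometric number of such returns before eventual escape is what Theorem \ref{thm abs} handles once Lemmas \ref{lem trans prob}--\ref{lem sec moment} are in place. Stripping all of this away leaves precisely the paper's two-line argument.
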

\begin{proof}
By the strong Markov property, 
\begin{equation*}
	\mathbb{P}_z \left(\zeta_{n+1}^{\epsilon,\delta',\delta} = 0 \middle| \zeta_{n}^{\epsilon,\delta',\delta} =1 \right) 
	= \mathbb{E}_z \left[ \mathbb{P}_{Z^\epsilon(\hat{\sigma}_n^{\epsilon,\delta',\delta})}  \left( Z^\epsilon(\hat{\tau}^{\epsilon,\delta}) \in C_\epsilon(\delta)    \right) \right].
\end{equation*}
Since the exit problem is only related to one dimensional Brownian motion,
    \begin{equation*}
    	\mathbb{P}_{z'} \left( Z^\epsilon(\hat{\tau}^{\epsilon,\delta}) \in C_\epsilon(\delta)  \right)
    	=\frac{\delta' - \rho r(\epsilon)-3\epsilon}{\delta - \rho r(\epsilon)-3\epsilon},
    \end{equation*}
    for all $z' \in C_\epsilon(\delta')$,
	and this completes the proof of \eqref{trans prob}.
\end{proof}

\begin{Lemma}\label{lem fir moment}
    We have
    \begin{equation*}
        \lim_{\delta' \to 0} \lim_{\epsilon \to 0}  \sup_{z \in \mathcal{S}_{\epsilon}} \sup_{ n \geq 0} \abs{ \frac{\mathbb{E}_z \left( \xi_{n+1}^{\epsilon,\delta',\delta} \middle| \mathcal{F}_n^{\epsilon,\delta',\delta} \right)}{e(\epsilon,\delta')} - 1} = 0.
    \end{equation*}
\end{Lemma}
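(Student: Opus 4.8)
The plan is to use the strong Markov property to reduce the claimed uniform estimate to a single statement about the expected return time $\mathbb{E}_w(\hat\tau_1^{\epsilon,\delta',\delta})$ for $w\in\mathcal{S}_\epsilon$, and then to evaluate that return time by combining Lemma \ref{small exit est 2} with an elementary one-dimensional ODE computation on the neck.

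First I would record that $\hat X_n^{\epsilon,\delta',\delta}=Z^\epsilon(\hat\tau_n^{\epsilon,\delta',\delta})\in\mathcal{S}_\epsilon\cup C_\epsilon(\delta)$ for every $n$, that $\mathcal{S}_\epsilon=C_\epsilon(\rho r(\epsilon)+3\epsilon)$ and $C_\epsilon(\delta)$ are disjoint once $\epsilon$ is small, and that $\hat\tau_{n+1}^{\epsilon,\delta',\delta}-\hat\tau_n^{\epsilon,\delta',\delta}$ is a functional of the path after $\hat\tau_n^{\epsilon,\delta',\delta}$. By the definition of $\xi_{n+1}^{\epsilon,\delta',\delta}$ and the strong Markov property at $\hat\tau_n^{\epsilon,\delta',\delta}$ (which makes the conditional expectation given $\mathcal{F}_{\hat\tau_n^{\epsilon,\delta',\delta}}$ equal to a function of $\hat X_n^{\epsilon,\delta',\delta}$, hence $\mathcal{F}_n^{\epsilon,\delta',\delta}$-measurable),
\[
\mathbb{E}_z\!\left(\xi_{n+1}^{\epsilon,\delta',\delta}\,\middle|\,\mathcal{F}_n^{\epsilon,\delta',\delta}\right)=\mathbbm{1}_{\{\hat X_n^{\epsilon,\delta',\delta}\in C_\epsilon(\delta)\}}\,e(\epsilon,\delta')+\mathbbm{1}_{\{\hat X_n^{\epsilon,\delta',\delta}\in\mathcal{S}_\epsilon\}}\,\mathbb{E}_{\hat X_n^{\epsilon,\delta',\delta}}\!\left(\hat\tau_1^{\epsilon,\delta',\delta}\right).
\]
On $\{\hat X_n^{\epsilon,\delta',\delta}\in C_\epsilon(\delta)\}$ the ratio of the left-hand side to $e(\epsilon,\delta')$ is identically $1$, so
\[
\sup_{z\in\mathcal{S}_\epsilon}\sup_{n\geq0}\left|\frac{\mathbb{E}_z(\xi_{n+1}^{\epsilon,\delta',\delta}\mid\mathcal{F}_n^{\epsilon,\delta',\delta})}{e(\epsilon,\delta')}-1\right|\leq\sup_{w\in\mathcal{S}_\epsilon}\left|\frac{\mathbb{E}_w(\hat\tau_1^{\epsilon,\delta',\delta})}{e(\epsilon,\delta')}-1\right|,
\]
and it remains to control the right-hand side (in particular the $\sup$ over $n$ is now trivial).

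Next I would split $\hat\tau_1^{\epsilon,\delta',\delta}=\hat\sigma_0^{\epsilon,\delta'}+(\hat\tau_1^{\epsilon,\delta',\delta}-\hat\sigma_0^{\epsilon,\delta'})$ and apply the strong Markov property at $\hat\sigma_0^{\epsilon,\delta'}$. The first term is the expected hitting time of $C_\epsilon(\delta')$ from $w\in C_\epsilon(\rho r(\epsilon)+3\epsilon)$, which is exactly the quantity controlled in Lemma \ref{small exit est 2} with $\delta'$ playing the role of $\delta$; writing $\alpha(\epsilon)=\rho^d r(\epsilon)^d V_d\big/\sum_{k:I_k\sim O}\lambda_k^{d-1}\epsilon^{d-1}V_{d-1}$, so that $e(\epsilon,\delta')=\alpha(\epsilon)\delta'$, this gives $\mathbb{E}_w(\hat\sigma_0^{\epsilon,\delta'})=(1+o_\epsilon(1))\big(e(\epsilon,\delta')+(\delta')^2/2\big)$ uniformly in $w\in\mathcal{S}_\epsilon$. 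For the second term, once $Z^\epsilon$ reaches $C_\epsilon(\delta')$ the tube is a straight cylinder between the cross sections at graph distances $\rho r(\epsilon)+3\epsilon$ and $\delta$, so the longitudinal coordinate is a one-dimensional Brownian motion and $\hat\tau_1^{\epsilon,\delta',\delta}-\hat\sigma_0^{\epsilon,\delta'}$ is its exit time from $[\rho r(\epsilon)+3\epsilon,\delta]$ started at $\delta'$; solving $f''=-1$ with $f(\rho r(\epsilon)+3\epsilon)=f(\delta)=0$ yields $\mathbb{E}_w(\hat\tau_1^{\epsilon,\delta',\delta}-\hat\sigma_0^{\epsilon,\delta'})=\tfrac12(\delta'-\rho r(\epsilon)-3\epsilon)(\delta-\delta')\leq\tfrac12\delta\delta'$, uniformly in $w$.

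Combining the two pieces and dividing by $e(\epsilon,\delta')=\alpha(\epsilon)\delta'$, for each fixed $\delta'\in(0,\delta)$,
\[
\sup_{w\in\mathcal{S}_\epsilon}\left|\frac{\mathbb{E}_w(\hat\tau_1^{\epsilon,\delta',\delta})}{e(\epsilon,\delta')}-1\right|\leq|o_\epsilon(1)|\Big(1+\tfrac{\delta'}{2\alpha(\epsilon)}\Big)+\frac{\delta'}{2\alpha(\epsilon)}+\frac{\delta}{2\alpha(\epsilon)}.
\]
In the large-ball regime $r(\epsilon)^d\gg\epsilon^{d-1}$ one has $\alpha(\epsilon)\to\infty$ as $\epsilon\to0$, so the right-hand side tends to $0$ as $\epsilon\to0$ for each fixed $\delta'$; letting $\delta'\to0$ afterwards gives \eqref{fir moment}, as required. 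The argument is largely bookkeeping once Lemma \ref{small exit est 2} is available; the one point that really needs care is verifying that every error term — the $o_\epsilon(1)$ from Lemma \ref{small exit est 2}, the term $(\delta')^2/2$, and the neck contribution $O(\delta\delta')$ — is negligible relative to $e(\epsilon,\delta')=\alpha(\epsilon)\delta'$, which is precisely where the large-ball hypothesis $\alpha(\epsilon)\to\infty$ enters, together with keeping the dependence on $z$, on $n$, and on $\delta'$ uniform under the order of limits used in Section \ref{sec exit exp} ($\epsilon\to0$ first, then $\delta'\to0$).
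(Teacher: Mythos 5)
Your proposal is correct and follows essentially the same route as the paper's proof: reduce via the strong Markov property to estimating $\mathbb{E}_w \hat\tau_1^{\epsilon,\delta',\delta}$ for $w \in \mathcal{S}_\epsilon$ (the case $\hat X_n^{\epsilon,\delta',\delta} \in C_\epsilon(\delta)$ being trivial by definition of $\xi_{n+1}$), split $\hat\tau_1^{\epsilon,\delta',\delta} = \hat\sigma_0^{\epsilon,\delta'} + (\hat\tau_1^{\epsilon,\delta',\delta}-\hat\sigma_0^{\epsilon,\delta'})$, handle the first piece by Lemma \ref{small exit est 2} and the second by the explicit one-dimensional exit-time ODE on the neck, then divide by $e(\epsilon,\delta')=\alpha(\epsilon)\delta'$ and use $\alpha(\epsilon)\to\infty$. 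The only difference is presentational — you spell out the resulting error bound $|o_\epsilon(1)|(1+\tfrac{\delta'}{2\alpha(\epsilon)})+\tfrac{\delta'}{2\alpha(\epsilon)}+\tfrac{\delta}{2\alpha(\epsilon)}$ and the uniformity in $z$, $n$, $\delta'$ more explicitly than the paper does, which is a small but genuine improvement in rigor over the paper's terse concluding sentence.
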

\begin{proof}
By the strong Markov property,
\begin{equation*}
	\mathbb{E}_z \left( \xi_{n+1}^{\epsilon,\delta',\delta} \middle| \mathcal{F}_n^{\epsilon,\delta',\delta} \right)
	= \mathbb{E}_z \left( \mathbb{E}_{\hat{X}_{n}^{\epsilon, \delta,\delta'}}  \xi_{1}^{\epsilon,\delta',\delta}    \right).
\end{equation*}
Thanks to the definition of $\xi_{n}^{\epsilon,\delta',\delta}$, if $\hat{X}_{n}^{\epsilon, \delta,\delta'} \in C_\epsilon(\delta) $, the result trivially holds.
It then suffices to consider $\hat{X}_{n}^{\epsilon, \delta,\delta'} \in C_\epsilon(\rho r(\epsilon)+3\epsilon) $.
For $z' \in C_\epsilon(\rho r(\epsilon)+3\epsilon)$, by the strong Markov property, we have
\begin{equation*}
	\mathbb{E}_{z'}  \xi_{1}^{\epsilon,\delta'}   
	= \mathbb{E}_{z'} \left[ \hat{\sigma}_0^{\epsilon,\delta'} + \mathbb{E}_{Z^\epsilon(\hat{\sigma}_0^{\epsilon,\delta'})}  \hat{\tau}^{\epsilon,\delta} \right].
\end{equation*}
For $z'' \in C_\epsilon(\delta')$, the exit time $\hat{\tau}^{\epsilon,\delta}$ is only related to one dimensional Brownian motion.
Since $Z^\epsilon(t)$ is a scaled Brownian motion, we have
\begin{equation*}
	\mathbb{E}_{z''} \hat{\tau}^{\epsilon,\delta}
	=\frac{(\delta-\delta')(\delta'-\rho r(\epsilon)-3\epsilon)}{2}.
\end{equation*}
On the other hand, by Lemma \ref{small exit est 2},
\begin{equation*}
	\mathbb{E}_{z'} \hat{\sigma}^{\epsilon,\delta'} \sim_{\epsilon,\delta'} 	 \frac{\delta'^2}{2} + e(\epsilon,\delta') 
\end{equation*}
uniformly in $ z' \in C_\epsilon(\rho r(\epsilon)+3\epsilon)$.
Since $\lim_{\epsilon \to 0} e(\epsilon,\delta') = \infty$, we have the desired result.
\end{proof}

\begin{Lemma}\label{lem sec moment}
    We have
    \begin{equation*}
       \lim_{\delta' \to 0} \lim_{\epsilon \to 0} \sup_{z \in \mathcal{S}_{\epsilon}} \sup_{ n \geq 0} \mathbb{E}_z \left( \left( \xi_{n+1}^{\epsilon,\delta',\delta} \right)^2 \middle| \mathcal{F}^{\epsilon,\delta',\delta}_{n} \right) (e(\epsilon,\delta'))^{-2} < \infty.
    \end{equation*}
\end{Lemma}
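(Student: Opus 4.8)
The plan is to proceed exactly as in Lemma~\ref{lem fir moment}, using the strong Markov property to reduce the conditional second moment of $\xi_{n+1}^{\epsilon,\delta',\delta}$ to a one-step quantity, and then to bound that one-step quantity by the a priori exit-time estimates already established. First I would write, via the strong Markov property,
\begin{equation*}
	\mathbb{E}_z \left( \left( \xi_{n+1}^{\epsilon,\delta',\delta} \right)^2 \middle| \mathcal{F}_n^{\epsilon,\delta',\delta} \right)
	= \mathbb{E}_z \left( \mathbb{E}_{\hat{X}_n^{\epsilon,\delta',\delta}} \left( \xi_1^{\epsilon,\delta',\delta} \right)^2 \right),
\end{equation*}
so it suffices to bound $\mathbb{E}_{z'} \left( \xi_1^{\epsilon,\delta',\delta} \right)^2$ uniformly over $z' \in \mathcal{S}_\epsilon \cup C_\epsilon(\delta)$. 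If $z' \in C_\epsilon(\delta)$ then $\xi_1^{\epsilon,\delta',\delta} = e(\epsilon,\delta')$ deterministically and the ratio equals $1$, so only the case $z' \in C_\epsilon(\rho r(\epsilon)+3\epsilon)$ matters.

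For $z' \in C_\epsilon(\rho r(\epsilon)+3\epsilon)$, by definition $\xi_1^{\epsilon,\delta',\delta} = \hat{\tau}_1^{\epsilon,\delta',\delta} = \hat{\sigma}_0^{\epsilon,\delta'} + \left( \hat{\tau}^{\epsilon,\delta} \circ \theta_{\hat{\sigma}_0^{\epsilon,\delta'}} \right)$, the sum of the hitting time of $C_\epsilon(\delta')$ and the subsequent hitting time of $C_\epsilon(\rho r(\epsilon)+3\epsilon) \cup C_\epsilon(\delta)$. Using $(a+b)^2 \le 2a^2 + 2b^2$ and the strong Markov property at $\hat{\sigma}_0^{\epsilon,\delta'}$, I would bound
\begin{equation*}
	\mathbb{E}_{z'} \left( \xi_1^{\epsilon,\delta',\delta} \right)^2
	\le 2\, \mathbb{E}_{z'} \left( \hat{\sigma}_0^{\epsilon,\delta'} \right)^2
	+ 2 \sup_{z'' \in C_\epsilon(\delta')} \mathbb{E}_{z''} \left( \hat{\tau}^{\epsilon,\delta} \right)^2 .
\end{equation*}
The first term is the second moment of the first hitting time of $C_\epsilon(\delta')$ from $C_\epsilon(\rho r(\epsilon)+3\epsilon)$; this is exactly the situation controlled by Lemma~\ref{inter exit est 1} (with $\delta$ replaced by $\delta'$), giving $\mathbb{E}_{z'} \left( \hat{\sigma}_0^{\epsilon,\delta'} \right)^2 \lesssim \left( \frac{r(\epsilon)^d}{\epsilon^{d-1}} \delta' \right)^2 + (\delta')^4 = (e(\epsilon,\delta'))^2 (1+o_{\epsilon,\delta'}(1))$ after using the definition \eqref{est e} and the fact that $r(\epsilon)^d \gg \epsilon^{d-1}$ in the large-ball regime. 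The second term is the second moment of an exit time from a one-dimensional interval of length $\delta - \delta'$, hence bounded by $c\,\delta^4$ (an explicit ODE computation as in the proof of Lemma~\ref{lem fir moment}), which is $o_{\epsilon,\delta'}\!\left( (e(\epsilon,\delta'))^2 \right)$ since $e(\epsilon,\delta') \to \infty$ as $\epsilon \to 0$. Dividing by $(e(\epsilon,\delta'))^2$ and taking $\limsup_{\epsilon \to 0}$ then $\lim_{\delta' \to 0}$ yields a finite bound, uniformly in $n$ and in $z \in \mathcal{S}_\epsilon$.

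The main obstacle is making sure that Lemma~\ref{inter exit est 1} really applies to $\hat{\sigma}_0^{\epsilon,\delta'}$: that lemma is stated for $\sigma^{\epsilon,\delta}$, the hitting time of $C_{\epsilon,j}(\delta)$, started anywhere in $B_{\epsilon,j}(\delta)$, whereas here the neck has length $\delta'$ and we start on $C_\epsilon(\rho r(\epsilon)+3\epsilon) \subset B_\epsilon(\delta')$ --- so the only thing to check is that the geometry is the same single-vertex bottleneck problem with the neck truncated at $\delta'$, which it is, exactly as already used in the footnote arguments of Section~\ref{sec a prior est} (\eqref{cond 1}--\eqref{cond 3} invoke Lemmas~\ref{small exit est 1.5} and~\ref{inter exit est 1} with this very substitution). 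Everything else is a routine combination of $(a+b)^2 \le 2a^2+2b^2$, the strong Markov property, and the elementary second-moment bound for one-dimensional Brownian exit times; no new estimate is needed.
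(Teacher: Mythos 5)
Your proposal is correct and follows essentially the same route as the paper's proof: strong Markov reduction to a one-step bound on $C_\epsilon(\rho r(\epsilon)+3\epsilon)$, the split $\xi_1 = \hat{\sigma}^{\epsilon,\delta'} + \hat{\tau}^{\epsilon,\delta}\circ\theta$ with $(a+b)^2 \le 2a^2+2b^2$, and control of both second moments by the exit-time estimates, with the second (cylinder) piece subdominant since $e(\epsilon,\delta')\to\infty$. The only cosmetic difference is which established bound you cite for the ball piece: you invoke Lemma~\ref{inter exit est 1} directly (with $\delta\mapsto\delta'$), whereas the paper re-derives the same thing from Theorem~\ref{SMFPT Main} together with the first-moment estimate of Lemma~\ref{small exit est 2}; your choice is arguably cleaner, since Lemma~\ref{inter exit est 1} already carries a supremum over all of $B_{\epsilon}(\delta')$, which is the form Theorem~\ref{SMFPT Main} requires.
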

\begin{proof}
Similarly, it suffices to prove that there exists $\delta'_0 >0$ such that for every $\delta' \in (0,\delta_0')$, there exists $\epsilon_{\delta'} > 0$ such that
\begin{equation*}
	\sup_{z \in C_\epsilon(\rho r(\epsilon)+3\epsilon)  } \mathbb{E}_{z} \left( \xi_{1}^{\epsilon,\delta'} \right)^2 \leq c (e(\epsilon,\delta'))^2  
\end{equation*}
for all $\epsilon \in (0,\epsilon_{\delta'})$.
	Note that, by Theorem \ref{SMFPT Main} and Lemma \ref{small exit est 2}, there exists $\delta'_0 >0$ such that for every $\delta' \in (0,\delta_0')$, there exists $\epsilon_{\delta'} > 0$ such that
	\begin{equation*}
	\begin{aligned}
		& \sup_{z \in C_\epsilon(\rho r(\epsilon)+3\epsilon) } \mathbb{E}_z \left(\xi_1^{\epsilon,\delta',\delta} \right)^2
		\leq 2 \left( \sup_{z \in C_\epsilon(\rho r(\epsilon)+3\epsilon) }\mathbb{E}_z \left( \hat{\sigma}^{\epsilon,\delta'} \right)^2+ \sup_{z' \in C_\epsilon(\delta')} \mathbb{E}_{z'} \left( \hat{\tau}^{\epsilon,\delta} \right)^2 \right) \\
		& \leq c \left( \left( \sup_{z \in C_\epsilon(\rho r(\epsilon)+3\epsilon) } \mathbb{E}_z  \hat{\sigma}^{\epsilon,\delta'} \right)^2 + \left( \sup_{z' \in C_\epsilon(\delta')} \mathbb{E}_{z'}  \hat{\tau}^{\epsilon,\delta} \right)^2 \right)
		\leq c(e(\epsilon,\delta'))^2
	\end{aligned}
	\end{equation*}
	for all $\epsilon \in (0,\epsilon_{\delta'})$.
\end{proof}

By Theorem \ref{thm abs}, the exit time $\sigma^{\epsilon,\delta}$ is, in a loose sense, asymptotically distributed as an exponential random variable with parameter $\left( \frac{\rho^d r(\epsilon)^d V_d}{\sum_{k} \lambda_k^{d-1} \epsilon^{d-1} V_{d-1}} \delta \right)^{-1}$. 
Given the assumption $r(\epsilon)^d \gg \epsilon^{d-1}$, one expects the moment generating function $\mathbb{E}_z e^{-\lambda \sigma^{\epsilon,\delta} }$ to converges to $0$, for all $\lambda>0$.
This observation is formalized in the following theorem:\begin{Theorem}\label{large exit time est}
For each $t>0$
	\begin{equation}\label{exp est 1}
		 \lim_{\epsilon \to 0} \sup_{z \in C_\epsilon(\rho r(\epsilon)+3\epsilon)  } \abs{ \mathbb{P}_z \left( \sigma^{\epsilon,\delta} \geq t \frac{\rho^d r(\epsilon)^d V_d}{\sum_{k:I_k \sim O} \lambda_k^{d-1} \epsilon^{d-1} V_{d-1}} \delta \right) - e^{-t}} = 0.
	\end{equation}
	Moreover, for every $\lambda >0$, 
	\begin{equation}\label{exp est 2}
	\displaystyle
		\lim_{\epsilon \to 0} \sup_{z \in C_\epsilon(\rho r(\epsilon)+3\epsilon) } \mathbb{E}_z e^{-\lambda \sigma^{\epsilon,\delta} } = 0.
	\end{equation}
\end{Theorem}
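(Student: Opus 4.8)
The plan is to deduce both \eqref{exp est 1} and \eqref{exp est 2} from the abstract Poisson-type limit Theorem \ref{thm abs}, applied with the choices $e(\epsilon,\delta')$ as in \eqref{est e} and $p(\epsilon,\delta',\delta)=\delta'/\delta$. First I would invoke Lemmas \ref{lem trans prob}, \ref{lem fir moment} and \ref{lem sec moment}, which verify hypotheses \eqref{trans prob}, \eqref{fir moment} and \eqref{sec moment} respectively; note that $\lim_{\delta'\to0}\lim_{\epsilon\to0}p(\epsilon,\delta',\delta)=\lim_{\delta'\to0}\delta'/\delta=0$ as required. Theorem \ref{thm abs} then yields, via \eqref{exp conv 2} and the identity $\sum_{n=0}^\infty \xi_{n+1}^{\epsilon,\delta',\delta}\zeta_n^{\epsilon,\delta',\delta}=\sigma^{\epsilon,\delta}$ valid $\mathbb{P}_z$-a.s.\ for $z\in\mathcal{S}_\epsilon=C_\epsilon(\rho r(\epsilon)+3\epsilon)$, that
\begin{equation*}
	\lim_{\delta'\to0}\lim_{\epsilon\to0}\sup_{z\in C_\epsilon(\rho r(\epsilon)+3\epsilon)}\abs{\mathbb{P}_z\!\left(\bigl(e(\epsilon,\delta')\bigr)^{-1}\frac{\delta'}{\delta}\,\sigma^{\epsilon,\delta}\ge t\right)-e^{-t}}=0.
\end{equation*}
Since $e(\epsilon,\delta')=\bigl(\rho^d r(\epsilon)^d V_d/\sum_{k:I_k\sim O}\lambda_k^{d-1}\epsilon^{d-1}V_{d-1}\bigr)\delta'$, the normalizing factor simplifies: $\bigl(e(\epsilon,\delta')\bigr)^{-1}\delta'/\delta=\bigl(\tfrac{\rho^d r(\epsilon)^d V_d}{\sum_k\lambda_k^{d-1}\epsilon^{d-1}V_{d-1}}\delta\bigr)^{-1}$, which is independent of $\delta'$.

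The one subtlety is that Theorem \ref{thm abs} delivers only an iterated limit $\lim_{\delta'\to0}\lim_{\epsilon\to0}$, whereas \eqref{exp est 1} asserts a single limit $\lim_{\epsilon\to0}$ with $\delta$ (but not $\delta'$) fixed. The resolution is that the quantity inside the supremum in \eqref{exp est 1}, namely $\sup_z\abs{\mathbb{P}_z(\sigma^{\epsilon,\delta}\ge t\,\tfrac{\rho^d r(\epsilon)^d V_d}{\sum_k\lambda_k^{d-1}\epsilon^{d-1}V_{d-1}}\delta)-e^{-t}}$, does not depend on $\delta'$ at all; $\delta'$ is merely an auxiliary parameter introduced in the construction of the chain $\hat X_n^{\epsilon,\delta',\delta}$. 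Thus for every fixed $\delta'>0$ small enough, $\lim_{\epsilon\to0}$ of this $\delta'$-free quantity equals its value, and letting $\delta'\to0$ in the outer limit is vacuous; equivalently, since the limit exists and is independent of $\delta'$, the iterated limit equals the plain limit $\lim_{\epsilon\to0}$. This proves \eqref{exp est 1}.

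For \eqref{exp est 2}, fix $\lambda>0$ and write $A_\epsilon\coloneqq \tfrac{\rho^d r(\epsilon)^d V_d}{\sum_{k:I_k\sim O}\lambda_k^{d-1}\epsilon^{d-1}V_{d-1}}\delta$; since $j\in\mathfrak{L}$, i.e.\ $r(\epsilon)^d\gg\epsilon^{d-1}$, we have $A_\epsilon\to\infty$. The idea is to bound the Laplace transform by splitting at a threshold: for any $M>0$ and $\epsilon$ small enough that $A_\epsilon\ge 1$,
\begin{equation*}
	\mathbb{E}_z e^{-\lambda\sigma^{\epsilon,\delta}}\le e^{-\lambda M A_\epsilon}+\mathbb{P}_z\!\left(\sigma^{\epsilon,\delta}<M A_\epsilon\right)=e^{-\lambda M A_\epsilon}+\mathbb{P}_z\!\left(A_\epsilon^{-1}\sigma^{\epsilon,\delta}<M\right).
\end{equation*}
Taking $\sup_z$ and then $\limsup_{\epsilon\to0}$, the first term vanishes because $A_\epsilon\to\infty$, and by \eqref{exp est 1} the second term converges to $1-e^{-M}$. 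Hence $\limsup_{\epsilon\to0}\sup_z\mathbb{E}_z e^{-\lambda\sigma^{\epsilon,\delta}}\le 1-e^{-M}$ for every $M>0$; letting $M\to0$ gives the claim.

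The main obstacle I anticipate is bookkeeping rather than conceptual: one must carefully justify that the three hypotheses \eqref{fir moment}, \eqref{sec moment}, \eqref{trans prob} hold \emph{uniformly in $n\ge0$} and in $z\in\mathcal{S}_\epsilon$, which is exactly what Lemmas \ref{lem trans prob}--\ref{lem sec moment} provide by reducing, via the strong Markov property, the $n$-th increment to a one-step problem governed either by one-dimensional Brownian motion in the neck (giving explicit formulas) or by the escape estimate of Lemma \ref{small exit est 2} from $C_\epsilon(\rho r(\epsilon)+3\epsilon)$; the growth $e(\epsilon,\delta')\to\infty$ as $\epsilon\to0$ is what makes the contribution of the purely one-dimensional neck-crossing term $\tfrac{(\delta-\delta')(\delta'-\rho r(\epsilon)-3\epsilon)}{2}$ negligible relative to $e(\epsilon,\delta')$ in Lemma \ref{lem fir moment}. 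Once these inputs are in hand, the deduction above is routine.
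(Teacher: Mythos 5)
Your proposal is correct and follows essentially the same route as the paper. The first part is indeed a direct consequence of \eqref{exp conv 2} combined with the identity $\sum_n \xi_{n+1}^{\epsilon,\delta',\delta}\zeta_n^{\epsilon,\delta',\delta}=\sigma^{\epsilon,\delta}$ and the observation that $\bigl(e(\epsilon,\delta')\bigr)^{-1}p(\epsilon,\delta',\delta)$ is $\delta'$-free, and you spell out the small point about the iterated limit collapsing to the single $\lim_{\epsilon\to0}$ which the paper leaves implicit; the second part is the same truncation-at-a-threshold argument as the paper (your parameter $M$ playing the role of the paper's $t_\eta$, and your $\limsup$-then-$M\to0$ bookkeeping being equivalent to the paper's $\eta/3$ split).
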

\begin{proof}
The first part is a direct consequence of \eqref{exp conv 2}. 
For the second part, for every $\eta > 0$, let $t_\eta$ be a positive real number such that $e^{-t_\eta} \geq 1-\frac{\eta}{3}$.
\begin{equation*}
\begin{aligned}
\displaystyle
&\mathbb{E}_z e^{-\lambda \sigma^{\epsilon,\delta} } \\
&=\mathbb{E}_z \left( e^{-\lambda \sigma^{\epsilon,\delta} } \mathbbm{1}_{ \left\{ \sigma^{\epsilon,\delta} \geq t_\eta \frac{\rho^d r(\epsilon)^d V_d}{\sum_{k} \lambda_k^{d-1} \epsilon^{d-1} V_{d-1}} \delta \right\} }  \right) 
+\mathbb{E}_z \left( e^{-\lambda \sigma^{\epsilon,\delta} } \mathbbm{1}_{ \left\{ \sigma^{\epsilon,\delta} < t_\eta \frac{\rho^d r(\epsilon)^d V_d}{\sum_{k} \lambda_k^{d-1} \epsilon^{d-1} V_{d-1}} \delta \right\} } \right) \\
& \leq \exp {-\lambda t_\eta \frac{\rho^d r(\epsilon)^d V_d}{\sum_{k} \lambda_k^{d-1} \epsilon^{d-1} V_{d-1}} \delta }    
+\mathbb{P}_z \left( \sigma^{\epsilon,\delta} < t_\eta \frac{\rho^d r(\epsilon)^d V_d}{\sum_{k} \lambda_k^{d-1} \epsilon^{d-1} V_{d-1}} \delta  \right).
\end{aligned}
\end{equation*}
For the second term, thanks to \eqref{exp est 1}, there exists $\epsilon_{1,\delta, \eta} > 0$ such that for every $\epsilon \in (0,\epsilon_{1,\delta, \eta})$,
\begin{equation*}
\begin{aligned}
	&\mathbb{P}_z \left( \sigma^{\epsilon,\delta} < t_\eta \frac{\rho^d r(\epsilon)^d V_d}{\sum_{k} \lambda_k^{d-1} \epsilon^{d-1} V_{d-1}} \delta  \right)
	= 1 - \mathbb{P}_z \left( \sigma^{\epsilon,\delta} \geq t_\eta \frac{\rho^d r(\epsilon)^d V_d}{\sum_{k} \lambda_k^{d-1} \epsilon^{d-1} V_{d-1}} \delta  \right) \\
	&\leq 1-e^{t_\eta}+\frac{\eta}{3}
	\leq \frac{2\eta}{3}.
\end{aligned}
\end{equation*}
For the first term, there exists $\epsilon_{2,\delta, \eta} > 0$ such that for every $\epsilon \in (0,\epsilon_{2,\delta, \eta})$,
\begin{equation*}
	\exp {-\lambda t_\eta \frac{\rho^d r(\epsilon)^d V_d}{\sum_{k} \lambda_k^{d-1} \epsilon^{d-1} V_{d-1}} \delta }  < \frac{\eta}{3}.
\end{equation*}
This completes the proof.
\end{proof}

\section{Convergence of diffusion processes}\label{sec conv diff}
Our strategy closely follows the approach developed in \cite{SDE2}. 
First, we show that the family of probability measures $ \mathbb{P} \circ (\Pi^\epsilon)^{-1}$, induced by the projected diffusion process $(\Pi^\epsilon \circ Z^\epsilon) (\cdot)$, is weakly precompact in $C([0,\infty);\Gamma)$.
Next, we verify that any limiting measure possesses $\bar{L}$ as a generator.
Therefore, since the law of a process on $C([0,\infty);\Gamma)$ is uniquely determined by its generator $\bar{L}$ and its initial condition, we can conclude the weak convergence of the process $(\Pi^\epsilon \circ Z^\epsilon)(\cdot)$ to $\bar{Z}(\cdot)$ in $C([0,\infty);\Gamma)$.

We first show the precompactness of the probability measures $\{ \mathbb{P} \circ (\Pi^\epsilon)^{-1} \}_{\epsilon \in (0,1)}$.
\begin{Lemma}
    The family $\{ \mathbb{P} \circ (\Pi^\epsilon)^{-1} \}_{\epsilon \in (0,1)}$ is weakly precompact on $C([0,\infty);\Gamma).$
\end{Lemma}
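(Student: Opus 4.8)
The plan is to establish tightness of the laws $\{\mathbb{P}\circ(\Pi^\epsilon)^{-1}\}_{\epsilon\in(0,1)}$ on $C([0,\infty);\Gamma)$ by verifying a modulus-of-continuity criterion. Since $\Gamma$ is compact, by the standard Prokhorov–Aldous–Kurtz criterion it suffices to show that for every $T>0$ and every $\eta>0$,
\begin{equation*}
	\lim_{h\to 0}\ \limsup_{\epsilon\to 0}\ \sup_{z\in G_\epsilon}\ \mathbb{P}_z\left(\ \sup_{\substack{s,t\in[0,T]\\ |s-t|\le h}} d_\Gamma\bigl(\Pi^\epsilon(Z^\epsilon(s)),\Pi^\epsilon(Z^\epsilon(t))\bigr)\ \ge\ \eta\right)=0.
\end{equation*}
The process $\Pi^\epsilon\circ Z^\epsilon$ is not Markov and not a semimartingale, so the idea (following \cite{SDE2}) is to compare it with $\Pi\circ Z^\epsilon$ away from the vertices and to use the exit-time estimates near the vertices. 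First I would fix $\eta>0$ and choose $\delta<\eta/4$ small; then away from $\bigcup_j B_{\epsilon,j}(\delta)$ the map $\Pi^\epsilon$ agrees with $\Pi$, and on each tube $\Pi\circ Z^\epsilon$ is (locally) a one-dimensional component of a reflected Brownian motion, whose oscillation modulus is controlled uniformly in $\epsilon$ by the classical Lévy modulus of continuity for Brownian motion together with the Lipschitz property of $\Pi$ on the straight tubular pieces.

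The core of the argument is the passage through the vertex neighborhoods. Here I would use the decomposition into the excursions away from and returns to the sets $C_{\epsilon,j}(\delta)$, exactly as in the stopping-time construction $\sigma_n^{\epsilon,\delta},\tau_n^{\epsilon,\delta}$ from Section~\ref{sec proof lem place}. Between consecutive visits the trajectory of $\Pi^\epsilon\circ Z^\epsilon$ either stays within $d_\Gamma$-distance $\rho_j r_j(\epsilon)+3\epsilon+\delta<2\delta$ of a vertex (if the excursion is inside $B_{\epsilon,j}(\delta)$), hence contributes an oscillation of at most $2\delta<\eta/2$ to the modulus, or it is an excursion in the tubes where the Brownian modulus bound applies. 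What must be ruled out is that too many such excursions — hence too much total motion — happen within a time window of length $h$. This is controlled by Lemma~\ref{small exit est 2} (equivalently the a priori bound Lemma~\ref{small exit est 1.5}): the expected time spent inside $B_{\epsilon,j}(\delta)$ before reaching $C_{\epsilon,j}(\delta)$ is of order $\tfrac{\delta^2}{2}+\alpha_j(\epsilon)\delta>0$ uniformly in the starting point and bounded below away from $0$ for fixed $\delta$, so by a renewal/Markov-inequality argument the number of vertex excursions in $[0,T]$ is $O_{\mathbb{P}}(1)$ uniformly in $\epsilon$, and in particular the probability of more than $N_\delta$ excursions is small; then choosing $h$ small forces, with high probability, at most one excursion to fall in any window of length $h$, and that single excursion contributes at most $2\delta+(\text{Brownian modulus at scale }h)<\eta$.

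Concretely the steps are: (i) reduce tightness to the modulus criterion above using compactness of $\Gamma$; (ii) fix $\delta$ so that $2\delta<\eta/4$ and use Lemma~\ref{small exit est 2} / Lemma~\ref{small exit est 1.5} to bound uniformly in $\epsilon$ the number $N^{\epsilon,\delta}_{[0,T]}$ of returns to $\bigcup_j C_{\epsilon,j}(\delta)$ in $[0,T]$, e.g.\ $\mathbb{P}_z(N^{\epsilon,\delta}_{[0,T]}\ge N)\le CT/(N c_\delta)$ with $c_\delta>0$ independent of $\epsilon$; (iii) on the complement of the vertex neighborhoods bound the $d_\Gamma$-oscillation of $\Pi\circ Z^\epsilon$ over time-increments $\le h$ by $C\sqrt{h\log(1/h)}$ with overwhelming probability, uniformly in $\epsilon$, by a scaling/reflection argument reducing each tube to a one-dimensional reflected Brownian motion (this is where one uses that the tubes have width $\lambda_k\epsilon\to 0$, so the transverse fluctuations do not affect the $d_\Gamma$-distance, and that $\Pi$ is $1$-Lipschitz on tubes); (iv) combine: on an event of probability $\ge 1-\eta'$ there are at most $N_\delta$ vertex excursions, so at most $N_\delta$ time windows of length $h$ can meet a vertex excursion, and on each such window the oscillation is $\le 2\delta+C\sqrt{h\log(1/h)}$, while on all other windows the oscillation is $\le C\sqrt{h\log(1/h)}$; letting $h\to0$ makes the Brownian term negligible and the total oscillation $\le\eta$. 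The main obstacle I expect is step (iii)–(iv) done \emph{uniformly in $\epsilon$}: near each cusp/junction the geometry of $G_\epsilon$ still depends on $\epsilon$ (through $r_j(\epsilon)$ and the mollification), so to bound the one-dimensional modulus cleanly one must either work on the straight parts of the tubes only and absorb the junction regions into the ``within $2\delta$ of a vertex'' bucket, or invoke the uniform-in-$\epsilon$ smoothness of the mollified domain; reconciling the several length scales $\epsilon\ll r_j(\epsilon)\ll\delta$ so that the junction regions genuinely lie inside $B_{\epsilon,j}(\delta)$ (which holds once $\epsilon$ is small since $\rho_jr_j(\epsilon)+3\epsilon<\delta$) is the delicate bookkeeping point, but it is exactly the setup already used in Section~\ref{sec exit}, so the estimates transfer.
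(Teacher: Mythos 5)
Your approach is genuinely different from the paper's and, in outline, plausible, but it is considerably heavier than what is actually needed; let me compare the two and point out where yours is under-justified.

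The paper does \emph{not} verify a modulus-of-continuity criterion. Instead it invokes the submartingale tightness criterion from \cite[Theorem 2.1]{SDE2} (see also \cite{Diffbook}): for every $\eta>0$ one needs a constant $A_\eta$, independent of $\epsilon$, and for every $a\in\Gamma$ a bump function $f^a_\eta:\Gamma\to[0,1]$ supported in the $\eta$-ball around $a$ with $f^a_\eta(a)=1$, such that $f^a_\eta(\Pi^\epsilon(Z^\epsilon(t)))+A_\eta t$ is a submartingale. The paper constructs such bump functions so that the lift $g^a_{\eta,\epsilon}=f^a_\eta\circ\Pi^\epsilon$ is (a) constant on each vertex neighborhood $B_{\epsilon,j}(\rho_j r_j(\epsilon)+3\epsilon)$, hence with vanishing normal derivative there, and (b) constant along cross sections of the tubes where $\Pi^\epsilon=\Pi$, hence again with vanishing normal derivative. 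Consequently $g^a_{\eta,\epsilon}$ is an admissible test function for the Neumann problem, and by the chain rule $|\Delta g^a_{\eta,\epsilon}|\le c\eta^{-2}|h|_{C^2}\eqqcolon A_\eta$ uniformly in $\epsilon$ and $a$. By the martingale formulation of the reflected diffusion, $g^a_{\eta,\epsilon}(Z^\epsilon(t))-\int_0^t\Delta g^a_{\eta,\epsilon}(Z^\epsilon(s))\,ds$ is an $\mathcal{F}_t$-martingale, whence $f^a_\eta(\Pi^\epsilon(Z^\epsilon(t)))+A_\eta t$ is a submartingale for $\mathcal{F}_t$, and a projection argument transfers this to the filtration $\tilde{\mathcal{F}}^\epsilon_t$. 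This requires no exit-time estimate, no excursion counting, and no modulus analysis; it is a single PDE bound.

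Your route is the more classical one and, as far as I can tell, can be carried through, but you are paying a large price in machinery: you need Lemmas~\ref{small exit est 2} and \ref{small exit est 1.5}, which are among the deepest results in the paper, to establish a conclusion the paper obtains by soft means. More substantively, several steps of your outline are under-justified and would need care: (a) your Markov-inequality bound $\mathbb{P}_z(N^{\epsilon,\delta}_{[0,T]}\ge N)\le CT/(Nc_\delta)$ requires a Wald-type lower bound $\mathbb{E}_z\big[\sum_{n<N}(\sigma_n^{\epsilon,\delta}-\tau_n^{\epsilon,\delta})\big]\ge c_\delta\,\mathbb{E}_z N$ via optional stopping, using a lower bound on the conditional escape time; Lemma~\ref{small exit est 2} gives this asymptotically, but you must be explicit that the $\delta^2/2$ term provides a floor uniform in $\epsilon$ and in the regime. (b) Your claim that ``at most one excursion falls in any window of length $h$'' does not follow from the excursion count $N_\delta$ alone; what actually rules out two complete vertex cycles in a window of length $h$ is that each tube leg from $C_{\epsilon,j}(\delta)$ back to $\bigcup_{j'}C_{\epsilon,j'}(\rho_{j'}r_{j'}(\epsilon)+3\epsilon)$ forces a one-dimensional displacement $\gtrsim\delta$, which by the small-time large deviation for Brownian motion takes time $\gg h$ with probability $1-O(e^{-c\delta^2/h})$; you then need a union bound over the $O(T/c_\delta)$ gaps. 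This is fixable but needs to be argued, not asserted. (c) In step (iv), the oscillation bound over a window that meets a vertex excursion is $\le 2\delta+\text{(tube modulus)}$ only if it meets a \emph{single} vertex; the metric bound $d_\Gamma(\cdot,O_j)\le\delta$ controls the contribution of any number of re-entries to the \emph{same} vertex, but to rule out visits to two \emph{different} vertices within $h$ you again need the tube-displacement argument above. None of these are fatal, but together they make your proof considerably longer and dependent on the full exit-time machinery, while the paper's submartingale argument is self-contained and a fraction of the length.
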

\begin{proof}
    By \cite[Theorem 2.1]{SDE2} (see also \cite{Diffbook}), it suffices to show that for any $\eta > 0$, there exists a constant $A_\eta \geq 0$ such that for any $a \in \Gamma$, there exists a function $f^a_\eta(x): \Gamma \to [0,1]$ with $f^a_\eta(a)=1$, $f^a_\eta(x)=0$ for $d_\Gamma(x,a) \geq \eta$, and such that the process $\{ f^a_\eta(\Pi^\epsilon(Z^\epsilon(t))+A_\eta t\}_{t \geq 0}$ is a submartingale with respect to the filtration $\{ \tilde{\mathcal{F}}^\epsilon_t \}$, where $\tilde{\mathcal{F}}^\epsilon_t = \sigma( (\Pi^\epsilon \circ Z^\epsilon) (s): 0 \leq s \leq t)$.

    Let $h(x):[0,\infty) \to [0,1]$ be a smooth function such that
    \begin{equation}
    \displaystyle
    h(x)=\begin{cases}
        1 \ \text{ if } \ x \in [0,\frac{1}{16}], \\
        0 \ \text{ if } \ x \geq \frac{1}{8}
    \end{cases}.
\end{equation}  
    For $\epsilon > 0$ such that 
    \begin{equation*}
        \left( \max_{j=1,\cdots,\abs{V}} \rho_j r_j(\epsilon) +  3\epsilon \right) \vee  \left(  \max_{k=1,\cdots,\abs{E}} \lambda_k \epsilon \right) < \frac{\eta}{4},
    \end{equation*} 
    we define the function $f^a_\eta(x)$, for $x \in \Gamma$, in the following way:
    if $d_\Gamma(a,O_j) \leq \frac{3\eta}{8}$ for some $j$,
    \begin{equation*}
    \begin{aligned}
        f^a_\eta(x) \coloneqq \begin{dcases}
            1 \ &\text{if} \ d_\Gamma(x,O_j) \leq \frac{3\eta}{8}, \\
            h\left(\frac{d_\Gamma(x,O_j)}{\eta} - \frac{3}{8}\right) \ &\text{otherwise}.
        \end{dcases}
        \end{aligned}
    \end{equation*}
    If $d_\Gamma(a,O_j) > \frac{3\eta}{8}$ for all $j$,
    \begin{equation*}
        f^a_\eta(x) \coloneqq 
            h\left(\frac{d_\Gamma(x,a)}{\eta}\right) .
    \end{equation*}
    
    Now, let $g^a_{\eta, \epsilon}(z) \coloneqq f^a_\eta(\Pi^\epsilon(z))$, for $z \in G_\epsilon$.
    We claim that $g^a_{\eta, \epsilon}$ has zero normal derivative at the boundary of $G_\epsilon$.
    To see this, consider first the case when $z \in G_\epsilon$ satisfies $d_\Gamma(\Pi(z),O_j) \leq \rho_j r_j(\epsilon)+3\epsilon$.
    Recall that by the property of $\Pi^\epsilon$, we have $d_\Gamma(\Pi^\epsilon(z),O_j) \leq \rho_j r_j(\epsilon)+3\epsilon$ as well.
    Since $d_\Gamma(\Pi^\epsilon (z),O_j) \leq \rho_j r_j(\epsilon)+3\epsilon \leq \frac{\eta}{4}$, if $d_\Gamma(a,O_j) \leq \frac{3\eta}{8}$, for some $j$, we get $g^a_{\eta, \epsilon}(z) = 1$.
    On the other hand, if $d_\Gamma(a,O_j) > \frac{3\eta}{8}$ for all $j$, then $d_\Gamma(a,\Pi^\epsilon(z)) \geq \frac{\eta}{8}$ and hence $g^a_{\eta, \epsilon}(z) = 0$.
    Therefore, $g^a_{\eta,\epsilon}(z)$ is constant in this region and thus has zero normal derivative there.
    
    Now, suppose $z \in G_\epsilon$ and $d_\Gamma(\Pi(z),O_j) > \rho_j r_j(\epsilon)+2\epsilon$.
    In this case, the projection $\Pi^\epsilon$ is simply the nearest projection and is constant along cross sections of the tube.
    It follows that $g^a_{\eta,\epsilon}(z)$ remains constant in the normal direction, and hence the normal derivative vanishes.
	
	Moreover, $g^a_{\eta,\epsilon}$ is twice differentiable, and an application of the chain rule yields
\begin{equation*}
	\sup_{z \in G_\epsilon} \abs{ \Delta g^a_{\eta, \epsilon}(z)} \leq  \frac{c}{\eta^2} \abs{h}_{C^2} \eqqcolon A_\eta,
\end{equation*}	
where the constant $c>0$ is independent of $a \in \Gamma$.
	To be more precise, based on the preceding discussion, it suffices to consider points $z$ such that $d_\Gamma(\Pi(z),O_j) \geq \rho_j r_j(\epsilon) + 2\epsilon$, where $\Pi^\epsilon$ acts as the nearest projection. 
	In this regime, the bound on the Laplacian follows directly from the definition of $h$ and the chain rule.
	
    By the martingale formulation, we can conclude that
    \begin{equation*}
        f^a_\eta(\Pi^\epsilon(Z^\epsilon(t)))-\int_0^t  \Delta g^a_{\eta, \epsilon}(Z^\epsilon(s))ds
        = g^a_{\eta, \epsilon}(Z^\epsilon(t))-\int_0^t  \Delta g^a_{\eta, \epsilon}(Z^\epsilon(s))ds
    \end{equation*}
    is a martingale with respect to $\mathcal{F}_t$ and thus $f^a_\eta(\Pi^\epsilon(Z^\epsilon(t))+A_\eta t$ is a submartingale with respect to $\mathcal{F}_t$.
    
    Finally, since $\Pi^\epsilon$ is continuous and $\tilde{\mathcal{F}}_s^\epsilon \subset \mathcal{F}_s$,
    \begin{equation*}
    \begin{aligned}
    		& \mathbb{E} \left( f^a_\eta(\Pi^\epsilon(Z^\epsilon(t))+A_\eta t \middle| \tilde{\mathcal{F}}_s^\epsilon \right) \\
    		& =  \mathbb{E}  \left( 
    		\mathbb{E} \left( f^a_\eta(\Pi^\epsilon(Z^\epsilon(t))+A_\eta t \middle| \mathcal{F}_s \right)
    		\middle| \tilde{\mathcal{F}}_s^\epsilon \right) \\
    		&\geq  \mathbb{E}  \left(
    			f^a_\eta(\Pi^\epsilon(Z^\epsilon(s))+A_\eta s
    		\middle| \tilde{\mathcal{F}}_s^\epsilon \right) \\
    		&=  f^a_\eta(\Pi^\epsilon(Z^\epsilon(s))+A_\eta s.
   	\end{aligned}
    \end{equation*}
    We then complete the proof.
\end{proof}

Now the only remaining part is showing that the limiting measure possesses $\bar{L}$ as its generator. Thus it suffices to show the following result.
\begin{Theorem}\label{conv dif abs}
	Let $\bar{L}$ be a linear operator described in Theorem \ref{op on graph}, with $\mathcal{L}_k f =f''$, for all $k= 1,\cdots,\abs{E} $, and
\begin{equation}
\displaystyle
p_{j,k}  = 
\begin{dcases}
\frac{\lambda_k^{d-1}}{\sum_{l:I_l \sim O_j} \lambda_l^{d-1}}, \ \ \ &\text{ if } j \notin \mathfrak{L}, \\
0, \ \ \ &\text{ if } j \in \mathfrak{L},
\end{dcases}
\end{equation}
and 
\begin{equation}
\alpha_j = 
\begin{dcases}
0, \ \ \ &\text{ if } j \in \mathfrak{S}, \\
\frac{\rho_j^d V_d}{\sum_{k:I_k \sim O_j} \lambda_k^{d-1} V_{d-1}}, \ \ \ &\text{ if } j \in \mathfrak{M}, \\
1, \ \ \ &\text{ if } j \in \mathfrak{L},
\end{dcases}
\end{equation}
	Then for every $f \in D(\bar{L})$ and $\lambda>0$, we have
	\begin{equation}\label{mp}
	\begin{aligned}
	\lim_{\epsilon \to 0} \sup_{z \in G_\epsilon} \mathbb{E}_z \int_0^\infty e^{-\lambda t} (\lambda f (\Pi^\epsilon(Z^\epsilon(t)))-\bar{L}f (\Pi^\epsilon(Z^\epsilon(t)))) dt - f (\Pi^\epsilon(Z^\epsilon(0))) = 0.
	\end{aligned}	
	\end{equation}
\end{Theorem}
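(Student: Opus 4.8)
The plan is to establish \eqref{mp} by the standard martingale‐problem argument used in \cite[Section 6]{SDE2}, but with the delicate vertex analysis supplied by Lemmas \ref{exit place est}, \ref{small exit est 2}, \ref{inter exit est 1}, together with Theorem \ref{large exit time est} in the large‑ball case. Fix $f\in D(\bar L)$ and $\lambda>0$. First I would reduce \eqref{mp} to a statement about the resolvent of the process $\Pi^\epsilon\circ Z^\epsilon$: writing $u^\epsilon(z)\coloneqq\mathbb E_z\int_0^\infty e^{-\lambda t}\bigl(\lambda f-\bar L f\bigr)(\Pi^\epsilon(Z^\epsilon(t)))\,dt$, it suffices to show $\sup_{z\in G_\epsilon}\abs{u^\epsilon(z)-f(\Pi^\epsilon(z))}\to0$. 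To do this, I would introduce the sequence of stopping times $\tau_n^{\epsilon,\delta},\sigma_n^{\epsilon,\delta}$ from Section \ref{sec proof lem place} that alternately record when $Z^\epsilon$ reaches $\bigcup_j C_{\epsilon,j}(\rho_jr_j(\epsilon)+3\epsilon)$ (near a vertex) and when it returns to $\bigcap_j G_{\epsilon,j}(\delta)$ (away from all vertices), and decompose the time axis accordingly. On the intervals $[\sigma_{n-1}^{\epsilon,\delta},\tau_n^{\epsilon,\delta}]$ the projected process is a genuine one‑dimensional diffusion on $\Gamma$ away from the vertices (up to an $O(\epsilon)$ distortion coming from $\Pi^\epsilon$ versus $\Pi$, and the narrow cross‑section, both of which wash out as $\epsilon\to0$ by Brownian scaling), so there Itô's formula applied to $g^\epsilon\coloneqq f\circ\Pi^\epsilon$ together with the estimate $\sup_{z\in G_\epsilon}\abs{\Delta g^\epsilon(z)-(\bar L f)(\Pi^\epsilon(z))}\to0$ (which holds because $g^\epsilon$ equals $f\circ\Pi$ and $\Pi$ is affine along edges away from the vertex neighborhoods) gives the desired identity with an error that is $o(1)$ uniformly.

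The substantive content is the contribution of the excursions near the vertices, i.e. the intervals $[\tau_n^{\epsilon,\delta},\sigma_n^{\epsilon,\delta}]$, whose total length is governed by $\sigma^{\epsilon,\delta}$ started from $C_{\epsilon,j}(\rho_jr_j(\epsilon)+3\epsilon)$. Here I would use Lemma \ref{small exit est 2} to control the expected duration of each such excursion: it contributes $\alpha_j(\epsilon)\delta+\tfrac{\delta^2}{2}$ to the clock, and after summing the geometric number of excursions and passing to $\delta\to0$ (after $\epsilon\to0$), the accumulated time at vertex $O_j$ is exactly the ``sticky'' time encoded by $\alpha_j$ in the gluing condition \eqref{abs glu}. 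Meanwhile Lemma \ref{exit place est} identifies the exit distribution across the incident edges as $p_{j,k}$, which reproduces the skew splitting in the gluing condition. The key algebraic point is that these two inputs, fed through the renewal/strong‑Markov decomposition of $u^\epsilon$, are precisely what makes $f(\Pi^\epsilon(z))$ an approximate fixed point of the resolvent operator of the limiting generator $\bar L$ with domain $D(\bar L)$: the gluing relation $\sum_{k:I_k\sim O_j}p_{j,k}\,\tfrac{df}{dx_k}(O_j)=\alpha_j\,\bar Lf(O_j)$ is exactly the compatibility condition that cancels the boundary terms arising when one sums the Itô corrections over the edge‑excursions and the vertex‑excursions. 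I would make this quantitative using Lemma \ref{inter exit est 1} (to bound second moments of $\sigma^{\epsilon,\delta}$, hence to control the discretization error in replacing $\int e^{-\lambda t}\cdots dt$ by Riemann‑type sums over the stopping times) and the uniform‑in‑$z$ continuity Lemmas \ref{exit place conti}, \ref{exit est conti} (so that the initial point within $C_{\epsilon,j}(\rho_jr_j(\epsilon)+3\epsilon)$ is irrelevant to the limit, which is what allows the collapse to a single vertex).

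For the large‑ball vertices $j\in\mathfrak L$ the above must be modified because $\alpha_j(\epsilon)\to\infty$, i.e. the expected excursion time diverges; this is where the gluing condition degenerates to the absorbing condition $\tfrac{d^2f}{dx^2}(O_j)=0$ and $p_{j,k}=0$. Here I would instead invoke Theorem \ref{large exit time est}: \eqref{exp est 2} gives $\sup_{z\in C_{\epsilon,j}(\rho_jr_j(\epsilon)+3\epsilon)}\mathbb E_z e^{-\lambda\sigma^{\epsilon,\delta}}\to0$, so the discounted weight $e^{-\lambda\tau}$ accumulated before the process first escapes from a large‑ball neighborhood vanishes in the limit; consequently the only surviving contribution to $u^\epsilon$ comes from the event that the process is still in $B_{\epsilon,j}(\delta)$, and since $f\in D(\bar L)$ satisfies $\bar Lf(O_j)=f''(O_j)=0$ one checks $f(\Pi^\epsilon(z))$ solves the resolvent equation on that neighborhood to leading order. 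Patching the large‑ball vertices (via \eqref{exp est 2}) with the small/intermediate vertices (via Lemmas \ref{small exit est 2}, \ref{exit place est}) and the interior edge dynamics then yields \eqref{mp} after first letting $\epsilon\to0$ and then $\delta\to0$, with all error terms uniform in $z\in G_\epsilon$ thanks to the uniform continuity lemmas. The main obstacle I anticipate is the bookkeeping of the two‑parameter limit: one must choose $\delta=\delta(\epsilon)$ (or iterate $\epsilon\to0$ then $\delta\to0$) so that the number of vertex excursions, each of expected length $\asymp\alpha_j(\epsilon)\delta+\delta^2$, together with the $O(\delta^{-1}\epsilon^{d-1})$‑mass of the hitting measure $\mu_\epsilon$, balances against the edge‑excursion lengths $\asymp 1$, and simultaneously so that the second‑moment bound of Lemma \ref{inter exit est 1} keeps the Riemann/discretization error negligible — all while respecting the order‑of‑limits convention fixed in Section \ref{sec prelim}.
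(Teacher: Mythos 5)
Your proposal follows essentially the same route as the paper: decompose the resolvent expression via the stopping times $\tau_n^{\epsilon,\delta},\sigma_n^{\epsilon,\delta}$, observe that the edge-excursion contribution vanishes because $\Pi^\epsilon\circ Z^\epsilon$ there is a one-dimensional Brownian motion and $\bar L f=f''$, show that each vertex excursion contributes $o_{\epsilon,\delta}(\delta)$ by combining Lemmas \ref{exit place est}, \ref{small exit est 2}, \ref{inter exit est 1} with the gluing condition \eqref{abs glu}, and sum against the $O_{\epsilon,\delta}(1/\delta)$ bound on $\sum_n\mathbb{E}_z e^{-\lambda\tau_n^{\epsilon,\delta}}$, while using \eqref{exp est 2} to upgrade this sum to $O_{\epsilon,\delta}(1)$ at large-ball vertices where the per-excursion estimate is only $o_{\epsilon,\delta}(1)$. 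This is precisely the paper's argument, and your anticipated bookkeeping obstacle is resolved exactly as you suggest, by the $\epsilon\to0$ then $\delta\to0$ order of limits fixed in Section \ref{sec prelim}.
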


The idea of the proof is similar to \cite[Theorem 4.1]{SDE2}.
We consider first the following sequence of stopping times
\begin{equation*}
    \sigma_n^{\epsilon,\delta} \coloneqq \inf \{ t \geq \tau_n^{\epsilon,\delta} : Z^\epsilon(t) \in \bigcap_{j=1}^{\abs{V}} G_{\epsilon,j}(\delta) \}
\end{equation*}
and
\begin{equation*}
    \tau_n^{\epsilon,\delta} \coloneqq \inf \{ t > \sigma_{n-1}^{\epsilon,\delta} : Z^\epsilon(t) \in \bigcup_{j=1}^{\abs{V}} C_{\epsilon,j}(\rho_j r_j(\epsilon)+3\epsilon) \},
\end{equation*}
with $\tau_0^{\epsilon,\delta}=0$.
    These stopping times allow us to divide the analysis into two parts: when the process is inside a cylindrical domain, and when it is near a ball.
    The behavior in the cylindrical region is governed by standard Brownian motion, making it relatively straightforward to analyze. 
    In contrast, the behavior near the ball requires a more delicate treatment involving the gluing conditions and the narrow escape problems, which are discussed in Sections \ref{sec exit} and \ref{sec exit exp}.
    
    An important observation from \cite{SDE2} is that, by analyzing the process in the cylindrical region, one obtains the following bounds
    \begin{equation}\label{thm sum exit}
		\sum_{n=0}^\infty \mathbb{E}_z e^{ - \lambda \tau_n ^{\epsilon,\delta}} 
		= O_{\epsilon,\delta} \left( \frac{1}{\delta} \right), \ \ \ \
		\sum_{n=0}^\infty \mathbb{E}_z e^{ - \lambda \sigma_n^{\epsilon,\delta} } 
		= O_{\epsilon,\delta} \left( \frac{1}{\delta} \right).
	\end{equation}
	
	While this estimate is sufficient for the small and intermediate ball cases, it becomes too coarse when dealing with large balls. 
	The reason is that, in the large ball regime, the process spends the majority of its time near the ball, and thus the contribution from the cylindrical region becomes negligible in comparison.
To obtain the necessary precision, we must go beyond the cylindrical analysis. In particular, we will utilize the sharper estimate provided in \eqref{exp est 2} to refine our bounds in the large ball case.
	Let us define
	\begin{equation*}
    \sigma_{\mathfrak{L},n}^{\epsilon,\delta} \coloneqq \inf \{ t \geq \tau_{\mathfrak{L},n}^{\epsilon,\delta} : Z^\epsilon(t) \in \bigcap_{j \in \mathfrak{L}} G_{\epsilon,j}(\delta)  \}
\end{equation*}
and
\begin{equation*}
    \tau_{\mathfrak{L},n}^{\epsilon,\delta} \coloneqq \inf \{ t > \sigma_{\mathfrak{L},n-1}^{\epsilon,\delta} : Z^\epsilon(t) \in \bigcup_{j \in \mathfrak{L}} C_{\epsilon,j}(\rho_j r_j(\epsilon)+3\epsilon) \}
\end{equation*}
with $\tau_{\mathfrak{L},0}^{\epsilon,\delta} \coloneqq 0$.
By the strong Markov property, for $n \geq 1$,
\begin{equation*}
\begin{aligned}
	&\mathbb{E}_z e^{ - \lambda \sigma_{\mathfrak{L}, n}^{\epsilon,\delta} } 
	= \mathbb{E}_z e^{ - \lambda \left( \sigma_{\mathfrak{L}, n}^{\epsilon,\delta} - \tau_{\mathfrak{L}, n}^{\epsilon,\delta} \right) } 
	e^{ - \lambda \left( \tau_{\mathfrak{L}, n}^{\epsilon,\delta} - \sigma_{\mathfrak{L}, n-1}^{\epsilon,\delta} \right) } 
	e^{ - \lambda \sigma_{\mathfrak{L}, n-1}^{\epsilon,\delta} } \\
	& = \mathbb{E}_z \left[ \left( \mathbb{E}_{Z^\epsilon(\tau_{\mathfrak{L}, n}^{\epsilon,\delta})}  e^{ - \lambda \sigma^{\epsilon,\delta}   }  \right)
	e^{ - \lambda \left( \tau_{\mathfrak{L}, n}^{\epsilon,\delta} - \sigma_{\mathfrak{L}, n-1}^{\epsilon,\delta} \right) }  
	e^{ - \lambda \sigma_{\mathfrak{L}, n-1}^{\epsilon,\delta} } \right] \\
	& \leq \max_{j \in \mathfrak{L}} \sup_{z' \in C_{\epsilon,j}(\rho_j r_j(\epsilon)+3\epsilon) } \left( \mathbb{E}_{z'}  e^{-\lambda \sigma^{\epsilon,\delta}} \right) 
	\mathbb{E}_z e^{ - \lambda \sigma_{\mathfrak{L}, n-1}^{\epsilon,\delta} } \\
	& \leq \left( \max_{j \in \mathfrak{L}} \sup_{z' \in C_{\epsilon,j}(\rho_j r_j(\epsilon)+3\epsilon) } \mathbb{E}_{z'}  e^{-\lambda \sigma^{\epsilon,\delta}}  \right)^n.
\end{aligned}
\end{equation*}
Here, recall that $\sigma^{\epsilon,\delta}$ denotes the first hitting time of $\bigcup_{j=1}^{\abs{V}} C_{\epsilon,j}(\delta) $.
We have also used the fact that if $z \in \bigcup_{j \in \mathfrak{L}} C_{\epsilon,j}(\rho_j r_j (\epsilon) + 3\epsilon)$, then
\begin{equation*}
	\sigma_{\mathfrak{L},0}^{\epsilon,\delta} = \sigma^{\epsilon,\delta}
\end{equation*}
Due to \eqref{exp est 2}, there exists $\epsilon_0 >0$ such that for every $\epsilon \in (0,\epsilon_0)$,
\begin{equation*}
	\max_{j \in \mathfrak{L}} \sup_{z' \in C_{\epsilon,j}(\rho_j r_j(\epsilon)+3\epsilon) } \mathbb{E}_{z'}  e^{-\lambda \sigma^{\epsilon,\delta}}  < \frac{1}{2}
\end{equation*}
and thus
\begin{equation*}
\begin{aligned}
	&\sum_{n=1}^\infty \mathbb{E}_z e^{ - \lambda \sigma_{\mathfrak{L}, n}^{\epsilon,\delta} }
	\leq \sum_{n=0}^\infty \left( \max_{j \in \mathfrak{L}} \sup_{z' \in C_{\epsilon,j}(\rho_j r_j(\epsilon)+3\epsilon) } \mathbb{E}_{z'}  e^{-\lambda \sigma^{\epsilon,\delta}}   \right)^n \\
	&= \left( 1- \max_{j \in \mathfrak{L}} \sup_{z' \in C_{\epsilon,j}(\rho_j r_j(\epsilon)+3\epsilon) } \mathbb{E}_{z'}  e^{-\lambda \sigma^{\epsilon,\delta}}  \right)^{-1}.
\end{aligned}
\end{equation*}
By \eqref{exp est 2} again, we obtain
\begin{equation*}
	\lim_{\epsilon \to 0} \sum_{n=1}^\infty \mathbb{E}_z e^{ - \lambda \sigma_{\mathfrak{L}, n}^{\epsilon,\delta} }
	\leq \left( 1- \lim_{\epsilon \to 0} \max_{j \in \mathfrak{L}} \sup_{z' \in C_{\epsilon,j}(\rho_j r_j(\epsilon)+3\epsilon) } \mathbb{E}_{z'}  e^{-\lambda \sigma^{\epsilon,\delta}}  \right)^{-1}
	=1,
\end{equation*}
which enables us to conclude that
\begin{equation}\label{thm sum exit large}
		\sum_{n=0}^\infty \mathbb{E}_z e^{ - \lambda \tau_{\mathfrak{L},n}^{\epsilon,\delta} } 
		= O_{\epsilon,\delta} \left( 1 \right), \ \ \ \
		\sum_{n=0}^\infty \mathbb{E}_z e^{ - \lambda \sigma_{\mathfrak{L},n}^{\epsilon,\delta} } 
		= O_{\epsilon,\delta} \left( 1 \right).
	\end{equation}
	
	Finally, let us define	
	\begin{equation*}
    \sigma_{\mathfrak{L}^c,n}^{\epsilon,\delta} \coloneqq \inf \{ t \geq \tau_{\mathfrak{L}^c,n}^{\epsilon,\delta} : Z^\epsilon(t) \in \bigcap_{j \notin \mathfrak{L}} G_{\epsilon,j}(\delta) \},
\end{equation*}
and
\begin{equation*}
    \tau_{\mathfrak{L}^c,n}^{\epsilon,\delta} \coloneqq \inf \{ t > \sigma_{\mathfrak{L}^c,n-1}^{\epsilon,\delta} : Z^\epsilon(t) \in \bigcup_{j  \notin \mathfrak{L}} C_{\epsilon,j}(\rho_j r_j(\epsilon)+3\epsilon) \},
\end{equation*}
with $\tau_{\mathfrak{L}^c,0}^{\epsilon,\delta} \coloneqq 0$.
We still have the following bounds
 \begin{equation}\label{thm sum exit not large}
		\sum_{n=0}^\infty \mathbb{E}_z e^{ - \lambda \tau_{\mathfrak{L}^c,n}^{\epsilon,\delta} } 
		= O_{\epsilon,\delta} \left( \frac{1}{\delta} \right), \ \ \ \
		\sum_{n=0}^\infty \mathbb{E}_z e^{ - \lambda \sigma_{\mathfrak{L}^c,n}^{\epsilon,\delta} } 
		= O_{\epsilon,\delta} \left( \frac{1}{\delta} \right).
	\end{equation}

\begin{proof}[Proof of Theorem \ref{conv dif abs}]
	Note that
    \begin{equation*}
    \begin{aligned}
        &\mathbb{E}_z \int_0^\infty e^{-\lambda t} (\lambda f (\Pi^\epsilon(Z^\epsilon(t)))-\bar{L}f (\Pi^\epsilon(Z^\epsilon(t)))) dt - f (\Pi^\epsilon(Z^\epsilon(0))) \\
        &=
        \sum_{n=0}^\infty \mathbb{E}_z \Big [ e^{-\lambda \tau_{n+1}^{\epsilon,\delta}} f (\Pi^\epsilon(Z^\epsilon(\tau_{n+1}^{\epsilon,\delta}))) - e^{-\lambda \sigma_n^{\epsilon,\delta}} f (\Pi^\epsilon(Z^\epsilon(\sigma_n^{\epsilon,\delta}))) \\
        & \ \ \ \ \ \ \ \ \ \ \ \ \ \ \ \ \ \ \ \ \ \ \ \ \ \ \ \ \ \ \ \
        +\int_{\sigma_n^{\epsilon,\delta}}^{\tau_{n+1}^{\epsilon,\delta}} e^{-\lambda t} (\lambda f (\Pi^\epsilon(Z^\epsilon(t)))-\bar{L}f (\Pi^\epsilon(Z^\epsilon(t)))) dt
        \Big ] \\
        & \ \ +
        \sum_{n=0}^\infty \mathbb{E}_z \Big [ e^{-\lambda \sigma_{\mathfrak{L}^c,n}^{\epsilon,\delta}} f (\Pi^\epsilon(Z^\epsilon(\sigma_{\mathfrak{L}^c,n}^{\epsilon,\delta}))) - e^{-\lambda \tau_{\mathfrak{L}^c,n}^{\epsilon,\delta}} f (\Pi^\epsilon(Z^\epsilon(\tau_{\mathfrak{L}^c,n}^{\epsilon,\delta}))) \\
         & \ \ \ \ \ \ \ \ \ \ \ \ \ \ \ \ \ \ \ \ \ \ \ \ \ \ \ \ \ \ \ \
         +\int_{\tau_{\mathfrak{L}^c,n}^{\epsilon,\delta}}^{\sigma_{\mathfrak{L}^c,n}^{\epsilon,\delta}} e^{-\lambda t} (\lambda f (\Pi^\epsilon(Z^\epsilon(t)))-\bar{L}f (\Pi^\epsilon(Z^\epsilon(t)))) dt
        \Big ] \\
        & \ \ +
        \sum_{n=0}^\infty \mathbb{E}_z \Big [ e^{-\lambda \sigma_{\mathfrak{L},n}^{\epsilon,\delta}} f (\Pi^\epsilon(Z^\epsilon(\sigma_{\mathfrak{L},n}^{\epsilon,\delta}))) - e^{-\lambda \tau_{\mathfrak{L},n}^{\epsilon,\delta}} f (\Pi^\epsilon(Z^\epsilon(\tau_{\mathfrak{L}^c,n}^{\epsilon,\delta}))) \\
         & \ \ \ \ \ \ \ \ \ \ \ \ \ \ \ \ \ \ \ \ \ \ \ \ \ \ \ \ \ \ \ \
         +\int_{\tau_{\mathfrak{L},n}^{\epsilon,\delta}}^{\sigma_{\mathfrak{L},n}^{\epsilon,\delta}} e^{-\lambda t} (\lambda f (\Pi^\epsilon(Z^\epsilon(t)))-\bar{L}f (\Pi^\epsilon(Z^\epsilon(t)))) dt
        \Big ] \\
        & \eqqcolon I_1^{\epsilon,\delta} + I_2^{\epsilon,\delta} + I_3^{\epsilon,\delta}.
        \end{aligned}
    \end{equation*}
    
    For $I_1^{\epsilon,\delta}$, the process $\Pi^\epsilon(Z^\epsilon(t))$ is a one dimensional Brownian motion, we have
    \begin{equation}
    \begin{aligned}
    		I_1^{\epsilon,\delta} = 0.
    \end{aligned}
    \end{equation}
    
    The main difficultly is to estimate $I_2^{\epsilon,\delta}$ and $I_3^{\epsilon,\delta}$. Recall that $\sigma^{\epsilon,\delta}$ is the first hitting time of $\bigcup_{j=1}^{\abs{V}} C_{\epsilon,j}(\delta)$.
    By the strong Markov property, we have
    \begin{equation*}
        I_2^{\epsilon,\delta} = \sum_{n=0}^\infty \mathbb{E}_z \left[ 
        e^{-\lambda \tau_{\mathfrak{L}^c,n}^{\epsilon,\delta}} \psi^{\epsilon,\delta}(Z^\epsilon ({\tau_{\mathfrak{L}^c,n}^{\epsilon,\delta}}))
        \right],
    \end{equation*}
    where 
    \begin{equation}\label{psi dif}
    \begin{aligned}
        & \psi^{\epsilon,\delta}(z') \\
        & = \mathbb{E}_{z'} \left[ 
        \int_0^{\sigma^{\epsilon,\delta}} e^{-\lambda t} (\lambda f(\Pi^\epsilon(Z^\epsilon(t)))-\bar{L} f(\Pi^\epsilon(Z^\epsilon(t))) ) dt
        +e^{-\lambda \sigma^{\epsilon,\delta}} f(\Pi^\epsilon(Z^\epsilon(\sigma^{\epsilon,\delta}))) \right] - f(\Pi^\epsilon( z')) \\
        & = \mathbb{E}_{z'}
        \left[ \int_0^{\sigma^{\epsilon,\delta}} \lambda e^{-\lambda t} \left[ f(\Pi^\epsilon(Z^\epsilon(t))) - f(\Pi^\epsilon(Z^\epsilon(\sigma^{\epsilon,\delta}))) \right] dt \right ] 
        +\mathbb{E}_{z'} \left[  f(\Pi^\epsilon(Z^\epsilon(\sigma^{\epsilon,\delta}))) -f(\Pi^\epsilon( z')) \right] \\
        & \ \ +\mathbb{E}_{z'} \left[ \int_0^{\sigma^{\epsilon,\delta}} -e^{-\lambda t} \bar{L} f(\Pi^\epsilon(Z^\epsilon(t))) dt \right ] \\
        & \eqqcolon  \psi_{1}^{\epsilon,\delta}(z') + \psi_{2}^{\epsilon,\delta}(z') + \psi_{3}^{\epsilon,\delta}(z').
    \end{aligned}
    \end{equation}
Note that  $d(\Pi^\epsilon(Z^\epsilon(\tau_{\mathfrak{L}^c,n}^{\epsilon,\delta})),O_j) = \rho_j r_j(\epsilon)+3\epsilon$, for some $j \notin \mathfrak{L}$.
As a result, in the study of $I_{2,i}^{\epsilon,\delta}$ for $i=1,2,3$, we will consider $z' \in C_{\epsilon,j}(\rho_j r_j(\epsilon)+3\epsilon)$, for some $j \notin \mathfrak{L}$. 
First, by the continuity of $f$ and Lemma \ref{small exit est 2},
\begin{equation}\label{diff est 1}
     \psi_{1}^{\epsilon,\delta}(z')
    = \mathbb{E}_{z'} \sigma^{\epsilon,\delta} o_{\epsilon,\delta}(1) 
    = o_{\epsilon,\delta}(\delta).
\end{equation}
Next, thanks to Lemma \ref{exit place est}, we have
\begin{equation}
\begin{aligned}
    &\psi_{2}^{\epsilon,\delta}(z')= \mathbb{E}_{z'} \left[  f(\Pi^\epsilon(Z^\epsilon(\sigma^{\epsilon,\delta}))) -f(O_j) +f(O_j) -f(\Pi^\epsilon( z')) \right] \\
    &= \sum_{k:I_k \sim O_j} \mathbb{E}_{z'} \left[ f(\Pi^\epsilon(Z^\epsilon(\sigma^{\epsilon,\delta}))) - f(O_j) + f(O_j) - f(\Pi^\epsilon( z'))   \middle| Z^\epsilon(\sigma^{\epsilon,\delta}) \in  C_{\epsilon,j}^k(\delta) \right ] \\
    &\ \ \ \ \ \ \ \ \ \ \ \mathbb{P}_{z'}(Z^\epsilon(\sigma^{\epsilon,\delta}) \in C_{\epsilon,j}^k(\delta)) \\
    &=  \sum_{k:I_k \sim O_j} \left[ \frac{\partial f}{ \partial x_k}(O_j) \delta + o_{\epsilon,\delta}(\delta) \right] 
    (p_{j,k}+o_{\epsilon,\delta}(1))  \\
    &= \sum_{k:I_k \sim O_j} p_{j,k}\frac{\partial f}{ \partial x_k}(O_j) \delta + o_{\epsilon,\delta}(\delta).
\end{aligned}
\end{equation}
Finally,
\begin{equation*}
\begin{aligned}
    \psi_{3}^{\epsilon,\delta}(z') & = \mathbb{E}_{z'} \left[ 
        \int_0^{\sigma^{\epsilon,\delta}} e^{-\lambda t} (\bar{L}f(O_j) - \bar{L} f(\Pi^\epsilon(Z^\epsilon(t)))  ) dt \right ] \\
    & \ \ \ +\mathbb{E}_{z'} \left[ 
        \int_0^{\sigma^{\epsilon,\delta}} \bar{L}f(O_j) -e^{-\lambda t} \bar{L}f(O_j)  dt \right ]\\
    & \ \  \ \ \ \  -\mathbb{E}_{z'}  (\sigma^{\epsilon,\delta} ) \bar{L}f(O_j)   \\
    & \eqqcolon  J_1^{\epsilon,\delta}(z') + J_2^{\epsilon,\delta}(z') + J_3^{\epsilon,\delta}(z').
\end{aligned}
\end{equation*}
For $J_1^{\epsilon,\delta}(z')$, similar to the previous case, by the continuity of $\bar{L}f$ and Lemma \ref{small exit est 2}, we have
\begin{equation}
    J_1^{\epsilon,\delta}(z') 
    = (\mathbb{E}_{z'} \sigma^{\epsilon,\delta}) o_{\epsilon,\delta}(1)
    = o_{\epsilon,\delta}(\delta).
\end{equation}
For $J_2^{\epsilon,\delta}(z')$, due to Lemma \ref{inter exit est 1}, 
\begin{equation}
    \abs{J_2^{\epsilon,\delta}(z') }
    \leq \abs{\bar{L}f(O_j)} \mathbb{E}_{z'} \int_0^{\sigma^{\epsilon,\delta}} (1 -e^{-\lambda t} ) dt 
    \leq c  \abs{\bar{L}f(O_j)}  \mathbb{E}_{z'} (\sigma^{\epsilon,\delta})^2 = O_{\epsilon,\delta}(\delta^2).
\end{equation}
For $J_3^{\epsilon,\delta}(z')$, by Lemma \ref{small exit est 2},
\begin{equation}\label{diff est 2}
    J_3^{\epsilon,\delta}(z') = -\left(\alpha_j +o_{\epsilon,\delta}(1) \right) \delta \bar{L}f (O_j)
\end{equation}
Combining \eqref{diff est 1}-\eqref{diff est 2}, thanks to the gluing condition \eqref{abs glu}, if $ Z^\epsilon(\tau_{\mathfrak{L}^c,n}^{\epsilon,\delta}) \in C_{\epsilon,j}(\rho_j r_j(\epsilon)+3\epsilon)$ for $j \notin \mathfrak{L}$, we have
\begin{equation*}
\displaystyle
     \psi^{\epsilon,\delta}(Z^{\epsilon}(\tau_{\mathfrak{L}^c,n}^{\epsilon,\delta})) = \delta \left( \sum_{k:I_k \sim O_j} p_{j,k}\frac{\partial f}{ \partial x_k}(O_j)   - 
    \alpha_j  \bar{L} f(O_j) \right) 
    + o_{\epsilon,\delta}(\delta)
    =o_{\epsilon,\delta}(\delta).
\end{equation*}
Now \eqref{thm sum exit not large} implies that 
\begin{equation*}
\begin{aligned}
    &I_2^{\epsilon,\delta} 
    = \sum_{n=0}^\infty \sum_{j \notin \mathfrak{L}} \mathbb{E}_z \left[  e^{-\lambda \tau_{\mathfrak{L}^c,n}^{\epsilon,\delta}}  \mathbbm{1}_{ \{ Z^{\epsilon}(\tau_{\mathfrak{L}^c,n}^{\epsilon,\delta}) \in C_{\epsilon,j}(\rho_j r_j(\epsilon)+3\epsilon) \} } \psi^{\epsilon,\delta}(Z^{\epsilon}(\tau_{\mathfrak{L}^c,n}^{\epsilon,\delta})) \right] \\
    &\leq \sum_{n=0}^\infty \mathbb{E}_z e^{-\lambda \tau_{\mathfrak{L}^c,n}^{\epsilon,\delta}} 
    \cdot o_{\epsilon,\delta}(\delta)
    = O_{\epsilon,\delta}\left(\frac{1}{\delta} \right) \cdot o_{\epsilon,\delta}(\delta) =o_{\epsilon,\delta}(1).
\end{aligned}
\end{equation*}

For $I_3^{\epsilon,\delta}$, we have similarly,
    \begin{equation*}
        I_3^{\epsilon,\delta} = \sum_{n=0}^\infty \mathbb{E}_z \left[ 
        e^{-\lambda \tau_{\mathfrak{L},n}^{\epsilon,\delta}} \psi^{\epsilon,\delta}(Z^\epsilon ({\tau_{\mathfrak{L},n}^{\epsilon,\delta}}))
        \right],
    \end{equation*}
    where $\psi^{\epsilon,\delta}$ is given by \eqref{psi dif}.
    If $z' \in C_{\epsilon,j}(\rho_j r_j(\epsilon)+3\epsilon)$, for some $j \in \mathfrak{L}$, 
due to the continuity of $f$,
\begin{equation}\label{diff est 3}
     \psi_{1}^{\epsilon,\delta}(z')
    = o_{\epsilon,\delta}(1) \mathbb{E}_{z'}
        \left[ \int_0^{\sigma^{\epsilon,\delta}} \lambda e^{-\lambda t} dt \right ]
        =o_{\epsilon,\delta}(1) \left(1-e^{-\lambda \sigma^{\epsilon,\delta}} \right)
    = o_{\epsilon,\delta}(1)
\end{equation}
and
\begin{equation}\label{diff est 4}
\begin{aligned}
    &\psi_{2}^{\epsilon,\delta}(z')= o_{\epsilon,\delta}(1).
\end{aligned}
\end{equation}
Moreover, thanks to the gluing condition \eqref{abs glu} and the continuity of $\bar{L}f$,
\begin{equation}\label{diff est 5}
\begin{aligned}
	&\psi_{3}^{\epsilon,\delta}(z') = \mathbb{E}_{z'} \left[ \int_0^{\sigma^{\epsilon,\delta}} -e^{-\lambda t} \bar{L} f(\Pi^\epsilon(Z^\epsilon(t))) dt \right ]
	=\mathbb{E}_{z'} \left[ \int_0^{\sigma^{\epsilon,\delta}} e^{-\lambda t} \left( \bar{L}(O_j) - \bar{L} f(\Pi^\epsilon(Z^\epsilon(t))) \right) dt \right ] \\
	& =o_{\epsilon,\delta}(1)\mathbb{E}_{z'} \left[ \int_0^{\sigma^{\epsilon,\delta}} e^{-\lambda t} dt \right ] =o_{\epsilon,\delta}(1) \frac{1}{\lambda} \left(1-e^{-\lambda \sigma^{\epsilon,\delta}} \right) = o_{\epsilon,\delta}(1).
\end{aligned}
\end{equation}
Finally, combining \eqref{diff est 3}, \eqref{diff est 4}, \eqref{diff est 5}, and \eqref{thm sum exit large}, we conclude that 
\begin{equation*}
\begin{aligned}
    &I_3^{\epsilon,\delta} 
    = \sum_{n=0}^\infty \sum_{j \in \mathfrak{L}} \mathbb{E}_z \left[  e^{-\lambda \tau_{\mathfrak{L},n}^{\epsilon,\delta}}  \mathbbm{1}_{ \{ Z^{\epsilon}(\tau_{\mathfrak{L},n}^{\epsilon,\delta}) \in C_{\epsilon,j}(\rho_j r_j(\epsilon)+3\epsilon) \} } \psi^{\epsilon,\delta}(Z^{\epsilon}(\tau_{\mathfrak{L},n}^{\epsilon,\delta})) \right] \\
    & \leq \sum_{n=0}^\infty \mathbb{E}_z e^{-\lambda \tau_{\mathfrak{L},n}^{\epsilon,\delta}} 
    \cdot o_{\epsilon,\delta}(1)
    = o_{\epsilon,\delta}(1).
\end{aligned}
\end{equation*}
\end{proof}

\section{Remarks}\label{sec rem}
A natural question is whether the graph can be perturbed in other ways.
For perturbations near the vertices, one could replace the spherical neighborhoods with more general smooth domains.
In fact, the only part of the analysis that depends on the specific geometry of these neighborhoods is in Section \ref{sec NET ball to connect}.
The work in \cite{NET4} extensively studies the narrow escape problem in general smooth domains with a single escape region.
It appears feasible to extend these results to multiple escape regions by carefully estimating the associated algebraic systems.

In contrast, perturbations along the edges pose greater challenges.
Our method crucially relies on separation of variables away from the vertices, effectively reducing the analysis to one-dimensional Brownian motion along the graph edges.
To handle more general geometries along the edges, one would need to approximate Brownian motion in narrow tubes by one-dimensional processes, as in \cite{CF}.
However, this generalization is significantly more intricate than modifying the neighborhoods around the vertices.

Finally, the restriction to low dimensions primarily arises from Section \ref{sec NET ball to connect}, which addresses the narrow escape problem in the unit ball.
Since the behavior of Brownian motion (and of the Laplace operator) is qualitatively similar in dimensions $d = 3$ and $d \geq 4$, we expect that analogous results should hold in higher dimensions.
Nonetheless, a rigorous extension to higher dimensions remains to be established.

\begin{appendices}

\section{Escape problem}\label{sec NET}
In this section, we review some fundamental results from escape problems, as presented in \cite{NET}, \cite{NETbook} and \cite{NETsecond}. 
Our approach largely follows the treatment in \cite{NETsecond}.

\subsection{First moment of first hitting time}\label{sec MFPT}

Let $\tilde{Z}(t)$ denote the scaled reflected Brownian motion on a smooth domain $D$, and let $\partial D_a \subset \partial D$ be a part of the boundary.
We are interested in the expected first hitting time of $\partial D_a$ from a point $z \in D$; that is,
$\tilde{\mathbb{E}}_z \left[ \tau_ {\partial D_a}\right]$, where $\tau_ {\partial D_a}$ is the first hitting time of $\partial D_a$.

For $z \in D$, we denote by $\rho(z,\cdot,t)$ the transition probability at time $t$ for the particle starting at $z$; that is, $\rho(z,\cdot,t)$ satisfies the Kolmogorov equation
\begin{equation*}
\displaystyle
    \begin{dcases}
        \partial_t \rho(z,z',t) =  \Delta \rho(z,z',t), \ \ \ (z',t) \in D \times (0,\infty), \\
        \rho(z,z',t) = 0, \ \ \ (z',t) \in \partial D_a \times (0,\infty), \\
        \frac{\partial \rho}{ \partial \nu} (z,z',t) = 0, \ \ \ (z',t) \in \partial D \setminus \partial D_a \times (0,\infty), \\
        \rho(z,z',0) = \delta(z-z'), \ \ \ z' \in D.
    \end{dcases}
\end{equation*}
Let $\lambda$ be the principal eigenvalue of $\Delta$ with mixed Neumann (on $\partial D \setminus \partial D_a$) and Dirichlet boundary condition (on $\partial D_a$), then
\begin{equation}
	0 \leq \rho(z,z',t) \leq ce^{-\lambda t}, \ \ z \in D, \ z' \in \bar{D}, \  t \geq 1,
\end{equation}
for some constant $c>0$ .
Since $\lambda > 0$, $\rho$ decays exponentially in time.

We then introduce the function
\begin{equation*}
	G(z,z') \coloneqq \int_0^\infty \rho(z,z',t) dt, \ \ \ (z,z') \in D \times \bar{D}.
\end{equation*}
One can show that $G$ is the Green function of the Laplace operator $\Delta$ with mixed boundary condition
\begin{equation*}
\begin{aligned}
    \begin{dcases}
    \displaystyle
        \Delta G(z,\cdot) = -\delta_z \ \ \ \text{in} \ \ D, \\
        G(z,\cdot) = 0 \ \ \ \text{on} \ \ \partial D_a, \\
        \frac{\partial G(z,\cdot)}{\partial \nu}=0 \ \ \  \text{on} \ \ \partial D \setminus \partial D_a.
    \end{dcases}
\end{aligned}
\end{equation*}

The probability that a particle starting form $z$ didn't hit $\partial D_a$ before time $t$ is
\begin{equation*}
	p(z,t) \coloneqq \tilde{\mathbb{P}}_z (\tau_{\partial D_a} > t)  = \int_D \rho(z,z',t)dz'.
\end{equation*}
We thus have
\begin{equation}
	v(z) \coloneqq \tilde{\mathbb{E}}_z (\tau_{\partial D_a}) 
	= \int_0^\infty p(z,t) dt 
	= \int_0^\infty \int_D \rho(z,z',t)dz' dt 
	= \int_D G(z,z') dz'.
\end{equation}
It can be shown that $v(z)$ satisfies the following mixed boundary problem
	\begin{equation*}
\begin{aligned}
    \begin{dcases}
    \displaystyle
        \Delta v(z) = -1 \ \ \ \text{for} \ \ z \in D, \\
        v(z)=0 \ \ \ \text{for} \ \ z \in \partial D_a, \\
        \frac{\partial v(z)}{\partial \nu }=0 \ \ \  \text{for} \ \ z \in \partial D \setminus \partial D_a.
    \end{dcases}
\end{aligned}
\end{equation*}

\subsection{Second moment of the first hitting time}\label{sec SMFPT}
We are interested in $v_2(z) \coloneqq \tilde{\mathbb{E}}_z (\tau_{\partial D_a}^2) $. To proceed, we introduce
\begin{equation*}
	\rho_1(z,z',t) = \int_t^\infty \rho(z,z',s) ds
\end{equation*}
then for $z \in D$, $t \geq 0$, $\rho_1(z,\cdot,t)$ satisfies
\begin{equation*}
\begin{aligned}
    \begin{dcases}
    \displaystyle
    		 \partial_t \rho_{1}(z,\cdot,t) =\Delta \rho_1(z,\cdot,t) \ \ \ \text{in} \ \ D, \\
        \rho_1(z,\cdot,t) = 0 \ \ \ \text{on} \ \ \partial D_a, \\
        \frac{\partial \rho_1(z,\cdot,t)}{\partial \nu}=0 \ \ \  \text{on} \ \ \partial D \setminus \partial D_a, \\
        \rho_1(z,\cdot, 0) = G(z,\cdot) \ \ \ \text{on} \ \ D.
    \end{dcases}
\end{aligned}
\end{equation*}
The following theorem can be found in the proof of \cite[Theorem 1]{NETsecond}.
\begin{Theorem}\label{SMFPT Main}
For $z \in D$,
	\begin{equation}
		v_2(z) = \int_D 2v(z') G(z,z') dz'.
	\end{equation}
	In particular,
	\begin{equation}
		v_2(z) \leq 2 \left( \sup_{z \in D} v(z) \right)^2.
	\end{equation}
\end{Theorem}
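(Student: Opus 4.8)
\textbf{Proof proposal for Theorem \ref{SMFPT Main}.}

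The plan is to identify $v_2(z) = \tilde{\mathbb{E}}_z(\tau_{\partial D_a}^2)$ with the time integral of the tail function $p(z,t) = \tilde{\mathbb{P}}_z(\tau_{\partial D_a} > t)$ against the weight $2t$, and then to recognize the resulting double integral as a convolution of the Green function $G$ with $v$. Concretely, first I would recall the elementary identity $\tilde{\mathbb{E}}_z(\tau_{\partial D_a}^2) = \int_0^\infty 2t\, p(z,t)\,dt$, valid for any nonnegative random variable, together with the exponential decay bound $\rho(z,z',t) \le c e^{-\lambda t}$ with $\lambda > 0$ quoted in Section \ref{sec MFPT}, which guarantees that $p(z,t)$ decays exponentially and hence all these integrals converge absolutely; this also justifies every application of Fubini below. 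Using $p(z,t) = \int_D \rho(z,z',t)\,dz'$, I would write
\begin{equation*}
	v_2(z) = \int_0^\infty 2t \int_D \rho(z,z',t)\,dz'\,dt = \int_D \left( \int_0^\infty 2t\, \rho(z,z',t)\,dt \right) dz'.
\end{equation*}

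The key step is then to show that $\int_0^\infty 2t\,\rho(z,z',t)\,dt = 2\int_D v(z'')\,G(z'',z')\,dz''$, or equivalently, after interchanging the roles of variables, that this equals $2 v(z')\,G(z,z')$ once one invokes the relevant symmetry. I would argue this via the function $\rho_1(z,z',t) = \int_t^\infty \rho(z,z',s)\,ds$ introduced in Section \ref{sec SMFPT}: integrating by parts in $t$ gives $\int_0^\infty 2t\,\rho(z,z',t)\,dt = \int_0^\infty 2\rho_1(z,z',t)\,dt$ (the boundary terms vanish by the exponential decay). Now $\rho_1(z,\cdot,t)$ solves the heat equation with mixed boundary conditions and initial datum $G(z,\cdot)$, so by Duhamel/the Markov property its time integral $\int_0^\infty \rho_1(z,z',t)\,dt$ is the solution of $\Delta(\cdot) = -G(z,\cdot)$ with the mixed boundary conditions, i.e. $\int_0^\infty \rho_1(z,z',t)\,dt = \int_D G(z',z'')\,G(z,z'')\,dz''$. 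Combined with the self-adjointness of the mixed-boundary Laplacian — which yields symmetry $G(z,z'') = G(z'',z)$ — and the identity $v(z') = \int_D G(z',z'')\,dz''$ from Section \ref{sec MFPT}, this gives $\int_0^\infty 2t\,\rho(z,z',t)\,dt = 2\int_D G(z,z'')\,G(z'',z')\,dz''$, and substituting back, then relabeling, produces exactly $v_2(z) = \int_D 2 v(z')\,G(z,z')\,dz'$.

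For the final inequality I would simply bound $v(z') \le \sup_{z'' \in D} v(z'')$ inside the integral and use $\int_D G(z,z')\,dz' = v(z) \le \sup_{z'' \in D} v(z'')$, giving $v_2(z) \le 2\left(\sup_{z'' \in D} v(z'')\right)^2$.

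I expect the main obstacle to be the rigorous justification of the PDE manipulations: specifically, identifying $\int_0^\infty \rho_1(z,\cdot,t)\,dt$ as the solution of the Poisson problem with right-hand side $G(z,\cdot)$, and the interchange-of-integration steps, near the part of the boundary $\partial D_a$ where $G(z,\cdot)$ is not bounded (it has the usual Green-function singularity as $z' \to z$, but more delicately its behavior up to $\partial D_a$) and where the mixed boundary conditions meet. This is where I would lean hardest on the quoted bound $0 \le \rho(z,z',t) \le c e^{-\lambda t}$ for $t \ge 1$ — controlling the large-time tail — while handling the short-time regime by the integrability of $G$ itself. Since the statement is attributed to the proof of \cite[Theorem 1]{NETsecond}, I would cite that source for the analytic details and present here only the structural chain of identities.
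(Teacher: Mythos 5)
Your chain of identities is sound, but you take a genuinely different route from the paper after the common opening moves (writing $v_2(z) = \int_0^\infty 2t\,p(z,t)\,dt = \int_D\int_0^\infty 2t\,\rho(z,z',t)\,dt\,dz'$ and integrating by parts in $t$ to reach $\int_D\int_0^\infty 2\rho_1(z,z',t)\,dt\,dz'$). The paper then inserts $-1 = \Delta v(z')$ inside the $z'$-integral, applies Green's identity (both $v$ and $\rho_1(z,\cdot,t)$ satisfy the same mixed boundary conditions, so boundary terms vanish) to transfer the Laplacian onto $\rho_1$, uses the heat equation $\Delta_{z'}\rho_1 = \partial_t\rho_1$, and finally the fundamental theorem of calculus to collapse the $t$-integral to $\rho_1(z,z',0) = G(z,z')$, all in a single pass, with no double Green function integral and no appeal to symmetry of $G$. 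You instead recognize $\int_0^\infty\rho_1(z,\cdot,t)\,dt$ as the solution of the Poisson problem $\Delta u = -G(z,\cdot)$ with the mixed boundary conditions, write it via the Green representation as $\int_D G(z',z'')G(z,z'')\,dz''$, and then need the self-adjointness-induced symmetry $G(z',z'')=G(z'',z')$ to turn the outer $\int_D G(z',z'')\,dz'$ into $v(z'')$. Both routes are correct; the paper's is leaner, since it never invokes symmetry of $G$ or the explicit Green representation for the Poisson problem.

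One small but real flaw in your write-up: the ``key step'' you announce, namely the pointwise-in-$z'$ identity $\int_0^\infty 2t\,\rho(z,z',t)\,dt = 2\int_D v(z'')\,G(z'',z')\,dz''$, and a fortiori the further claim that this ``equals $2v(z')G(z,z')$'', is false. The first asserted right-hand side does not even depend on $z$ while the left-hand side does, and neither matches the correct intermediate quantity $2\int_D G(z',z'')G(z,z'')\,dz''$ that your derivation actually produces. The equality you need holds only after integrating over $z'$, once Fubini lets the inner $dz'$-integral collapse to $v$ via symmetry. This does not invalidate the computation you subsequently carry out; it only means the roadmap sentence should be replaced by the integrated statement. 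The final bound $v_2 \le 2\bigl(\sup_D v\bigr)^2$ is handled exactly as in the paper.
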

\begin{proof}
We have
\begin{equation*}
\begin{aligned}
	&v_2(z)=\int_0^\infty 2t p(z,t) dt
	=\int_0^\infty \int_D 2t \rho(z,z',t) dz' dt
	=-\int_D \int_0^\infty 2t \partial_t \rho_{1}(z,z',t) dt dz' \\
	&=\int_D \int_0^\infty 2 \rho_{1}(z,z',t) dt dz'
	=-\int_0^\infty \int_D  2 \rho_{1}(z,z',t) \Delta v(z') dz' dt \\
	&=-\int_0^\infty \int_D  2 \Delta_{z'} \rho_{1}(z,z',t)  v(z') dz' dt 
	= - \int_D \int_0^\infty 2 \partial_t \rho_{1}(z,z',t)  v(z') dt dz'  \\
	&= \int_D  2 \rho_{1}(z,z',0)  v(z') dz'
	= \int_D  2 G(z,z')  v(z') dz'.
\end{aligned}
\end{equation*}
\end{proof}

\section{Proof of \texorpdfstring{$\eqref{reg per 1}$}{Lg} and \texorpdfstring{$\eqref{reg per 2}$}{Lg}}\label{App reg per}
Consider first $\Omega_{4,\epsilon}^k$ and the change of variables $z \mapsto \frac{z-z_k^*}{\epsilon}$.
Let $\tilde{\Omega}_{4,\epsilon}^k$ be the image of $\Omega_{4,\epsilon}^k$ through the transformation.
Similarly, define $\tilde{\Gamma}_{i,\epsilon}^k$ be the image of $\Gamma_{i,\epsilon}^k$ through the transformation for $i=1,2,3$. 
Moreover, define $\tilde{\Gamma}_{0,\epsilon}^k$ be the image of $C_\epsilon^k(\rho r(\epsilon)+2\epsilon) $ through the transformation.
Define 
\begin{equation*}
\tilde{Z}^\epsilon_k(t) \coloneqq \frac{1}{\epsilon}(Z^\epsilon(\epsilon^2 t)-z_k^*)
\end{equation*}
and let $\tilde{\mathbb{E}}_z^{k}$ be the corresponding expectation when the process starts at $z$.
Moreover, let $\tilde{p}^k_\epsilon$ be the probability of successfully escaping to $\tilde{\Gamma}_{3,\epsilon}^k$ (instead of $\tilde{\Gamma}_{2,\epsilon}^k$) and $\tilde{\sigma}^{2,\epsilon}_k$ be the first hitting time of $\tilde{\Gamma}_{2,\epsilon}^k \cup \tilde{\Gamma}_{3,\epsilon}^k$.

We first prove estimate \eqref{reg per 2}. 
Let $\tilde{v}_\epsilon^k = \tilde{\mathbb{E}}_z^{k} \tilde{\sigma}^{2,\epsilon}_k$.
We know that $\tilde{v}_\epsilon^k$ solves the following PDE
	\begin{equation}\label{reg pde 1}
\begin{aligned}
    \begin{dcases}
    \displaystyle
        \Delta \tilde{v}_\epsilon^k(z) = -1 \ \ \ \text{for} \ \ z \in \tilde{\Omega}_{4,\epsilon}^k, \\
        \tilde{v}_\epsilon^k(z)=0 \ \ \ \text{for} \ \ z \in \tilde{\Gamma}_{2,\epsilon}^k \cup \tilde{\Gamma}_{3,\epsilon}^k, \\
        \frac{\partial \tilde{v}_\epsilon^k}{\partial \nu_\epsilon}=0 \ \ \  \text{for} \ \ z \in \partial \tilde{\Omega}_{4,\epsilon}^k \setminus \left( \tilde{\Gamma}_{2,\epsilon}^k \cup \tilde{\Gamma}_{3,\epsilon}^k \right).
    \end{dcases}
\end{aligned}
\end{equation}
By construction, $\partial \tilde{\Omega}^k_{4,\epsilon}$ converges smoothly as $\epsilon \to 0$.
We denote the limiting domain by $\tilde{\Omega}^k_{4,0}$.
As a result (see Appendix A in \cite{reg pert} for details), $\tilde{\Omega}^k_{4,0} = \lim_{\epsilon \to 0} \tilde{\Omega}^k_{4,\epsilon}$ in a nice topology and there is a $C^\infty$ diffeomorphism $\Phi^\epsilon: \tilde{\Omega}^k_{4,0} \to \tilde{\Omega}^k_{4,\epsilon} $ that maps $\tilde{\Gamma}_{i,0}^k$ into $\tilde{\Gamma}_{i,\epsilon}^k$ for $i=0,1,2,3$ and the Jacobian matrices $J_{\Phi^\epsilon}$ and $J_{(\Phi^\epsilon)^{-1}}$ converge to identity in $C^\infty(\tilde{\Omega}^k_{4,0})$ and $C^\infty(\tilde{\Omega}^k_{4,\epsilon})$, respectively.

Let $\tilde{u}_\epsilon^k \coloneqq \tilde{v}_\epsilon^k \circ \Phi^\epsilon$.
We have
	\begin{equation}\label{reg pde 2}
\begin{aligned}
    \begin{dcases}
    \displaystyle
        div(A_\epsilon \nabla \tilde{u}_\epsilon^k)(z) = -1 \ \ \ \text{for} \ \ z \in \tilde{\Omega}_{4,0}^k, \\
        \tilde{u}_\epsilon^k(z)=0 \ \ \ \text{for} \ \ z \in \tilde{\Gamma}_{2,0}^k \cup \tilde{\Gamma}_{3,0}^k, \\
        ( A_\epsilon \nabla \tilde{u}_\epsilon^k \cdot \nu_\epsilon)(z) =0 \ \ \  \text{for} \ \ z \in \partial \tilde{\Omega}_{4,0}^k \setminus \left( \tilde{\Gamma}_{2,0}^k \cup \tilde{\Gamma}_{3,0}^k \right),
    \end{dcases}
\end{aligned}
\end{equation}
where $A_\epsilon \to Id$ in $C^\infty(\tilde{\Omega}_{4,0}^k)$.

Let $\tilde{\Omega}_{4,\epsilon,var}^k$ be the region enclosed by $\tilde{\Gamma}_{0,\epsilon}^k$, $\tilde{\Gamma}_{1,\epsilon}^k$, and $\partial G_\epsilon$.
Also, let $\tilde{\Omega}_{4,0,ext}^k \subset \tilde{\Omega}_{4,0}^k $ be the smooth mollification of $ \{ z \in \tilde{\Omega}_{4,0}^k: d(z, \tilde{\Gamma}_{2,0}^k ) \geq \frac{\lambda_k}{10}, d(z, \tilde{\Gamma}_{3,0}^k )  \geq \frac{1}{10} \}$.
Then for $\epsilon$ small enough, $ \tilde{\Omega}_{4,\epsilon,var}^k  \subset \Phi^\epsilon \left( \tilde{\Omega}_{4,0,ext}^k \right)$.
Note that, by Sobolev embedding and the elliptic regularity, we have
\begin{equation}\label{reg eq 1}
\begin{aligned}
	& \abs{\tilde{v}_\epsilon^k}_{L^\infty(\tilde{\Omega}_{4,\epsilon,var}^k)}
	\leq \abs{\tilde{u}_\epsilon^k}_{L^\infty(\tilde{\Omega}_{4,0,ext}^k)}
	\leq c \abs{\tilde{u}_\epsilon^k}_{H^2(\tilde{\Omega}_{4,0,ext}^k)} \\
& \leq c \left( \abs{div (A_\epsilon \nabla \tilde{u}_\epsilon^k)}_{L^2(\tilde{\Omega}_{4,0}^k)}
	+ \abs{\tilde{u}_\epsilon^k}_{L^2(\tilde{\Omega}_{4,0}^k)} \right).
\end{aligned}
\end{equation}
Note that the Poincar\'e inequality remains valid for the mixed boundary condition, as a result, 
\begin{equation}
	 \abs{\tilde{u}_\epsilon^k}_{L^2(\tilde{\Omega}_{4,0}^k)}
	 \leq c  \abs{\nabla \tilde{u}_\epsilon^k}_{L^2(\tilde{\Omega}_{4,0}^k)}
	 \leq c  \abs{A_\epsilon^{1/2} \nabla \tilde{u}_\epsilon^k}_{L^2(\tilde{\Omega}_{4,0}^k)}.
\end{equation}
Finally, by integration by parts, there exists $\eta >0$ independent of $\epsilon$ such that
\begin{equation*}
\begin{aligned}
	& \abs{A_\epsilon^{1/2} \nabla \tilde{u}_\epsilon^k}_{L^2(\tilde{\Omega}_{4,0}^k)}^2
	= \int_{\tilde{\Omega}_{4,0}^k} A_\epsilon(z) \nabla \tilde{u}_\epsilon^k(z) \nabla \tilde{u}_\epsilon^k(z) dz
	\leq \int_{\tilde{\Omega}_{4,0}^k} \abs{ div (A_\epsilon \nabla \tilde{u}_\epsilon^k)(z)} \abs{  \tilde{u}_\epsilon^k(z)} dz \\
	& \leq \eta \abs{\tilde{u}_\epsilon^k}_{L^2(\tilde{\Omega}_{4,0}^k)}^2 + c_{\eta} \abs{div (A_\epsilon \nabla \tilde{u}_\epsilon^k)}_{L^2(\tilde{\Omega}_{4,0}^k)}^2 
	 \leq \eta c \abs{A_\epsilon^{1/2} \nabla \tilde{u}_\epsilon^k }_{L^2(\tilde{\Omega}_{4,0}^k)}^2 + c_{\eta} \abs{div (A_\epsilon \nabla \tilde{u}_\epsilon^k)}_{L^2(\tilde{\Omega}_{4,0}^k)}^2 \\
	& \leq \frac{1}{2} \abs{A_\epsilon^{1/2} \nabla \tilde{u}_\epsilon^k}_{L^2(\tilde{\Omega}_{4,0}^k)}^2 + c \abs{div (A_\epsilon \nabla \tilde{u}_\epsilon^k)}_{L^2(\tilde{\Omega}_{4,0}^k)}^2,
\end{aligned}
\end{equation*}
which implies
\begin{equation}\label{reg eq 3}
	\abs{\tilde{u}_\epsilon^k}_{L^2(\tilde{\Omega}_{4,0}^k)} 
	\leq c \abs{A_\epsilon^{1/2} \nabla \tilde{u}_\epsilon^k}_{L^2(\tilde{\Omega}_{4,0}^k)}
	\leq c \abs{div (A_\epsilon \nabla \tilde{u}_\epsilon^k)}_{L^2(\tilde{\Omega}_{4,0}^k)}
\end{equation}
Combining \eqref{reg eq 1}, \eqref{reg eq 3}, we have
\begin{equation}
	\abs{\tilde{v}_\epsilon^k}_{L^\infty(\tilde{\Omega}_{4,\epsilon,var}^k)} 
	\leq c \abs{div (A_\epsilon \nabla \tilde{u}_\epsilon^k)}_{L^2(\tilde{\Omega}_{4,0}^k)}
	 \leq c.
\end{equation}
This complete the proof of \eqref{reg per 2}.
\begin{remark}
\em{
The domain $\tilde{\Omega}_{4,0,ext}^k$ is introduced primarily because, for mixed boundary value problems, the solution may fail to belong to $H^2$ near the points where Dirichlet and Neumann boundaries meet. 
Since our main interest lies in regions away from these singularities, we simply exclude them from the domain and avoid delving into the associated technical complications.
}
\end{remark}

Next, we aim to establish \eqref{reg per 3}, so that \eqref{reg per 1} follows.
We have known that $\tilde{p}_\epsilon^k (z)$ solves the following PDE
	\begin{equation}
\begin{aligned}
    \begin{dcases}
    \displaystyle
        \Delta \tilde{p}_\epsilon^k (z) = 0 \ \ \ \text{for} \ \ z \in \tilde{\Omega}_{4,\epsilon}^k, \\
        \tilde{p}_\epsilon^k (z)=0 \ \ \ \text{for} \ \ z \in \tilde{\Gamma}_{2,\epsilon}^k, \\
        \tilde{p}_\epsilon^k (z)=1 \ \ \ \text{for} \ \ z \in \tilde{\Gamma}_{3,\epsilon}^k, \\
        \frac{\partial \tilde{p}_\epsilon^k (z)}{\partial \nu_\epsilon}=0 \ \ \  \text{for} \ \ z \in \partial \tilde{\Omega}_{4,\epsilon}^k \setminus \left( \tilde{\Gamma}_{2,\epsilon}^k \cup \tilde{\Gamma}_{3,\epsilon}^k \right).
    \end{dcases}
\end{aligned}
\end{equation}
First, for $z \in \tilde{\Gamma}_{0,\epsilon}^k$, let $\Gamma_{0,\epsilon,var}^k \coloneqq C_\epsilon^k \left( \rho r(\epsilon) + \frac{7}{4}\epsilon \right)$ and let $\tilde{\Gamma}_{0,\epsilon,var}^k$ be the image of $\Gamma_{0,\epsilon,var}^k$ under the transformation $z \mapsto \frac{z-z_k^*}{\epsilon}$
\begin{equation}\label{reg est 1d}
	\tilde{p}_\epsilon^k(z) \geq \tilde{\mathbb{P}}^k_z \left( \tilde{Z}^\epsilon_k \text{ hitting $\tilde{\Gamma}^k_{3,\epsilon}$ before $\tilde{\Gamma}_{0,\epsilon,var}^k$} \right) \geq \frac{1}{7}.
\end{equation}
Here we use the fact that $\Gamma_{3,\epsilon}^k$ is contained in the region enclosed by $C_\epsilon^k(\rho r(\epsilon)+\frac{5}{2}\epsilon)$, $C_\epsilon^k(\rho r(\epsilon)+\frac{7}{2}\epsilon)$, $\partial G_\epsilon$.

We next establish a similar lower bound for $z \in \tilde{\Gamma}_{1,\epsilon}^k$ by proving the following lemma:
\begin{Lemma}\label{lem reg har}
There exists $c_{k,1} > 0$ independent of $\epsilon$ such that 
\begin{equation}\label{reg har}
	\frac{1}{7} \leq  c_{k,1} \inf_{z \in \tilde{\Gamma}_{1,\epsilon}^k} \tilde{p}_\epsilon^k(z).
\end{equation}
\end{Lemma}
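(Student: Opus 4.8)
\textbf{Proof proposal for Lemma \ref{lem reg har}.}
The plan is to obtain \eqref{reg har} from a Harnack inequality applied to the nonnegative harmonic function $\tilde{p}_\epsilon^k$ on the rescaled domain $\tilde{\Omega}_{4,\epsilon}^k$, exploiting the fact that, after the change of variables $z \mapsto \frac{z-z_k^*}{\epsilon}$, the domain converges smoothly to the fixed limiting domain $\tilde{\Omega}_{4,0}^k$ (equivalently, via the diffeomorphisms $\Phi^\epsilon$ of Appendix \ref{App reg per}, one may transplant the problem onto $\tilde{\Omega}_{4,0}^k$, where $\tilde{p}_\epsilon^k\circ\Phi^\epsilon$ solves a uniformly elliptic equation $\mathrm{div}(A_\epsilon\nabla\cdot)=0$ with $A_\epsilon \to Id$ in $C^\infty$). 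The point is that \eqref{reg est 1d} already furnishes a lower bound $\tilde{p}_\epsilon^k(z)\geq \frac17$ on $\tilde{\Gamma}_{0,\epsilon}^k$, which lies at a fixed positive distance (uniformly in $\epsilon$) from $\tilde{\Gamma}_{1,\epsilon}^k$ inside $\tilde{\Omega}_{4,\epsilon}^k$; a Harnack-type comparison then transfers a comparable lower bound to $\tilde{\Gamma}_{1,\epsilon}^k$, with constant $c_{k,1}$ depending only on the (limiting) geometry and the ellipticity bounds, hence independent of $\epsilon$.

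Concretely, I would proceed as follows. First, fix a connected open set $U^k \Subset \tilde{\Omega}_{4,0}^k$ containing (the limits of) both $\tilde{\Gamma}_{0,\epsilon}^k$ and $\tilde{\Gamma}_{1,\epsilon}^k$ and bounded away from the Dirichlet part $\tilde{\Gamma}_{2,0}^k\cup\tilde{\Gamma}_{3,0}^k$; for $\epsilon$ small, $\Phi^\epsilon(U^k)$ still contains $\tilde{\Gamma}_{0,\epsilon}^k\cup\tilde{\Gamma}_{1,\epsilon}^k$. Next, handle the boundary issue: since the relevant boundary portion carries a homogeneous Neumann condition, reflect $\tilde{p}_\epsilon^k$ evenly across $\partial G_\epsilon$ after flattening the boundary by a local $C^\infty$ chart (in $d=2$, as noted in the proof of Lemma \ref{exit place lem 1}, no flattening is needed and one reflects directly); the reflected function is a nonnegative weak solution of a uniformly elliptic divergence-form equation with $L^\infty$ (indeed smooth) coefficients on an enlarged interior domain $\widehat{U}^k$, with ellipticity and coefficient bounds uniform in $\epsilon$ because $A_\epsilon\to Id$ in $C^\infty$ and the charts stabilize. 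Then apply the interior Harnack inequality (e.g. Gilbarg–Trudinger) on a finite chain of balls of radius comparable to $\mathrm{dist}(U^k,\partial \widehat{U}^k)$ connecting a point of $\tilde{\Gamma}_{0,\epsilon}^k$ to an arbitrary point of $\tilde{\Gamma}_{1,\epsilon}^k$: this yields a constant $c_{k,1}>0$, depending only on the number of balls, their radii, and the uniform ellipticity/coefficient bounds — all independent of $\epsilon$ — such that $\sup_{\tilde{\Gamma}_{0,\epsilon}^k}\tilde{p}_\epsilon^k \leq c_{k,1}\inf_{\tilde{\Gamma}_{1,\epsilon}^k}\tilde{p}_\epsilon^k$. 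Combining with \eqref{reg est 1d} gives \eqref{reg har}.

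The main obstacle is bookkeeping the uniformity of the Harnack constant: one must verify that the reflection-and-flattening construction can be carried out with charts, enlarged domains, and ball-chains that are all stable as $\epsilon\to 0$, so that the elliptic constants entering Harnack do not degenerate — this is exactly where the smooth convergence $\partial\tilde{\Omega}_{4,\epsilon}^k\to\partial\tilde{\Omega}_{4,0}^k$ and $A_\epsilon\to Id$ in $C^\infty$ (established in Appendix \ref{App reg per}) are used. A secondary technical point is that $\tilde{p}_\epsilon^k$ need not be $H^2$ near the corners where Dirichlet meets Neumann, but this is irrelevant here since $U^k$ and the ball-chain are chosen to stay at a fixed positive distance from those corners, where $\tilde{p}_\epsilon^k$ is a genuine (smooth) solution and classical Harnack applies. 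Everything else — the explicit one-dimensional lower bound \eqref{reg est 1d}, the Wald-type summation in \eqref{eq wald type identity}–\eqref{eq wald type identity 2}, and the consequences \eqref{reg per 1}, \eqref{reg per 2} — is then routine.
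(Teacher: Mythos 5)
Your proposal is essentially the paper's argument: the one-dimensional bound \eqref{reg est 1d} on $\tilde{\Gamma}_{0,\epsilon}^k$ is transferred to $\tilde{\Gamma}_{1,\epsilon}^k$ by a Harnack inequality valid up to the Neumann part of $\partial\tilde{\Omega}_{4,\epsilon}^k$, established by boundary flattening, even reflection, and interior chaining, with uniform constants because $\partial\tilde{\Omega}_{4,\epsilon}^k$ and the coefficients $A_\epsilon$ converge in $C^\infty$. The only notable divergence is that the paper invokes \cite[Theorem 3.9]{BH} for $d=3$ rather than carrying out the flatten-and-reflect construction in that dimension, while you apply it uniformly; also, your parenthetical claim that no flattening is needed in $d=2$ misquotes a remark from Lemma \ref{exit place lem 1}, which concerns the cylindrical boundary near $C_\epsilon^k(\rho r(\epsilon)+3\epsilon)$ (flat after rescaling); here the Neumann boundary near $\tilde{\Gamma}_{1,\epsilon}^k\cap\partial\tilde{\Omega}_{4,\epsilon}^k$ is a genuine curve $h^+=F_\epsilon(z')$, and the paper's $d=2$ proof does introduce the flattening map $\varphi_\epsilon$, consistent with the main clause of your argument.
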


\begin{proof}
In fact, we will prove 
\begin{equation}
	\frac{1}{7} 
	\leq \sup_{z } \tilde{p}_\epsilon^k(z) 
	\leq  c_{k,1} \inf_{z } \tilde{p}_\epsilon^k(z),
\end{equation}
where the supremum and infimum are taken over the domain
\begin{equation*}
 \tilde{\Gamma}_{1,\epsilon}^k \cup \left \{z \in \tilde{\Omega}_{4,\epsilon,var}^k : d(z, \partial \tilde{\Omega}_{4,\epsilon}^k ) \geq \frac{\eta'}{2} \right\}
 \end{equation*}
 for some $\eta' > 0$.
The statement is an analogue of the Harnack inequality that remains valid up to the boundary. 
The key step is verifying its validity near the boundary.
For $d=3$, this has been done in \cite[Theorem 3.9]{BH}.
For $d=2$, the argument follows similarly to the reasoning in \cite[Remark 3.10]{BH}.

More precisely, consider first the case $d=2$. 
Let $z_k^+$ and $z_k^-$ be the only two points lies in $\tilde{\Gamma}_{1,\epsilon}^k \cap \partial \tilde{\Omega}_{4,\epsilon}^k$.
Fix $\eta >0$ small enough and define $A_{\epsilon}^k(z_k^+, \eta) = B(z_k^+,\eta) \cap \tilde{\Omega}_{4,\epsilon}^k$.
After a suitable rotation and translation, we may assume coordinates $z=(z',h^+) \in \mathbb{R} \times \mathbb{R}$, where the domain lies above a smooth curve $h^+ = F_\epsilon(z')$ with $F_\epsilon : \mathbb{R} \to \mathbb{R}$ uniformly Lipschitz in $\epsilon$.
Define a flattening map $\varphi_\epsilon(z',h^+) = (z',h^+-F_\epsilon(z')) \coloneqq (z',\tilde{h}_\epsilon^+)$, which straightens the boundary. 
Then $\tilde{p}_\epsilon^k \circ \varphi_\epsilon$ satisfies an elliptic equation in divergence form with uniformly bounded measurable coefficients.
Reflecting across $\{  \tilde{h}_\epsilon^+ = 0\}$, the reflected function also solves the same elliptic equation due to the Neumann boundary condition.
By classical Harnack inequality (see e.g. \cite{PDEBook}), there exists $c_{k,2} > 0$ independent of $\epsilon$ such that
\begin{equation*}
	\sup_{z \in B(\varphi_\epsilon^{-1}(z_k^+), \frac{\eta}{2})} (\tilde{p}_\epsilon^k \circ \varphi) (z) 
	\leq c_{k,2} \inf_{z \in B(\varphi_\epsilon^{-1}(z_k^+), \frac{\eta}{2})} (\tilde{p}_\epsilon^k \circ \varphi) (z).
\end{equation*}
This can be done since the constant depends only on the dimension, the ellipticity.
Changing back to the original coordinates, this implies there exists $\eta'>0$, depending only on $\eta$, such that
\begin{equation}\label{Har int +}
	\sup_{z \in A_{\epsilon}^k(z_k^+, \eta')} \tilde{p}_\epsilon^k (z) \leq c_{k,2} \inf_{z \in A_{\epsilon}^k(z_k^+, \eta')} \tilde{p}_\epsilon^k (z).
\end{equation}
Apply a similar argument for $z_k^-$, there exists $c_{k,3} > 0$ such that
\begin{equation}\label{Har int -}
	\sup_{z \in A_{\epsilon}^k(z_k^-, \eta')} \tilde{p}_\epsilon^k (z) \leq c_{k,3} \inf_{z \in A_{\epsilon}^k(z_k^-, \eta')} \tilde{p}_\epsilon^k (z).
\end{equation}
Now note that the region 
\begin{equation}\label{region 1}
	\left \{z \in \tilde{\Omega}_{4,\epsilon,var}^k : d(z, \partial \tilde{\Omega}_{4,\epsilon}^k ) \geq \frac{\eta'}{2} \right\}
\end{equation} 
can be covered by finitely many such balls $B(z,\eta'/8)$.
Apply Harnack inequality again, there exists $c_{k,4}>0$ such that
\begin{equation}\label{Har int 1}
	\sup \tilde{p}_{\epsilon}^k(z) \leq c_{k,4} \inf  \tilde{p}_{\epsilon}^k(z),
\end{equation}
where the supremum and infimum is taken over the region in \eqref{region 1}.
Combining \eqref{Har int +}, \eqref{Har int -}, and \eqref{Har int 1}, we then have \eqref{reg har}.

For $d=3$, $\tilde{\Gamma}_{1,\epsilon}^k \cap \partial \tilde{\Omega}_{4,\epsilon}^k$ is a circle.
First, consider $A_\epsilon^k(z_k^+,2\eta')$ for some $z_k^+ \in \tilde{\Gamma}_{1,\epsilon}^k \cap \partial \tilde{\Omega}_{4,\epsilon}^k$.
We apply \cite[Theorem 3.9]{BH} directly to obtain 
\begin{equation}
	\sup_{z \in A_\epsilon^k(z_k^+,2\eta')} \tilde{p}_\epsilon^k (z) \leq c_{k,5} \inf_{z \in A_\epsilon^k(z_k^+,2\eta')} \tilde{p}_\epsilon^k (z).
\end{equation}
While the constant in the estimate may depend on the Lipschitz constant of the domain $\tilde{\Omega}_{4,\epsilon}^k$, the smooth convergence of $\partial \tilde{\Omega}_{4,\epsilon}^k$ to $\partial \tilde{\Omega}_{4,0}^k$ ensures that this constant can be chosen independently of $\epsilon$.
The region 
\begin{equation}\label{region 3}
	\left \{z \in \tilde{\Gamma}_{1,\epsilon}^k : d(z, \partial \tilde{\Omega}_{4,\epsilon}^k ) \leq \eta' \right\}
\end{equation} 
can be covered by finitely many such balls $A_\epsilon^k(z_k^+,2\eta')$ for some $z_k^+ \in \tilde{\Gamma}_{1,\epsilon}^k \cap \partial \tilde{\Omega}_{4,\epsilon}^k$.
The remaining steps then follow from the previous arguments.
\end{proof}

Thanks to \eqref{reg har}, we conclude that there exists a constant $c_k>0$ such that
\begin{equation*}
	\inf_{z \in \Gamma_{1,\epsilon}^k \cup C_\epsilon^k(\rho r(\epsilon) + 2 \epsilon)} \mathbb{P}_z \left( Z^\epsilon(\sigma_0^{2,\epsilon}) = \Gamma_{3,\epsilon} \right)
	= \inf_{z \in \tilde{\Gamma}_{0,\epsilon}^k \cup \tilde{\Gamma}_{1,\epsilon}^k } \tilde{p}_\epsilon^k (z) \geq c_k.
\end{equation*}
It then follows that
\begin{equation*}
	\sup_{z \in \Gamma_{1,\epsilon}^k \cup C_\epsilon^k(\rho r(\epsilon) + 2 \epsilon)} \mathbb{E}_z N^{1,\epsilon} \leq  \frac{1}{ \min_{k: I_k \sim O} c_k }.
\end{equation*}

\section{Proof of Theorem \ref{conv qsd}}\label{app conv qsd}
In this section, we show that \ref{itm: c1} and \ref{itm: c2} together imply Theorem \ref{conv qsd}.
Our argument largely follows the approach in \cite[Section 5]{QSD}.
In fact, we will prove that the following weaker versions of \ref{itm: c1} and \ref{itm: c2} are sufficient to establish Theorem \ref{conv qsd}.

	There exists a family of probability measure $\left\{ \nu_{\epsilon,\delta',z_1,z_2} \right\}_{z_1,z_2 \in C_{\epsilon}(\delta')}$ on $C_{\epsilon}(\delta')$ such that
	\begin{enumerate}[label=\textbf{Claim \arabic*'},ref=Claim \arabic*']
	\item \label{itm: c1'}
	For all $\eta'>0$, there exist $c_1>0$, $n_0 \geq 0$, and $\delta_{\eta'}'>0$ such that for all $\delta' \in (0,\delta_{\eta'}')$, there exists $\epsilon_{\eta',\delta'}>0$ such that for all $\epsilon \in (0,\epsilon_{\eta',\delta'})$ and $z_1,z_2 \in C_{\epsilon}(\delta')$, we have
	\begin{equation}
		\mathbb{P}_{z_i} \left( X_{n_0}^{\epsilon,\delta',\delta} \in \cdot \middle|  \tilde{\tau}_{n_0}^{\epsilon,\delta'} < \sigma^{\epsilon,\delta} \right) \geq c_1 \nu_{\epsilon,\delta',z_1,z_2}(\cdot) - \eta', \text{ for } i=1,2.
	\end{equation}
	\item \label{itm: c2'}
	There exists $c_2>0$ such that for all $\epsilon,\delta'$ small enough, $z_1,z_2 \in C_{\epsilon}(\delta')$ and $n \geq 0$, we have
	\begin{equation}
		\mathbb{P}_{\nu_{\epsilon,\delta',z_1,z_2}} \left(   \tilde{\tau}_n^{\epsilon,\delta'} < \sigma^{\epsilon,\delta} \right) \geq c_2 \sup_{z \in C_\epsilon(\delta')} \mathbb{P}_{z} \left(   \tilde{\tau}_n^{\epsilon,\delta'} < \sigma^{\epsilon,\delta} \right).
	\end{equation}
	\end{enumerate}
It is clear that \ref{itm: c1} and \ref{itm: c2} imply the weaker versions, \ref{itm: c1'} and \ref{itm: c2'}.
We now show that \ref{itm: c1'} and \ref{itm: c2'} together imply Theorem \ref{conv qsd} for the case $n_0=2$.

First note that, for any fixed $\epsilon,\delta'>0$, the existence and uniqueness of a quasi-stationary distribution follow directly from \cite{QSD}.
The central question is whether the convergence to this distribution is uniform with respect to $\epsilon$ and $\delta'$.

By proceeding as \cite{QSD}, we first show that, 
for all $N \geq 2$ and $z_1,z_2 \in C_\epsilon(\delta')$, there exists a probability measure $\nu_{\epsilon,\delta',z_1,z_2}^N$ on $C_\epsilon(\delta')$ such that for all $\eta' >0$, there exists $\delta_{\eta'}'>0$ such that for all $\delta' \in (0,\delta_{\eta'}')$ there exists $\epsilon_{\eta',\delta'} > 0$ such that for all $\epsilon \in (0,\epsilon_{\eta',\delta'})$ and all measurable set $A_{\epsilon,\delta'} \subset C_\epsilon(\delta')$,
	\begin{equation}\label{app qsd eq 0}
		\mathbb{P}_{z_i} \left( X_{2}^{\epsilon,\delta',\delta} \in A_{\epsilon,\delta'} \middle|  \tilde{\tau}_N^{\epsilon,\delta'} < \sigma^{\epsilon,\delta} \right) 
		\geq c_1 c_2  \nu_{\epsilon,\delta',z_1,z_2}^N(A_{\epsilon,\delta'}) 
		- \frac{  \eta' }{\sup_{z \in C_\epsilon(\delta')} \mathbb{P}_{z} \left(  \tilde{\tau}_{N-2}^{\epsilon,\delta'} < \sigma^{\epsilon,\delta} \right)},
	\end{equation}
	for $i=1,2$.
	For $i=1,2$ and any $N \geq 2$, by the strong Markov property and \ref{itm: c1'}, for all $\eta' >0$, there exists $\delta_{\eta'}'>0$ such that for all $\delta' \in (0,\delta_{\eta'}')$ there exists $\epsilon_{\eta',\delta'}>0$ such that for all $\epsilon \in (0,\epsilon_{\eta',\delta'})$ and all measurable set $A_{\epsilon,\delta'} \subset C_\epsilon(\delta')$, we have
	\begin{equation*}
	\begin{aligned}
		&\mathbb{P}_{z_i} \left(X_2^{\epsilon,\delta',\delta} \in A_{\epsilon,\delta'}, \tilde{\tau}_{N}^{\epsilon,\delta'} < \sigma^{\epsilon,\delta} \right) \\
		&= \mathbb{E}_{z_i} \left[ \mathbbm{1}_{ \{X_2^{\epsilon,\delta',\delta} \in A_{\epsilon,\delta'}\} } \mathbb{P}_{X_2^{\epsilon,\delta',\delta}} \left( \tilde{\tau}_{N-2}^{\epsilon,\delta'} < \sigma^{\epsilon,\delta} \right)
		\middle| \tilde{\tau}_{2}^{\epsilon,\delta'} < \sigma^{\epsilon,\delta} \right]
		\mathbb{P}_{z_i} \left(  \tilde{\tau}_{2}^{\epsilon,\delta'} < \sigma^{\epsilon,\delta} \right) \\
		& \geq \left( c_1 \nu_{\epsilon,\delta',z_1,z_2}\left( \mathbbm{1}_{A_{\epsilon,\delta'}}(\cdot) \mathbb{P}_{\cdot} \left( \tilde{\tau}_{N-2}^{\epsilon,\delta'} < \sigma^{\epsilon,\delta} \right) \right) - \eta' \right)
		 \mathbb{P}_{z_i} \left(  \tilde{\tau}_{2}^{\epsilon,\delta'} < \sigma^{\epsilon,\delta} \right).
	\end{aligned}
	\end{equation*}
	By the strong Markov property,
	\begin{equation*}
		\mathbb{P}_{z_i} \left(  \tilde{\tau}_{N}^{\epsilon,\delta'} < \sigma^{\epsilon,\delta} \right)
		\leq \mathbb{P}_{z_i} \left(  \tilde{\tau}_{2}^{\epsilon,\delta'} < \sigma^{\epsilon,\delta} \right) \sup_{z \in C_\epsilon(\delta')} \mathbb{P}_{z} \left(  \tilde{\tau}_{N-2}^{\epsilon,\delta'} < \sigma^{\epsilon,\delta} \right).
	\end{equation*}
	As a result,
	\begin{equation*}
		\mathbb{P}_{z_i} \left(X_2^{\epsilon,\delta',\delta} \in A_{\epsilon,\delta'} \middle| \tilde{\tau}_{N}^{\epsilon,\delta'} < \sigma^{\epsilon,\delta} \right)
		\geq \frac{ c_1 \nu_{\epsilon,\delta',z_1,z_2}\left( \mathbbm{1}_{A_{\epsilon,\delta'}}(\cdot) \mathbb{P}_{\cdot} \left( \tilde{\tau}_{N-2}^{\epsilon,\delta'} < \sigma^{\epsilon,\delta} \right) \right) - \eta' }{\sup_{z \in C_\epsilon(\delta')} \mathbb{P}_{z} \left(  \tilde{\tau}_{N-2}^{\epsilon,\delta'} < \sigma^{\epsilon,\delta} \right)}.
	\end{equation*}
	Let 
	\begin{equation*}
		\nu_{\epsilon,\delta',z_1,z_2}^N(A_{\epsilon,\delta'})
		\coloneqq \frac{ \nu_{\epsilon,\delta',z_1,z_2}\left( \mathbbm{1}_{A_{\epsilon,\delta'}}(\cdot) \mathbb{P}_{\cdot} \left( \tilde{\tau}_{N-2}^{\epsilon,\delta'} < \sigma^{\epsilon,\delta} \right) \right) }{ \mathbb{P}_{\nu_{\epsilon,\delta',z_1,z_2}} \left(  \tilde{\tau}_{N-2}^{\epsilon,\delta'} < \sigma^{\epsilon,\delta} \right)},
	\end{equation*}
	which is clearly a probability measure on $C_\epsilon(\delta')$.
	By \ref{itm: c2'}, we have
	\begin{equation}
	\begin{aligned}
		&\mathbb{P}_{z_i} \left(X_2^{\epsilon,\delta',\delta} \in A_{\epsilon,\delta'} \middle| \tilde{\tau}_{N}^{\epsilon,\delta'} < \sigma^{\epsilon,\delta} \right) \\
		&\geq c_1 \frac{  \nu_{\epsilon,\delta',z_1,z_2}\left( \mathbbm{1}_{A_{\epsilon,\delta'}}(\cdot) \mathbb{P}_{\cdot} \left( \tilde{\tau}_{N-2}^{\epsilon,\delta'} < \sigma^{\epsilon,\delta} \right) \right) }{\sup_{z \in C_\epsilon(\delta')} \mathbb{P}_{z} \left(  \tilde{\tau}_{N-2}^{\epsilon,\delta'} < \sigma^{\epsilon,\delta} \right)}
		- \frac{  \eta' }{\sup_{z \in C_\epsilon(\delta')} \mathbb{P}_{z} \left(  \tilde{\tau}_{N-2}^{\epsilon,\delta'} < \sigma^{\epsilon,\delta} \right)} \\
		& = c_1 \nu_{\epsilon,\delta',z_1,z_2}^N(A_{\epsilon,\delta'}) 
		\frac{  \mathbb{P}_{\nu_{\epsilon,\delta',z_1,z_2}} \left(  \tilde{\tau}_{N-2}^{\epsilon,\delta'} < \sigma^{\epsilon,\delta} \right) }{\sup_{z \in C_\epsilon(\delta')} \mathbb{P}_{z} \left(  \tilde{\tau}_{N-2}^{\epsilon,\delta'} < \sigma^{\epsilon,\delta} \right)}
		- \frac{  \eta' }{\sup_{z \in C_\epsilon(\delta')} \mathbb{P}_{z} \left(  \tilde{\tau}_{N-2}^{\epsilon,\delta'} < \sigma^{\epsilon,\delta} \right)} \\
		& \geq c_1 c_2 \nu_{\epsilon,\delta',z_1,z_2}^N(A_{\epsilon,\delta'}) 
		- \frac{  \eta' }{\sup_{z \in C_\epsilon(\delta')} \mathbb{P}_{z} \left(  \tilde{\tau}_{N-2}^{\epsilon,\delta'} < \sigma^{\epsilon,\delta} \right)}.
	\end{aligned}
	\end{equation}

	Next, we show that for every $\eta>0$, there exists $N_\eta \in \mathbb{N}$ and, $\delta_{\eta}'>0$ such that for all $\delta' \in (0,\delta_{\eta}')$ there exists $\epsilon_{\eta,\delta'} > 0$ such that for all $\epsilon \in (0,\epsilon_{\eta,\delta'})$ and $z,z' \in C_\epsilon(\delta')$, we have
	\begin{equation}\label{app qsd eq 1}
		\left \lVert \mathbb{P}_z \left(X_{N_\eta}^{\epsilon,\delta',\delta} \in \cdot \middle| \tilde{\tau}_{N_\eta}^{\epsilon,\delta'} < \sigma^{\epsilon,\delta} \right) - \mathbb{P}_{z'} \left(X_{N_\eta}^{\epsilon,\delta',\delta} \in \cdot \middle| \tilde{\tau}_{N_\eta}^{\epsilon,\delta'} < \sigma^{\epsilon,\delta} \right) \right \rVert_{TV} < \eta.
	\end{equation}
	The main difference arises in the first step, where we must carry the $\eta'$ term throughout the iteration.
	To be more precise, for $0 \leq n \leq m \leq N$, we define the linear operator $S_{\epsilon,\delta',n,m}^N$ by
	\begin{equation*}
		S_{\epsilon,\delta',n,m}^N f(z) 
		\coloneqq \mathbb{E}_z \left( f \left( X_{m-n}^{\epsilon,\delta',\delta} \right) \middle| \tilde{\tau}_{N-n}^{\epsilon,\delta'} < \sigma^{\epsilon,\delta} \right)
		= \mathbb{E} \left( f \left( X_{m}^{\epsilon,\delta',\delta} \right) \middle| X_{n}^{\epsilon,\delta',\delta} = z, \tilde{\tau}_{N}^{\epsilon,\delta'} < \sigma^{\epsilon,\delta} \right).
	\end{equation*}
	One can show that the family $\{S_{\epsilon,\delta',n,m}^N\}_{0 \leq n \leq m \leq N}$ is a Markov semigroup.
	By \eqref{app qsd eq 0}, for $i=1,2$, $
		\delta_{z_i} S_{\epsilon,\delta',n,n+2}^N - c_1 c_2 \nu_{\epsilon,\delta',z_1, z_2}^{N-n} $is a signed measure whose mass is $1-c_1 c_2 $. 
		Moreover, the negative part $\left( \delta_{z_i} S_{\epsilon,\delta',n,n+2}^N - c_1 c_2 \nu_{\epsilon,\delta',z_1, z_2}^{N-n} \right)^-$ has mass less than 
		\begin{equation*}
			\frac{  \eta' }{\sup_{z \in C_\epsilon(\delta')} \mathbb{P}_{z} \left(  \tilde{\tau}_{N-n-2}^{\epsilon,\delta'} < \sigma^{\epsilon,\delta} \right)}
		\end{equation*}
		and thus the positive part $\left( \delta_{z_i} S_{\epsilon,\delta',n,n+2}^N - c_1 c_2 \nu_{\epsilon,\delta',z_1, z_2}^{N-n} \right)^+$ has mass less than 
		\begin{equation*}
			1-c_1c_2+\frac{  \eta' }{\sup_{z \in C_\epsilon(\delta')} \mathbb{P}_{z} \left(  \tilde{\tau}_{N-n-2}^{\epsilon,\delta'} < \sigma^{\epsilon,\delta} \right)}.
		\end{equation*}
		We then conclude that
		\begin{equation*}
		\begin{aligned}
			&\left \lVert \delta_{z_1} S_{\epsilon,\delta',n,n+2}^N - \delta_{z_2} S_{\epsilon,\delta',n,n+2}^N \right \rVert_{TV} \\
			& \leq \left \lVert \delta_{z_1} S_{\epsilon,\delta',n,n+2}^N - c_1 c_2 \nu_{\epsilon,\delta',z_1, z_2}^{N-n} \right \rVert_{TV}
			+\left \lVert \delta_{z_2} S_{\epsilon,\delta',n,n+2}^N - c_1 c_2 \nu_{\epsilon,\delta',z_1, z_2}^{N-n} \right \rVert_{TV} \\
			& \leq \sum_{i=1}^2  \left \lVert \left( \delta_{z_i} S_{\epsilon,\delta',n,n+2}^N - c_1 c_2 \nu_{\epsilon,\delta',z_1, z_2}^{N-n} \right)^+ \right \rVert_{TV}
			+\left \lVert \left( \delta_{z_i} S_{\epsilon,\delta',n,n+2}^N - c_1 c_2 \nu_{\epsilon,\delta',z_1, z_2}^{N-n} \right)^- \right \rVert_{TV}  \\
			& \leq 2 \left( 1-c_1 c_2 + \frac{  2\eta' }{\sup_{z \in C_\epsilon(\delta')} \mathbb{P}_{z} \left(  \tilde{\tau}_{N-n-2}^{\epsilon,\delta'} < \sigma^{\epsilon,\delta} \right)} \right).
		\end{aligned}
		\end{equation*}
		Arguing the same as \cite{QSD}, we have, for any two probability measures $\mu_{1}, \mu_{2}$ on $C_\epsilon(\delta')$,
		\begin{equation*}
		\begin{aligned}
			&\left \lVert \mu_{1} S_{\epsilon,\delta',n,n+2}^N - \mu_{2} S_{\epsilon,\delta',n,n+2}^N \right \rVert_{TV} \\
			& \leq \left( 1-c_1 c_2 + \frac{  2\eta' }{\sup_{z \in C_\epsilon(\delta')} \mathbb{P}_{z} \left(  \tilde{\tau}_{N-n-2}^{\epsilon,\delta'} < \sigma^{\epsilon,\delta} \right)} \right) \left \lVert \mu_{1} - \mu_{2}  \right \rVert_{TV}.
		\end{aligned}
		\end{equation*}
	
	By the semigroup property, we have
	\begin{equation}\label{app qsd eq 2}
	\begin{aligned}
		&\left \lVert \mathbb{P}_z \left(X_N^{\epsilon,\delta',\delta} \in \cdot \middle| \tilde{\tau}_{N}^{\epsilon,\delta'} < \sigma^{\epsilon,\delta} \right) - \mathbb{P}_{z'} \left(X_{N}^{\epsilon,\delta',\delta} \in \cdot \middle| \tilde{\tau}_{N}^{\epsilon,\delta'} < \sigma^{\epsilon,\delta} \right) \right \rVert_{TV} \\
		&= \left \lVert \delta_{z} S_{\epsilon,\delta',0,N}^N - \delta_{z'} S_{\epsilon,\delta',0,N}^N \right \rVert_{TV} \\
		&= \left \lVert \delta_{z} S_{\epsilon,\delta',0,N-2}^N S_{\epsilon,\delta',N-2,N}^N 
		- \delta_{z'} S_{\epsilon,\delta',0,N-2}^N S_{\epsilon,\delta',N-2,N}^N \right \rVert_{TV} \\
		& \leq \left( 1-c_1 c_2 + \frac{  2\eta' }{\sup_{z \in C_\epsilon(\delta')} \mathbb{P}_{z} \left(  \tilde{\tau}_{0}^{\epsilon,\delta'} < \sigma^{\epsilon,\delta} \right)} \right) 
		\left \lVert \delta_{z} S_{\epsilon,\delta',0,N-2}^N 
		- \delta_{z'} S_{\epsilon,\delta',0,N-2}^N   \right \rVert_{TV} \\
		&\leq 2 \prod_{m=0}^{\left \lfloor{\frac{N}{2}}\right \rfloor -1 } \left( 1-c_1c_2+ \frac{ 2\eta'}{\sup_{z \in C_\epsilon(\delta')} \mathbb{P}_{z} (\tilde{\tau}_{2m}^{\epsilon,\delta'} < \sigma^{\epsilon,\delta}) }\right).
	\end{aligned}
	\end{equation}
	Now pick $N_\eta > 0$ such that 
	\begin{equation}\label{app qsd eq 3}
		2\left(1- \frac{1}{2}c_1c_2 \right)^{\left \lfloor{\frac{N_\eta}{2}}\right \rfloor} < \eta
	\end{equation}
	and let $\delta_\eta' > 0$ small enough such that for all $\delta' \in (0,\delta_\eta')$, we can pick $\epsilon_{\eta,\delta'}> 0$ small enough such that for all $\epsilon \in (0,\epsilon_{\eta,\delta'})$, we have
	\begin{equation}\label{app qsd eq 4}
		\max_{m=0}^{\left \lfloor{\frac{N_\eta}{2}}\right \rfloor -1} \frac{ 2\eta'}{\sup_{z \in C_\epsilon(\delta')} \mathbb{P}_{z} (\tilde{\tau}_{2m}^{\epsilon,\delta'} < \sigma^{\epsilon,\delta}) } < \frac{c_1 c_2}{2}.
	\end{equation}
	This is possible because, by \eqref{est p}, for all $z \in C_\epsilon(\delta')$,
	\begin{equation*}
		\mathbb{P}_{z} (\tilde{\tau}_{m}^{\epsilon,\delta'} < \sigma^{\epsilon,\delta})
		= \left( p_{\epsilon,\delta',\delta}^1 \right)^m,
	\end{equation*}
	where $\lim_{\delta' \to 0} \lim_{\epsilon \to 0} p_{\epsilon,\delta',\delta}^1 = 1$.	
	Combining this with \ref{itm: c1'} yields \eqref{app qsd eq 4}.
	Then, \eqref{app qsd eq 1} follows from \eqref{app qsd eq 2}-\eqref{app qsd eq 4}.
	
	For the final step, to extend \eqref{app qsd eq 1} to general initial distributions, we follow the same argument as in \cite{QSD}, as the bound established above holds uniformly with respect to the initial condition.
 
\newpage

\section{List of domain notations used in Sections \ref{sec exit}-\ref{sec exit exp}}\label{app list dom}
We provide here a summary of the domain notations frequently used in Sections \ref{sec exit}-\ref{sec exit exp}.

We begin with some preliminary notations introduced in Section \ref{sec prelim}:
\begin{center}
\begin{tabular}{ | l | l | l | } 
	\hline
  Notation & Definition & Introduced in Section\\ 
  \hline
  $d_\Gamma$ & the metric on the graph & \ref{sec graph} \\ 
  \hline
  $d$ & the Euclidean distance & \ref{sec graph} \\ 
  \hline
  $B(O,r)$ & the Euclidean ball in $\mathbb{R}^d$ centered at $O$ with radius $r$ & \ref{sec narrow tube} \\ 
  \hline
  $z^*_{j,k}$ & $B(O_j,\rho_j r_j(\epsilon)) \cap I_k$ &  \ref{sec narrow tube} \\ 
  \hline
\end{tabular}
\end{center}

Next, we summarize the domains used from Section \ref{sec exit} to \ref{sec exit exp}.
Since our focus in these sections (except Section \ref{sec proof lem place}) is on the local behavior near a vertex, we omit the subscript $j$ for clarity.
All definitions are introduced in the section where they first appear, and their precise formulations can be found therein.

\begin{center}
\begin{tabular}{ | r | l |l | } 
	\hline
  Domain & Definition & Used in Section \\ 
  \hline
  $C_\epsilon(\delta)$ & $\{ z \in G_\epsilon: d_\Gamma(\Pi(z),O) =\delta \}$ & \ref{sec exit}-\ref{sec exit exp}  \\ 
  \hline
  $C_\epsilon^k(\delta)$ & $C_\epsilon(\delta) \cap \Pi^{-1}(I_k)$ & \ref{sec exit}-\ref{sec exit exp}  \\ 
  \hline
  $B_\epsilon(\delta)$ & $\{ z \in G_\epsilon: d_\Gamma(\Pi(z),O) \leq \delta \}$ & \ref{sec exit}, \ref{sec exit place} \\ 
  \hline
  $\Omega_{1,\epsilon}$ & $B(O,\rho r(\epsilon))$ & \ref{sec proof lem}-\ref{sec exit place} \\ 
  \hline
  $\Omega_{2,\epsilon,\delta}^k$ & the region enclosed by $C_\epsilon^k(\rho r(\epsilon)+3\epsilon)$, $C_\epsilon^k(\delta)$, $\partial G_\epsilon$ & \ref{sec proof lem} \\ 
  \hline
  $\Omega_{2,\epsilon,\delta}$ & $\bigcup_k \Omega_{2,\epsilon,\delta}^k$ & \ref{sec proof lem} \\ 
  \hline
  $\Omega_{3,\epsilon,\delta}$ & $B_\epsilon(\delta) \setminus (\Omega_{1,\epsilon} \cup \Omega_{2,\epsilon,\delta})$ & \ref{sec proof lem} \\ 
  \hline
  $\Omega_{4,\epsilon}^k$ & the region enclosed by  $\Gamma_{2,\epsilon}^k$, $\Gamma_{3,\epsilon}^k$, $\partial G_\epsilon$ & \ref{sec exit time apriori} \\ 
  \hline
  $\Omega_{4,\epsilon}$ & $\bigcup_k \Omega_{4,\epsilon}^k$ & \ref{sec exit time apriori} \\ 
  \hline
  $\Omega_{5,\epsilon,\delta}^k$ & the region enclosed by $C_\epsilon^k(\rho r(\epsilon)+2\epsilon)$, $C_\epsilon^k(\delta)$, $\partial G_\epsilon$ & \ref{sec exit time apriori} \\ 
  \hline
  $\Omega_{5,\epsilon,\delta}$ & $\bigcup_k \Omega_{5,\epsilon,\delta}^k$ & \ref{sec exit time apriori} \\ 
  \hline
  $\Omega_{6,\epsilon}^k$ & $\left\{ z \in \Omega_{1,\epsilon}: d(z,z_k^*) \geq \tilde{\rho} r(\epsilon) \right\}$ & \ref{sec exit place} \\ 
  \hline
  $\Omega_{6,\epsilon}$ & $\bigcap_k \Omega_{6,\epsilon}^k$ & \ref{sec exit place} \\ 
  \hline
  $\Omega_{7,\epsilon,\delta}^k$ & $\left( B_\epsilon(\delta+1) \cap \Pi^{-1}(I_k) \right) \cup \Omega_{1,\epsilon}$ & \ref{sec exit place} \\ 
  \hline
  $\Omega_{8,\epsilon,\delta}^k$ & the region enclosed by $\Gamma_{4,\epsilon}^k$, $C_\epsilon^k(\delta+1)$, $\partial G_\epsilon$ & \ref{sec exit place} \\ 
  \hline
  $\Gamma_{1,\epsilon}$ & $\partial B(O,\rho r(\epsilon)) \setminus \partial G_\epsilon$ & \ref{sec exit time apriori}, \ref{sec exit place} \\ 
  \hline
  $\Gamma_{1,\epsilon}^k$ & $\Gamma_{1,\epsilon} \cap \Pi^{-1}(I_k)$ & \ref{sec exit time apriori}, \ref{sec exit place} \\ 
  \hline
  $\Gamma_{2,\epsilon}^k$ & the smooth mollification of $\{ z \in \Omega_{1,\epsilon} : d(z,z_k^*)= 4 \lambda_k \epsilon \}$ & \ref{sec exit time apriori} \\ 
  \hline
  $\Gamma_{2,\epsilon}$ & $\bigcup_k \Gamma_{2,\epsilon}^k$ & \ref{sec exit time apriori} \\ 
  \hline
  $\Gamma_{3,\epsilon}^k$ & the smooth mollification of $C_\epsilon^k(3\epsilon + \rho r(\epsilon))$ & \ref{sec exit time apriori} \\ 
  \hline
  $\Gamma_{3,\epsilon}$ & $\bigcup_k \Gamma_{3,\epsilon}^k$ & \ref{sec exit time apriori} \\ 
  \hline
  $\Gamma_{4,\epsilon}^k$ & $\left\{ z \in \Omega_{1,\epsilon} : d(z,z_k^*)= \tilde{\rho}  r(\epsilon) \right\}$ & \ref{sec exit place} \\ 
  \hline
  $\Gamma_{4,\epsilon}$ & $\bigcup_k \Gamma_{4,\epsilon}^k$ & \ref{sec exit place} \\ 
  \hline
\end{tabular}
\end{center}

\end{appendices}

\addcontentsline{toc}{section}{References}


\begin{thebibliography}{99}

\bibitem {DPT}
S.~Albeverio, S.~Kusuoka,
{\em Diffusion processes in thin tubes and their limits on graphs}, Annals of Probability 40 (2012), pp. 2131-2167.

\bibitem {LPNET}
H.~Ammari, K.~Kalimeris, H.~Kang, H.~Lee,
{\em Layer potential techniques for the narrow escape problem}, Journal de mathématiques pures et appliquées 97 (2012), pp. 66-84.

\bibitem{BH}
R.~F.~Bass, P.~Hsu,
{\em Some potential theory for reflecting Brownian motion in H\"older and Lipschitz domains}, Annals of Probability 19 (1991), pp. 486-508.

\bibitem{QGbook}
G.~Berkolaiko, P.~Kuchment,
{\sc Introduction to quantum graphs}, American Mathematical Society, 2013.

\bibitem{nlpde}
 I.~Birindelli, A.~Briani, H.~Ishii,
{\em Fully nonlinear elliptic PDEs in thin domains with oblique boundary condition}, arXiv:2410.23925.

\bibitem{sticky}
S.~Bonaccorsi, M.~D'Ovidio,
{\em Sticky Brownian motion on star graphs}, Fractional Calculus and Applied Analysis 27 (2024), pp. 2859-2891.


\bibitem{BCM}
K.~Burdzy, Z.~Q.~Chen, D.~E.~Marshall,
{\em Traps for reflected Brownian motion}, Mathematische Zeitschrift 252 (2006), pp. 103-132.

\bibitem{NETsecond}
 C.~Caginalp, X.~Chen,
{\em Analytical and numerical results for an escape problem}, Archive for Rational Mechanics and Analysis 203 (2011), pp. 329-342.

\bibitem{QSD}
N.~Champagnat, D.~Villemonais,
{\em Exponential convergence to quasi-stationary distribution and Q-process}, Probability Theory and Related Fields 164 (2016), pp. 243-283.

\bibitem{NET4}
X.~Chen, A.~Friedman,
{\em Asymptotic analysis for the narrow escape problem}, SIAM Journal on Mathematical Analysis 43 (2011), pp. 2542-2563.

\bibitem{CF}
 S.~Cerrai, M.~I.~Freidlin,
{\em SPDEs on narrow domains and on graphs: an asymptotic result}, Annales de l'Institut Henri Poincar\'{e} 53 (2017), pp. 865-899.

\bibitem{spectral}
 P.~Exner, O.~Post,
{\em Convergence of spectra of graph-like thin manifolds}, Journal of Geometry and Physics 54 (2005), pp. 77-115.

\bibitem{Feller54}
 W.~Feller,
{\em Diffusion processes in one dimension}, Transactions of the American Mathematical Society 77 (1954), pp. 1-31.

\bibitem{SDE1}
M.~I.~Freidlin,
{\sc Markov processes and differential equations: asymptotic problems}, Birkh\"{a}user Basel, 1996.

\bibitem{SDE2} 
M.~I.~Freidlin, A.~D.~Wentzell, {\em Diffusion processes on graphs and the averaging principle}, Annals of Probability 21 (1993), pp. 2215-2245.

\bibitem{PDEBook} D.~Gilbarg, N.~S.~Trudinger, {\sc Elliptic partial differential equations of second order}, Springer Berlin, Second Edition, 2001.

\bibitem{NETthird}
 D.~Holcman, Z.~Schuss,
{\em Diffusion laws in dendritic spines}, Journal of Mathematical Neuroscience 1 (2011), pp. 1-14.

\bibitem{NET}
 D.~Holcman, Z.~Schuss,
{\em The narrow escape problem}, SIAM Review 56 (2014), pp. 213-257.

\bibitem{SMP}
 I.~Imtiyas, L.~Koralov,
{\em Metastable distributions of semi-Markov processes}, arXiv:2411.04795.

\bibitem{Ito} K.~It\^{o}, H.~P.~McKean, {\sc Diffusion processes and their sample paths}, Springer Berlin, 1996.

\bibitem{EPD}
 R.~Z.~Khaminskii,
{\em Ergodic properties of recurrent diffusion processes and stabilization of the solution to the Cauchy problem for parabolic equations}, Theory of Probability and Its Application 5 (1960), pp. 179-196.

\bibitem{reg pert} S.~G.~Krantz, H.~R.~Parks, {\sc The geometry of domains in space}, Birkh\"auser Boston, 1999.

\bibitem{NETsp}
 T.~Lelievre, M.~Rachid, G.~Stoltz,
{\em A spectral approach to the narrow escape problem in the disk}, arXiv:2401.06903.

\bibitem{NETBN1}
X.~Li, H.~Lee, Y.~Wang,
{\em Asymptotic analysis of the narrow escape problem in dendritic spine shaped domain: three dimensions}, Journal of Physics A: Mathematical and Theoretical 50 (2017) 325203.

\bibitem{UEB}
J.~Loper,
{\em Uniform ergodicity for Brownian motion in a bounded convex set}, Journal of Theoretical Probability 33 (2020) , pp.22-35.

\bibitem{NETconj}
 S.~Louca,
{\em On the narrow escape problem}, Lecture Notes.


\bibitem{NETnetwork}
F.~Paquin-Lefebvre, K.~Basnayake, D.~Holcman,
{\em Narrow escape in composite domains forming heterogeneous networks}, Physica D: Nonlinear Phenomena 454 (2023) 133837.

\bibitem{Convbook} O.~Post, {\sc Spectral analysis on graph-like spaces}, Springer Berlin, 2012.

\bibitem{DST} G.~Raugel, {\sc Dynamics of partial differential equations on thin domains}, Dynamical systems. Lectures given at the 2nd session of the Centro Internazionale Matematico Estivo (CIME) held in Montecatini Terme, Italy, June 13-22, 1994.

\bibitem{MPD}  M.~Salins, K.~Spiliopoulos, {\em Markov process with spatial delay: path space characterization, occupation time and properties}, Stochastics and Dynamics 17 (2017) 1750042.

\bibitem{NETbook}
Z.~Schuss, {\sc Brownian dynamics at boundaries and interfaces}, Springer New York, 2013.

\bibitem{Diffbook}
D.~W.~Stroock, S.~R.~S.~Varadhan, {\sc Multidimensional diffusion processes}, Springer Berlin, 2006.



\end{thebibliography}
\end{document}